\newcommand{\mathboldit}[1]{\boldsymbol{#1}}
\newcommand{\myTimes}{
\usepackage{mathptmx}
\DeclareSymbolFont{letters}{OML}{txmi}{m}{it}
\DeclareMathAlphabet{\mathbit}{OT1}{ptm}{bx}{it}
\renewcommand{\mathboldit}[1]{\mathbit{##1}}
}
\newcommand{\add}[1] {{\color{blue}{#1}}}
 \newcommand{\df}[1]{\emph{#1}}
\renewcommand{\le}{\leqslant}
\renewcommand{\ge}{\geqslant}
\newcommand{\txt}[1]{\text{\rmfamily\mdseries\upshape{#1}}}
\newenvironment{bullets}{\begin{enumerate}[\textbullet]}{\end{enumerate}}
\newcommand{\customqed}[1]{{\renewcommand{\qedsymbol}{#1}\qed}}
\newcommand{\varqed}{\customqed{\hbox{$\lrcorner$}}}
\newcommand{\intheoremstyle}{
 \newtheorem{lemma}{Lemma}[section]

\newtheorem{theorem}[lemma]{Theorem}
 \newtheorem{proposition}[lemma]{Proposition}
 
 \newtheorem{corollary}[lemma]{Corollary}

 \newtheorem{question}{Question}
}
\newcommand{\indefinitionstyle}{
\newtheorem{Definition}[lemma]{Definition}
\newenvironment{definition}{\begin{Definition}}{\varqed\end{Definition}}

 \newtheorem{Notation}[lemma]{Notation}
  \newenvironment{notation}{%
   \begin{Notation}}{\varqed\end{Notation}}

 \newtheorem{Condition}{Condition}[section]

}
\newcommand{\standardTheorems}{
\theoremstyle{plain}
\intheoremstyle
\theoremstyle{definition}
\indefinitionstyle
}
\newcommand{\restofTheorems}{
 \theoremstyle{remark}
 \newtheorem{Remark}[lemma]{Remark}
 \newtheorem{Remarks}[lemma]{Remarks}

 \newenvironment{remark}{%
   \begin{Remark}}{\varqed\end{Remark}}
 \newenvironment{remarks}{%
   \begin{Remarks}}{\varqed\end{Remarks}}

 \newtheorem{Example}[lemma]{Example}
 \newtheorem{Examples}[lemma]{Examples}

 \newenvironment{example}{%
   \begin{Example}}{\varqed\end{Example}}
 \newenvironment{examples}{%
   \begin{Examples}}{\varqed\end{Examples}}
} 
\newenvironment{alphenum} 
 {\begin{enumerate}[{\upshape (a)}]}{\end{enumerate}}
\newcommand{\opint}[2]{(#1,#2)}
\newcommand{\clint}[2]{[#1,#2]}
\newcommand{\lint}[2]{[#1,#2)}
\newcommand{\pair}[2]{(#1,#2)}
\newcommand{\lea}{\stackrel{{}_+}{<}}
\newcommand{\gea}{\stackrel{{}_+}{>}}
\newcommand{\eqa}{\stackrel{{}_+}{=}}
\newcommand{\lem}{\stackrel{{}_*}{<}}
\newcommand{\gem}{\stackrel{{}_*}{>}}
\newcommand{\eqm}{\stackrel{{}_*}{=}}
\newcommand\evof[1]{\mathopen[\,#1\,\mathclose]}
\newcommand\Paren[1]{{\left( #1\right)}}
\newcommand\setOf[2]{\mathopen\{\,#1 : #2\,\mathclose\}}
\newcommand\set[1]{\mathopen\{#1\mathclose\}}
\newcommand\tup[1]{( #1)}
\newcommand{\cei}[1]{{\lceil #1\rceil}}
\newcommand{\flo}[1]{{\lfloor #1\rfloor}}
\newcommand{\bB}{\mathbf{B}}
\newcommand{\bM}{\mathbf{M}}
\newcommand{\bX}{\mathbf{X}}
\newcommand{\bY}{\mathbf{Y}}
\newcommand{\bZ}{\mathbf{Z}}
\newcommand{\cA}{\mathcal{A}}
\newcommand{\cB}{\mathcal{B}}
\newcommand{\cC}{\mathcal{C}}
\newcommand{\cE}{\mathcal{E}}
\newcommand{\cM}{\mathcal{M}}
\newcommand{\cS}{\mathcal{S}}
\newcommand{\bbB}{\mathbb{B}}
\newcommand{\bbN}{\mathbb{N}}
\newcommand{\bbR}{\mathbb{R}}
\newcommand{\prefix}{\sqsubseteq}
\newcommand{\postfix}{\sqsupseteq}
\renewcommand{\b}{\mathbf{b}}
\newcommand{\KP}{\mathit{Kp}}
\newcommand{\Km}{\mathit{Km}}
\newcommand{\KM}{\mathit{KM}}
\renewcommand{\d}{\mathbf{d}}
\newcommand{\m}{\mathbf{m}}
\newcommand{\M}{M}
\renewcommand{\P}{P} 
\newcommand{\Q}{Q} 
\renewcommand{\r}{r} 
\newcommand{\R}{R} 
\newcommand{\s}{s}
\renewcommand{\t}{\mathbf{t}}
\newcommand{\Y}{Y}
\newcommand{\len}[1]{|#1|}
\newcommand{\Rands}{\mathrm{Randoms}}
\newcommand{\aver}{\txt{aver}}
\newcommand{\prob}{\txt{prob}}
\begin{document}

\title{Algorithmic tests and randomness with respect to a class
      of measures}

\author{Laurent Bienvenu\thanks{%
     LIAFA, CNRS \& Universit\'e Paris Diderot, Paris 7, Case 7014, 75205
     Paris Cedex 13, France,
     e-mail: \texttt{Laurent dot Bienvenu at liafa dot jussieu dot fr}},
Peter G\'acs\thanks{%
     Department of Computer Science, Boston University,
     111 Cummington st., Room 138, Boston, MA 02215,
     e-mail: \texttt{gacs at bu dot edu}},
Mathieu Hoyrup\thanks{%
     LORIA -- B248, 615, rue du Jardin Botanique, BP 239, 54506
     Vand\oe uvre-l\`es-Nancy, France,
     e-mail: \texttt{Mathieu dot Hoyrup at loria dot fr}},\\
Cristobal Rojas\thanks{%
     Department of Mathematics, University of Toronto,
     Bahen Centre, 40 St. George St., Toronto, Ontario, Canada, M5S 2E4,
     e-mail: \texttt{crojas at math dot utoronto dot ca}},
Alexander Shen\thanks{%
     LIF, Universit\`e Aix -- Marseille, CNRS,
     39, rue Joliot-Curie, 13453 Marseille cedex 13, France,
     on leave from IITP RAS, Bolshoy Karetny, 19, Moscow.
     Supported by NAFIT ANR-08-EMER-008-01, RFBR 0901-00709-a
     grants. e-mail: \texttt{sasha dot shen at gmail dot com}.}
}

\maketitle
\thispagestyle{empty}

\begin{abstract}
This paper offers some new results on randomness with respect to classes of
measures, along with a didactical exposition of their context based on
results that appeared elsewhere.

We start with the reformulation of the Martin-L\"of definition
of randomness (with respect to computable measures) in terms of
randomness deficiency functions.
A formula that expresses the randomness deficiency in
terms of prefix complexity is given (in two forms).
Some approaches that go in another direction (from deficiency to
complexity) are considered.

The notion of Bernoulli randomness (independent coin tosses
for an asymmetric coin with some probability $p$ of head) is defined.
It is shown that a sequence is Bernoulli if it is random with respect to
\emph{some} Bernoulli measure $B_p$.
A notion of ``uniform test'' for Bernoulli sequences is
introduced which allows a quantitative strengthening of this result.
Uniform tests are then generalized to arbitrary measures.

Bernoulli measures $B_{p}$ have the important property that
$p$ can be recovered from each random sequence of $B_{p}$.
The paper studies some important consequences of this
orthogonality property (as well as most other questions mentioned above)
also in the more general setting of constructive metric spaces.
\end{abstract}

\section{Introduction}

This paper, though intended to be rather self-contained,
can be seen as a continuation of~\cite{GacsUnif05} (which itself
built on earlier work of Levin) and~\cite{HoyrupRojasRandomness09}.

Our enterprise is to develop the theory of
randomness beyond the framework where the underlying
probability distribution is the uniform distribution or a computable
distribution.
A randomness test $\t(\omega,\P)$ of object $\omega$ with respect to measure $\P$
is defined to be a function of both the measure $\P$ and the point $\omega$.

In some later parts of the paper, we will also go beyond the case where the
underlying space is the set of finite or infinite sequences: rather,
we take a constructive metric space with its algebra of Borel sets.

We will apply the above notion of test to define, following ideas
of~\cite{LevinRand73}, for a class $\cC$
of measures having some compactness property,
a ``class test'' $\t_{\cC}(\omega)$.
This is a test to decide whether object $\omega$ is random with
respect to any one measure $\P$ in the class $\cC$.
We will show that in case of the class of Bernoulli measures over binary
sequences, this notion is equivalent to the class tests introduced by
Martin-L\"of in~\cite{MLof66art}.

In case there is an effective sense in which the
elements of the class are mutually orthogonal, we obtain an especially simple
separation of the randomness test $\t(\omega,\P)$ into two parts: the class test
and an arbitrarily simple test for ``typicality'' with respect to the measure
$\P$.
In some natural special cases, the typicality test corresponds to a
convergence property of relative frequencies, allowing to apply the theory to
any general effectively compact class of ergodic stationary processes.

There are some properties of
randomness tests $\t(\omega,\P)$ that depend on the measure $\P$, which our tests
do not necessarily possess, for example a kind of monotonicity in $\P$.
It is therefore notable that in case of the orthogonal classes,
randomness is equivalent to an ``blind'' notion of
randomness, that only considers randomness tests that do not depend on the
measure $\P$.

Here is an outline of the paper.
We start with the reformulation of the Martin-L\"of definition
of randomness (with respect to computable measures) in terms of
tests.
A randomness test provides a quantitative measure of
non-randomness, called ``randomness deficiency''; it is finite
for random sequences and infinite for non-random ones.
There are two versions of these tests (``average-bounded'' and
``probability-bounded'' ones); a relation between them is established.

A formula that expresses the (average-bounded) randomness deficiency in
terms of prefix complexity is given (in two forms).
It implies the Levin-Schnorr criterion of randomness (with prefix complexity,
as in the special case first announced in Chaitin's paper~\cite{Chaitin75}).
Some approaches that go in another direction (from deficiency to
complexity) are considered.

The notion of Bernoulli sequence (looking like the outcome of
independent coin tosses for an asymmetric coin) is defined.
It is shown that the set of Bernoulli sequences is the union
(over all $p\in\clint{0}{1}$) of the sets of sequences that are random
with respect to $B_p$, the Bernoulli measure with probability
$p$; here we assume that $p$ is given as an oracle).
A notion of ``uniform test'' for Bernoulli sequences is
introduced.
Then the statement above is proved in the following
quantitative form: the Bernoulli deficiency is the infimum of
$B_{p}$ deficiencies over all $p\in\clint{0}{1}$.

The notion of general uniform test (not restricted to the class
of Bernoulli measures) is introduced.
It is shown that it generalizes Martin-L\"ofs
earlier definition of randomness (which was given only for computable
measures).

Bernoulli measures $B_{p}$ have the important property that
$p$ can be recovered from each random sequence of $B_{p}$.
The paper studies some important consequences of this
orthogonality property (as well as most other questions mentioned above)
also in the more general setting of constructive metric spaces.

The following notation is useful, since inequalities hold frequently only within
an additive or multiplicative constant.
\begin{notation}
We will write $f(x)\lem g(x)$ for inequality
between positive functions within a multiplicative constant,
that is for the relation $f(x)=O(g(x))$: precisely, if
there is a constant $c$ with $f(x)\le c g(x)$ for all $x$.
The relation $f\eqm g$ means $f\lem g$ and $f\gem g$.
Similarly, $f\lea g$ and $f\eqa g$
means inequality within an additive constant.

Let $\Lambda$ denote the empty string.

Logarithms are taken, as a default, to base 2.
We use $\len{x}$ to denote the length of a string $x$.
For finite string, $x$ and finite or infinite string $y$ let
$x\prefix y$ denote that $x$ is a prefix of $y$.
If $x$ is a finite or infinite sequence then its elements are written
as $x(1), x(2),\dots$, and its prefix of size $n$ will be
denoted by $x(1:n)$.

Let $\overline\bbR_{+}=\clint{0}{\infty}$ be the set of nonnegative reals,
with the special value $\infty$ added.
The binary alphabet $\{0,1\}$ will also be denoted by $\bbB$.
\end{notation}

\section{Randomness on sequences, for computable measures}

\subsection{Lower semicomputable functions on sequences}

In the first sections, we will study randomness over infinite binary sequences.

\begin{definition}[Binary Cantor space, Baire space]\label{def:binary-Cantor}
  We will denote by $\Omega$ the set of infinite binary sequences,
and call it also the \df{binary Cantor space}.
For a finite string $x$ let $x\Omega$ be the set of all infinite sequences that have
finite prefix $x$.
These sets will be called \df{basic open sets}, the set of all basic open set is
called the \df{basis} of $\Omega$ (as a topological space).
A subset of $\Omega$ is \df{open} if it is the union of a set of basis elements.

The set of infinite sequences of natural numbers will be called the \df{Baire
  space}.
Basic open sets and open sets can be defined for it analogously.
\end{definition}

A notion somewhat weaker than computability will play crucial role.

\begin{definition}\label{def:lower-semicomp.seqs}
An open set $G\subseteq\Omega$ is called \df{effectively open}, or
\df{lower semicomputable open}, or \df{c.e.~open}, or \df{r.e.~open} if it is
the union of a computable sequence $x_{i}\Omega$ of basic elements.
A set is \df{upper semicomputable closed}, or \df{effectively closed} if its
complement is effectively open.

A set $\Gamma$ is called \df{effectively $G_{\delta}$} if there is a sequence of
sets $U_{k}$, $k=1,2,\dots$   effectively open uniformly in $k$
such that $\Gamma=\bigcap_{k}U_{k}$.

A function $t:\Omega\to\clint{0}{\infty}$ is \df{lower semicomputable} if
\begin{alphenum}
\item\label{i:lower-semicomp.seqs.lower-semicont}
 For any rational $r$ the set
        $
\{\omega : r < t(\omega)\}
        $
is open in $\Omega$, that is is a union of intervals $x\Omega$.

\item Moreover, this set is effectively open uniformly in $r$,
that is there exists an algorithm that gets $r$ as input and
generates strings $x_0,x_1,\ldots$ such that the union of
interval $x_{i}\Omega$ is equal to $\{\omega: r<t(\omega)\}$.
\end{alphenum}
\end{definition}

This definition is a constructive version of the classical
notion of lower semicontinuous function as in
requirement~(\ref{i:lower-semicomp.seqs.lower-semicont}).
The same class of lower semicomputable functions has other
(equivalent) definitions; here is one of them.

\begin{definition}\label{def:basic-func.seqs}
A function $u$ defined on $\Omega$ and having rational values is
called \df{basic} if the value $u(\omega)$ is determined by
some finite prefix of $\omega$.
\end{definition}

If this prefix has length $N$,
the function can be presented as a table with $2^N$ rows; each
row contains $N$ bits (the values of the first $N$ bits of
$\omega$) and a rational number (the value of the function).
Such a function is a finite object.

The proof of the following proposition is a simple exercise:

\begin{proposition}\label{propo:lower-semi-limit.seqs}
The (pointwise) limits of monotonic sequences of basic functions
are exactly the lower semicomputable functions on $\Omega$.
\end{proposition}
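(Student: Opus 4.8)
The plan is to prove the two inclusions separately, reading ``monotonic'' as nondecreasing (so that the pointwise limit is the pointwise supremum, which is what pairs with \emph{lower} semicomputability), and taking the sequence of basic functions to be computable — the effectivity implicit in Definition~\ref{def:lower-semicomp.seqs}.

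For the easy direction, suppose $t=\lim_n u_n$ for a computable nondecreasing sequence $(u_n)$ of basic functions, so that $t(\omega)=\sup_n u_n(\omega)$. For any rational $r$,
\[
\{\omega : r < t(\omega)\} = \bigcup_n \{\omega : r < u_n(\omega)\},
\]
since $t(\omega)>r$ holds exactly when $u_n(\omega)>r$ for some $n$. Each set $\{\omega : r < u_n(\omega)\}$ is a finite union of basic intervals $x\Omega$ that can be read directly off the table of $u_n$, uniformly in $n$ and $r$; hence the union is effectively open uniformly in $r$, and $t$ is lower semicomputable.

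For the substantive direction, suppose $t$ is lower semicomputable. By Definition~\ref{def:lower-semicomp.seqs} there is an algorithm that, on input a rational $r$, enumerates finite strings whose intervals exhaust $\{\omega : r<t(\omega)\}$; note that whenever such a string $x$ is enumerated at level $r$, we are guaranteed $t(\omega)>r$ for \emph{every} $\omega\in x\Omega$. Dovetailing this algorithm over all positive rationals $r$ and all output indices, I accumulate a growing finite set $S_n$ of certified pairs $(x,r)$ after $n$ steps, with $S_n\subseteq S_{n+1}$. I then set
\[
u_n(\omega) = \max\setOf{r}{(x,r)\in S_n,\ x\prefix\omega},
\]
with the convention that the empty maximum equals $0$. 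Each $u_n$ depends only on the bits of $\omega$ up to the greatest string length occurring in the finite set $S_n$, so it is basic and computable from $n$; and since $S_n$ grows with $n$, the sequence is nondecreasing.

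It remains to verify that $u_n\to t$ pointwise. On one hand $u_n(\omega)\le t(\omega)$ for every $n$, since each $r$ entering the maximum satisfies $r<t(\omega)$. On the other hand, fix any rational $r<t(\omega)$; then $\omega\in\{\omega' : r<t(\omega')\}$, so some enumerated interval $x\Omega$ with $x\prefix\omega$ certifies the pair $(x,r)$ at a finite stage, whence $u_n(\omega)\ge r$ for all large $n$. Taking the supremum over rationals $r<t(\omega)$ yields $\sup_n u_n(\omega)\ge t(\omega)$, so equality holds (when $t(\omega)=\infty$ the value is attained as an increasing limit of rationals). The only real care needed is the bookkeeping that makes each $u_n$ simultaneously basic, monotone in $n$, and convergent from below; this is the crux, but it follows immediately from the finiteness of each $S_n$ together with the identity $t(\omega)=\sup\setOf{r\in\bbQ}{t(\omega)>r}$.
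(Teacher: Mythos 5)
Your proof is correct, and since the paper states this proposition without proof (``a simple exercise''), there is nothing to diverge from: your argument is the standard one the authors have in mind, namely reading off cylinders from the tables of the $u_n$ for one direction, and dovetailing the enumerations over all rationals $r$ into finite certified sets $S_n$ for the other. You also rightly pin down the two implicit conventions (``monotonic'' means nondecreasing, and the sequence of basic functions must be computable), and the use of $t\ge 0$ to justify the empty-maximum convention is the only point where any care is needed.
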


Since the difference of two basic functions is a basic function,
we can reformulate this criterion as follows: lower
semicomputable functions are (pointwise) sums of computable
series made of non-negative basic functions.

One more way to define a lower semicomputable function goes as
follows.

\begin{definition}[Generating]\label{def:generate-lower}
Let $T$ be a lower semicomputable function on the set
$\{0,1\}^{*}$ of finite sequences of zeros and ones with
non-negative (finite or infinite) values.
This means that the
set of pairs $\pair{x}{r}$ such that $r<T(x)$ is enumerable.
Then function $t$ defined as
\begin{equation*}
t(\omega)=\sup_{x\prefix\omega}T(x)
\end{equation*}
is a lower semicomputable function on~$\Omega$: we will say that
function $T(\cdot)$ \df{generates} function $t(\cdot)$ if it is also monotone:
$T(x)\le T(y)$ if $x\prefix y$.
\end{definition}

The monotonicity requirement can always be satisfied by taking
$T'(x)=\max_{z\prefix x}T(z)$.

\begin{proposition}\label{propo:generate-lower}
Any lower semicomputable function $t$ on $\Omega$ is generated by an appropriate
function $T$ on $\{0,1\}^{*}$ this way.
\end{proposition}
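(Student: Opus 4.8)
The plan is to extract the generating function $T$ directly from the uniform enumeration that witnesses the lower semicomputability of $t$. By Definition~\ref{def:lower-semicomp.seqs}, there is an algorithm that, given a rational $r$, enumerates a set $S_r$ of finite strings with $\bigcup_{z\in S_r} z\Omega = \{\omega : r < t(\omega)\}$, uniformly in $r$. First I would set
$$T_0(x) = \sup\{r \in \bbQ : x \in S_r\},$$
i.e.\ the best lower bound on $t$ that the enumeration certifies once the prefix $x$ has actually appeared as a generator for some threshold. Since the relation $x \in S_r$ is semidecidable uniformly in the pair $(x,r)$, the set $\{(x,r) : r < T_0(x)\}$ is enumerable: $r < T_0(x)$ holds precisely when some rational $r' > r$ satisfies $x \in S_{r'}$, which can be detected by dovetailing the enumerations of $S_{r'}$ over all rationals $r' > r$. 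Hence $T_0$ is lower semicomputable on $\{0,1\}^{*}$ in the sense of Definition~\ref{def:generate-lower}.

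The function $T_0$ need not be monotone, so following the remark after Definition~\ref{def:generate-lower} I would pass to $T(x) = \max_{z\prefix x} T_0(z)$. This is monotone by construction; it is still lower semicomputable, being a maximum of $T_0$ over the \emph{finitely many} prefixes of $x$; and it has the same suprema along every branch, $\sup_{x\prefix\omega} T(x) = \sup_{z\prefix\omega} T_0(z)$, since maximizing over prefixes does not change the supremum taken over all prefixes of $\omega$ (the inequality $T\ge T_0$ gives one direction, and $T(x)=\max_{z\prefix x}T_0(z)\le\sup_{z\prefix\omega}T_0(z)$ for $x\prefix\omega$ gives the other).

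It then remains to verify that $T$ generates $t$, that is $\sup_{x\prefix\omega}T(x)=t(\omega)$. Unfolding the definitions, $\sup_{z\prefix\omega}T_0(z)$ equals $\sup\{r\in\bbQ : \exists z\prefix\omega,\ z\in S_r\}$, and the existence of such a $z$ is exactly the statement $\omega\in\bigcup_{z\in S_r}z\Omega$, hence $r<t(\omega)$ by the defining property of $S_r$; so this supremum is $\sup\{r\in\bbQ : r<t(\omega)\}=t(\omega)$ by density of the rationals (the case $t(\omega)=\infty$ is covered, since then every rational qualifies).

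The only genuinely delicate point is the semicomputability bookkeeping of the first step: $T_0$ must be read off from the strings $x$ that \emph{literally appear} in the enumeration of some $S_r$, not from the semantic inclusion $x\Omega\subseteq\{\omega:r<t(\omega)\}$, which is merely co-enumerable information and would not yield an enumerable $T$. Everything else is routine. A completely parallel argument could instead start from the representation $t=\sum_i u_i$ as a computable series of non-negative basic functions supplied by Proposition~\ref{propo:lower-semi-limit.seqs}, each $u_i$ being determined by a prefix of some length $N_i$, and put $T(x)=\sum_{i:\,N_i\le|x|}u_i(x)$; there non-negativity of the $u_i$ is what forces the partial sums to approach $T(x)$ from below, which is exactly what keeps $T$ lower semicomputable while making it monotone and generate $t$.
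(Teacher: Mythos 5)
Your argument is correct: the paper states this proposition without an explicit proof, and your direct extraction of $T_0(x)=\sup\{r\in\bbQ : x\in S_r\}$ from the enumeration witnessing lower semicomputability, followed by monotonization over the finitely many prefixes, is exactly the intended routine argument (the basic-function decomposition you sketch at the end is the equally valid alternative that Proposition~\ref{propo:lower-semi-limit.seqs} sets up). One cosmetic point: Definition~\ref{def:generate-lower} requires $T$ to take non-negative values, and your $T_0(x)$ is $\sup\emptyset=-\infty$ (or negative) on strings that never appear in any $S_r$, so you should replace $T_0$ by $\max(T_0,0)$ before monotonizing --- harmless, since $t\ge 0$ means this does not change any supremum along a branch.
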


We may also assume that $T$ is a computable function with rational values.
Indeed, since only the supremum
of $T$ on all the prefixes is important, instead of
increasing $T(x)$ for some $x$ we may increase $T(y)$ for all
$y\postfix x$ of large length; this delay allows $T$ to be computable.

For a given lower semicomputable function
$t$ on $\Omega$ there exists a maximal monotonic function $T$ on finite strings
that generates $t$ (in the sense just described).
This maximal $T$ can be defined as follows:
\begin{equation}\label{eq:T-as-inf}
T(x)=\inf_{\omega\postfix x} t(\omega).
\end{equation}
Let us now exploit the finiteness of the binary alphabet $\{0,1\}$, which
implies that the space $\Omega$ is a compact topological space.

\begin{proposition}\label{propo:inf-lower-semicomp.seqs}
The function $T$ defined by~\eqref{eq:T-as-inf} is lower semicomputable.
In the definition, we can replace $\inf$ by $\min$.
\end{proposition}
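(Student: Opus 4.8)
The plan is to prove the two assertions in the natural order, using compactness of $\Omega$ as the main engine for both. Write $U_{r}=\{\omega: r<t(\omega)\}$ for the set appearing in Definition~\ref{def:lower-semicomp.seqs}; by that definition each $U_{r}$ is effectively open, uniformly in the rational $r$, so we may enumerate a list of basic intervals whose union is $U_{r}$, uniformly in $r$.

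First I would settle the replacement of $\inf$ by $\min$. Since $t$ is lower semicontinuous (requirement~(\ref{i:lower-semicomp.seqs.lower-semicont})), its sublevel sets $\{\omega: t(\omega)\le c\}$ are closed. Fix $x$ and let $m=\inf_{\omega\postfix x}t(\omega)$. The sets $x\Omega\cap\{\omega: t(\omega)\le m+1/n\}$ form a nested family of nonempty closed subsets of the compact set $x\Omega$, hence have nonempty intersection; any $\omega_{0}$ in that intersection satisfies $t(\omega_{0})\le m$, so the infimum is attained and $T(x)=\min_{\omega\postfix x}t(\omega)$.

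Next comes lower semicomputability, i.e.\ enumerability of $\{\pair{x}{r}: r<T(x)\}$. The key observation is that, because the minimum is attained, $r<T(x)$ holds if and only if $t(\omega)>r$ for every $\omega\postfix x$, that is, if and only if $x\Omega\subseteq U_{r}$. Now compactness converts this universal statement into an existential, enumerable one: since $x\Omega$ is compact and $U_{r}$ is the union of the enumerated basic intervals, $x\Omega\subseteq U_{r}$ holds if and only if $x\Omega$ is already covered by \emph{finitely many} of those intervals. Whether a finite list of basic intervals covers $x\Omega$ is a decidable, purely combinatorial condition (pad every string to the common length $N$ equal to the maximum of the lengths involved, and check that each length-$N$ extension of $x$ extends one of the listed strings).

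The algorithm is therefore to dovetail over all pairs $\pair{x}{r}$ and over the stages of the enumeration of $U_{r}$, outputting $\pair{x}{r}$ as soon as the finitely many intervals produced so far for $U_{r}$ cover $x\Omega$. By the equivalence above this enumerates exactly $\{\pair{x}{r}: r<T(x)\}$, proving $T$ lower semicomputable. The one step that really requires thought --- and the place where finiteness of the alphabet is used --- is the passage from the $\Pi_{1}$-looking condition ``$t(\omega)>r$ for all $\omega\postfix x$'' to the $\Sigma_{1}$ condition ``a finite subcover exists''; everything else is routine dovetailing and a decidable covering check. (If one prefers to avoid invoking attainment here, the same equivalence holds upon replacing $r$ by a strictly larger rational $r'$ and quantifying existentially over $r'$.)
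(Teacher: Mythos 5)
Your proof is correct and follows essentially the same route as the paper's: compactness of $x\Omega$ turns the covering condition $x\Omega\subseteq U_{r}$ into an enumerable finite-subcover condition, and lower semicontinuity plus compactness gives attainment of the infimum. The only (cosmetic) difference is that you establish attainment first and use it to avoid the auxiliary rational $r'$, whereas the paper quantifies existentially over $r'>r$ — a variant you yourself note in your closing parenthetical.
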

\begin{proof}
Indeed, $r<\inf_{\omega\postfix x} t(\omega)$ if and
only if there exists some rational $r'>r$ with $r'<t(\omega)$ for
all $\omega\postfix x$.
The last condition can be
reformulated: the open set of all sequences $\omega$ such that
$t(\omega)>r'$ is a superset of $x\Omega$.
This open set is a
union of an enumerable family of intervals; if these intervals
cover $x\Omega$, compactness implies that
this is revealed at some finite stage, so the
condition is enumerable (and the existential quantifier over $r'$
keeps it enumerable).

Since the function $t(\omega)$ is lower semicontinuous, it
actually reaches its infimum on the compact set
$x\Omega$, so $\inf$ can be replaced with $\min$.
\end{proof}

\subsection{Randomness tests}

We assume that the reader is familiar with the basic concepts of measure theory
and integration, at least in the space $\Omega$ of infinite binary sequences.
A measure $\P$ on $\Omega$ is determined by the values
 \begin{align*}
  \P(x)=\P(x\Omega)
 \end{align*}
which we will denote by the same letter $P$, without danger of confusion.
Moreover, any function $\P:\{0,1\}^{*}\to\clint{0}{1}$ with the properties
 \begin{align}\label{eq:measure.Omega}
  \P(\Lambda)=1,\quad\P(x)=\P(x0)+\P(x1)
 \end{align}
uniquely defines a measure (this is a particular case of Caratheodory's theorem).

\begin{definition}[Computable measure]\label{def:computable-measure-Omega}
A real number is called \df{computable} if there is an algorithm that for all
rational $\varepsilon$ returns a rational approximation of $x$ with error not greater
than $\varepsilon$.
Computable numbers can also be determined as limits of sequences
$x_{1},x_{2},\dots$ for which $|x_{n}-x_{n+k}|\le 2^{-n}$.
An infinite sequence $s_{1},s_{2},\dots$ of real numbers is a
\df{strong Cauchy} sequence if for all $m<n$ we have $|s_{m}-s_{n}|\le 2^{-m}$.

A function determined on words (or other constructive objects) is
\df{computable} if its values are computable uniformly from the input,
that is there is an algorithm that from each input and $\varepsilon>0$ returns an
$\varepsilon$-approximation of the function value on this input.

Measure $\P$ over $\Omega$ is said to be \df{computable}
if the function $\P:\{0,1\}^{*}\to\clint{0}{1}$ is computable.
\end{definition}

\begin{definition}[Randomness test, computable
  measure]\label{def:test-computable-measure}
Let $\P$ be a computable probability distribution (measure) on~$\Omega$.
A lower semicomputable function $t$ on $\Omega$
with non-negative (possible infinite) values is an
(\df{average-bounded}) \df{randomness test} with respect to $\P$ (or
\df{$\P$-test})
if the expected value of $t$ with respect to $\P$ is at most~$1$, that is
\begin{equation*}
\int t(\omega)\,d\P\le 1.
\end{equation*}
A sequence $\omega$ \df{passes} a test $t$ if
$t(\omega)<\infty$.
A sequence is called \df{random} with
respect to $\P$ it is passes all $\P$-randomness tests (as defined
above).
\end{definition}

The intuition: when $t(\omega)$ is large, this means that test
$t$ finds a lot of ``regularities'' in $\omega$.
Constructing a
test, we are allowed to declare whatever we want as a
``regularity''; however, we should not find too many of them on
average: if we declare too many sequences to be ``regular'', the
average becomes too big.

This definition turns out to be equivalent to randomness as defined by
Martin-L\"of (see below).
But let us start with the universality theorem:

\begin{theorem}\label{thm:universality.cptable}
For any computable measure $\P$ there exists a
\df{universal} (maximal) $\P$-test $u$: this means that for any other
$\P$-test $t$ there exists a constant $c$ such that
\begin{equation*}
t(\omega)\le c\cdot u(\omega)
\end{equation*}
for every $\omega\in\Omega$.
\end{theorem}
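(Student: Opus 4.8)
The plan is the classical \emph{enumerate-and-trim} construction, with the trimming arranged so that it stays effective. First I would fix a uniformly lower semicomputable enumeration $t_{1},t_{2},\dots$ of \emph{all} lower semicomputable functions $\Omega\to\clint{0}{\infty}$. This is available because, by the remark following Proposition~\ref{propo:lower-semi-limit.seqs}, each such function is the pointwise supremum of an effectively given increasing list of non-negative basic functions (Definition~\ref{def:basic-func.seqs}), and the programs producing such lists can be enumerated in the usual way: every program yields a lower semicomputable function, and every lower semicomputable function arises from some program. Concretely I would present each $t_{i}$ as $t_{i}=\sup_{m}u^{(i)}_{m}$ with $u^{(i)}_{1}\le u^{(i)}_{2}\le\cdots$ basic, given uniformly in $i$ and $m$.

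The essential step is to replace each $t_{i}$ by a trimmed $\tilde t_{i}$ that is guaranteed to satisfy the test inequality, while leaving genuine tests essentially untouched. Since $\P$ is computable and each $u^{(i)}_{m}$ is basic, the integral $\int u^{(i)}_{m}\,d\P$ is a finite sum $\sum_{x}\P(x)\cdot(\text{value on }x\Omega)$, hence a computable real, increasing in $m$ to $\int t_{i}\,d\P$. I would then set $\tilde t_{i}=\sup\{\,u^{(i)}_{m} : \int u^{(i)}_{m}\,d\P<1\text{ has been certified}\,\}$, where ``certified'' means that a finite-precision computation of the integral has already produced an upper bound strictly below~$1$. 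The set of admitted indices $m$ is enumerable uniformly in $i$, so $\tilde t_{i}$ is lower semicomputable uniformly in $i$; and because the admitted $u^{(i)}_{m}$ form an increasing family, monotone convergence gives $\int\tilde t_{i}\,d\P=\sup_{m\text{ admitted}}\int u^{(i)}_{m}\,d\P\le 1$, so each $\tilde t_{i}$ is a genuine $\P$-test.

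Next I would assemble $u=\sum_{i}2^{-i}\tilde t_{i}$. This is lower semicomputable (a computable weighted sum of uniformly lower semicomputable functions) and satisfies $\int u\,d\P=\sum_{i}2^{-i}\int\tilde t_{i}\,d\P\le\sum_{i}2^{-i}=1$, so $u$ is a $\P$-test. For universality let $t$ be any $\P$-test. The trick is to pass to $t/2$, which is again lower semicomputable with $\int (t/2)\,d\P\le 1/2<1$; hence $t/2=t_{j}$ for some $j$, and since $\int t_{j}\,d\P$ is \emph{strictly} below~$1$, every $u^{(j)}_{m}$ is eventually certified, so $\tilde t_{j}=t_{j}=t/2$ holds exactly. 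Therefore $u\ge 2^{-j}\tilde t_{j}=2^{-(j+1)}t$, i.e.\ $t\le 2^{\,j+1}u$, with $c=2^{\,j+1}$ depending only on $t$.

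I expect the one real obstacle to be precisely the effectiveness of the trimming: one cannot decide $\int t_{i}\,d\P\le 1$, because the integral is only a limit of approximations and the borderline value $1$ can never be confirmed. The construction above sidesteps this by certifying the \emph{strict} inequality $<1$ (which is semi-decidable from the approximations) and by applying universality to $t/2$ rather than $t$, so that the boundary case $\int t\,d\P=1$ costs only a harmless factor of $2$ absorbed into $c$. The remaining points—that suprema of enumerated basic functions and the weighted sum preserve lower semicomputability uniformly—are routine consequences of the basic-function characterization and I would check them only briefly.
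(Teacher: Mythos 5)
Your proof is correct and follows essentially the same enumerate-and-trim strategy as the paper: enumerate all lower semicomputable functions via increasing sequences of basic functions, gate each basic approximant by a semi-decidable strict integral bound (which is possible because $\P$ is computable), and sum the trimmed functions with summable weights. The only cosmetic difference is how the undecidable boundary case is absorbed — the paper certifies the integral to be $<2$ so that genuine tests pass unchanged, while you certify $<1$ and apply universality to $t/2$; both cost only a constant factor.
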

In particular, $u(\omega)$ is finite if and only if $t(\omega)$
is finite for every $\P$-test $t$, so the sequences that pass
test $u$ are exactly the random sequences.
\begin{proof}
Let us enumerate the algorithms that generate
all lower semicomputable functions.
Such an algorithm produces a
monotone sequence of basic functions.
Before letting through the
next basic function of this sequence, let us check that its
$\P$-expectation is less than $2$.
If the algorithm considered
indeed defines a $\P$-test, this expectation does not exceed~$1$,
so by computing the values of $\P$ with sufficient precision we
are able to guarantee that the expectation is less than~$2$.
If this checking procedure does not terminate (or gives a negative
result), we just do not let the basic function through.

In this way we enumerate all tests as well as some lower
semicomputable functions that are not exactly tests but are at
most twice bigger than tests.
It remains to sum up all these
functions with positive coefficients whose sum does not exceed
$1/2$ (say, $1/2^{i+2}$).
\end{proof}

Recall the definition of randomness according to Martin-L\"of.

\begin{definition}
Let $\P$ be a computable distribution over $\Omega$.
A sequence of open sets $U_{1},U_{2},\dots$ is called a \df{Martin-L\"of test}
for $\P$
if the sets $U_{i}$ are effectively open in a uniform way (that is
$U_{i}=\bigcup_{j}x_{ij}\Omega$ where the double sequence $x_{ij}$ of strings
is computable),
moreover $\P(U_{k})\le 2^{-k}$ for all $k$.

A set $N$ is called a \df{constructive (effective) null set} for the measure $\P$
if there is a Martin-L\"of test $U_{1},U_{2},\dots$ with the property
$N=\bigcap_{k}U_{k}$.
Note that effective null sets are constructive $G_{\delta}$ sets.

A sequence $\omega\in\Omega$ is said to \df{pass} the test $U_{1},U_{2},\dots$ if it is
not in $N$.
It is \df{Martin-L\"of-random} with respect to measure
$\P$ if it is not contained in any constructive null set for $\P$.
\end{definition}

The following theorem is not new, see for example~\cite{LiViBook97}.

\begin{theorem}
A sequence $\omega$ passes all average-bounded $\P$-tests
(=passes the universal $\P$-test) if and only if it is
Martin-L\"of random with respect to $\P$.
\end{theorem}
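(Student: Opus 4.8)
The plan is to prove the two implications separately, using two elementary facts: indicator functions of uniformly effectively open sets are lower semicomputable, and Markov's inequality converts an integral bound into a measure bound. Throughout, $\eta$ denotes a generic point of $\Omega$, reserving $\omega$ for the fixed sequence in the statement.

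First I would show that if $\omega$ fails some average-bounded $\P$-test, then it is not Martin-L\"of random. Let $t$ be a $\P$-test with $\int t\,d\P\le 1$ and $t(\omega)=\infty$. Define $U_{k}=\{\eta : t(\eta)>2^{k}\}$. Since $t$ is lower semicomputable, each $U_{k}$ is effectively open, uniformly in $k$: this is exactly requirement (b) of Definition~\ref{def:lower-semicomp.seqs} applied to the rational threshold $2^{k}$. By Markov's inequality, $\P(U_{k})\le 2^{-k}\int t\,d\P\le 2^{-k}$, so $(U_{k})$ is a Martin-L\"of test for $\P$. Because $t(\omega)=\infty$, we have $\omega\in U_{k}$ for every $k$, hence $\omega\in\bigcap_{k}U_{k}$, a constructive null set containing $\omega$; thus $\omega$ is not Martin-L\"of random. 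Note that this direction uses only the lower semicomputability of $t$, not the computability of $\P$.

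Conversely, I would show that if $\omega$ is not Martin-L\"of random, then it fails some average-bounded test. Let $(U_{k})$ be a Martin-L\"of test for $\P$ with $\omega\in\bigcap_{k}U_{k}$ and $\P(U_{k})\le 2^{-k}$. The indicator $\mathbf{1}_{U_{k}}$ is lower semicomputable, uniformly in $k$, since $\{\eta : r<\mathbf{1}_{U_{k}}(\eta)\}$ equals $U_{k}$ for $0\le r<1$ and is empty for $r\ge 1$. Set $t=\sum_{k}\mathbf{1}_{U_{k}}$. As the sum of a computable series of non-negative lower semicomputable functions, $t$ is itself lower semicomputable (Proposition~\ref{propo:lower-semi-limit.seqs} and the remark following it), and by the monotone convergence theorem $\int t\,d\P=\sum_{k}\P(U_{k})\le\sum_{k}2^{-k}=1$, so $t$ is a $\P$-test. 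Since $\omega$ lies in every $U_{k}$, we get $t(\omega)=\sum_{k}1=\infty$, so $\omega$ fails $t$.

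Finally I would combine these with Theorem~\ref{thm:universality.cptable}: passing the universal $\P$-test is equivalent to passing all average-bounded $\P$-tests, which by the two implications above is equivalent to Martin-L\"of randomness. The only points requiring care — and the nearest thing to an obstacle — are the bookkeeping of effectivity: verifying that the superlevel sets $U_{k}=\{t>2^{k}\}$ and the indicators $\mathbf{1}_{U_{k}}$ are effectively open uniformly in $k$, and pinning down the constants so that the Markov bound yields exactly $\P(U_{k})\le 2^{-k}$ and the geometric series for the integral sums to at most $1$. These are routine once the definitions are unwound, and the argument requires no monotonicity or additional regularity of $\P$ beyond what the test definition already supplies.
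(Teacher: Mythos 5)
Your proposal is correct and follows essentially the same route as the paper's own proof: Markov/Chebyshev applied to the superlevel sets $\{t>2^{k}\}$ for one direction, and the test $\sum_{k}\mathbf{1}_{U_{k}}$ with $\int\sum_{k}\mathbf{1}_{U_{k}}\,d\P\le\sum_{k}2^{-k}\le 1$ for the other. The only difference is cosmetic (thresholds $2^{k}$ versus $N$), and your extra care about uniform effectivity of the superlevel sets is exactly the bookkeeping the paper leaves implicit.
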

\begin{proof}
If $t$ is a test, then the set of all
$\omega$ such that $t(\omega)>N$ is an effectively open set that
can be found effectively given $N$.
This set has $\P$-measure at
most $1/N$ (by Chebyshev's inequality), so the sets of sequences $\omega$ that
do not pass
$t$ (that is $t(\omega)$ is infinite) is an effectively $\P$-null set.

On the other hand, let us show that for every effectively null set $Z$ there
exists an average-bounded test that is infinite at all its elements.
Indeed, for every effectively open set $U$ the function $1_{U}$ that is
equal to $1$ inside $U$ and to $0$ outside $U$ is lower
semicomputable.
Then we can get a test $\sum_{i} 1_{U_i}$.
The average of this test
does not exceed $\sum_{i} 2^{-i}=1$, while the sum is infinite for all elements
of $\bigcap_{i} U_i$.
\end{proof}

When talking about randomness for a computable measure,
we will write \df{randomness} from now on, understanding
Martin-L\"of randomness, since no other kind will be considered.

Sometimes it is useful to switch to the logarithmic scale.

\begin{definition}\label{def:deficiency}
 For every computable measure $\P$, we will fix a universal $\P$-test and denote
it by $\t_{\P}(\omega)$.
Let $\d_{\P}(\omega)$ be the logarithm of the universal test~$\t_{\P}(\omega)$:
\begin{equation*}
\t_{\P}(\omega)=2^{\d_{\P}(\omega)}.
\end{equation*}
With other kinds of test also, it will be our convention to use $\t$ (boldface)
for the universal test, and $\d$ (boldface) for its logarithm.
\end{definition}

In a sense, the function $\d_{\P}$ measures the randomness
deficiency in bits.

The logarithm, along with the requirement $\int \t_{\P}(\omega)\,d\P\le 1$,
implies that $\d_{\P}(\omega)$ may have some negative values, and even values
$-\infty$.
By just choosing a different universal test we
can always make $\d_{\P}(\omega)$ bounded below by, say, $-1$, and also
integer-valued.
On the other hand, if we want to make it nonnegative, we will have to lose the
property $\int 2^{\d_{\P}(\omega)}d\P\le 1$, though we may still have
$\int 2^{\d_{P}(\omega)}d\P\le 2$.
It will still have the following property:

\begin{proposition}
The function $\d_{P}(\cdot)$ is lower semicomputable and is the largest (up
to an additive constant) among all lower semicomputable
functions such that the $\P$-expectation of $2^{\d_{\P}(\cdot)}$ is
finite.
\end{proposition}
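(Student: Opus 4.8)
The plan is to prove the two assertions separately, leaning entirely on the universality theorem (Theorem~\ref{thm:universality.cptable}) and on the fact that lower semicomputability is preserved under composition with a monotone increasing computable map. Throughout I work with the evident extension of Definition~\ref{def:lower-semicomp.seqs} to functions valued in $\clint{-\infty}{\infty}$: a function $f$ is lower semicomputable if $\{\omega : r < f(\omega)\}$ is effectively open uniformly in the rational $r$.

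For the lower semicomputability of $\d_{\P}$, I would use that $\log_2$ is a monotone increasing continuous bijection from $\clint{0}{\infty}$ onto $\clint{-\infty}{\infty}$, so composing it with the lower semicomputable test $\t_{\P}$ cannot destroy semicomputability. Concretely, for rational $r$,
\begin{equation*}
\{\omega : r < \d_{\P}(\omega)\} = \{\omega : 2^{r} < \t_{\P}(\omega)\} = \bigcup_{q\in\bbQ,\; q > 2^{r}} \{\omega : q < \t_{\P}(\omega)\}.
\end{equation*}
Since $2^{r}$ is computable uniformly from $r$ one can enumerate the rationals $q > 2^{r}$, and each inner set is effectively open uniformly in $q$ because $\t_{\P}$ is lower semicomputable; hence the union is effectively open uniformly in $r$.

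For maximality, I would first observe that $\d_{\P}$ itself belongs to the class in question, since $\int 2^{\d_{\P}}\,d\P = \int \t_{\P}\,d\P \le 1 < \infty$. Now let $d$ be any lower semicomputable function with $C := \int 2^{d}\,d\P < \infty$. Put $t = 2^{d}$; by the same composition argument $t$ is lower semicomputable (here $2^{x}$ plays the role of the monotone map). The decisive step is to choose a \emph{rational} $c' > C$ and form $t/c'$: dividing by a positive computable constant preserves lower semicomputability, and $\int (t/c')\,d\P = C/c' < 1$, so $t/c'$ is a genuine $\P$-test. Universality of $\t_{\P}$ then furnishes a constant $c''$ with $t/c' \le c''\,\t_{\P}$ pointwise, whence $2^{d} \le c'c''\,2^{\d_{\P}}$ and, taking logarithms, $d \le \d_{\P} + \log_2(c'c'')$ everywhere.

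The only place where care is needed—and the step I expect to be the genuine obstacle—is the normalization in the maximality argument. The naive move of dividing by $C = \int 2^{d}\,d\P$ fails, because $C$ need not be a computable real, so $t/C$ need not be lower semicomputable and the universality theorem would not be applicable. Replacing $C$ by a rational overestimate $c' > C$, which exists exactly because $C$ is finite, repairs this at the cost of nothing more than an additive constant in the final comparison while keeping the normalized function inside the class of lower semicomputable $\P$-tests.
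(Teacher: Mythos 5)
Your proof is correct, and it is essentially the argument the paper intends: the paper states this proposition without proof, as an immediate consequence of the universality theorem, and your two steps (lower semicomputability via composition with a monotone map, and maximality via normalizing $2^{d}$ into a genuine $\P$-test and invoking Theorem~\ref{thm:universality.cptable}) are the standard route. Your observation that one must normalize by a rational overestimate $c'>C$ rather than by the possibly non-computable integral $C$ itself is exactly the right care to take, and it costs only the additive constant the statement allows.
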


As we have shown, for any fixed computable measure $\P$ the
value $\d_{\P}(\omega)$ (and $\t_{\P}(\omega)$) is finite if
and only if the sequence $\omega$ is Martin-L\"of random with respect to~$\P$.

\begin{remarks}\label{rem:ML-tests}
  \begin{enumerate}[1.]
\item\label{i:prob-bounded} Each Martin-L\"of's test ($U_{1},U_{2},\dots$) is
more directly related to
a lower semicomputable function $F(\omega)=\sup_{\omega\in U_{i}} i$.
This function has the property $P\evof{F(\omega)\ge k}\le 2^{-k}$.
Such functions will be called \df{probability-bounded} tests, and were used
in~\cite{ZvLe70}.
We will return to such functions later (subsection~\ref{subsec:average-probability}).

  \item
We have defined $\d_\P(\omega)$ separately for each computable measure $\P$
(up to a constant).
We will later give a more general definition of
randomness deficiency $\d(\omega,\P)$ as a function of two variables $\P$ and
$\omega$
that coincides with $\d_\P(\omega)$ for every computable $\P$ up to a constant
depending on~$\P$.
  \end{enumerate}
\end{remarks}

\subsection{Average-bounded and probability-bounded deficiencies}
\label{subsec:average-probability}
Let us refer for example to~\cite{LiViBook97,ShenUppsala00} for the definition
of and basic properties of plain and prefix (Kolmogorov) complexity.
We will define prefix complexity in Definition~\ref{def:prefix} below, though.
We will not use complexities explicitly in the present section, just refer to some of
their properties by analogy.

The definition of a test given above resembles the definition of
prefix complexity; we can give another one which is closer
to plain complexity.
For that we use a weaker requirement: we
require that the $\P$-measure of the set of all sequences~$\omega$ such
that $t(\omega)>N$ does not exceed $1/N$.
(This property is true if
the integral does not exceed $1$, due to Chebyshev's inequality.)

In logarithmic scale this requirement can be restated as
follows: the $\P$-measure of the set of all sequences whose
deficiency is greater than $n$ does not exceed $2^{-n}$.
If we restrict tests to integer values, we arrive at the classical Martin-L\"of
tests: see also Remark~\ref{rem:ML-tests}, part~\ref{i:prob-bounded}.

While constructing an universal test in this sense, it is
convenient to use the logarithmic scale and consider only
integer values of $n$.
As before, we enumerate all tests and
``almost-tests'' $d_i$ (where the measure is bounded by twice
bigger bound) and then take the weighted maximum in the
following way:
\begin{equation*}
\d(\omega)=\max_i [d_i(\omega)-i] - c.
\end{equation*}
Then $\d$ is less than $d_i$ only by $i+c$,
and the set of all $\omega$ such that $\d(\omega)>k$ is the union
of sets where $d_i(\omega)>k+i+c$.
Their measures are bounded by
$O(2^{-k-i-c})$ and for a suitable $c$ the sum of measures is at
most $2^{-k}$, as required.

In this way we get two measures of non-randomness that can be
called ``average-bounded deficiency'' $\d^{\aver}$ (the first one,
related to  the tests called ``integral tests'' in~\cite{LiViBook97}) and ``probability
bound deficiency'' $\d^{\prob}$ (the second one).
It is easy to see that they
define the same set of nonrandom sequences (=sequences that have
infinite deficiency).
Moreover, the finite values of these two
functions are also rather close to each other:

\begin{proposition}
  \begin{equation*}
 \d^{\aver}(\omega)\lea \d^{\prob}(\omega)
\lea \d^{\aver} (\omega)+ 2 \log \d^{\aver}(\omega).
  \end{equation*}
\end{proposition}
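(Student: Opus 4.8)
The plan is to prove the two inequalities separately, in each case exhibiting a test in one class and invoking the maximality (universality) of the corresponding deficiency function: recall that $\d^{\aver}$ is the largest, up to an additive constant, among lower semicomputable functions with $\int 2^{\d^{\aver}}\,d\P$ finite, while $\d^{\prob}$ is the largest, up to an additive constant, among integer-valued lower semicomputable functions $g$ with $\P\evof{g>n}\le 2^{-n}$. I may assume that $\d^{\prob}$ is integer-valued (as produced by its universal construction) and that both deficiencies are at least $3$ wherever they are finite, absorbing small and negative values into the additive constants, so that the logarithms are defined and the elementary function $x\mapsto x-2\log x$ is increasing on the relevant range ($x\ge 3$).

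For the first inequality $\d^{\aver}\lea\d^{\prob}$, which is the easy direction, I would argue that an average-bounded test is, after rounding, a probability-bounded one. By Chebyshev's inequality, $\int 2^{\d^{\aver}}\,d\P\le 1$ yields $\P\evof{\d^{\aver}>n}\le 2^{-n}$ for every $n$, so $\cei{\d^{\aver}}$ is an integer-valued lower semicomputable function satisfying $\P\evof{\cei{\d^{\aver}}>n}\le 2^{-n}$, i.e. a probability-bounded test. Maximality of $\d^{\prob}$ in that class gives $\cei{\d^{\aver}}\lea\d^{\prob}$, and since $\d^{\aver}\le\cei{\d^{\aver}}$ we obtain $\d^{\aver}\lea\d^{\prob}$.

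The substantive direction is the second inequality, and the idea is to convert the probability-bounded test $g:=\d^{\prob}$ into an average-bounded one by subtracting a logarithmic correction that makes the exponential integrable. I would set $f:=g-2\log g$; since $g$ is integer-valued and lower semicomputable and $x\mapsto x-2\log x$ is increasing, the composition $f$ is again lower semicomputable. Summing over the level sets $\set{g=n}$ and using $\P\evof{g=n}\le\P\evof{g>n-1}\le 2\cdot 2^{-n}$,
\begin{equation*}
\int 2^{f}\,d\P=\sum_{n}2^{\,n-2\log n}\,\P\evof{g=n}=\sum_{n}n^{-2}2^{n}\,\P\evof{g=n}\le 2\sum_{n}n^{-2}<\infty .
\end{equation*}
Thus $f$ is, up to a normalizing additive constant, an average-bounded test, so by maximality of $\d^{\aver}$ we get $f\lea\d^{\aver}$, that is $\d^{\prob}=g=f+2\log g\lea\d^{\aver}+2\log\d^{\prob}$.

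It then remains to replace $\log\d^{\prob}$ by $\log\d^{\aver}$ on the right, which is where the two inequalities are combined. From $\d^{\prob}\lea\d^{\aver}+2\log\d^{\prob}$, and using that $2\log\d^{\prob}\le\tfrac12\d^{\prob}$ for large values, one gets $\d^{\prob}\le 2\d^{\aver}+O(1)$, hence $\log\d^{\prob}\lea\log\d^{\aver}$; substituting this back yields $\d^{\prob}\lea\d^{\aver}+2\log\d^{\aver}$. The main obstacle I anticipate is bookkeeping rather than conceptual: checking carefully that $f=g-2\log g$ is genuinely lower semicomputable (monotonicity of the correcting map together with integer-valuedness of $g$), and handling small, negative, or infinite values of the deficiencies so that the logarithms and the $\lea$-estimates are legitimate — all of which are absorbed into the additive constants.
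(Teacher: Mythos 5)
Your proposal is correct and follows essentially the same route as the paper: Chebyshev for the easy direction, the correction $d\mapsto d-2\log d$ turning a probability-bounded test into an average-bounded one via the level-set computation $\sum_n 2\,n^{-2}<\infty$, and then the same elementary manipulation (using $2\log a\le a/2$ for large $a$) to replace $\log\d^{\prob}$ by $\log\d^{\aver}$ on the right-hand side. The extra bookkeeping you flag (semicomputability of $g-2\log g$ via monotonicity, absorbing small values into the additive constants) is handled correctly and is only slightly more explicit than the paper's version.
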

\begin{proof}
Any average-bounded test is also a probability-bounded test, therefore
$\d^{\aver}(\omega) \lea \d^{\prob}(\omega)$.

For the other direction, let $d$ be a probability-bounded test (in the
logarithmic scale).
Let us show that $d-2\log d$ is an average-bounded test.
Indeed, the probability of the event ``$d(\omega)$ is between $i-1$ and
$i$'' does not exceed $1/2^{i-1}$, the integral of $2^{d-2\log d}$
over this set is bounded by $2^{-i+1}2^{i-2\log i}=2/i^2$ and
therefore the integral over the entire space converges.

It remains to note that the inequality $a\lea b+2\log b$
follows from $b\gea a-2\log a$.
Indeed, we have $b\ge a/2$ (for large enough $a$), hence $\log a\le\log b+1$,
and then $a\lea b+2\log a\lea b+2\log b$.
\end{proof}

In the general case the question of the connection between boundedness in
average and boundedness in probability is addressed in the
paper~\cite{ShaferShenVereshVovk09}.
It is shown there (and this is not difficult) that if
$u:\clint{1}{\infty}\to\clint{0}{\infty}$ is a monotonic continuous function
with $\int_{1}^{\infty}u(t)/t^{2}\,d t\le 1$, then $u(t(\omega))$ is an
average-bounded test for every probability-bounded test $t$, and that this
condition cannot be improved.
(Our estimate is obtained by choosing $u(t)=t/\log^{2} t$.)

\begin{remark}
This statement resembles the relation between
prefix and plain description complexity.
However, now the difference is
bounded by the logarithm of the \emph{deficiency} (that is bounded independently
of length for the sequences that are close to random), not
of the \emph{complexity} (as usual), which would be normally growing with the
length.
\end{remark}

\begin{question}
It would be interesting to understand whether the two tests differ only by a
shift of scale or in some more substantial way.
For the confirmation of such a more substantial difference could serve two
families of sequences $\omega_{i}$ and $\omega'_{i}$ for which
 \begin{align*}
   \d^{\aver}(\omega'_{i})-\d^{\aver}(\omega_{i}) \to \infty
 \end{align*}
for $i\to\infty$, while
 \begin{align*}
   \d^{\prob}(\omega'_{i})-\d^{\prob}(\omega_{i}) \to -\infty.
 \end{align*}
The authors do not know whether such a family exists.
\end{question}

\subsection{A formula for average-bounded deficiency}

Let us recall some concepts connected with the prefix description complexity.
For reference, consult for example~\cite{LiViBook97,ShenUppsala00}.

\begin{definition}\label{def:prefix}
A set of strings is called \df{prefix-free} if no element of it is a prefix of
another element.
A computable partial function $T:\{0,1\}^{*}\to\{0,1\}^{*}$
is called a \df{self-delimiting interpreter} if its domain of definition
is a prefix-free set.
We define the complexity $\KP_{T}(x)$ of a string $x$ with respect to $T$
as the length of a shortest string $p$ with $T(p)=x$.
It is known that there is an \df{optimal} (self-deliminiting) interpreter: that is
a (self-delimiting) interpreter $U$ with the property that for every
self-delimiting interpreter $T$ there is a constant $c$ such that for every
string $x$ we have $\KP_{U}(x)\le \KP_{T}(x)+c$.
We fix an optimal self-delimiting interpreter $U$ and denote
$\KP(x)=\KP_{U}(x)$.

We also denote $\m(x)=2^{-\KP(x)}$, and call it sometimes
\df{discrete a priori probability}.
\end{definition}

The ``a priori'' name comes from some interpretations of a property that
distinguishes the function $\m(x)$ among certain ``weight distributions'' called
semimeasures.

\begin{definition}\label{def:semimeasure}
  A function $f:\{0,1\}^{*}\to\lint{0}{\infty}$ is called a
\df{discrete semimeasure} if $\sum_{x}f(x)\le 1$.
\end{definition}

Lower semicomputable semimeasures arise as the output distribution of a
randomized algorithm using a source of random numbers, and outputting some word
(provided the algorithm halts; with some probability, it may not halt).

It is easy to check that $\m(x)$ is a lower semicomputable discrete
semimeasure.

Recall the following fact.

\begin{proposition}[Coding Theorem]\label{propo:coding}
Among lower semicomputable discrete semimeasures, the function
$\m(x)$ is maximal within a multiplicative constant: that is for every lower
semicomputable discrete semimeasure $f(x)$ there is a constant $c$
with $c\cdot\m(x)\ge f(x)$ for all $x$.
\end{proposition}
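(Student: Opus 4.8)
The plan is to prove the Coding Theorem by exhibiting, for a given lower semicomputable discrete semimeasure $f$, a self-delimiting interpreter $T$ whose complexity $\KP_T$ satisfies $\KP_T(x)\lea -\log f(x)$, and then to invoke optimality of $U$. The key intuition is that $\m(x)=2^{-\KP(x)}$ is itself a lower semicomputable semimeasure (as already noted), and that the inequality $c\cdot\m(x)\ge f(x)$ is equivalent, in logarithmic scale, to $\KP(x)\lea -\log f(x)$. So the whole content is to build a prefix code for the strings $x$ in which $x$ receives a codeword of length close to $-\log f(x)$.

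First I would use the characterization from Proposition~\ref{propo:lower-semi-limit.seqs} (adapted to the discrete setting): since $f$ is lower semicomputable, I can write $f$ as the pointwise supremum of a computable nondecreasing sequence of nonnegative rational-valued functions, equivalently approximate $f(x)$ from below by an enumeration of rational ``increment'' events. Concretely, there is an algorithm enumerating a sequence of pairs $(x_1,q_1),(x_2,q_2),\dots$ with $q_i$ positive rationals (which I may take to be negative powers of two after rounding down) such that $f(x)=\sum_{i:\,x_i=x}q_i$ and such that every partial sum stays below the semimeasure bound $\sum_x f(x)\le 1$. Each enumerated unit of weight $q_i$ attached to $x_i$ is then to be converted into a reserved codeword of length $\log(1/q_i)$.

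The central step is the allocation of prefix-free codewords, which is the classical Kraft--Chaitin (Levin) construction and is the main obstacle. I must show that when weights $q_i$ are requested online, with $\sum_i q_i\le 1$, one can assign to each request a binary string $p_i$ of length exactly $\log(1/q_i)$ so that the whole collection $\{p_i\}$ is prefix-free, doing so computably and without knowing future requests. The idea is to maintain a pool of still-available dyadic intervals of $[0,1)$ and, for each incoming request of size $q_i=2^{-k_i}$, carve off an interval of that length; the bookkeeping (always keeping available a set of free intervals whose lengths are distinct powers of two summing to the remaining mass) guarantees a free interval of the required size exists precisely because the total requested mass never exceeds $1$. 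Setting $T(p_i)=x_i$ then defines a partial computable function with prefix-free domain, hence a self-delimiting interpreter, and for each $x$ the shortest requested codeword gives $\KP_T(x)\le \log(1/q_i)+O(1)$ for the largest single increment $q_i\le f(x)$, whence $\KP_T(x)\lea -\log f(x)$.

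Finally I would close the argument by optimality: there is a constant $c'$ with $\KP(x)=\KP_U(x)\le \KP_T(x)+c'\lea -\log f(x)$, so $2^{-\KP(x)}\gem f(x)$, that is $c\cdot\m(x)\ge f(x)$ for a suitable constant $c$. One routine point to check is that the single largest increment to $x$ is within a constant factor of $f(x)$ in the relevant sense, or alternatively that summing the codeword masses for a fixed $x$ reproduces $f(x)$ up to the semimeasure inequality; either way the logarithmic estimate survives because we only need the bound within a multiplicative constant. The step I expect to require the most care is the online prefix-free assignment with exact codeword lengths, since the rest is a direct translation between the probability scale and the complexity scale.
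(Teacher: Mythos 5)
The paper itself gives no proof of this proposition: it is stated as a recalled fact (with the references \cite{LiViBook97,ShenUppsala00} serving as background), so there is no ``paper's route'' to compare against. Your argument is the standard Kraft--Chaitin proof of the Coding Theorem, and its overall architecture --- enumerate weight increments, allocate prefix-free codewords online, define an interpreter $T$, invoke optimality of $U$ --- is the right one. The online allocation step is also correctly sketched: the invariant that the free dyadic intervals have distinct power-of-two lengths summing to the unallocated mass does guarantee that every request of size $2^{-k}$ not exceeding the remaining mass can be served.

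The one place where the proof as written would fail is the step you label ``routine'': the claim that the shortest codeword assigned to $x$ has length $\lea -\log f(x)$. Under the enumeration you describe (arbitrary increments, each merely rounded down to a power of two), the largest single increment credited to $x$ can be far smaller than $f(x)$: for instance $f(x)=1/2$ could be delivered as $2^{k}$ increments each of size $2^{-k-1}$, giving $\KP_T(x)=k+1$ instead of $O(1)$. Your proposed alternative --- summing the codeword masses for fixed $x$ --- does not rescue this, because $\KP_T(x)$ is defined by the \emph{shortest} codeword, i.e.\ by the maximum of the $2^{-\len{p_i}}$, not their sum. The standard repair is to reorganize the enumeration before issuing requests: maintain the running lower approximation $f_s(x)$ and emit a request of size $2^{-j}$ only at the first stage where $f_s(x)\ge 2^{-j}$. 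Then the requests for each $x$ are \emph{distinct} powers of two, the largest one is at least half the largest power of two not exceeding $f(x)$ (hence $\ge f(x)/4$), and the total requested mass is at most $2\sum_x f(x)\le 2$, which is absorbed by lengthening every codeword by one bit. With that modification the rest of your argument goes through and yields $c\cdot\m(x)\ge f(x)$ as required.
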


The universal average-bounded randomness test $\t_{\P}1$ (the largest lower
semicomputable function with bounded expectation) can be
expressed in terms of a priori probability (and therefore prefix
complexity):

\begin{proposition}\label{propo:sum-characteriz}
Let $\P$ be a computable measure and let $\t_{\P}$ be
the universal average-bounded randomness test with respect to~$\P$.
Then
\begin{equation*}
\t_{\P}(\omega)\eqm\sum_{x\prefix\omega}\frac{\m(x)}{\P(x)}.
\end{equation*}
(If $\P(x)=0$, then the ratio $\m(x)/\P(x)$ is considered to be infinite.)
\end{proposition}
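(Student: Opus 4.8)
The plan is to prove the two multiplicative inequalities hidden in $\eqm$ separately. Write $s(\omega)=\sum_{x\prefix\omega}\m(x)/\P(x)$ for the function on the right. For the inequality $s\lem\t_{\P}$ I would simply check that $s$ is itself an average-bounded $\P$-test and then invoke the universality of $\t_{\P}$ from Theorem~\ref{thm:universality.cptable}. Lower semicomputability of $s$ is clear: each summand $\omega\mapsto\m(x)/\P(x)$ with $x=\omega(1:n)$ depends only on a finite prefix and is lower semicomputable uniformly in $n$ (since $\m$ is lower semicomputable, $\P$ is computable, and $\m(x)>r\P(x)$ is semidecidable uniformly in the rational $r$, the case $\P(x)=0$ automatically giving the value $\infty$), and a sum of uniformly lower semicomputable functions is lower semicomputable. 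For the expectation I would swap sum and integral by Tonelli and use $\P(x)=\int 1_{x\prefix\omega}\,d\P$, obtaining $\int s\,d\P=\sum_{x}\frac{\m(x)}{\P(x)}\P(x)=\sum_{x:\,\P(x)>0}\m(x)\le 1$, the strings with $\P(x)=0$ contributing nothing because they sit on a set of $\P$-measure~$0$. Hence $s$ is a test and $s\lem\t_{\P}$.

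For the reverse inequality $\t_{\P}\lem s$ I would exploit the generating representation. By Proposition~\ref{propo:generate-lower} and the remark following it, the lower semicomputable function $\t_{\P}$ is generated by a \emph{monotone, computable, rational-valued} function $T$ on strings, so that $\t_{\P}(\omega)=\sup_{x\prefix\omega}T(x)=\lim_{n}T(\omega(1:n))$. Monotonicity lets me telescope: setting $\delta(x)=T(x)-T(x^{-})\ge 0$ (with $x^{-}$ the parent string and $T$ of the parent of $\Lambda$ read as $0$) gives $\t_{\P}(\omega)=\sum_{x\prefix\omega}\delta(x)$ as a sum of non-negative increments. The key computation is that $g(x)=\delta(x)\P(x)$ is a discrete semimeasure: again writing $\P(x)=\int 1_{x\prefix\omega}\,d\P$ and swapping sum and integral, $\sum_{x}\delta(x)\P(x)=\int\sum_{x\prefix\omega}\delta(x)\,d\P=\int\t_{\P}\,d\P\le 1$. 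Moreover $g$ is computable, hence lower semicomputable, because $T$ and $\P$ are. The Coding Theorem (Proposition~\ref{propo:coding}) then yields $g(x)\lem\m(x)$, i.e.\ $\delta(x)\lem\m(x)/\P(x)$ (trivially true when $\P(x)=0$). Summing over the prefixes of $\omega$ gives $\t_{\P}(\omega)=\sum_{x\prefix\omega}\delta(x)\lem\sum_{x\prefix\omega}\m(x)/\P(x)=s(\omega)$, as desired.

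The step I expect to be most delicate is securing a generating function $T$ that is \emph{genuinely computable and rational-valued}, not merely lower semicomputable: without this, the increments $\delta(x)$, being a difference of two lower semicomputable quantities, would fail to be lower semicomputable and the Coding Theorem could not be applied to $\delta(x)\P(x)$. This is exactly what the remark after Proposition~\ref{propo:generate-lower} provides, via the trick of delaying increases of $T$. A secondary point requiring care, appearing in both directions, is the bookkeeping around strings with $\P(x)=0$: these contribute the value $\infty$ pointwise but lie on a $\P$-null set, so they are harmless inside the integrals and consistent with the stated convention $\m(x)/\P(x)=\infty$.
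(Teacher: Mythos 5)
Your proposal is correct and follows essentially the same route as the paper: one direction by checking that $\sum_{x\prefix\omega}\m(x)/\P(x)$ is itself an average-bounded test, the other by decomposing the universal test into non-negative per-prefix increments (the paper's ``weights'' $w(x)$ are exactly your $\delta(x)$), observing that $\delta(x)\P(x)$ is a lower semicomputable discrete semimeasure, and invoking the Coding Theorem. Your explicit care about obtaining a computable, rational-valued, monotone generating function $T$ so that the increments remain lower semicomputable is precisely the point the paper disposes of with its remark that computability of $\P$ preserves lower semicomputability in both directions.
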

\begin{proof}
A lower semicomputable function on sequences is a limit of
an increasing sequence of basic functions.

Withouth loss of generality we may assume that each increase is made
on some cylinder $x\Omega$.
In other terms, we increase the ``weight'' $w(x)$
of $x$ and let our basic function on $\omega$ be the sum of the
weights of all prefixes of $\omega$.
The weights increase gradually: at any moment, only finitely many weights differ
from zero.
In terms of weights, the average-boundedness condition translates into
 \begin{align*}
   \sum_{x}\P(x)w(x)\le 1,
 \end{align*}
so after multiplying the weights by $\P(x)$, this condition corresponds exactly
to the semimeasure requirement.
Let us note that due to the computability of $\P$, the lower semicomputability is
conserved in both directions (multiplying or dividing by $\P(x)$).
More formally, the function
 \begin{align*}
   \sum_{x\prefix\omega}\frac{\m(x)}{\P(x)}
 \end{align*}
is a lower semicomputable average-bounded test: its integral is exactly
$\sum_{x}\m(x)$.
On the other hand, every lower semicomputable test can be presented in terms of
an increase of weights, and the limits of these weights, multiplied by $\P(x)$,
form a lower semicomputable semimeasure.
(Note that the latter transformation is not unique: we can redistribute the
weights among a string and its continuations without altering the sum over the
infinite sequences.)
\end{proof}
Note that we used that both $\P$ (in the second part of the
proof) and $1/\P$ (in the first part) are lower semicomputable.

In Proposition~\ref{propo:sum-characteriz},
we can replace the sum with a least upper bound.
This way, the following theorem connects three quantities,
$\t_{\P}$, the supremum and the sum, all of which are equal within a
multiplicative constant.

\begin{theorem}\label{thm:randomness-complexity}
We have $
     \t_{\P}(\omega) \eqm\sup_{x\prefix\omega}\frac{\m(x)}{\P(x)}
                                 \eqm\sum_{x\prefix\omega}\frac{\m(x)}{\P(x)}
  $,
or in logarithmic notation
\begin{align}
\label{eq:randomness-complexity}
  \d_{\P}(\omega) &\eqa\sup_{x\prefix\omega}\Paren{-\log\P(x)-\KP(x)}.
  \end{align}
\end{theorem}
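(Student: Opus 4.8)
The plan is to build on Proposition~\ref{propo:sum-characteriz}, which already gives $\t_{\P}(\omega)\eqm\sum_{x\prefix\omega}\m(x)/\P(x)$; it therefore suffices to show that this sum is, up to a multiplicative constant, equal to the supremum $\sup_{x\prefix\omega}\m(x)/\P(x)$. One inequality is free: since all terms are non-negative, each single term is bounded by the whole sum, so $\sup_{x\prefix\omega}\m(x)/\P(x)\le\sum_{x\prefix\omega}\m(x)/\P(x)$, giving $\sup\lem\sum\eqm\t_{\P}$. The entire content is the reverse inequality $\sum_{x\prefix\omega}\m(x)/\P(x)\lem\sup_{x\prefix\omega}\m(x)/\P(x)$.

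To prove it I would show that, for every $\omega$, a single well-chosen prefix already carries (up to a constant) the whole sum: it is enough to find $x\prefix\omega$ with $\m(x)/\P(x)\gem S(\omega)$, where $S(x)=\sum_{y\prefix x}\m(y)/\P(y)$ denotes the partial sum and $S(\omega)$ is its limit along $\omega$. By the Coding Theorem (Proposition~\ref{propo:coding}) it suffices to exhibit a lower semicomputable discrete semimeasure $\mu$ together with a prefix at which $\mu(x)\gem\P(x)\,S(\omega)$, because then $\m(x)\gem\mu(x)\gem\P(x)S(\omega)$. The semimeasure I would use distributes mass at the \emph{first crossings} of geometric thresholds of $S$: for each integer $k$ let $D_k$ be the prefix-free set of minimal strings $x$ with $S(x)\ge 2^k$, and put $\mu(x)=\sum_{k:\,x\in D_k}2^k\,\P(x)$. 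For $\omega$ with $2^K\le S(\omega)<2^{K+1}$, the prefix $x\in D_K$ lying on $\omega$ satisfies $\mu(x)\ge 2^K\P(x)\gem\P(x)S(\omega)$, which is exactly what is needed.

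The crucial point that makes this lossless (rather than losing a logarithmic factor) is the bound on the total mass, which I would obtain by a layer-cake summation rather than a term-by-term Markov estimate: since each $D_k$ is an antichain, $\sum_{x\in D_k}\P(x)=\P(\setOf{\omega}{S(\omega)\ge 2^k})$, and hence
\[
  \sum_x\mu(x)=\sum_k 2^k\,\P(S\ge 2^k)=\int\Big(\sum_{k:\,2^k\le S(\omega)}2^k\Big)\,d\P\le\int 2S(\omega)\,d\P=2\sum_y\m(y)\le 2,
\]
using $\int S\,d\P=\sum_y\m(y)$ from the proof of Proposition~\ref{propo:sum-characteriz}. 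Thus $\mu$ is a semimeasure up to the constant $2$, and the argument goes through.

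The step I expect to be the main obstacle is verifying that $\mu$ is genuinely lower semicomputable. The sets $D_k$ are defined through $S$, which is only lower semicomputable, so the ``first crossing'' of the threshold $2^k$ is not decidable and may appear to move to shorter prefixes as better lower estimates of $\m$ (hence of $S$) are enumerated. I would handle this by working with the current under-estimates of $S$, charging mass to crossing strings as they are discovered, and using the delayed-enumeration device described after Proposition~\ref{propo:generate-lower} to keep everything monotone and computable while preserving both the threshold values and the antichain (total-mass) bound; one also checks separately the degenerate case $S(\omega)=\infty$, where both the sum and the supremum are infinite. Once $\sum\eqm\sup$ is established, combining it with Proposition~\ref{propo:sum-characteriz} yields the three-way $\eqm$, and taking logarithms with $\m=2^{-\KP}$ and $\d_{\P}=\log\t_{\P}$ turns it into the stated additive identity $\d_{\P}(\omega)\eqa\sup_{x\prefix\omega}(-\log\P(x)-\KP(x))$.
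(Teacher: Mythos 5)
Your proof is correct in substance but takes a genuinely different route from the paper's. Both arguments come down to showing that the whole quantity is already attained, up to a constant factor, at a single prefix, and both close with the Coding Theorem (Proposition~\ref{propo:coding}); the decompositions, however, differ. The paper proves $\t_{\P}(\omega)\lem\sup_{x\prefix\omega}\m(x)/\P(x)$ directly: it rounds the universal test up to powers of two, writes $2^{\cei{\d_{\P}(\omega)}}$ as $\sup_i 2^{k_i}g_{x_i}(\omega)$, and normalizes the enumeration so that indices with comparable $x_i$ carry distinct exponents $k_i$; a sum of distinct powers of two is at most twice its largest term, so $\sum_i 2^{k_i}g_{x_i}$ is still a test up to a factor $4$, and integrating yields $2^{k_i}\P(x_i)\lem\m(x_i)$. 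You instead work with the partial sums $S(x)$ of the formula already established in Proposition~\ref{propo:sum-characteriz}, charge mass on the first-crossing antichains $D_k$, and replace the distinct-exponent trick by a layer-cake computation. Your version has the merit of isolating the purely combinatorial inequality $\sum\lem\sup$ for the specific function $\m(x)/\P(x)$; the paper's has the merit that its semimeasure is manifestly lower semicomputable, since it only reindexes the given enumeration of the test rather than defining new objects from a lower semicomputable sum.

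That last contrast is exactly where your write-up still needs real work, and while you have correctly located the obstacle, note that what is at stake is the mass bound, not merely effectivity. If you charge $2^k\P(x)$ to a string the moment it first appears to be a minimal crossing of $2^k$ under the current approximation of $S$, the minimal crossing can later migrate to shorter prefixes; a lower semicomputable semimeasure cannot retract mass, so all such strings stay charged, the sets $D_k$ cease to be antichains, and along one branch a chain of $\ell$ successive crossing points inflates the level-$k$ mass by a factor of order $\ell$, destroying $\sum_x\mu(x)\le 2$. The delayed-enumeration device following Proposition~\ref{propo:generate-lower} does repair this: replace $S$ by a computable monotone $\tilde S$ on strings with the same supremum along every $\omega$, obtained by applying each enumerated increment only at extensions of a length not yet processed, so that each $\tilde D_k$ is a genuine antichain fixed once and for all and the layer-cake bound goes through with $\sup_{x\prefix\omega}\tilde S(x)=S(\omega)$. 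With that modification, together with your noted treatment of $S(\omega)=\infty$ and of the boundary case $S(\omega)=2^K$ (drop to the threshold $2^{K-1}$ at the cost of another factor of $2$), the argument is complete.
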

\begin{proof}
The supremum is now smaller, so only the
second part of the proof of Proposition~\ref{propo:sum-characteriz}
should be reconsidered.

The lower semicomputable function $\cei{\d_{\P}(\omega)}$
can be obtained as the supremum of a sequence of
integer-valued basic functions of the form $k_{i}g_{x_{i}}(\omega)$, where
$g_{x}(\omega)=1_{x\Omega}(\omega)=1$ if $x\prefix\omega$ and 0 otherwise.
We can also require that if $i\ne j$, $x_{i}\prefix x_{j}$ then
$k_{i}\ne k_{j}$: indeed, suppose $k_{i}=k_{j}$.
If $i<j$ then we can delete the $j$th element, and if $i>j$, then we can replace
$2^{k_{i}}g_{x_{i}}$ with the sequence of all functions $2^{k_{i}}g_{z}$ where
$z$ has the same length as $x_{j}$ but differs from it.
We have
 \begin{align*}
  2\t_{\P}(\omega)&\ge 2^{\cei{\d_{\P}(\omega)}}
=\sup_{i}2^{k_{i}}g_{x_{i}}(\omega)=\sup_{x_{i}\prefix\omega}2^{k_{i}}
\ge 2^{-1}\sum_{i:x_{i}\prefix\omega}2^{k_{i}}=2^{-1}\sum_{i}2^{k_{i}}g_{x_{i}}(\omega).
 \end{align*}
The last inequality holds since according to our assumption, all the values
$k_{i}$ belonging to prefixes $x_{i}$ of the same sequence $\omega$ are
different, and the sum of different powers of 2 is at most twice larger than its
largest element.
Integrating by $\P$, we obtain $4\ge\sum_{i}2^{k_{i}}\P(x_{i})$, hence
$2^{k_{i}}\P(x_{i})\lem\m(x_{i})$ by the maximality of $\m(x)$, so
$2^{k_{i}}\lem\frac{\m(x_{i})}{\P(x_{i})}$.
We found
 \begin{align*}
 \t_{\P}(\omega)\lem\sup_{i:x_{i}\prefix\omega}\frac{\m(x_{i})}{\P(x_{i})}
\le\sup_{x\prefix\omega}\frac{\m(x)}{\P(x)}.
 \end{align*}
\end{proof}

Here is a reformulation:
\begin{equation*}
  \d_{\P}(\omega) \eqa\sup_{n}\Paren{-\log\P(\omega(1:n))-\KP(\omega(1:n))}.
\end{equation*}
This reformulation can be generalized:

\begin{theorem}\label{thm:seq-of-prefixes}
Let $n_{1}<n_{2}<\dotsm$ be an arbitrary computable sequence of natural
numbers.
Then
\begin{equation*}
  \d_{\P}(\omega) \eqa\sup_{k}\Paren{-\log\P(\omega(1:n_{k}))-\KP(\omega(1:n_{k}))}.
\end{equation*}
The constant in the $\eqa$ depends on the sequence $n_{k}$.
\end{theorem}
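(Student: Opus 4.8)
The plan is to pass to the multiplicative (non-logarithmic) formulation. By Theorem~\ref{thm:randomness-complexity} the quantity to be matched on the right is $g(\omega):=\sup_k \frac{\m(\omega(1:n_k))}{\P(\omega(1:n_k))}$ (recall $\m(x)=2^{-\KP(x)}$), and the assertion is equivalent to $\t_{\P}(\omega)\eqm g(\omega)$. One inequality is immediate: the prefixes $\omega(1:n_k)$ form a subset of all prefixes of $\omega$, so $g(\omega)\le\sup_{x\prefix\omega}\frac{\m(x)}{\P(x)}\eqm\t_{\P}(\omega)$, which gives $\d_{\P}(\omega)\gea\sup_k(\cdots)$. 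All the content is in the reverse inequality $\t_{\P}(\omega)\lem g(\omega)$, i.e.\ in showing that thinning the prefixes down to the subsequence $(n_k)$ cannot decrease the supremum by more than a constant factor.

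For the reverse inequality I would reuse the integer representation of the universal test produced inside the proof of Theorem~\ref{thm:randomness-complexity}: $2^{\cei{\d_{\P}(\omega)}}=\sup_i 2^{k_i}g_{x_i}(\omega)$, where $g_x=1_{x\Omega}$ and $\sum_i 2^{k_i}\P(x_i)\le 4$. The idea is to \emph{push} each of these weights up to the first admissible length. Writing $n_0=0$ and assigning to the block $(n_{k-1},n_k]$ every string whose length lies in that range, I would set, for $|y|=n_k$, $\nu'(y)=\sum_{i:\,x_i\prefix y,\ |x_i|\in(n_{k-1},n_k]}2^{k_i}\P(y)$, and $\nu'(y)=0$ when $|y|$ is not one of the $n_k$. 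Since $\P$ is computable, the sequence $(n_k)$ computable and the pairs $(x_i,k_i)$ enumerable, $\nu'$ is lower semicomputable. It is, up to the factor $4$, a semimeasure: exchanging the order of summation, each $x_i$ contributes $2^{k_i}\sum_{|y|=n_k,\,x_i\prefix y}\P(y)=2^{k_i}\P(x_i)$ because $\sum_{x_i\prefix y,\,|y|=n_k}\P(y)=\P(x_i)$ by additivity, so $\sum_y\nu'(y)=\sum_i 2^{k_i}\P(x_i)\le 4$.

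It then remains to read off the value of the associated breakpoint test along $\omega$. The proportional push-up was designed precisely so that the ratio is preserved: $a'_k:=\frac{\nu'(\omega(1:n_k))}{\P(\omega(1:n_k))}=\sum_{i:\,x_i\prefix\omega,\ |x_i|\in(n_{k-1},n_k]}2^{k_i}$. Because each summand is nonnegative, for every $i$ with $x_i\prefix\omega$ the block $k$ containing $|x_i|$ satisfies $a'_k\ge 2^{k_i}$, whence $\sup_k a'_k\ge\sup_{i:\,x_i\prefix\omega}2^{k_i}=2^{\cei{\d_{\P}(\omega)}}\ge\t_{\P}(\omega)$. Finally the Coding Theorem (Proposition~\ref{propo:coding}), applied to the lower semicomputable semimeasure $\nu'$, gives a constant $c$ with $\nu'(y)\le c\,\m(y)$, so $\sup_k a'_k\le c\,\sup_k\frac{\m(\omega(1:n_k))}{\P(\omega(1:n_k))}=c\,g(\omega)$. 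Chaining the two estimates yields $\t_{\P}(\omega)\lem g(\omega)$, as required.

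I expect the main obstacle to be exactly this reverse inequality, and the key realization is that one should \emph{not} try to compare $\KP(\omega(1:n))$ with $\KP(\omega(1:n_k))$ term by term, since the extra bits $\omega(n+1:n_k)$ are not cheap to describe; instead one transports the whole universal test onto the subsequence at the level of weights. The two properties to check with care are that the proportional push-up keeps the total mass bounded (so the Coding Theorem applies) and that it preserves the ratio $\m/\P$ along $\omega$ exactly (so no peak is lost). Once the integer representation with $\sum_i 2^{k_i}\P(x_i)\le 4$ is in hand, the distinctness of the $k_i$ is not even needed here, as only the pointwise lower bound $\sup_k a'_k\ge 2^{k_i}$ is used. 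A minor point to dispose of is the $\P$-null set where some prefix has measure zero: there both sides are infinite, while elsewhere all the divisions above are legitimate.
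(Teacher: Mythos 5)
Your proposal is correct and follows essentially the same route as the paper, whose proof is just the remark that every step of the proof of Theorem~\ref{thm:randomness-complexity} generalizes; the one step that actually needs modification is exactly the one you supply, namely transporting the weights $2^{k_{i}}\P(x_{i})$ from $x_{i}$ to its extensions of the next admissible length $n_{k}$, which conserves the total mass (so the Coding Theorem still applies) and the ratio to $\P$ along $\omega$. Your observation that the distinctness of the $k_{i}$ is only needed to establish the mass bound $\sum_{i}2^{k_{i}}\P(x_{i})\le 4$, and not afterwards, is accurate.
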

\begin{proof}
  Every step of the proof of Theorem~\ref{thm:randomness-complexity} generalizes
to this case straightforwardly.
\end{proof}
This theorem has interesting implications of the case when instead of a sequence
$\omega$ we consider an infinite two-dimensional array of bits.
Then for the randomness deficiency, it is sufficient to compare complexity and
probability of squares starting at the origin.

\subsubsection*{Historical digression}

The above formula for randomness deficiency is a quantitative refinement of the
following criterion.

\begin{theorem}[Criterion of
randomness in terms of prefix complexity]\label{thm:rand-prefix}
A sequence $\omega$
is random with respect to a computable measure $\P$ if and
only if the difference $-\log\P(x) - \KP(x)$ is bounded above for its
prefixes.
\end{theorem}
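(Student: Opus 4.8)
The plan is to obtain this criterion as an immediate corollary of the quantitative refinement already established in Theorem~\ref{thm:randomness-complexity}, rather than proving it from scratch. The essential point is that randomness has already been characterized by the finiteness of a single function, and that function has just been identified (up to an additive constant in the logarithmic scale) with the supremum appearing in the statement.

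First I would recall the two facts on which everything rests. By Theorem~\ref{thm:universality.cptable} together with the remarks following Definition~\ref{def:deficiency}, a sequence $\omega$ is random with respect to the computable measure $\P$ if and only if the universal test $\t_{\P}(\omega)$ is finite, equivalently if and only if its logarithm $\d_{\P}(\omega)$ is finite. Second, Theorem~\ref{thm:randomness-complexity} gives
\[
  \d_{\P}(\omega)\eqa\sup_{x\prefix\omega}\Paren{-\log\P(x)-\KP(x)}.
\]
Combining these, I would argue as follows. Since $\eqa$ means equality up to an additive constant, the left-hand side $\d_{\P}(\omega)$ is finite exactly when the right-hand side is finite; an additive constant can turn neither a finite value into $\infty$ nor $\infty$ into a finite value. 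But the right-hand side is finite precisely when the family of numbers $-\log\P(x)-\KP(x)$, ranging over the finite prefixes $x$ of $\omega$, is bounded above, which is the very meaning of the supremum being finite. Chaining the equivalences yields: $\omega$ is random with respect to $\P$ iff $\d_{\P}(\omega)<\infty$ iff the supremum over prefixes is finite iff $-\log\P(x)-\KP(x)$ is bounded above over the prefixes of $\omega$, which is exactly the assertion of Theorem~\ref{thm:rand-prefix}.

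In fact there is no genuine obstacle here: both directions of the biconditional fall out of this single chain, so no separate constructions are needed for ``random implies bounded'' and ``bounded implies random.'' The only points deserving a word of care are the harmless ones just noted, namely that the additive constant hidden in $\eqa$ preserves the finite/infinite dichotomy, and that ``bounded above for the prefixes'' is literally the statement that the supremum over prefixes is finite. All the real work (the weight/semimeasure correspondence and the maximality of $\m$) has already been expended in proving Theorem~\ref{thm:randomness-complexity}, so here it suffices to read off the consequence.
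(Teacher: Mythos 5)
Your proposal is correct and is essentially the paper's own argument: the paper dispatches this theorem with the parenthetical remark that the preceding theorem identifies the supremum of $-\log\P(x)-\KP(x)$ over prefixes with the average-bounded randomness deficiency, whose finiteness is equivalent to randomness. You have merely spelled out the same one-line deduction in full, including the harmless observation that an additive constant preserves the finite/infinite dichotomy.
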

(Indeed, the last theorem says that the maximum value of this
difference over all prefixes is exactly the average-bounded randomness
deficiency.)
This characterization of randomness was announced first, without proof,
in~\cite{Chaitin75}, with the proof attributed to Schnorr.
The first proof, for the case of a computable measure, appeared
in~\cite{GacsExact80}.

The historically
first clean characterizations of randomness in terms of complexity,
by Levin and Schnorr independently in~\cite{LevinRand73} and~\cite{Schnorr73}
have a similar form, but use complexity and a priori probability coming from
a different kind of interpreter called ``monotonic''.
(In the cited work, Schnorr uses a slightly different form of complexity, but later,
he also adopted the version introduced by Levin.)

\begin{definition}[Monotonic complexity]\label{def:monotonic}
Let us call to strings \df{compatible} if one is the prefix of the other.
An enumerable subset $A\subseteq \{0,1\}^{*}\times\{0,1\}^{*}$
is called a \df{monotonic interpreter} if for every $p,p',q,q'$, if
$\pair{p}{q}\in A$ and $\pair{p'}{q'}\in A$ and $p$ is compatible with $p'$ then
$q$ is compatible with $q'$.
For an arbitrary finite or infinite $p\in\{0,1\}^{*}\cup\Omega$,
we define
 \begin{align*}
 A(p)=\sup\setOf{x}{\exists p'\prefix p\;\pair{p'}{x}\in A}.
 \end{align*}
The monotonicity property implies that this limit, also in $\{0,1\}^{*}\cup\Omega$,
is well defined.

We define the (monotonic)
complexity $\Km_{A}(x)$ of a string $x$ with respect to $A$
as the length of a shortest string $p$ with $A(p)\postfix x$.
It is known that there is an \df{optimal} monotonic interpreter, where
optimality has the same sense as above, for prefix complexity.
We fix an optimal monotonic interpreter $V$ and denote
$\Km(x)=\Km_{V}(x)$.
\end{definition}

\begin{remark}[Oracle computation]\label{rem:oracles}
A monotonic interpreter is a slightly generalized version of what can be
accomplished by a Turing machine with a one-way read-only input tape containing
the finite or infinite string $p$.
The machine also has a working tape and a one-way output tape.
In the process of work, on this tape appears a finite or infinite sequence
$T(p)$.
The work may stop, if the machine halts or passes beyond the limit of the input
word; it may continue forever otherwise.
It is easy to check that the map $p\mapsto T(p)$ is a monotonic interpreter
(though not all monotonic interpreters correspond to such machines, resulting in a
somewhat narrower class of mappings).

These machines can be viewed as the definition of what we will later call \df{oracle
computation}: namely, a computation that uses $p$ as an oracle.

In our applications, such a machine would have the form
$T(p,\omega)$ where the machine works on both infinite strings $p$ and $\omega$ as
input, but considers $p$ the oracle and $\omega$ the string it is testing for
randomness.

The class of mappings is narrower indeed.
Let $S$ be an undecidable recursively enumerable set of integers.
Set $T(0^{n}1)=0$ for all $n\in S$, and $T(0^{n}10)=0$ for all $n$.
Now after reading $0^{n}1$, the machine $T$ has to decide whether to
output a 0 before reading the next bit, which is deciding the undecidable set
$S$.
It is unknown to us whether this class of mappings yields also a different
monotonic complexity.
\end{remark}

A monotonic interpreter will also give rise to something like a distribution
over the set of finite and infinite strings.

\begin{definition}\label{def:contin-semim}
Let us feed a monotonic interpreter $A$ a sequence of independent random bits
and consider the output distribution on the finite and infinite sequences.
Denote $\M_{A}(x)$ the probability that the output sequence begins with $x$.
Denote $\KM_{V}(x)=-\log \M_{V}(x)$.

Recall that $\Lambda$ denotes the empty string.
A function $\mu:\{0,1\}^{*}\to\clint{0}{1}$ is called a
\df{continous semimeasure} over the Cantor space
$\Omega$ if $\mu(\Lambda)\le 1$ and
$\mu(x)\ge\mu(x0)+\mu(x1)$ for all $x\in\{0,1\}^{*}$.
\end{definition}

It is easy to check that $\M_{V}(x)$ is a lower semicomputable continuous
semimeasure.
The proposition below is similar in form to the Coding Theorem
(Proposition~\ref{propo:coding}) above, only weaker, since it does not connect
to the complexity $\Km(x)$ defined in terms of shortest programs.
(It cannot, as shown in~\cite{GacsRel83}.)

\begin{proposition} (see~\cite{ZvLe70})
  \begin{alphenum}
\item Every lower semicomputable continuous semimeasure is the output distribution of some
  monotonic interpreter.
\item Among lower semicomputable continuous semimeasures, there is one
that is maximal within a multiplicative constant.
  \end{alphenum}
\end{proposition}

\begin{definition}[Continuous a priori
  probability]\label{def:continuous-apriori}
Let us fix a maximal lower semicomputable continuous semimeasure and denote it
$\M(x)$.
We call $\M(x)$ sometimes the \df{continuous a priori probability}, or
\df{apriori probability on a tree}.
\end{definition}

Now, the characterization by Levin (and a similar one by Schnorr) is the
following.
Its proof, technically not difficult, can be found
in~\cite{GacsLnotesAIT88,LiViBook97,ShenUppsala00}.

\begin{proposition}\label{propo:rand-monot}
Let $\P$ be a computable measure over $\Omega$.
Then the following properties of an infinite sequence $\omega$ are equivalent.
\begin{enumerate}[\upshape (i)]
\item $\omega$ is random with respect to $\P$.
\item $\lim\sup_{x\prefix\omega}-\log\P(x)-\Km(x)<\infty$.
\item $\lim\inf_{x\prefix\omega}-\log\P(x)-\Km(x)<\infty$.
\item $\lim\sup_{x\prefix\omega}-\log\P(x)-\KM(x)<\infty$.
\item $\lim\inf_{x\prefix\omega}-\log\P(x)-\KM(x)<\infty$.
\end{enumerate}
\end{proposition}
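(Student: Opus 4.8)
The plan is to establish a cycle of implications among the five conditions, exploiting the machinery already developed. The natural backbone is Theorem~\ref{thm:rand-prefix}, which tells us that (i) is equivalent to $\sup_{x\prefix\omega}\bigl(-\log\P(x)-\KP(x)\bigr)<\infty$. So my strategy is to show that each of the four $\lim\sup$/$\lim\inf$ conditions, phrased with $\Km$ or $\KM$, is sandwiched between the prefix-complexity quantity and itself in a way that forces all five to stand or fall together. First I would record the three standard complexity inequalities that hold up to an additive constant: $\Km(x)\lea\KP(x)$ (a self-delimiting program can be read monotonically), $\KM(x)\lea\Km(x)$ (since $\M$ is maximal among lower semicomputable continuous semimeasures and the distribution induced by the optimal monotonic interpreter $V$ is one such, giving $\M(x)\gem 2^{-\Km(x)}$), and the observation that along a single increasing chain of prefixes these quantities cannot drift apart too wildly.

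The clean way to organize the cycle is: (i) $\Rightarrow$ (ii) $\Rightarrow$ (iii) $\Rightarrow$ (v) $\Rightarrow$ (iv)-type reasoning, but it is cleaner to prove two blocks. For the easy directions, from $\KP(x)\gea\Km(x)\gea\KM(x)$ one gets
\begin{equation*}
-\log\P(x)-\KP(x)\lea -\log\P(x)-\Km(x)\lea -\log\P(x)-\KM(x),
\end{equation*}
so boundedness of the $\sup$ of the left quantity (which is condition (i), by Theorem~\ref{thm:rand-prefix}) immediately gives the $\lim\sup$ versions (ii) and (iv), and a fortiori the $\lim\inf$ versions (iii) and (v), since $\lim\inf\le\lim\sup$. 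Thus (i) implies all four, and within each column $\lim\sup<\infty$ trivially implies $\lim\inf<\infty$. The substance is entirely in the reverse direction: I must show that the weakest-looking hypothesis, namely (v), the $\lim\inf$ condition for $\KM$, already forces randomness (i).

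The hard part will be exactly this implication (v) $\Rightarrow$ (i), because $\lim\inf$ only controls infinitely many prefixes and $\KM$ is the \emph{weakest} (smallest) complexity in play, so I am assuming the least and must conclude the most. My plan is to prove the contrapositive: if $\omega$ is not random then $-\log\P(\omega(1:n))-\KM(\omega(1:n))\to\infty$. The key tool is a martingale/semimeasure argument: from a Martin-L\"of test witnessing non-randomness I would build, via the function $F(\omega)=\sup\{i:\omega\in U_i\}$ of Remark~\ref{rem:ML-tests}, a lower semicomputable continuous semimeasure $\mu$ concentrated on the sequences $\omega$ failing the test, with $\mu$ much larger than $\P$ along every prefix of such $\omega$; concretely one wants $\mu(\omega(1:n))/\P(\omega(1:n))\to\infty$. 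By maximality of $\M$ we have $\mu\lem\M$, hence $\M(\omega(1:n))/\P(\omega(1:n))\to\infty$, i.e.\ $-\log\P(\omega(1:n))-\KM(\omega(1:n))\to+\infty$, which is the negation of (v). The delicate point is arranging the semimeasure so the ratio grows along \emph{all} sufficiently long prefixes rather than merely infinitely often, which is what upgrades a $\lim\inf$ statement to a genuine limit and thereby closes the loop; this is where I would spend the real effort, modeling the construction on the standard Levin--Schnorr proof referenced after the statement.
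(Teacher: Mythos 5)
The paper itself offers no proof of this proposition --- it explicitly defers to the references (``Its proof, technically not difficult, can be found in...'') --- so your attempt has to stand on its own, and as written it has two genuine gaps. The first is that your ``easy direction'' is argued backwards. From $\KP(x)\gea\Km(x)\gea\KM(x)$ you correctly obtain
\begin{equation*}
-\log\P(x)-\KP(x)\lea -\log\P(x)-\Km(x)\lea -\log\P(x)-\KM(x),
\end{equation*}
but an upper bound on the supremum of the \emph{smallest} member of this chain (which is what (i) gives via Theorem~\ref{thm:rand-prefix}) says nothing about the larger members, so (ii) and (iv) do not follow ``immediately.'' The implication (i)~$\Rightarrow$~(iv) is in fact one of the two substantive halves of the Levin--Schnorr theorem: one must use that $\M(x)/\P(x)$ is a lower semicomputable supermartingale with initial value at most $1$ (the paper records this after Theorem~\ref{thm:long-chain}), so that by Ville's inequality $\sup_{x\prefix\omega}\M(x)/\P(x)$ is a probability-bounded test and hence finite on every $\P$-random $\omega$. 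Nothing in your text supplies this argument.

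The second gap is that your cycle of implications does not close. The displayed inequality shows that the weakest of the four complexity conditions is (iii), not (v): one has (iv)$\Rightarrow$(ii)$\Rightarrow$(iii) and (iv)$\Rightarrow$(v)$\Rightarrow$(iii) trivially, but (iii) trivially implies none of the others, and your proof never derives (i) from (iii). Your contrapositive construction --- a lower semicomputable continuous semimeasure $\mu$ with $\mu(\omega(1:n))/\P(\omega(1:n))\to\infty$ for non-random $\omega$, hence $\M(\omega(1:n))/\P(\omega(1:n))\to\infty$ --- is the right idea and correctly negates (v), but it does not negate (iii), because $-\log\P-\Km$ is the smaller quantity and its divergence is not implied by the divergence of $-\log\P-\KM$ (this is exactly the $\Km$ versus $\KM$ gap the paper alludes to via~\cite{GacsRel83}). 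To handle (iii) you must actually construct short monotone programs: from $U_k=\bigcup_j x_{kj}\Omega$ with $\P(U_k)\le 2^{-2k}$, allocate (Kraft--Chaitin style, respecting the prefix order) monotone codes of length about $-\log\P(x_{kj})-k$, the total weight $\sum_k 2^k\P(U_k)\le 1$ making this feasible, and then extend each code by arithmetic coding of the conditional measure $\P(\cdot\mid x_{kj})$ so that \emph{every} sufficiently long prefix $y$ of a non-random $\omega$ satisfies $\Km(y)\le-\log\P(y)-k+O(1)$. That monotone-coding step is the part of the argument your plan omits entirely.
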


Theorem~\ref{thm:rand-prefix} proved above adds to this a next equivalent
characterization, namely that $-\log\P(x)-\KP(x)$ is bounded above.
It is different in nature from the one in Proposition~\ref{propo:rand-monot}:
indeed, the expressions $-\log\P(x)-\Km(x)$ and $-\log\P(x)-\KM(x)$ are
\emph{always bounded from below} by a constant depending only on the measure
$\P$ (and not on $x$ or $\omega$), while $-\log\P(x)-\KP(x)$ is not.

Moreover, in the latter we cannot replace $\lim\sup$ with $\lim\inf$, as the
following example shows.
Note that we can add to every word $x$ some bits to achieve $\KP(y)\ge\len{y}$
(where $\len{y}$ is the length of word $y$).
Indeed, if this was not so, then for the continutations of the word we would
have $\m(y)\ge 2^{-\len{y}}$, and the sum $\sum_{y}\m(y)$ would be infinite.
Let us build a sequence, adding alternatingly long stretches of zeros to make the
complexity substantially less than the length, then bits that again bring the
complexity up to the length (as shown, this is always possible).
Such a sequence will not be random with respect to the uniform measure (since
the $\lim\sup$ of the difference is infinite), but has infinitely many prefixes
for which the complexity is not less than the length, making the $\lim\inf$
finite.

The following statement is interesting since no direct proof of it is known: the
proof goes through Theorem~\ref{thm:seq-of-prefixes}, and noting that since the
permutation of terms of the sequence does not change the coin-tossing
distribution, it does not change the notion of randomness.
More general theorems of this type, under the name of
\emph{randomness conservation}, can be found
in~\cite{LevinUnif76,LevinRandCons84,GacsUnif05}.

\begin{corollary}
Consider the uniform distribution (coin-tossing) $\P$ over binary sequences.
The maximal difference between $\len{x}$ and $\KP(x)$ for prefixes $x$ of a random
sequence is invariant (up to a constant) under any computable
permutation of the sequence terms.
(The constant depends on the permutation, but not on the sequence.)
\end{corollary}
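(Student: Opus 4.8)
The plan is to reduce the statement to the invariance of the average-bounded deficiency under a computable measure-preserving transformation, using the complexity formula to translate the quantity in the corollary into deficiency language. First I would observe that for the uniform (coin-tossing) measure $\P$ one has $-\log\P(x)=\len{x}$, so Theorem~\ref{thm:rand-prefix} together with its quantitative refinement in Theorem~\ref{thm:randomness-complexity} identifies the quantity under study with the deficiency up to a constant,
\begin{equation*}
\sup_{x\prefix\omega}\Paren{\len{x}-\KP(x)}\eqa\d_{\P}(\omega).
\end{equation*}
Writing $\omega^{\pi}$ for the sequence obtained by permuting the terms of $\omega$ according to the computable permutation $\pi$, it therefore suffices to prove that $\d_{\P}(\omega^{\pi})\eqa\d_{\P}(\omega)$, with an additive constant depending only on $\pi$.

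Next I would introduce the map $\Phi\colon\Omega\to\Omega$, $\Phi(\omega)=\omega^{\pi}$, which is a computable homeomorphism with computable inverse $\Phi^{-1}$ (induced by $\pi^{-1}$) and which preserves $\P$, since permuting the coordinates of an infinite fair-coin sequence yields again a fair-coin sequence, that is $\Phi_{*}\P=\P$. The core step is to show that the universal test is invariant: $\t_{\P}\circ\Phi\eqm\t_{\P}$. On one hand $\t_{\P}\circ\Phi$ is lower semicomputable, because the preimage under the computable map $\Phi$ of each effectively open set $\set{\omega':r<\t_{\P}(\omega')}$ is again effectively open, uniformly in $r$. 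On the other hand, by the change of variables for the measure-preserving $\Phi$,
\begin{equation*}
\int (\t_{\P}\circ\Phi)\,d\P=\int\t_{\P}\,d(\Phi_{*}\P)=\int\t_{\P}\,d\P\le 1,
\end{equation*}
so $\t_{\P}\circ\Phi$ is itself a $\P$-test and universality (Theorem~\ref{thm:universality.cptable}) gives $\t_{\P}\circ\Phi\lem\t_{\P}$. Applying the same argument to $\Phi^{-1}$ and substituting $\omega'=\Phi(\omega)$ yields the reverse inequality $\t_{\P}\lem\t_{\P}\circ\Phi$. Passing to logarithms gives $\d_{\P}(\omega^{\pi})\eqa\d_{\P}(\omega)$, with additive constant depending on $\pi$; combining with the first paragraph finishes the proof.

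I expect the genuine difficulty to be conceptual rather than computational, which is presumably why no direct proof is known: the transformation $\Phi$ does not send cylinders to cylinders, so a prefix of $\omega^{\pi}$ is governed by a scattered, $\pi$-dependent set of bits of $\omega$, and there is no term-by-term way to compare $\len{x}-\KP(x)$ along the two prefix filtrations. The whole point of routing through Theorems~\ref{thm:randomness-complexity} and~\ref{thm:seq-of-prefixes} is that they let us replace the fragile, filtration-dependent expression $\sup_{x\prefix\omega}(\len{x}-\KP(x))$ by the filtration-free object $\d_{\P}=\log\t_{\P}$, on which the measure-preserving map acts cleanly; Theorem~\ref{thm:seq-of-prefixes} in particular guarantees that the value of the deficiency does not depend on which computable cofinal family of cut-offs one reads it off, so the scrambling of the cylinder structure is harmless. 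The only technical points requiring care are the verification that composition with the computable $\Phi$ preserves lower semicomputability and the justification of the change-of-variables identity for the possibly infinite-valued integrand $\t_{\P}$.
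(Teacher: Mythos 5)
Your proof is correct and follows essentially the same route the paper sketches: translate $\sup_{x\prefix\omega}(\len{x}-\KP(x))$ into the deficiency $\d_{\P}(\omega)$ via the complexity formula, then use the fact that the computable, measure-preserving permutation map turns the universal test into another test (randomness conservation) to get invariance of the deficiency in both directions. The paper only gives a one-sentence sketch of this argument; your write-up supplies the details (computability of $\Phi$ and $\Phi^{-1}$, preservation of lower semicomputability under composition, and the change-of-variables step) in exactly the intended way.
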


Here is another corollary, a reformulation of Proposition~\ref{propo:sum-characteriz}:

\begin{corollary}[Miller-Yu ``ample excess'' lemma]\label{coroll:ample-excess}
A sequence $\omega$ is random with respect to a
computable measure $\P$ if and only if
\begin{equation*}
\sum_{x\prefix\omega} 2^{-\log\P(x) - \KP(x)}<\infty.
\end{equation*}
\end{corollary}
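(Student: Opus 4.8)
The plan is to read the statement off directly from Proposition~\ref{propo:sum-characteriz} together with the characterization of randomness by the universal test. First I would observe that, since $\m(x)=2^{-\KP(x)}$, each summand rewrites as
\begin{equation*}
2^{-\log\P(x)-\KP(x)} = \frac{2^{-\KP(x)}}{\P(x)} = \frac{\m(x)}{\P(x)},
\end{equation*}
so that the sum appearing in the corollary is exactly $\sum_{x\prefix\omega}\m(x)/\P(x)$, with the convention that the ratio is $\infty$ when $\P(x)=0$, matching the convention stated in Proposition~\ref{propo:sum-characteriz}.

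Next I would invoke Proposition~\ref{propo:sum-characteriz} itself, which asserts
\begin{equation*}
\t_{\P}(\omega)\eqm\sum_{x\prefix\omega}\frac{\m(x)}{\P(x)}.
\end{equation*}
Since the relation $\eqm$ means equality within positive multiplicative constants, the sum is finite precisely when $\t_{\P}(\omega)$ is finite: a nonnegative quantity is finite if and only if any fixed positive multiple of it is finite, so the constant hidden in $\eqm$ plays no role in the finite/infinite dichotomy.

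Finally, I would recall that $\t_{\P}$ is the universal $\P$-test (Definition~\ref{def:deficiency}), and that by the universality theorem (Theorem~\ref{thm:universality.cptable}) together with Definition~\ref{def:test-computable-measure}, a sequence $\omega$ passes the universal test --- equivalently $\t_{\P}(\omega)<\infty$ --- if and only if it passes every $\P$-test, that is, if and only if it is random with respect to $\P$. Chaining these equivalences yields that $\omega$ is random iff $\sum_{x\prefix\omega}\m(x)/\P(x)<\infty$, which is exactly the claimed statement.

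I expect no genuine obstacle here, since the corollary is a cosmetic reformulation of an already proved proposition; the only points demanding a moment's care are verifying that the multiplicative constant absorbed by $\eqm$ is irrelevant to finiteness, and that the $\P(x)=0$ convention is applied consistently in both statements so that the two sums agree term by term.
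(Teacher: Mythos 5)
Your proposal is correct and follows exactly the route the paper intends: the corollary is stated there as a direct reformulation of Proposition~\ref{propo:sum-characteriz}, obtained by rewriting $2^{-\log\P(x)-\KP(x)}$ as $\m(x)/\P(x)$ and noting that finiteness of the universal test characterizes randomness. The two points you flag as needing care (irrelevance of the multiplicative constant to finiteness, and the $\P(x)=0$ convention) are indeed the only details, and you handle them properly.
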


This corollary also implies the fact mentioned above already:

\begin{corollary}\label{coroll:extension}
Every finite sequence $x$ has an extension $y$ with $\KP(y)>\len{y}$.
\end{corollary}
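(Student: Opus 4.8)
The plan is to derive this directly from the ample excess lemma (Corollary~\ref{coroll:ample-excess}) applied to the uniform (coin-tossing) measure. First I would fix $\P$ to be the uniform measure, for which $-\log\P(z)=\len{z}$ for every string $z$; then the sum occurring in the ample excess lemma specializes to $\sum_{z\prefix\omega}2^{\len{z}-\KP(z)}$, and the lemma says that $\omega$ is random for $\P$ if and only if this sum converges.

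Next I would argue by contradiction: suppose some finite string $x$ had \emph{no} extension $y$ with $\KP(y)>\len{y}$. Then every $y\postfix x$ would satisfy $\KP(y)\le\len{y}$, i.e.\ $2^{\len{y}-\KP(y)}\ge 1$. Now fix any infinite sequence $\omega$ with $x\prefix\omega$. Among the prefixes of $\omega$ there are infinitely many that extend $x$ — namely all prefixes of length at least $\len{x}$ — and each such prefix $z$ contributes a term $2^{\len{z}-\KP(z)}\ge 1$ to the sum above. Hence $\sum_{z\prefix\omega}2^{\len{z}-\KP(z)}=\infty$, and by the ample excess lemma $\omega$ is \emph{not} random with respect to $\P$.

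The concluding step is a measure argument. The previous paragraph would show that \emph{every} sequence in the cylinder $x\Omega$ fails to be random. But the set of non-random sequences is a $\P$-null set (it is the set where the universal test is infinite, and that test has integral at most $1$), whereas $x\Omega$ has $\P$-measure $2^{-\len{x}}>0$. A positive-measure set cannot be contained in a null set, so $x\Omega$ must contain at least one $\P$-random sequence, contradicting what we just derived. Therefore no such $x$ exists, and every finite string has an extension $y$ with $\KP(y)>\len{y}$.

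I do not anticipate a genuine obstacle here; the argument is short once one observes that the only role of $x$ is to supply a positive-measure cylinder, and that divergence of the ample-excess sum at every point of that cylinder is incompatible with the cylinder meeting the full-measure set of random sequences. The single point to state with care is that each relevant term is bounded below by $1$ rather than merely being positive — which is exactly the content of the negated hypothesis $\KP(y)\le\len{y}$, and is what forces \emph{divergence} of the sum.
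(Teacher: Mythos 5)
Your proof is correct and rests on the same key lemma as the paper's: the ample excess lemma (Corollary~\ref{coroll:ample-excess}) specialized to the uniform measure. The only difference is in how a random sequence extending $x$ is produced — the paper takes $x\omega$ for a random $\omega$ (invoking preservation of randomness under prepending a finite string), whereas you get one from the observation that the positive-measure cylinder $x\Omega$ cannot lie inside the null set of non-random sequences; in either case convergence of the ample-excess sum along that sequence forces some prefix extending $x$ to satisfy $\KP(y)>\len{y}$.
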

\begin{proof}
Take $\omega$ random, then $x\omega$ is random, and therefore by the
Miller-Yu lemma $x\omega$
has arbitrarily long prefixes whose complexity is larger than the length.
\end{proof}

\subsection{Game interpretation}

The formula for the average-bounded deficiency can be interpreted in terms of
the following game.
Alice and Bob make their moves having no information about
the opponent's move.
Alice chooses an infinite binary sequence $\omega$,
Bob chooses a finite string $x$.
If $x$ turns out to be a prefix of $\omega$,
then Alice pays Bob $2^n$ where $n$ is the length of $x$.
(This version of the game corresponds to the uniform Bernoulli measure,
in the general case Alice pays $1/\P(x)$.)
Recall the game-theoretic notions of \df{pure} strategy, as a deterministic
choice by a player, and \df{mixed} strategy, as a probability distribution over
deterministic choices.

Bob has a trivial strategy (choosing the empty string) that guarantees him $1$
whatever Alice does.
Also Alice has a mixed strategy (the uniform distribution, or, in general case,
$\P$) that guarantees her the average loss $1$ whatever Bob does.
Bob can devise a strategy that will benefit him
in case (for whatever reason) Alice brings a nonrandom sequence.

A randomized algorithm that has no input and produces a string (or nothing)
can be considered a mixed strategy for Bob (if the algorithm does not
produce anything, Bob gets no money).
For any such algorithm $D$ the expected payment (if Alice produces
$\omega$ according to distribution $\P$) does not exceed $1$.
Therefore, the set of sequences $\omega$ where the expected payment (averaged
over Bob's random bits) is infinite, is a null set.
Observe the following:
\begin{enumerate}[\upshape (i)]
\item For every probabilistic strategy of Bob, his expected gain
(as a function of Alice's sequence) is an average-bounded test.
(From here already follows that this expected value will be finite, if Alice's
sequence is random in the sense of Martin-L\"of.)

\item If $m(x)$ is the probability of $x$ as Bob's move with algorithm $D$, his
expected gain  against $\omega$ is equal to
 \begin{align*}
 \sum_{x\prefix\omega} m(x)/\P(x).
 \end{align*}

\item Therefore if we take the algorithm outputting the discrete apriori
  probability $\m(x)$, then Bob's expected gain will be a universal test (by the
  proved formula for the universal test).

\end{enumerate}
Using the apriori probability as a mixed strategy enables Bob to
punish Alice with an infinite penalty for any non-randomness in her sequence.

One can consider more general strategies for Bob: he can give for a pure
strategy, not only a string $x$, but some basic function
$f$ on $\Omega$ with non-negative values.
Then his gain for the sequence $\omega$ brought by Alice is set to
$f(\omega)/\int f(\omega)\,d\P$.
(The denominator makes the expected return equal to $1$.)
To the move $x$ corresponds the basic function that assigns $2^{\len{x}}$ to
extensions of $x$ and zero elsewhere.
This extension does not change anything, since this move is a
mixed strategy and we allow Bob to mix his strategies anyway.
(After producing
$f$, Bob can make one more randomized step and choose some of the intervals on
which $f$ is constant, with an appropriate probability.)
In this way we get another formula for the universal test:
\begin{equation*}
\t_{\P}(\omega)\eqm\sum_{f} \frac{\m(f) f(\omega)}{\int f(\omega)\, d\P},
\end{equation*}
where the sum is taken over all basic functions $f$.
This formula might be useful in more general situations (not Cantor space)
where we do not work with intervals and consider some class of basic functions
instead.

On concluding this part let us point to a similar
game-theoretical interpretation of probability theory developed in the
book~\cite{ShaferVovkGame01} of Shafer and Vovk.
There, the randomness of an object is not its
property but, roughly speaking, a kind of guarantee with which it is being sold.

\section{From tests to complexities}

Formula~\eqref{eq:randomness-complexity}
expresses the randomness deficiency (the logarithm of the
universal test) of an infinite sequence in terms of complexities of its finite
prefixes.
A natural question arises: can we go in the other direction?
Is it
possible to express the complexity of a finite string $x$, or some kind of
``randomness deficiency'' of $x$, in terms of the deficiencies of $x$'s infinite
extensions?
Proposition~\ref{propo:generate-lower} and the discussion following it already
brought us from infinite sequences to finite ones.
This can also be done for the universal test:

\begin{definition}\label{def:bar-u}
Fix some computable measure $\P$, and
let $t$ be any (average-bounded) test for $\P$.
For any finite string $x$ let $\bar t(x)$ be the minimal deficiency
of all infinite extensions of $x$:
\begin{equation*}
  \bar t(x) = \inf_{\omega\postfix x} t(\omega).
\end{equation*}
\end{definition}

By Proposition~\ref{propo:inf-lower-semicomp.seqs}, $\bar t$
is a lower semicomputable function
defined on finite strings, and the function $t$ can be
reconstructed back from $\bar t$; so if $\t_{\P}$ is our fixed universal test then
$\bar\t_{\P}$
can be considered as a version of randomness deficiency for finite strings.

The intuitive meaning is clear: a finite sequence $z$ looks non-random if
\emph{all} infinite sequences that have prefix $z$ look non-random.

\begin{question}
Kolmogorov~\cite{KolmPPI69} had a somewhat
similar suggestion: for a given sequence $z$ we may consider the
minimal deficiency (with respect to the uniform distribution,
defined as a difference between length and complexity) of all
its \emph{finite} extensions.
Are there any formal connections?
\end{question}

Let us spell out what we found, in more general terms.

\begin{definition}[Extended test for a computable
  measure]\label{def:extended-test.computable}
A lower semicomputable, monotonic (with respect to the prefix relation)
function $T:\{0,1\}^{*}\to\clint{0}{\infty}$
is called an \df{extended test} for computable measure $P$ if for all $N$ the
average over words of length $N$ is bounded by 1:
 \begin{align*}
 \sum_{x:|x|=N}\P(x)T(x) \le 1.
\end{align*}
\end{definition}
Monotonicity guarantees that the sum over words of a given length can be
replaced by the sum over an arbitrary finite (or even infinite) prefix-free set $S$:
 \begin{align}\label{eq:extended-test.computable}
 \sum_{x\in S}\P(x)T(x) \le 1.
\end{align}
(Indeed, extend the words of $S$ to some common greater length.)

 \begin{proposition}\label{propo:extended-test.computable}
  Every extended test generates (in the sense of
Definition~\ref{def:generate-lower}) some averge-bounded test on the infinite
strings.
Conversely, every average-bounded test on the infinite sequences is generated by
some extended test.
\end{proposition}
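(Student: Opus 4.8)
The plan is to handle both directions through a single object, the maximal generating function $T(x)=\inf_{\omega\postfix x}t(\omega)$ from~\eqref{eq:T-as-inf}, and to tie the average-boundedness condition on $T$ to the integral bound on $t$ via the monotone convergence theorem. The key observation in both directions is that $T(x)$ is simultaneously a value of a basic function (when we freeze the length) and a pointwise lower bound for $t$ on the cylinder $x\Omega$.

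For the forward direction, suppose $T$ is an extended test and put $t(\omega)=\sup_{x\prefix\omega}T(x)$, so that $T$ generates $t$ in the sense of Definition~\ref{def:generate-lower}; by that definition $t$ is automatically lower semicomputable. It remains to check $\int t\,dP\le 1$. Since $T$ is monotone, for each fixed $\omega$ the sequence $T(\omega(1:n))$ is nondecreasing in $n$ and $t(\omega)=\lim_n T(\omega(1:n))$. The function $\omega\mapsto T(\omega(1:n))$ is basic, taking the constant value $T(x)$ on each cylinder $x\Omega$ with $\len{x}=n$, so its integral is exactly $\sum_{\len{x}=n}\P(x)T(x)\le 1$ by the extended-test condition. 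Monotone convergence then gives $\int t\,dP=\lim_n\sum_{\len{x}=n}\P(x)T(x)\le 1$, so $t$ is an average-bounded test.

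For the converse, let $t$ be an average-bounded test on $\Omega$ and define $T$ by~\eqref{eq:T-as-inf}. By Proposition~\ref{propo:inf-lower-semicomp.seqs} this $T$ is lower semicomputable, and it is monotone because the set $\set{\omega:\omega\postfix x}$ shrinks as $x$ grows, which can only raise the infimum; by the discussion accompanying~\eqref{eq:T-as-inf}, this maximal $T$ generates $t$. The only remaining point is average-boundedness. Here I would use that $T(x)$ is by construction a lower bound for $t$ on the whole cylinder $x\Omega$, so $\int_{x\Omega}t\,dP\ge \P(x)T(x)$; since the cylinders of a fixed length $N$ partition $\Omega$, summing yields $\sum_{\len{x}=N}\P(x)T(x)\le\int t\,dP\le 1$, which is precisely the defining inequality of Definition~\ref{def:extended-test.computable}.

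Neither direction conceals a serious difficulty; the substance is in quoting the right earlier results and in justifying the two limit interchanges. The step deserving the most care is the forward direction's appeal to monotone convergence, where one must note that the functions $T(\omega(1:n))$ form a pointwise \emph{monotone} sequence (a consequence of the monotonicity of $T$, not merely of its lower semicomputability) and are allowed to take the value $\infty$, so that the theorem applies on $\clint{0}{\infty}$. In the converse, the one subtle ingredient is already packaged in the excerpt: that the maximal $T$ of~\eqref{eq:T-as-inf} genuinely recovers $t$ as $\sup_{x\prefix\omega}T(x)$, which rests on the lower semicontinuity of $t$ on the compact space $\Omega$.
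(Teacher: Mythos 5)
Your proof is correct and follows essentially the same route as the paper's: the forward direction via monotone convergence applied to the basic functions $\omega\mapsto T(\omega(1:n))$, and the converse by taking $T(x)=\inf_{\omega\postfix x}t(\omega)$ (the paper's $\bar t$) and using Proposition~\ref{propo:inf-lower-semicomp.seqs}. You have merely filled in the details the paper leaves implicit.
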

\begin{proof}
The first part follows immediately from the definition (and the theorem of monotone
convergence under the integral sign).
In the opposite direction, we can set for example $T(x)=\bar t(x)$, or refer to
Proposition~\ref{propo:lower-semi-limit.seqs} if we do not want to rely on
compactness.
\end{proof}

The existence of a universal extended test is proved by the usual methods:

\begin{proposition}\label{propo:extended-univ.computable}
Among the extended tests $T(x)$ for a computable measure $\P(x)$
there is a maximal one, up to a multiplicative constant.
 \end{proposition}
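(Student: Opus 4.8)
The plan is to mimic the construction used earlier for the universal average-bounded test (Theorem~\ref{thm:universality.cptable}), but now carried out directly at the level of extended tests on finite strings. The key observation is that the defining condition~\eqref{eq:extended-test.computable} is additive in the test, so a suitably weighted sum of extended tests is again (up to a constant factor) an extended test.

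First I would enumerate all algorithms that generate lower semicomputable functions $T:\{0,1\}^{*}\to\clint{0}{\infty}$, and for each one attempt to ``clip'' it into a genuine extended test, exactly as in the proof of Theorem~\ref{thm:universality.cptable}. Concretely, for the $i$th algorithm I would produce a monotonic lower semicomputable approximation $T_{i}$ from below, and before letting any increase through I would verify, using the computability of $\P$, that the average $\sum_{|x|=N}\P(x)T_{i}(x)$ stays below $2$ for every length $N$ seen so far. Monotonicity lets me check this on a single length (extend to a common length via~\eqref{eq:extended-test.computable}), and computing $\P$ to sufficient precision lets me certify the bound $<2$ whenever the true value is $\le 1$. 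If the candidate is a genuine extended test this clipping never blocks it; otherwise we obtain a lower semicomputable monotonic function whose average is at most twice that of some extended test.

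Next I would combine these clipped functions into a single extended test by forming the weighted sum
\begin{equation*}
T(x) = \sum_{i} 2^{-i-2}\, T_{i}(x).
\end{equation*}
Each $T_{i}$ is lower semicomputable and monotonic uniformly in $i$, so $T$ is lower semicomputable and monotonic. Its average over length $N$ is $\sum_{i}2^{-i-2}\sum_{|x|=N}\P(x)T_{i}(x) \le \sum_{i}2^{-i-2}\cdot 2 = 1$, so $T$ is an extended test. For any fixed extended test $S$, it appears (up to the factor-two clipping loss) as some $T_{i}=S$, whence $T(x) \ge 2^{-i-2} S(x)$ for all $x$; thus $T$ dominates every extended test within a multiplicative constant depending only on its index, which is exactly maximality up to a multiplicative constant.

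The main obstacle is the same semidecidability subtlety that appears in Theorem~\ref{thm:universality.cptable}: the predicate ``this algorithm defines an extended test'' is not decidable, so I cannot simply discard the non-tests. The clipping argument circumvents this by never waiting for a global certificate; instead it admits each incremental increase only when the partial average is provably below $2$, which preserves lower semicomputability while guaranteeing the averaging bound. The one new wrinkle relative to the infinite-sequence case is that the boundedness condition is indexed by length $N$ rather than being a single integral, but monotonicity together with~\eqref{eq:extended-test.computable} reduces the family of conditions to a single checkable constraint, so no genuinely new difficulty arises.
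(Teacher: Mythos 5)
Your proof is correct and is exactly the argument the paper has in mind: the paper merely remarks that the existence of a universal extended test "is proved by the usual methods," meaning precisely the enumerate--clip--sum construction of Theorem~\ref{thm:universality.cptable} that you carry out, including the correct observation that monotonicity makes the length-indexed averages non-decreasing and eventually stable, so the clipping check reduces to a single computable condition.
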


 \begin{definition}\label{def:extended-test}
Let us fix some dominating extended test and call it the \df{universal} extended test.
 \end{definition}

 \begin{proposition}\label{propo:extended-to-bar}
The universal extended test
coincides with $\bar\t_{\P}(x)$ to within a bounded factor.
 \end{proposition}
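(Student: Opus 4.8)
The plan is to show that $\bar\t_{\P}$ is itself an extended test and that it dominates every extended test; by the uniqueness (up to a constant factor) of a dominating extended test, it will then coincide with any fixed universal extended test, which is exactly the assertion. So the argument splits into two inclusions, each of which leans directly on results already established in the excerpt.

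First I would check that $\bar\t_{\P}$ is an extended test in the sense of Definition~\ref{def:extended-test.computable}. Lower semicomputability is supplied by Proposition~\ref{propo:inf-lower-semicomp.seqs} (this is where compactness of $\Omega$ enters). Monotonicity is immediate: if $x\prefix y$, then the infimum defining $\bar\t_{\P}(y)$ ranges over a subset of the sequences used for $\bar\t_{\P}(x)$, so $\bar\t_{\P}(x)\le\bar\t_{\P}(y)$. For the averaging condition, note that $\bar\t_{\P}$ is constant on each cylinder $x\Omega$ and that $\bar\t_{\P}(x)\le\t_{\P}(\omega)$ for every $\omega\postfix x$; hence for each $N$,
\[
\sum_{|x|=N}\P(x)\,\bar\t_{\P}(x)=\int_{\Omega}\bar\t_{\P}(\omega(1:N))\,d\P\le\int_{\Omega}\t_{\P}(\omega)\,d\P\le 1 .
\]
Thus $\bar\t_{\P}$ is an extended test.

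Next I would show that every extended test $T$ satisfies $T\lem\bar\t_{\P}$. By Proposition~\ref{propo:extended-test.computable}, $T$ generates the average-bounded test $t_{T}(\omega)=\sup_{x\prefix\omega}T(x)$ on $\Omega$. By the universality of $\t_{\P}$ (Theorem~\ref{thm:universality.cptable}) there is a constant $c$ with $t_{T}(\omega)\le c\,\t_{\P}(\omega)$ for all $\omega$. Now fix a finite string $x$: for every $\omega\postfix x$, monotonicity of $T$ gives $T(x)\le\sup_{y\prefix\omega}T(y)=t_{T}(\omega)$, so $T(x)\le\inf_{\omega\postfix x}t_{T}(\omega)\le c\,\inf_{\omega\postfix x}\t_{\P}(\omega)=c\,\bar\t_{\P}(x)$. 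Hence $T\lem\bar\t_{\P}$.

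Combining the two steps, $\bar\t_{\P}$ is an extended test that dominates every extended test, so it is itself a universal (dominating) extended test; since any two dominating extended tests agree up to a multiplicative constant, $\bar\t_{\P}$ coincides with the fixed universal extended test of Definition~\ref{def:extended-test}, as claimed. I expect the only points needing real care to be the inheritance of lower semicomputability for $\bar\t_{\P}$ (which genuinely invokes the compactness argument of Proposition~\ref{propo:inf-lower-semicomp.seqs}) and the interchange of the $\inf$ over extensions with the $\sup$ over prefixes in the final estimate; everything else is routine bookkeeping.
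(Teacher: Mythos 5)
Your argument is correct and follows essentially the same route as the paper's own proof: one direction comes from observing that $\bar\t_{\P}$ is itself an extended test (hence dominated by the universal one), and the other from the fact that any extended test generates an average-bounded test on $\Omega$, which is dominated by $\t_{\P}$, and then passing back through the infimum over extensions. You have merely written out the routine verifications (monotonicity, the averaging bound, and the $\inf$/$\sup$ bookkeeping) that the paper leaves implicit.
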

 \begin{proof}
Since $\bar\t_{\P}$ is an extended test, it is not greater than the universal
test (to within a bounded factor).
On the other hand, the universal extended test generates a test on the infinite
sequences, it just remains to compare it with the maximal one.
 \end{proof}
If our space is not compact (say, it is the set of infinite sequences of
integers), then $\bar\t_{\P}(x)$ is not defined, but there is still a universal
extended test, which we will denote by $\t_{\P}(x)$.

Warning: not all extended tests generating $\t_{\P}(\omega)$ are maximal.
(For example, one can make the test equal to zero on all short words,
transferring its values to its extensions.)

The advantage of the function $\t_{\P}(x)$ is that it is defined on
finite strings, the condition~\eqref{eq:extended-test.computable}
(for finite sets $S$) imposed
on it is also more elementary than the integral condition,
but clearly implies that it generates a test.

The method just shown is not the only way
to move to tests on prefixes from tests on infinite sequences:

\begin{definition}\label{def:hat-u}
Assume that the computable measure $\P$ is positive on all intervals:
$\P(x)>0$ for all $x$.
Let $\hat\t_{\P}(x)$ be the conditional expected value of $\t_{\P}(\omega)$ if a
random variable $\omega\in\Omega$ has distribution~$\P$ and
the condition is $\omega\postfix x$.
In other terms: let $\hat\t_{\P}(x)$ be the average of $\t_{\P}$ on the interval
$x\Omega$, that is let $\hat \t_{\P}(x)=U(x)/\P(x)$ where
 \begin{align*}
  U(x)=\int_{x\Omega}\t_{\P}(\omega)\,dP(\omega).
 \end{align*}
  \end{definition}
The function $U$ is a lower semicomputable semimeasure.
(It is even a measure, but the measure
is not guaranteed to be computable and the measure of the entire space
$\Omega$ is not necessarily~$1$.
In other words, we get
a measure on $\Omega$ that has density $\t_{\P}$ with respect to $\P$.)
This implies that the function $\hat\t_{\P}(x)$ is a martingale,
according to the following definition.

\begin{definition}\label{def:martingale}
  A function $g:\{0,1\}^{*}\to\bbR$ is called a \df{martingale} with respect to
the probability measure $\P$ if
 \begin{align*}
   \P(x)g(x) = \P(x0)g(x0)+\P(x1)g(x1).
 \end{align*}
It is a \df{supermartingale} if at least the inequality $\ge$ holds here.
\end{definition}

Note that, as a martingale, the function $\hat\t_{\P}(x)$ is \emph{not}
monotonically increasing with respect to the prefix relation.

\begin{theorem}\label{thm:long-chain}
  \begin{equation}\label{eq:long-chain}
\frac{\m(x)}{\P(x)}\lem \t_{\P}(x) \lem \hat\t_{\P}(x) \lem
\frac{\M(x)}{\P(x)},
  \end{equation}
where $\m$ is the a priori probability on strings as isolated
objects (whose logarithm is minus prefix complexity) and $\M$ is
the a continuous priori probability as introduced in
Definition~\ref{def:continuous-apriori}.
\end{theorem}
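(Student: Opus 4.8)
The plan is to establish the chain of four inequalities in~\eqref{eq:long-chain} by proving the three links separately, since each connects functions of a rather different nature. The overall strategy is to interpret every quantity as (the density, relative to $\P$, of) a lower semicomputable semimeasure on strings, and then invoke the appropriate maximality property. Throughout, I would keep in mind that multiplying or dividing by $\P(x)$ preserves lower semicomputability in both directions, because $\P$ is a computable measure, as was already exploited in the proof of Proposition~\ref{propo:sum-characteriz}.

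First I would handle the leftmost inequality $\m(x)/\P(x)\lem\t_{\P}(x)$. The function $\t_{\P}(x)$ is the universal extended test (Definition~\ref{def:extended-test}), so by Proposition~\ref{propo:extended-to-bar} it agrees with $\bar\t_{\P}(x)$ up to a constant. It suffices to exhibit $\m(x)/\P(x)$ as an extended test, so that universality forces the inequality. Setting $T(x)=\m(x)/\P(x)$, the defining condition $\sum_{|x|=N}\P(x)T(x)\le 1$ reduces to $\sum_{|x|=N}\m(x)\le 1$, which holds since $\m$ is a discrete semimeasure. After making $T$ monotone by $T'(x)=\max_{z\prefix x}T(z)$ as in Definition~\ref{def:generate-lower}, this is a genuine extended test, and maximality gives the bound.

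Next comes the rightmost inequality $\hat\t_{\P}(x)\lem\M(x)/\P(x)$. Here I would use that $U(x)=\int_{x\Omega}\t_{\P}\,dP$ is a lower semicomputable semimeasure (as noted just after Definition~\ref{def:hat-u}), and in fact $U(x)=\P(x)\hat\t_{\P}(x)$. Since $\hat\t_{\P}$ is a martingale, $U$ is finitely additive over the two children, so $U(x)=U(x0)+U(x1)$, whence $U$ is a lower semicomputable \emph{continuous} semimeasure in the sense of Definition~\ref{def:contin-semim}. By the maximality of the continuous a priori probability $\M$ (Definition~\ref{def:continuous-apriori}), we get $U(x)\lem\M(x)$, and dividing by $\P(x)$ yields the claim.

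The middle inequality $\t_{\P}(x)\lem\hat\t_{\P}(x)$ is the step I expect to be the main obstacle, since it is the only one not reducible to a single maximality invocation, and it compares a monotone extended test against a non-monotone martingale. The natural route is to observe that both functions generate the same test $\t_{\P}(\omega)$ on infinite sequences: $\t_{\P}(x)$ generates it as an extended test, while $\hat\t_{\P}(x)$, being the conditional expectation $\P$-a.s.\ converging to $\t_{\P}(\omega)$ by the martingale convergence theorem, has limit superior along prefixes equal to $\t_{\P}(\omega)$. For the extended (monotone) test we have $\t_{\P}(x)=\inf_{\omega\postfix x}\t_{\P}(\omega)\le\hat\t_{\P}(x)$, because the conditional average over $x\Omega$ cannot fall below the pointwise infimum on $x\Omega$; this is exactly the relation $\bar t(x)=\inf_{\omega\postfix x}t(\omega)\le$ average of $t$ over $x\Omega$. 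Using Proposition~\ref{propo:extended-to-bar} to replace $\t_{\P}(x)$ by $\bar\t_{\P}(x)$ up to a constant, the inequality $\bar\t_{\P}(x)\le\hat\t_{\P}(x)$ is then immediate, and this is where the compactness of $\Omega$ (ensuring the infimum is attained, Proposition~\ref{propo:inf-lower-semicomp.seqs}) quietly does its work.
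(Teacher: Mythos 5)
Your proof is correct and follows essentially the same route as the paper's (much terser) argument: the middle link is exactly the paper's observation that the infimum of $\t_{\P}$ over $x\Omega$ is at most its conditional average (martingale convergence and compactness are not actually needed there), and the right link is the same comparison of the semimeasure $U$ with the maximal continuous semimeasure $\M$. The only immaterial divergence is the left link, where the paper simply restricts the sum formula of Proposition~\ref{propo:sum-characteriz} to prefixes of $x$, whereas you exhibit the monotonization of $\m(x)/\P(x)$ as an extended test; that does work, but the claim that monotonizing preserves the level-$N$ bound is not automatic and deserves the one-line grouping argument $\sum_{|x|=N}\P(x)\max_{z\prefix x}\m(z)/\P(z)\le\sum_{z}\m(z)\le 1$ (group the $x$'s by their maximizing prefix $z$) rather than being asserted.
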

\begin{proof}
In fact, the first inequality can be made stronger: we can replace
$\m(x)/\P(x)$ by $\sum_{t\prefix x}\m(t)/\P(t)$.
Indeed, this sum is a part of the expression for $\t_{\P}(\omega)$
for every $\omega$ that starts with $x$.

The second inequality uses Proposition~\ref{propo:extended-to-bar} and
relates the minimal and average values of a random variable.
The third inequality just compares
the lower semicomputable semimeasure $U(x)$ and the maximal
semimeasure $\M(x)$.
\end{proof}

Note that while $\hat\t_{\P}(x)$ is a martingale, $\frac{\M(x)}{\P(x)}$
is a supermartingale: it is actually maximal
within multiplicative constant, among the lower semicomputable
supermartingales for $\P$.

\begin{remarks}
  \begin{enumerate}[\upshape 1.]
\item We may insert
\begin{equation}\label{eq:inserted}
    \lem \max_{t\prefix x} \frac{\m(t)}{\P(t)}
   \lem \sum_{t\prefix x} \frac{\m(t)}{\P(t)}\lem
\end{equation}
between the first and the second terms of~\eqref{eq:long-chain}.
 \item
Using the logarithmic scale, we get
        \begin{equation*}
  -\log\P(x)-\KP(x) \lea \log\t_{\P}(x) \lea \log \hat\t_{\P}(x)
 \lea -\log\P(x)- \KM(x).
        \end{equation*}
\item The Measure $U$ depends on $\P$ (recall that $U$
is a maximal measure that has density with respect to $\P$), so for
different $\P$'s,  for example with different supports, like the Bernoulli measures
with different parameters, we get different measures.
But this dependence is bounded by the inequality above: it
shows that the possible variations do not exceed the difference
between $\KP(x)$ and $\KM(x)$.

\item The rightmost inequality cannot be replaced by an equality.
For example, let $\P$ be the uniform (coin-tossing) measure.
Then the value of  $U(x)$ tends to $0$ when $x$ is an
increasing prefix of a computable sequence (we integrate over
decreasing intervals whose intersection is a singleton that has zero
uniform measure).
On the other hand, the value $\M(x)$ is bounded
by a positive constant for all these~$x$.

\item We used compactness (the finiteness of the alphabet $\{0,1\}$) in proving
Proposition~\ref{propo:inf-lower-semicomp.seqs}.
But we could have used
Proposition~\ref{propo:generate-lower} and the discussion following it for a
starting point, obtaining analogous results for the Baire space of infinite
sequences of natural numbers.
  \end{enumerate}
\end{remarks}

All quantities listed in Theorem~\ref{thm:long-chain}
can be used to characterize randomness: a
sequence $\omega$ is random if the values of the quantity in question are
bounded for its prefixes.
Indeed, the Levin-Schnorr theorem guarantees that for a
random sequence the right-hand side is bounded, and for a non-random
one the left-hand side is unbounded.
The monotonicity of
the second term guarantees that all expressions except the first
one tend to infinity.
As we already mentioned above, one cannot say this about the first quantity.

\begin{question}
Some quantities used in the theorem ($\t_{\P}(x)$ and two added ones
in~\eqref{eq:inserted})
are monotonic (with respect to the prefix partial order of $x$) by definition.
We have seen that $\hat\t_{\P}(x)$, as a martingale, is not monotonic.
What can be said about $\frac{\M(x)}{\P(x)}$?
\end{question}

All these quantities are ``almost monotonic'' since they do not
differ much from the monotonic ones.

\section{Bernoulli sequences}

One can try to define randomness not only with respect to some
fixed measure but also with respect to some family of measures.
Intuitively a sequence is random if we can believe that it is
obtained by a random process that respects \emph{one of} these
measures.
As we show later, this definition can be given for
any \emph{effectively compact} class of measures.
But to make
it more intuitive, we start with a specific example: \emph{Bernoulli
measures}.

\subsection{Tests for Bernoulli sequences}

The Bernoulli measure $B_p$ arises from independent
tossing of a non-symmetric coin, where the
probability of success $p$ is some real number in $\clint{0}{1}$
(the same for all trials).
Note that we do not require $p$ to be computable.

\begin{definition}[Average-bounded Bernoulli test]\label{def:Bernoulli-test}
A lower semicomputable function $t$ on
infinite binary sequences is a \df{Bernoulli test} if its
integral with respect to any $B_p$ does not exceed $1$.
\end{definition}

\begin{proposition}[Universal Bernoulli test]
There exists a universal
(maximal up to a constant factor) Bernoulli test.
\end{proposition}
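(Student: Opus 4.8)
The plan is to mimic the construction of the universal $\P$-test from Theorem~\ref{thm:universality.cptable}, but with the essential complication that a Bernoulli test must have bounded integral against \emph{every} $B_p$ simultaneously, including non-computable $p$. As in the computable case, I would enumerate all algorithms generating lower semicomputable functions on $\Omega$; each such algorithm produces a monotone sequence of basic functions. The idea is to let basic functions through a ``filter'' that guarantees the accumulated function is (within a factor of $2$, say) a genuine Bernoulli test, and then to sum up the resulting admissible functions with coefficients $2^{-i-2}$. The sum is lower semicomputable and, being a sum of functions whose $B_p$-integrals are all bounded, will have $B_p$-integral bounded uniformly in $p$; maximality up to a constant follows exactly as before since any Bernoulli test is (twice) one of the enumerated admissible functions.

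\textbf{The main obstacle} is the filtering step: to admit the next basic function, I must verify that the candidate partial sum $g$ (a non-negative basic function, hence depending on only finitely many bits) satisfies $\int g\,dB_p \le 2$ for \emph{all} $p\in\clint{0}{1}$ at once, and I must do this with a terminating computation. Here is where the finite, combinatorial nature of basic functions saves the construction. A non-negative basic function $g$ determined by the first $N$ bits has the form $g(\omega)=\sum_{|x|=N} c_x \mathbf{1}_{x\Omega}(\omega)$ with finitely many rational coefficients $c_x\ge 0$. Its $B_p$-integral is
\begin{equation*}
  \int g\,dB_p = \sum_{|x|=N} c_x\, p^{k(x)}(1-p)^{N-k(x)},
\end{equation*}
where $k(x)$ is the number of ones in $x$. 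This is a \emph{polynomial in $p$} with rational coefficients. So the condition ``$\int g\,dB_p \le 2$ for all $p\in\clint{0}{1}$'' is exactly the assertion that a fixed rational polynomial is bounded by $2$ on $\clint{0}{1}$, which is a decidable property (for instance by checking the sign of the derivative's resultant data, or simply because bounding a rational polynomial by a constant on a rational interval is a decidable semialgebraic statement). Thus the filter can be implemented: before admitting a basic function, decide whether the new partial sum keeps the polynomial $\int(\cdot)\,dB_p$ below $2$ uniformly in $p$, and admit it only if so.

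First I would fix the enumeration of lower semicomputable functions and, for each one, run it while maintaining the current basic approximation $g$; each time the generator proposes to increase $g$ to a new basic function $g'$, I test the polynomial $p\mapsto\int g'\,dB_p$ against the threshold $2$ and pass $g'$ through precisely when the test succeeds. Any algorithm that genuinely computes a Bernoulli test has all its approximants with integrals at most $1<2$ against every $B_p$, so every such test is passed through in full and is thereby dominated (within the factor coming from its weight $2^{-i-2}$) by the final sum; conversely every admitted function has $B_p$-integral at most $2$ for all $p$, so after weighting by $2^{-i-2}$ and summing the total $B_p$-integral is at most $\sum_i 2\cdot 2^{-i-2}=1$ for every $p$, making the sum a legitimate Bernoulli test. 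Since the filter is decidable and the weighting is effective, the resulting function is lower semicomputable, and maximality up to a multiplicative constant follows exactly as in Theorem~\ref{thm:universality.cptable}. The only genuinely new point beyond that theorem is the decidability of the uniform-in-$p$ integral bound, which reduces to bounding a rational polynomial on $\clint{0}{1}$.
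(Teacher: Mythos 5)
Your proposal is correct and follows essentially the same route as the paper: enumerate lower semicomputable functions as monotone limits of basic functions, observe that $\int g\,dB_p$ is a polynomial in $p$ with rational coefficients so that the uniform-in-$p$ bound can be checked effectively, filter accordingly, and sum with suitable weights. The paper's proof is just a condensed version of this argument.
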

\begin{proof}
A lower semicomputable function is the
monotonic limit of basic functions.
If the integral of a given
basic function with respect to every $B_{p}$ is less or equal than $1$ for all $p$,
this fact can be established effectively (indeed, the integral
is a polynomial in $p$ with rational coefficients).
This allows us to eliminate all functions unfit to be tests, and to list all
Bernoulli tests.
Adding these up with appropriate coefficients, we obtain a universal one.
\end{proof}

\begin{definition}\label{def:univ.Bernoulli}
We fix a universal Bernoulli test and denote it $\t_{\cB}(\omega)$.
Its logarithm will be called \df{Bernoulli deficiency} $\d_{\cB}(\omega)$.
A sequence is called a \df{Bernoulli sequence} if its Bernoulli deficiency is finite.
\end{definition}

Again, we may modify the definition to within an additive
constant, to make it nonnegative and integer.

The informal motivation is the following: $\omega$ is a Bernoulli sequence if the
claim that it is obtained by independent coin tossing (coin
symmetry is not required) looks plausible.
And this statement is
not plausible if one can formulate some property that is true
for $\omega$ but defines an ``effectively Bernoulli null set''
(we did not formally introduce this notion, but could,
analogously to effective null sets).


Analogously to the case of computable measures, we can extend the class test
to finite sequences:

  \begin{definition}[Extended Bernoulli test]\label{def:extended-test.Bernoulli}
A lower semicomputable monotonic
function $T:\{0,1\}^{*}\to\clint{0}{\infty}$ is called an
\df{extended Bernoulli tests} if for all natural numbers $N$
and for all $p\in\clint{0}{1}$ the inequality
$\sum_{x: |x|=N}B_{p}(x)T(x) \le 1$ holds.
\end{definition}

As for computable measures, there is a connection between tests for finite and
tests for infinite sequences:

\begin{proposition}\label{propo:extended-univ.Bernoulli}
Every extended Bernoulli test generates a Bernoulli test over $\Omega$.
On the other hand, every Bernoulli test over $\Omega$ is generated by some
extended Bernoulli test.
\end{proposition}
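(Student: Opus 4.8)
The plan is to follow the pattern of the proof of Proposition~\ref{propo:extended-test.computable}, the only difference being that the defining inequality is now required to hold for every parameter $p\in\clint{0}{1}$ rather than for a single measure. The crucial observation is that both implications can be verified one value of $p$ at a time, so that once each is established for a fixed $B_{p}$, the Bernoulli condition (which quantifies over all $p$) follows automatically, with no uniform-in-$p$ bookkeeping required.

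For the first direction, let $T$ be an extended Bernoulli test and let $t(\omega)=\sup_{x\prefix\omega}T(x)$ be the function it generates in the sense of Definition~\ref{def:generate-lower}; this $t$ is automatically lower semicomputable. I would use the monotonicity of $T$ to rewrite the supremum over prefixes as an increasing limit, $t(\omega)=\lim_{N\to\infty}T(\omega(1:N))$, noting that each map $\omega\mapsto T(\omega(1:N))$ is a basic function taking the constant value $T(x)$ on every cylinder $x\Omega$ with $\len{x}=N$. The monotone convergence theorem then gives, for each fixed $p$,
\[
  \int t\,dB_{p}=\lim_{N\to\infty}\int T(\omega(1:N))\,dB_{p}=\lim_{N\to\infty}\sum_{x:\len{x}=N}B_{p}(x)T(x)\le 1,
\]
since every partial sum is at most $1$ by the extended-test hypothesis. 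As $p$ was arbitrary, $t$ is a Bernoulli test.

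For the converse, I would take $T=\bar t$, the function of Definition~\ref{def:bar-u}, that is $\bar t(x)=\inf_{\omega\postfix x}t(\omega)$. By Proposition~\ref{propo:inf-lower-semicomp.seqs} (which is where compactness of $\Omega$ enters) this $\bar t$ is lower semicomputable, and it is monotone because shrinking the set $\set{\omega:\omega\postfix x}$ can only raise the infimum. By Proposition~\ref{propo:generate-lower} together with formula~\eqref{eq:T-as-inf}, $\bar t$ generates $t$. It then remains to verify the extended Bernoulli bound: since $t(\omega)\ge\bar t(x)$ for every $\omega\postfix x$, averaging over the cylinder yields $B_{p}(x)\bar t(x)\le\int_{x\Omega}t\,dB_{p}$, and summing over all words $x$ of length $N$ gives $\sum_{x:\len{x}=N}B_{p}(x)\bar t(x)\le\int t\,dB_{p}\le 1$ for every $p$. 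Hence $\bar t$ is an extended Bernoulli test that generates $t$.

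I do not expect any serious obstacle here: the content is entirely parallel to the computable-measure case, and the quantifier over $p$ is harmless precisely because every estimate above is pointwise in $p$. The one genuine ingredient is the lower semicomputability of $\bar t$, which rests on the compactness of $\Omega$ through Proposition~\ref{propo:inf-lower-semicomp.seqs}. If one wished to avoid compactness (for instance to make the argument portable to the Baire space), one would instead represent $t$ as an increasing limit of basic functions via Proposition~\ref{propo:lower-semi-limit.seqs} and assemble a monotone $T$ directly from the associated weights, exactly as in the alternative route indicated after Proposition~\ref{propo:extended-test.computable}.
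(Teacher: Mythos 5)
Your proof is correct and follows essentially the same route the paper intends (it explicitly defers to the proof of Proposition~\ref{propo:extended-test.computable}): monotone convergence for the forward direction, and $T=\bar t$ together with the compactness-based Proposition~\ref{propo:inf-lower-semicomp.seqs} for the converse, with the pointwise-in-$p$ observation making the Bernoulli quantifier harmless. Even your closing remark about avoiding compactness via Proposition~\ref{propo:lower-semi-limit.seqs} mirrors the paper's own aside.
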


There is a dominating universal extended Bernoulli test: it generats a universal
Bernoulli test on $\Omega$.
As earlier, we wil use the same notation $\t_{\cB}$ for the maximal tests on the
finite and on the infinite sequences.
Of course, it generates a universal Bernoulli test.

\subsection{Other characterizations of the Bernoulli property}

Just as for the randomness with respect to computable measures,
several equivalent definitions exist.
One may consider probability-bounded tests  (the probability of the event
$t(\omega)>N$ on any of the measures $B_{p}$ must be not greater than $1/N$).
One may call a test, following Martin-L\"of's definition for the computable
measures, any computable sequence of effectively open sets $U_{i}$ with
$B_{p}(U_{i})\le 2^{-i}$ for all $i$ and all $p\in\clint{0}{1}$.
All these variant definitions are equivalent (and this is proved just as for
randomness with respect to a computable measures).

 \begin{notation} Let $\bbB(n,k)$ denote the set of binary strings of length $n$
with $k$ ones (and $n-k$ zeroes).
 \end{notation}

Martin-L\"of defined a Bernoulli test as a family of sets of words
$U_{1}\supseteq U_{2}\supseteq U_{3}\supseteq\dotsm$;
each of these sets is hereditary upward, that is for every word contains all of
its extensions.
The following restriction is made on these sets: consider arbitrary integer
$n\ge 0$ and $k$ from $0$ to $n$; it is required that for all $i$ the share of
words in $\bbB(n,k)$ belonging to $U_{i}$ is not greater than $2^{-i}$.

For convenience of comparison let us replace the sets $U_{i}$ with an
integer-valued lower semicomputable function $d$ for which
$U_{i}=\setOf{x}{d(i)\ge i}$.
The hereditary property of the sets $U_{i}$ implies the monotonicity of this
function $d$ with respect to the prefix relation.
Besides this, it is required that the event $d\ge i$ within each set $\bbB(n,k)$
is not greater than $2^{-i}$.
Clearly, these requirements correspond to probability-bounded extended tests (in
the logarithmic scale), only in place of the class $B_{p}$ on words of length
$n$ another set of measures is considered, those concentrated on words of a
given length with a given number of ones.
The measures in the class $B_{p}$ take equal values on words of equal lengths
with equal number of ones, and are therefore representable by a mixture of
uniform measures on $\bbB(n,k)$ with some coefficients.
Replacing $B_{p}$ with these measures, the condition becomes stronger.

Let us show that nonetheless, the set of Bernoulli sequences does not change
from such a replacement; moreover, the universal test (as a function on infinite
sequences) does not change (as usual, to within a bounded factor).
We will show this for the average-bounded variant of tests (changing
Martin-L\"of's definition accordingly); this does not change the class of
Bernoulli sequences.
The reasoning is analogous for the probability-bounded tests.

 \begin{definition}\label{def:combinat-Bernoulli}
A \df{combinatorial Bernoulli test} is a function
$f:\{0,1\}^{*}\to\clint{0}{\infty}$ with the following constraints:
 \begin{alphenum}
  \item It is lower semicomputable.
  \item It is monotonic with respect to the prefix relation.
  \item For all integer $n,k$ with $0\le k\le n$ the average of the function $f$
    on the set $\bbB(n,k)$ remains below 1:
 \begin{align}\label{eq:combinat-Bernoulli-sum}
 |\bbB(n,k)|^{-1}\sum_{x\in\bbB(n,k)} f(x) &\le 1.
 \end{align}
 \end{alphenum}
 \end{definition}
The last condition says that not only is the average of $f(x)$ bounded by 1
over the set $\{0,1\}^{n}$, as in extended tests for the unbiased
coin-tossing measure, but its average is bounded by 1 separately in each set
$\bbB(n,k)$ whose union is $\{0,1\}^{n}$.

Having such a test for words of bounded length, it can be continued by monotonicity:

 \begin{proposition}\label{propo:Bernoulli-extend}
If a combinatorial Bernoulli test $f(x)$ is given on strings $x$
of length less than $n$, then extending it to longer strings using
monotonicity we get a function that is still a combinatorial Bernoulli test.
 \end{proposition}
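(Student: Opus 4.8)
The plan is to make the phrase ``extending by monotonicity'' precise and then re-verify the three defining conditions of a combinatorial Bernoulli test for the extension. Since $f$ is already monotone on strings of length less than $n$, the minimal monotone extension freezes each value at the length-$(n-1)$ prefix: for every $y$ with $|y|\ge n$ I would set $\tilde f(y)=f(y(1:n-1))$, leaving $\tilde f=f$ on shorter strings. With this description, conditions (a) and (b) are immediate. Lower semicomputability is preserved because $\tilde f$ is the composition of $f$ with the computable prefix-truncation map $y\mapsto y(1:n-1)$; and monotonicity holds by construction, since once the length reaches $n-1$ the value no longer changes along any chain of extensions. So all the content lies in re-checking the averaging condition (c) at every length $m\ge n$.

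I would prove (c) by induction on $m$, the base case $m=n$ and the inductive step being one and the same computation. The key structural observation is that $\bbB(m,k)$ is the disjoint union of $\set{w0:w\in\bbB(m-1,k)}$ and $\set{w1:w\in\bbB(m-1,k-1)}$, obtained by stripping the last bit. Because $\tilde f$ is frozen past length $n-1$, one has $\tilde f(w0)=\tilde f(w)$ and $\tilde f(w1)=\tilde f(w)$ for every parent string $w$ of length $m-1\ge n-1$, so
\begin{equation*}
\sum_{x\in\bbB(m,k)}\tilde f(x)=\sum_{w\in\bbB(m-1,k)}\tilde f(w)+\sum_{w\in\bbB(m-1,k-1)}\tilde f(w).
\end{equation*}
Applying the averaging bound at length $m-1$ (the induction hypothesis, or the given hypothesis on $f$ in the base case $m=n$) to each of the two sums bounds them by $\binom{m-1}{k}$ and $\binom{m-1}{k-1}$ respectively, and Pascal's identity $\binom{m-1}{k}+\binom{m-1}{k-1}=\binom{m}{k}=|\bbB(m,k)|$ yields $\sum_{x\in\bbB(m,k)}\tilde f(x)\le|\bbB(m,k)|$, that is, average at most $1$.

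The boundary values $k=0$ and $k=m$ deserve a separate glance but cause no trouble: there exactly one of the two parent classes $\bbB(m-1,k)$, $\bbB(m-1,k-1)$ is empty and contributes nothing, while the surviving sum is bounded by the single nonzero binomial, which again totals $\binom{m}{k}$. I do not expect a genuine obstacle in this argument; the only thing that must be gotten right is the last-bit decomposition of $\bbB(m,k)$ together with the matching invocation of Pascal's rule. That combinatorial bookkeeping is precisely what encodes the difference between the family of measures concentrated on each $\bbB(n,k)$ and the plain uniform measure on $\{0,1\}^n$, and it is the heart of why the monotone extension stays within the class rather than merely preserving the weaker length-$n$ average bound.
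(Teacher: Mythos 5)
Your proof is correct and follows essentially the same route as the paper's: extend by freezing values at the length-$(n-1)$ prefix, split $\bbB(m,k)$ by the last bit into copies of $\bbB(m-1,k)$ and $\bbB(m-1,k-1)$, and observe that a set partitioned into parts each with average at most $1$ itself has average at most $1$ (your Pascal-identity bookkeeping is just the explicit form of this). The paper states only the single extension step from $n-1$ to $n$ and leaves the induction implicit; you have merely spelled it out.
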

 \begin{proof}
We extend $f$ to words of length $n$, setting $f(x0)=f(x1)=f(x)$ for words $x$ of
length $n-1$.
The set $\bbB(n,k)$ consists of two parts: words ending on zero and words ending
on one.
The first ones are in a one-to-one correspondence with $\bbB(n-1,k)$, the second ones
with $\bbB(n-1,k-1)$.
The function conserves the values in this correspondence, therefore the average
in both parts is not greater than 1.
Hence, the average over the whole $\bbB(n,k)$ is not greater than 1.
 \end{proof}

The following is obtained by standard methods:

 \begin{proposition}[Universal combinatorial Bernoulli test]
Among combinatorial Bernoulli tests, there is one that is maximal to within a
bounded factor.
 \end{proposition}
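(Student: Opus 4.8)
The plan is to mimic the standard universality construction already used repeatedly in this paper (for instance in Theorem~\ref{thm:universality.cptable} and for the universal Bernoulli test), adapting it to the combinatorial constraint~\eqref{eq:combinat-Bernoulli-sum}. First I would enumerate all algorithms generating lower semicomputable functions on $\{0,1\}^{*}$; each such algorithm produces a monotone sequence of basic (rational-valued) approximations from below. The key observation is that, unlike the continuous Bernoulli case where one checks a polynomial inequality in $p$, here each defining constraint is a \emph{finite, decidable} condition: for fixed $n$ and $k$, the inequality $|\bbB(n,k)|^{-1}\sum_{x\in\bbB(n,k)} f(x)\le 1$ involves only finitely many values of $f$ on strings of length $n$, all rational at any finite stage of approximation.

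The main subtlety is that there are infinitely many constraints (one for each pair $n,k$), and monotonicity couples the values across lengths, so I cannot simply verify all constraints before ``letting a value through.'' The way I would handle this is to pass each candidate function through a filter that enforces the constraints approximately: before increasing an approximation value, check that after the increase every constraint on $\bbB(n,k)$ for the already-committed lengths remains below $2$ (rather than $1$), which is verifiable in finite time since at each stage only finitely many values are nonzero and monotonicity lets me extend short strings to any chosen common length as in Proposition~\ref{propo:Bernoulli-extend}. If a proposed increase would violate this relaxed bound, I suppress it. This produces, for each index $i$, a lower semicomputable monotonic function $f_i$ that is guaranteed to satisfy the combinatorial averaging constraint with bound $2$, and that equals the genuine candidate whenever the candidate was already a legitimate combinatorial Bernoulli test.

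I would then form the weighted sum
\begin{equation*}
f(x)=\sum_{i} 2^{-(i+2)} f_i(x).
\end{equation*}
Each $f_i$ is lower semicomputable and monotonic, hence so is $f$ (a computable sum of such functions). For the averaging constraint, linearity gives
\begin{equation*}
|\bbB(n,k)|^{-1}\sum_{x\in\bbB(n,k)} f(x)
=\sum_{i} 2^{-(i+2)}\cdot |\bbB(n,k)|^{-1}\sum_{x\in\bbB(n,k)} f_i(x)
\le \sum_{i} 2^{-(i+2)}\cdot 2 \le 1,
\end{equation*}
so $f$ is a combinatorial Bernoulli test. Maximality is immediate: any combinatorial Bernoulli test $g$ coincides with some $f_{i}$ in the enumeration (the filter does not alter a legitimate test), and therefore $g\le 2^{i+2} f$, giving the bounded factor.

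The step I expect to be the main obstacle is making the filtering construction genuinely compatible with monotonicity across all lengths simultaneously, since increasing a value on a short string forces increases on all its extensions and thereby affects constraints at every larger length. The clean resolution is to use the delayed-computation trick already invoked after Proposition~\ref{propo:generate-lower}: rather than increasing $f_i(x)$ for a short $x$, postpone the increase and apply it uniformly to all extensions $y\postfix x$ of a fixed large length, so that at any finite stage only constraints up to the current committed length are active and each is a finite rational inequality. With this device every check terminates, lower semicomputability is preserved, and the relaxed bound of $2$ is maintained; the rest is the routine summation above.
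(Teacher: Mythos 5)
Your proof is correct and is exactly the ``standard methods'' the paper invokes without spelling them out: enumerate the lower semicomputable monotone candidates, use the decidability of the finitely many averaging constraints at each finite stage (together with Proposition~\ref{propo:Bernoulli-extend} to reduce to the committed lengths) to trim each candidate to a relaxed bound of $2$ while leaving genuine tests untouched, and take a weighted sum. This matches the construction used for Theorem~\ref{thm:universality.cptable} and the universal Bernoulli test, so there is nothing to add.
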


\begin{definition}
Let us fix a universal combinatorial Bernoulli test $\b(x)$ and extend it to
infinite sequences $\omega$ by
 \begin{align*}
   \b(\omega) = \sup_{x\prefix\omega} \b(x).
 \end{align*}
We will call the function obtained this way a universal combinatorial test on
$\Omega$ and will denote it also by $\b$.
\end{definition}
(By monotonicity, the least upper bound in this definition can be replaced with
a limit.)
Let us show that the this test coincides (to within a bounded factor) with the
Bernoulli tests introduced earlier in Definition~\ref{def:univ.Bernoulli}.

 \begin{theorem}\label{thm:combinat-Bernoulli}
$\b(\omega) \eqm \t_{\cB}(\omega)$.
 \end{theorem}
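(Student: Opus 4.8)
The plan is to prove the two inequalities $\b\lem\t_{\cB}$ and $\t_{\cB}\lem\b$ separately, each time reducing to the maximality of one of the two universal tests. The conceptual point, already signalled before the statement, is that a combinatorial test is constrained \emph{separately} on each slice $\bbB(n,k)$ by the uniform measures on those slices, whereas a Bernoulli test is only constrained by the measures $B_p$, each of which is a mixture of these uniform slice-measures. So the combinatorial condition is the stronger one, and the easy inequality will be $\b\lem\t_{\cB}$.

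For $\b\lem\t_{\cB}$ I would show that every combinatorial Bernoulli test, extended to $\Omega$ by monotonicity, is an ordinary Bernoulli test in the sense of Definition~\ref{def:Bernoulli-test}. Let $f$ be a combinatorial test; by monotone convergence $\int f\,dB_p=\lim_n\sum_{|x|=n}B_p(x)f(x)$. Since $B_p(x)=p^k(1-p)^{n-k}$ is constant on each slice $\bbB(n,k)$, the inner sum splits as $\sum_{k=0}^n p^k(1-p)^{n-k}\sum_{x\in\bbB(n,k)}f(x)$, and the defining inequality~\eqref{eq:combinat-Bernoulli-sum} bounds $\sum_{x\in\bbB(n,k)}f(x)$ by $|\bbB(n,k)|=\binom nk$. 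The binomial theorem then gives $\sum_{|x|=n}B_p(x)f(x)\le\sum_{k}\binom nk p^k(1-p)^{n-k}=1$, so $\int f\,dB_p\le 1$ for every $p$. Thus $f$ is a Bernoulli test, and applying this to $f=\b$ and using the maximality of $\t_{\cB}$ yields $\b\lem\t_{\cB}$.

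The converse $\t_{\cB}\lem\b$ is the substantial direction. By the maximality of $\b$ among combinatorial tests it is enough to dominate $\t_{\cB}$ on $\Omega$ by \emph{some} combinatorial test, and $\t_{\cB}$ is generated by an extended Bernoulli test $T$ (Proposition~\ref{propo:extended-univ.Bernoulli}). The obstacle is that $T$ itself is \emph{not} a combinatorial test up to a constant: writing $a_{n,k}$ for the average of $T$ over $\bbB(n,k)$, the extended-Bernoulli condition only says $\sum_k\binom nk p^k(1-p)^{n-k}a_{n,k}\le1$ for all $p$, and evaluating at $p=k/n$ bounds $a_{n,k}$ merely by $1/\max_p\binom nk p^k(1-p)^{n-k}=\Theta(\sqrt n)$. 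This $\sqrt n$ gap is genuine and unavoidable at a fixed length: because each $B_p$ spreads its mass over a window of $\Theta(\sqrt n)$ values of $k$ (local central limit behaviour), no positive mixture of the $B_p$ can reproduce a single slice-measure, so a Bernoulli test really can have slice-averages growing like $\sqrt n$ while a combinatorial test cannot.

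The resolution, and the technical heart of the proof, is that the equivalence holds only as an identity of functions on \emph{infinite} sequences, where the $\sqrt n$ loss can be absorbed by moving to a larger scale. My plan is to redistribute the value $T(x)$ of a prefix $x\in\bbB(n,k)$ onto its extensions of some much larger length $m$: within a slice $\bbB(m,j)$ the extensions of $x$ form a fraction $\binom{m-n}{j-k}/\binom mj$, and I would choose $m$ large enough that, uniformly over the relevant $j$, this hypergeometric fraction is comparable, within a factor tending to $1$, to the corresponding Bernoulli weight. Then the extended-Bernoulli bound on $T$ at level $m$ translates into the per-slice combinatorial bound for the redistributed function, monotonicity propagates the value so that the generated infinite-sequence test still dominates $\t_{\cB}$, and the $\Theta(\sqrt n)$ factor cancels in the limit. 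Keeping this construction lower semicomputable and monotone while making the asymptotic comparison of hypergeometric and binomial weights uniform enough is the main difficulty; once it is in place, maximality of $\b$ gives $\t_{\cB}\lem\b$ and hence $\b\eqm\t_{\cB}$.
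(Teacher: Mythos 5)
Your proposal is correct and follows essentially the same route as the paper: the easy direction via the binomial-theorem decomposition of $B_p$ over the slices $\bbB(n,k)$, and the hard direction by transferring the extended Bernoulli test at length $n$ out to length $m\approx n^2$ and comparing the induced hypergeometric (without-replacement) distribution on prefixes with $B_{j/m}$. The one step you leave open is exactly the paper's short computation: sampling without replacement is dominated by sampling with replacement up to the factor $(m/(m-n))^n=O(1)$ when $m=n^2$ (note the comparison is genuinely one-sided, but that is the only side the argument needs).
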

 \begin{proof}
We have already seen that $\b(x)$ is an extended Bernoulli test (from the bounds
on the average on each part $\bbB(n,k)$ follows the bound on the expected value
by the measure $B_{p}$, since this measure is constant on each part).
Consequently $\b(\omega)\lem\t_{\cB}(\omega)$.

The converse is not true: an extended Bernoulli test may not be a combinatorial
test.
But it is possible to construct a combinatorial test that takes the same values
(to within a bounded factor) on the infinite sequences, and only this is
asserted in the theorem.

Here is the idea.
Consider an extended Bernoulli test $t$ on words of length $n$ and transfer it
to words of much greater length $N$ (applying the old test to its beginnings of
length $n$).
We obtain a certain function $t'$.
We have to show that $t'$ is close to some combinatorial test (that is only exceeds it by a
constant factor).
For this, $t'$ must be averaged over the set $\bbB(N,K)$ for an arbitrary $K$
between $0$ and $N$.
In other words, we must average $t$ by the probability distribution on the
$n$-bit prefixes of sequences of length $N$ containing $K$ ones.
With $N\gg n$ this distribution will be close to the Bernoulli one with distribution
$p=K/N$.

In terms of elementary probability theory, we have an urn with $N$ balls, $K$ of
which is black, and take out from it $n$ balls.
We must compare the probability distribution with the Bernoulli one that would
have been obtained at sampling with replacement.
Let us show that
\begin{quote}
  \emph{for $N=n^{2}$ the distribution without replacement does not exceed the one with
  replacement more than $O(1)$ times}.
\end{quote}
(The inequality does not hold in the other direction: for $K=1$ without replacement
we cannot obtain a word with two ones, and with replacement we can.
But we only need the inequality in the given direction.)

Indeed, in sampling without replacement the probability that a ball of a given
color will be drawn is equal to the quotient
 \begin{align*}
   \frac{\text{the number of remaining balls of this color}}{\text{the number of
       all remaining balls}}.
 \end{align*}
The number of balls of this color is not more than in the case with replacement,
on the other hand the denominator is at least $N-n$.
Therefore the probability of any combination during sampling with replacement is
at most the probability of the same combination with replacement, multiplied by
$N/(N-n)$ to the power $n$.
For $N=n^{2}$ the multiplier $(1+O(1/n))^{n}=O(1)$ is obtained.

This way, taking the extended Bernoulli test $t$ and then defining $t'(x)$ on a
word $x$ of length $N$ as $t$ on the prefix of $x$ of length $\flo{\sqrt{N}}$,
the obtained function $t'$ will be a combinatorial test to within a
bounded factor.
(Note that its monotonicity follows from that of $t$.)
 \end{proof}

\subsection{Criterion for Bernoulli sequences}

It is natural to compare the notion of Bernoulli sequence (those sequences for
which the Bernoulli test is finite) with the notion of a sequence random with
respect to the measure $B_{p}$.
But Martin-L\"of definition of randomness assumes that
the measure is computable.
Therefore it cannot be applied
directly to $B_p$ if $p$ is non-computable.
But this definition
can be relativized, and if (the binary expansion of) $p$ is
given as an oracle (see Remark~\ref{rem:oracles}),
then the measure $B_p$ becomes computable
and randomness is well defined.
The following theorem supports an intuitive idea of Bernoulli
sequence as a sequence that is random with respect to some
Bernoulli measure:

\begin{theorem}\label{thm:Bernoulli-oracle}
A sequence $\omega$ is a Bernoulli sequence if
and only if it is random with respect to some
measure $B_{p}$, with oracle~$p\in\clint{0}{1}$.
\end{theorem}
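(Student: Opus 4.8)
The plan is to prove the two implications separately; the forward direction (``random $\Rightarrow$ Bernoulli'') is routine, and essentially all the work is in the converse.

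\emph{From $B_p$-randomness to Bernoulli.} Suppose $\omega$ is random with respect to $B_p$ relative to the oracle $p$. The universal Bernoulli test $\t_{\cB}$ is lower semicomputable \emph{without} any oracle, hence a fortiori lower semicomputable relative to $p$; and by Definition~\ref{def:Bernoulli-test} its integral against every $B_q$, in particular against $B_p$, is at most $1$. Thus $\t_{\cB}$ is an admissible $B_p$-test relative to $p$, so $B_p$-randomness of $\omega$ (relative to $p$) forces $\t_{\cB}(\omega)<\infty$, i.e.\ $\omega$ is a Bernoulli sequence. This direction needs nothing beyond the observation that having no oracle is a special case of having oracle $p$.

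\emph{From Bernoulli to $B_p$-randomness.} The candidate parameter is the limiting frequency $p=\lim_n n^{-1}\sum_{i\le n}\omega(i)$, so the first step is to show this limit exists for every Bernoulli sequence. I would construct, for each pair of rationals $a<b$, a Bernoulli test infinite exactly on the sequences whose running frequency dips below $a$ and rises above $b$ infinitely often; summing these over all rational pairs with small weights yields one Bernoulli test infinite on every sequence lacking a limiting frequency. This is the classical fact (due to Martin-L\"of) that Bernoulli sequences obey the law of large numbers, and the combinatorial characterization of Theorem~\ref{thm:combinat-Bernoulli} is what makes the argument uniform in $p$: on each slice $\bbB(n,k)$ the empirical frequency is pinned to $k/n$, so the bound ``average at most $1$ on $\bbB(n,k)$'' follows from Chernoff-type deviation estimates that are uniform in $p$, combined with an upcrossing bound. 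Since $\omega$ passes all Bernoulli tests, its frequency converges; call the limit $p$.

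It remains to show $\omega$ is $B_p$-random relative to $p$. I argue by contraposition: if $\omega$ failed some $B_p$-test $t$ that is lower semicomputable relative to $p$, I would manufacture from $t$ an \emph{unconditional} Bernoulli test still infinite at $\omega$, contradicting Bernoulli-ness. The device is to replace the oracle $p$ by the empirical frequency read off the input sequence, running $t$ with the current estimate $k_n/n$ in place of $p$; this is legitimate because $p$ is recoverable from $\omega$ as its frequency. To see that the resulting $f$ is a genuine Bernoulli test one checks $\int f\,dB_q\le 1$, up to a constant factor that the universal test absorbs, for \emph{every} $q$: under $B_q$ the empirical frequency converges to $q$ almost surely, so on $B_q$-typical sequences $f$ agrees asymptotically with a legitimate $B_q$-test and its expectation stays bounded. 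The main obstacle is exactly this step: one must keep the oracle-replacement lower semicomputable even though $k_n/n$ only converges in the limit and $t$ is merely lower semicomputable, and one must control the integral simultaneously for all $q\in\clint{0}{1}$, including the atypical sequences on which the frequency estimate is misleading. Passing once more to the slice-wise formulation helps, since conditioning on $\bbB(n,k)$ removes the dependence of the measure on the parameter and reduces the uniform integrability to the per-slice averaging bound already available for combinatorial tests.
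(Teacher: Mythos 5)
Your forward direction is fine and is essentially the paper's: the universal Bernoulli test is lower semicomputable, hence $p$-lower semicomputable, and its $B_p$-integral is at most $1$, so it is an admissible relativized test and relativized randomness forces its finiteness.

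The converse has a genuine gap, and it sits exactly where you locate ``the main obstacle.'' You propose to recover $p$ as the limiting frequency of $\omega$ and then to convert a $p$-relativized $B_p$-test $t$ into an oracle-free Bernoulli test by running $t$ with the running estimate $k_n/n$ in place of the oracle. But an oracle computation of $t$ needs \emph{guaranteed} information about $p$ (rational intervals certain to contain it), and no finite prefix of $\omega$ guarantees anything about the limiting frequency: the sequence can always continue with an arbitrarily long run of zeros or ones. The paper makes this point explicitly in the remark following Theorem~\ref{thm:orthogonal-blind.bin-Cantor}: although $p$ is \emph{determined} by $\omega$, it does not depend continuously on $\omega$. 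A simulation that commits to enumerating a lower bound for $t(\omega)$ on the strength of a tentative oracle value cannot retract it when later data show the estimate was wrong, so the resulting function either fails to be lower semicomputable or fails to coincide with $t$ at $\omega$. The slice-wise $\bbB(n,k)$ reformulation does not repair this, since the difficulty is the discontinuity of $\omega\mapsto p$, not the dependence of the measure on the parameter; and ``agrees asymptotically with a $B_q$-test on $B_q$-typical sequences'' does not bound $\int f\,dB_q$, because a lower semicomputable function can be enormous on an atypical set of positive measure.

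The paper's route avoids the limiting frequency entirely. It introduces uniform tests $\t(\omega,p)$, lower semicomputable jointly in $(\omega,p)$ with $p$ supplied as a point of $\clint{0}{1}$ rather than read off $\omega$; the trimming device (Proposition~\ref{propo:trim}) turns any jointly lower semicomputable function into an almost-test for \emph{every} $q$ while leaving it unchanged wherever it already was a test; the oracle-to-parameter conversion is done for a fixed $p$ (a nested sequence of rational intervals converging to an irrational $p$ does yield its bits); and the witness $p$ is produced by compactness of $\clint{0}{1}$, since $\inf_p\t(\omega,p)$ is lower semicomputable, attained, and equal to the universal Bernoulli test up to a constant factor. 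Repairing your argument essentially requires replacing the frequency estimator by this parameterization-plus-trimming scheme, at which point you have the paper's proof.
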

By ``with oracle $p$'', we understand the possiblity to obtain from each $i$ the
$i$th bit in the binary expansion of the real number $p$
(which is essentially unique, except in those cases when $p$ is binary-rational,
and in these cases both expansions are computable, and the oracle is trivial).

Before proving the theorem (even in a stronger quantitative form),
we introduce a new notion,
of a test depending explicitly on the parameter $p$ of the Bernoulli measure
$B_{p}$,  which later will be extended to arbitrary (not just Bernoulli)
measures.
The required result will be obtained as the combination of the following claims:
\begin{alphenum}
\item Among the ``uniform'' randomness tests, there exists a maximal test
  $\t(\omega,p)$.
\item The function $\omega\mapsto\inf_{p}\t(\omega,p)$ coincides (as usual, to
  within a bounded factor) with the universal Bernoulli test.
\item For a fixed $p$, the function $\omega\mapsto\t(\omega,p)$ coincides (to
  the same precision) with the maximal randomness test for the ($p$-computable)
  measure $B_{p}$, relativized to $p$.
\end{alphenum}
These three assertions imply Theorem~\ref{thm:Bernoulli-oracle} easily: sequence
$\omega$ is Bernoulli, if the Bernoulli test is finite; the latter is equal to the
greatest lower bound of $\t(\omega,p)$, hence its finiteness means
$\t(\omega,p)<\infty$ for some $p$, which is equivalent to the relativized
randomness with respect to the measure $B_{p}$.

We need some technical preparation.
The randomness tests (as functions of two variables) will also be lower
semicomputable, but the definition of this concept needs to be extended,
since an additional real parameter is involved.
(In what follows we will also consider a more general situation, in which the
second argument is a measure.)

\begin{definition}\label{def:lower-semicomp.product}
In the space $\Omega\times\clint{0}{1}$,
let us call \df{basic rectangles}
all sets of the form $x\Omega\times\opint{u}{v}$, where $u<v$ are rational
numbers.
(A technical point: we allow $u,v$ to be outside $\clint{0}{1}$, but in this
case the rectangle we mean is $x\Omega\times(\clint{0}{1}\cap\opint{u}{v})$.)

A function $f:\Omega\times\clint{0}{1}\to\clint{-\infty}{\infty}$
is called \df{lower semicomputable} if there is an algorithm that, given a
rational $r$ on its input, enumerates a sequence of basic rectangles whose union
is the set of all pairs $\pair{\omega}{p}$ with $f(\omega,p)>r$.

The notion of \df{upper semicomputability} is defined analogously, and is
equivalent to the lower semicomputablity of $(-f)$.

A function with finite real values is called \df{computable} if it is both upper
and lower semicomputable.
\end{definition}

This definition, as earlier, requires that the preimage of $\opint{-\infty}{r}$
be an effective open set uniformly in $r$, only now we consider effectively open
sets in $\Omega\times\clint{0}{1}$, defined in a natural way.

Since the intersection of effective open sets is effective open, the
following---more intuitive---formulation is obtained for computability:

\begin{proposition}
  A real function $f:\Omega\times\clint{0}{1}\to\bbR$ is computable if
and only if for every rational interval $\opint{u}{v}$ its preimage
is the union of a sequence of basic rectangles that are effectively enumerated,
uniformly in $u$ and $v$.
\end{proposition}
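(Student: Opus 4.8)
The plan is to prove the two directions of the equivalence separately, using the definition of lower semicomputability via basic rectangles and the fact (from the preceding \emph{Proposition}) that the intersection of effectively open sets is again effectively open uniformly. First I would establish the forward direction: suppose $f$ is computable. Then both $f$ and $-f$ are lower semicomputable, so given a rational $r$, I can enumerate basic rectangles covering $\{(\omega,p): f(\omega,p)>r\}$ and, separately, basic rectangles covering $\{(\omega,p): f(\omega,p)<r\}$ (the latter coming from lower semicomputability of $-f$, applied to the threshold $-r$). For a rational interval $\opint{u}{v}$, the preimage $f^{-1}(\opint{u}{v})$ is exactly the intersection $\{f>u\}\cap\{f<v\}$. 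Since each of these two sets is effectively open uniformly in its rational threshold, their intersection is effectively open uniformly in the pair $\pair{u}{v}$; this is where I invoke the remark that intersections of effective open sets are effective open.

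For the converse, I would assume that for every rational interval $\opint{u}{v}$ the preimage $f^{-1}(\opint{u}{v})$ is a union of basic rectangles enumerated effectively and uniformly in $u,v$. To recover lower semicomputability of $f$, fix a rational $r$ and observe that
\begin{equation*}
\{(\omega,p): f(\omega,p)>r\}=\bigcup_{n\in\bbN} f^{-1}\bigl(\opint{r}{n}\bigr),
\end{equation*}
where the union ranges over integers $n>r$, each term being effectively open uniformly in $n$ by hypothesis. A union over a computable index set of uniformly effectively open sets is effectively open, so $f$ is lower semicomputable; applying the identical argument to the intervals $\opint{-n}{r}$ (or equivalently reasoning about $-f$) yields upper semicomputability, and hence $f$ is computable.

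The only genuine subtlety, and the step I would treat most carefully, is the finiteness clause in the definition of computability (``a function with finite real values''). The displayed equivalence presupposes that $f$ takes values in $\bbR$, so I do not need to worry about the symbols $\pm\infty$ appearing in the range $\clint{-\infty}{\infty}$ of the lower-semicomputability definition; I should simply note at the outset that we are working with a finite-valued $f:\Omega\times\clint{0}{1}\to\bbR$, matching the hypothesis of the proposition. Everything else is a routine application of closure of the class of effectively open sets under effective unions and intersections, exactly paralleling the one-variable case, so I would present the argument compactly rather than spelling out the rectangle manipulations in full.
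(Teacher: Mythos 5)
Your argument is correct and matches the paper's own (very terse) justification: the paper simply remarks that the proposition follows because the intersection of effective open sets is effective open, which is precisely the key step of your forward direction, and your converse via $\{f>r\}=\bigcup_{n>r}f^{-1}(\opint{r}{n})$ is the routine complementary observation. Your attention to the finiteness of $f$'s values (needed for that union to exhaust $\{f>r\}$) is appropriate and consistent with the paper's definition of computability for finite real-valued functions.
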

The intuitive meaning of this characterization will become clearer after
observing that to ``give approximations to $\alpha$ with any given precision''
is equivalent to ``enumerate all intervals containing $\alpha$''.
Therefore for a computable function $f$ we can find approximations to
$f(\omega,p)$, if we are given appropriate approximations to $\omega$ and $p$.

We can reformulate the definition of (non-negative) lower semicomputable
function, introducing the notion of basic functions.
It is important for us that the basic functions are continuous, therefore the
dependence on the real argument will be piecewise linear, without jumps.

\begin{definition}[Basic functions, Bernoulli case]\label{def:basic-functions.Bernoulli}
We define an enumerated list of \df{basic} functions $\cE=\{e_{1},e_{2},\dots\}$
over the set $\Omega\times\clint{0}{1}$ as follows.
For $x\in\{0,1\}^{*}$, positive integer $k$ and
rational numbers $u,v$ with $u+2^{-k}<v-2^{-k}$ define the function
$g_{x,u,v,k}(\omega,p)$ as follows.
If $x\not\prefix\omega$, then it is 0.
Otherwise, its value does not depend on $\omega$ and
depends piecewise linearly on $p$:
it is 0 if $p\not\in\opint{u}{v}$ and 1 if $u+2^{-k}\le p\le v-2^{-k}$, and
varies linearly in between.
Now $\cE$ is the smallest set of functions containing all
$g_{x,u,v,k}$, and closed under maxima, minima and rational
linear combination.
\end{definition}

Now lower semicomputable functions admit the following equivalent characterization:

\begin{proposition}\label{propo:lower-semicomp-as-limit}
A function $f:\Omega\times\clint{0}{1}\to\clint{0}{\infty}$ is lower
semicomputable if and only if it is the pointwise limit
of an increasing computable sequence of basic functions.
(It follows that basic functions are computable.)
\end{proposition}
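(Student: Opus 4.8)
The statement is the product-space analogue of Proposition~\ref{propo:lower-semi-limit.seqs}, so I would prove the two implications separately, disposing of the ``if'' direction quickly and spending the work on the ``only if'' direction, where the continuity built into the basic functions (Definition~\ref{def:basic-functions.Bernoulli}) is the source of the difficulty. Throughout I use that the class $\cE$ is closed under maxima and rational linear combinations, so that finite maxima of basic functions, and rational multiples $r\,g_{x,u,v,k}$, are again basic.

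\textbf{The easy direction.} Suppose $f=\sup_n e_n$ for an increasing computable sequence $e_1\le e_2\le\dotsm$ of basic functions. For any rational $r$ we have $\{f>r\}=\bigcup_n\{e_n>r\}$, since the $e_n$ increase to $f$ pointwise. It therefore suffices to show that each superlevel set $\{e_n>r\}$ is effectively open, uniformly in $n$ and $r$. Given a finite description of $e_n$ as an element of $\cE$, the value of $e_n(\omega,p)$ depends only on a finite prefix of $\omega$ and is a piecewise-linear continuous function of $p$ on each cylinder; hence $\{e_n>r\}$ decomposes into finitely many pieces of the form (cylinder)$\times$(open $p$-interval) whose endpoints are computable from the description, and these can be covered by an enumerated family of basic rectangles. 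Dovetailing over $n$ and over these enumerations yields the algorithm required by Definition~\ref{def:lower-semicomp.product}, so $f$ is lower semicomputable.

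\textbf{The hard direction.} Now let $f$ be lower semicomputable. For each positive rational $r$, run the algorithm witnessing lower semicomputability to enumerate basic rectangles $x\Omega\times\opint{u}{v}\subseteq\{f>r\}$; for each such rectangle and each integer $k$ with $u+2^{-k}<v-2^{-k}$, form the basic function $r\,g_{x,u,v,k}$. This produces a computably enumerated family $h_1,h_2,\dots$ of basic functions. First, every $h_i$ lies below $f$: if $x\not\prefix\omega$ or $p\notin\opint{u}{v}$ then $h_i(\omega,p)=0$, and otherwise $(\omega,p)$ lies in the rectangle, so $f(\omega,p)>r\ge r\,g_{x,u,v,k}(\omega,p)$ because $g_{x,u,v,k}\le 1$. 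Second, $\sup_i h_i=f$: given $(\omega,p)$ and any $s<f(\omega,p)$, choose a rational $r$ with $s<r<f(\omega,p)$; then $(\omega,p)\in\{f>r\}$ is covered by some enumerated rectangle $x\Omega\times\opint{u}{v}$, and since $p$ lies strictly inside $\opint{u}{v}$ we have $u+2^{-k}\le p\le v-2^{-k}$ for all large $k$, whence $g_{x,u,v,k}(\omega,p)=1$ and $h_i(\omega,p)=r>s$. Setting $e_n=\max(h_1,\dots,h_n)$ gives an increasing computable sequence of basic functions with $\sup_n e_n=\sup_i h_i=f$, as required.

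\textbf{Main obstacle and the parenthetical.} The one genuinely delicate point is the continuity requirement in the $p$-direction: a basic rectangle has a discontinuous indicator in $p$, yet approximants must be continuous. The margin parameter $k$ in $g_{x,u,v,k}$ resolves this exactly, since $g_{x,u,v,k}$ is a continuous under-approximation of $\mathbf 1_{x\Omega\times\opint{u}{v}}$ that converges to it on the interior as $k\to\infty$, which is precisely what both the domination $h_i\le f$ and the convergence $\sup_i h_i=f$ exploit. Finally, the parenthetical remark follows because a basic function $e$ is trivially the increasing limit of the constant sequence $e$, hence lower semicomputable by the equivalence just proved, and the same piecewise-linear analysis used in the easy direction applied to $-e$ shows $e$ is upper semicomputable as well; by Definition~\ref{def:lower-semicomp.product} it is therefore computable.
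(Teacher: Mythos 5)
Your proof is correct and takes essentially the same route as the paper: the paper's (much terser) argument likewise reduces to the observation that the claim would be immediate if indicators of basic rectangles were allowed, and that the margin parameter $k$ in $g_{x,u,v,k}$ lets these indicators be recovered as increasing limits of continuous basic functions. You have simply written out in full the details the paper leaves implicit.
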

\begin{proof}
  This would be completely clear if for basic functions we also allowed the
indicator functions of basic rectangles and the maxima of such functions.
But we want the basic functions to be continuous (this will be important in what
follows).
One must note therefore that for $k\to\infty$ the function $g_{x,u,v,k}$
converges to the indicator function of a rectangle.
\end{proof}

The continuity of the basic functions guarantees the following important property:

\begin{proposition}\label{propo:integral-computable.Bernoulli}
  Let $f:\Omega\times \clint{0}{1}\to\bbR$ be a basic function.
The integral $\int f(\omega,p)\,B_{p}(d\omega)$ is a computable function of
the parameter $p$, uniformly in the code of the basic function $f$.
\end{proposition}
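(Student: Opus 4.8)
The plan is to put an arbitrary basic function into a normal form that is finitely tabulated in $\omega$ and explicitly piecewise-polynomial in $p$, and then to integrate term by term. First I would unwind the generators of Definition~\ref{def:basic-functions.Bernoulli}. Each $g_{x,u,v,k}$ factors as $g_{x,u,v,k}(\omega,p)=1_{x\Omega}(\omega)\,h_{u,v,k}(p)$, where $h_{u,v,k}$ is a continuous piecewise-linear function of $p$ with rational breakpoints ($u$, $u+2^{-k}$, $v-2^{-k}$, $v$) and rational slopes. The operations of max, min and rational linear combination preserve the property of being, on each sufficiently long cylinder, constant in $\omega$ and continuous piecewise-linear in $p$. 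Concretely, reading the code of $f$ (a finite term over these operations with generator leaves) I can compute a length $N$ bounding the lengths of all strings $x$ occurring in $f$; then for each $z\in\{0,1\}^{N}$ the restriction of $f$ to $z\Omega$ does not depend on $\omega$ and equals a continuous piecewise-linear function $\phi_{z}(p)$, obtained by applying the operations of the term to the leaf-restrictions (each leaf restricts to $h_{u,v,k}$ if $x\prefix z$ and to $0$ otherwise). The finitely many breakpoints and slopes of $\phi_{z}$ are rational and computable from the code of $f$, uniformly in $z$.

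With this normal form in hand, the integral collapses to a finite sum. Since $f(\omega,p)=\phi_{z}(p)$ whenever $z=\omega(1:N)$, and $B_{p}(z)=p^{k}(1-p)^{N-k}$ where $k$ is the number of ones in $z$, I obtain
\begin{equation*}
\int f(\omega,p)\,B_{p}(d\omega)=\sum_{z\in\{0,1\}^{N}}p^{k(z)}(1-p)^{N-k(z)}\,\phi_{z}(p),
\end{equation*}
a sum I can assemble effectively from the code of $f$. Each summand is the product of a rational-coefficient polynomial in $p$ and a continuous rational piecewise-linear function, hence a continuous piecewise-polynomial function of $p$ whose breakpoints and polynomial coefficients are rational; the same is then true of the whole sum, call it $I(p)$, and all of this data is extracted uniformly from the code of $f$.

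Finally I would check that such an $I$ is computable in the sense of Definition~\ref{def:computable-measure-Omega}. The decisive point is that $I$ is \emph{continuous}: this is exactly where the continuity of the basic functions (emphasized in Proposition~\ref{propo:lower-semicomp-as-limit}) is used, and it is what keeps the piecewise description from obstructing computability. Given approximations to $p$ and a target precision, I locate $p$ inside a small rational interval; the explicit rational coefficients give an effective modulus of continuity on $\clint{0}{1}$, so I may evaluate the known rational-coefficient polynomial at a nearby rational and, by continuity across the (finitely many, rational) breakpoints, output an approximation of $I(p)$ to the desired precision without ever deciding on which side of a breakpoint $p$ lies. This produces the required algorithm, uniform in the code of $f$.

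The hard part is the first paragraph: establishing the normal form and verifying that $N$, the tables $\phi_{z}$, and their breakpoints and slopes can be read off effectively and uniformly from an arbitrary term built by max, min and rational linear combination. Once that bookkeeping is settled, the integration is immediate and the computability argument is routine, the only genuinely indispensable ingredient being the continuity of the $\phi_{z}$, which guarantees that $I(p)$ has no jumps at rational arguments and is therefore computable rather than merely lower semicomputable.
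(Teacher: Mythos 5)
Your proof is correct and complete. The paper in fact states this proposition without any proof (only a parenthetical remark follows it), and your reduction of the integral to a continuous piecewise-polynomial function of $p$ with effectively computable rational breakpoints and coefficients --- an extension of the paper's own observation, made in the proof of the universal Bernoulli test, that for basic functions of $\omega$ alone the integral is a polynomial in $p$ with rational coefficients --- is exactly the argument the authors evidently intend, including the essential use of continuity to evaluate near the breakpoints without having to decide on which side of them $p$ lies.
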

(Computability is understood in the above described sense; we remark that every
computable function is continuous.
An analogous statement holds for an arbitrary computable function $f$, not only
for basic functions, but we do not need this.)

The following fact, proved in~\cite{HoyrupRojasRandomness09},
will be used in the present paper a number of times,
also in generalizations, but with essentially the same proof.

 \begin{proposition}[Trimming]\label{propo:trim}
Let $\varphi:\Omega\times\clint{0}{1}\to\clint{0}{\infty}$
be a lower semicomputable function.
There is a lower semicomputable function $\varphi'(\omega,p)$ not exceeding
$\varphi(\omega,p)$ with the property that for all $p$:
 \begin{alphenum}
  \item $\int \varphi'(\omega,p)B_{p}(d\omega)\le 2$;
  \item If $\int\varphi(\omega,p)B_{p}(d\omega)\le 1$ then
    $\varphi'(\omega,p)=\varphi(\omega,p)$ for all $\omega$.
 \end{alphenum}
 \end{proposition}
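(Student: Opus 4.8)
The plan is to construct $\varphi'$ from $\varphi$ by an effective ``cut-off as we go'' procedure that monitors the running integral and refuses to let it cross a safe threshold. Since $\varphi$ is lower semicomputable, by Proposition~\ref{propo:lower-semicomp-as-limit} it is the pointwise limit of an increasing computable sequence of basic functions $e_{1}\le e_{2}\le\dotsm$, and we may assume without loss of generality that $\varphi=\sup_{n}e_{n}$. For each stage $n$ and each parameter $p$ the integral $\int e_{n}(\omega,p)\,B_{p}(d\omega)$ is, by Proposition~\ref{propo:integral-computable.Bernoulli}, a computable function of $p$, uniformly in $n$. The idea is to define $\varphi'$ as the supremum of those $e_{n}$ whose integral has been \emph{certified} to stay below the threshold $2$; the key subtlety is that this certification must be made locally in $p$ and must preserve lower semicomputability.

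First I would set, for each $n$, a ``still safe'' open set of parameters
\begin{equation*}
  S_{n}=\bigl\{\,p : \int e_{n}(\omega,p)\,B_{p}(d\omega)<2\,\bigr\}.
\end{equation*}
Because the integral is computable in $p$ uniformly in $n$, each $S_{n}$ is effectively open uniformly in $n$, and the $S_{n}$ are nested decreasing in $n$ (the integrals increase with $n$). I would then define
\begin{equation*}
  \varphi'(\omega,p)=\sup_{n}\;e_{n}(\omega,p)\cdot 1_{S_{n}}(p),
\end{equation*}
so that at each $p$ we only admit the contributions of those stages still certified safe. Admitting an indicator $1_{S_{n}}$ would break continuity, but since $S_{n}$ is a union of rational intervals one replaces it by a lower semicomputable continuous ``ramp'' approximation from below, exactly as the basic functions $g_{x,u,v,k}$ approximate indicators of rectangles in Proposition~\ref{propo:lower-semicomp-as-limit}; this keeps $\varphi'$ lower semicomputable while only \emph{decreasing} it, so $\varphi'\le\varphi$ is automatic. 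For property~(b): if $\int\varphi(\omega,p)B_{p}(d\omega)\le 1$ at a given $p$, then by monotone convergence every $\int e_{n}(\cdot,p)\,B_{p}\le 1<2$, so $p\in S_{n}$ for all $n$ and $\varphi'(\omega,p)=\sup_{n}e_{n}(\omega,p)=\varphi(\omega,p)$.

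The main work is property~(a), the bound $\int\varphi'(\omega,p)B_{p}(d\omega)\le 2$ for \emph{every} $p$, including the bad $p$ where the full integral diverges. The point is that at such a $p$ there is a last stage $n_{0}=n_{0}(p)$ with $p\in S_{n_{0}}$, and by construction $\varphi'(\cdot,p)=e_{n_{0}}(\cdot,p)$ there (the later, uncertified stages contribute nothing), whence $\int\varphi'(\omega,p)B_{p}(d\omega)=\int e_{n_{0}}(\omega,p)B_{p}(d\omega)<2$. I expect the genuine obstacle to be the interface between the discrete stage-cutting and the continuity requirement: the ramp approximations to $1_{S_{n}}$ must be arranged so that the supremum over $n$ does not secretly overshoot the threshold near the endpoints of the intervals making up $S_{n}$, and so that the whole family is computable \emph{uniformly} (this is what lets the trimming be reused verbatim in the later generalizations the authors announce). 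Managing the bookkeeping of overlapping ramps across successive stages, so that the certified bound of $2$ is honored at boundary parameters, is the delicate step; once that is set up, properties (a) and (b) follow as above.
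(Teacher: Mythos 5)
Your proposal is correct and is essentially the paper's own argument: the paper writes $\varphi=\sum_{n}h_{n}$ as a sum of nonnegative basic functions (whose partial sums are your $e_{n}$), forms the same effectively open safe sets $S_{n}$ from the computable-in-$p$ partial integrals, and sets $h'_{n}=h_{n}$ on $S_{n}$ and $0$ elsewhere, which is exactly your stage-wise cut-off, with (a) and (b) verified the same way. The only difference is that the paper multiplies directly by the indicator of the effectively open set $S_{n}$ --- a nonnegative lower semicomputable function times such an indicator is still lower semicomputable --- so the ramp-approximation bookkeeping you flag as the delicate step can be dispensed with entirely.
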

 \begin{proof}
By Proposition~\ref{propo:lower-semicomp-as-limit}, we can represent
$\varphi(\omega,p)$  as a sum of a series of basic functions
$\varphi(\omega,p)=\sum_{n}h_{n}(\omega,p)$.
The integral $\int\sum_{i\le n} h_{i}(\omega,p)B_{p}(d\omega)$ is computable
by Proposition~\ref{propo:integral-computable.Bernoulli},
as a function of $p$ (uniformly in $n$), therefore the set $S_{n}$ of all $p$
where this integral is less than $2$
is effectively open, uniformly in $n$.

Define now $h'_{n}(\omega,p)$ as $h_{n}(\omega,p)$ for all $p\in S_{n}$,
and 0 otherwise.
The function $h'_{n}(\omega,p)$ is lower semicomputable, and the integral
$\int\sum_{i\le n} h'_{i}(\omega,p)B_{p}(d\omega)$ will be less than $2$ for all $p$.
Defining $\varphi'=\sum_{n}h'_{n}$ we obtain a lower semicomputable function,
and the theorem on the integral of monotonic limits gives that
$\int \varphi'(\omega,p)B_{p}(d\omega)$ is less than $2$ for all $p$.

It remains to note that if for some $p$ the integral
$\int\varphi(\omega,p)B_{p}(d\omega)$ does not exceed 1, then this $p$ enters all
sets $S_{n}$, and the change from $h_{n}$ to $h'_{n}$ as well as the change from
$\varphi$ to $\varphi'$ does not change it.
 \end{proof}

Now we are ready to introduce tests depending explicitly on $p$:

\begin{definition}\label{def:uniform.Bernoulli}
A \df{uniform test for Bernoulli measures} is a function $t$
of two arguments $\omega\in\Omega$ and $p\in\clint{0}{1}$; informally,
$t(\omega,p)$ measures the amount of nonrandomness (``regularity'') in
the sequence $\omega$ with respect to distribution $B_p$.
We require the following:
\begin{alphenum}
\item\label{i:uniform.Bernoulli.joint-lower}
$t(\omega,p)$ is lower semicomputable jointly as a function of the pair
  $\pair{\omega}{p}$.

 \item For every $p\in\clint{0}{1}$ the expected
value of $t(\omega,p)$ (that is $\int t(\omega,p)B_p(d\omega)$) does
not exceed~$1$.
\end{alphenum}
\end{definition}

It remains to prove the three assertions promised earlier:

\begin{lemma}\label{lem:univ-unif.Bernoulli}
There exists a universal uniform test $\t(\omega,p)$, that is a test that
multiplicatively dominates all uniform tests for Bernoulli measures.
\end{lemma}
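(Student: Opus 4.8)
The plan is to mimic the standard construction of a universal test, now adapted to the two-variable setting where joint lower semicomputability in $\pair{\omega}{p}$ replaces ordinary lower semicomputability. First I would enumerate all algorithms that, on input $r$, attempt to enumerate basic rectangles witnessing $f(\omega,p)>r$; by Proposition~\ref{propo:lower-semicomp-as-limit} each genuine lower semicomputable $f\colon\Omega\times\clint{0}{1}\to\clint{0}{\infty}$ arises as the increasing pointwise limit of a computable sequence of basic functions $e_{i}\in\cE$, so I can equivalently enumerate all computable sequences of basic functions and form their partial sums. This gives an effective list $\varphi_{1},\varphi_{2},\dots$ of all lower semicomputable functions on $\Omega\times\clint{0}{1}$, each presented as a sum of a computable series of basic functions.

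The obstacle is that an arbitrary $\varphi_{j}$ from this list need not be a uniform test: the constraint $\int\varphi_{j}(\omega,p)\,B_{p}(d\omega)\le 1$ may fail for some values of $p$. In the single-measure case one simply checks the expectation before admitting each basic function, but here the expectation is a function of the continuous parameter $p$, and admitting or rejecting a whole function wholesale would be too crude since the test might satisfy the bound only on part of $\clint{0}{1}$. This is exactly what the Trimming Proposition (Proposition~\ref{propo:trim}) is designed to handle. So the key step is: apply Trimming to each $\varphi_{j}$ to obtain a lower semicomputable $\varphi_{j}'\le\varphi_{j}$ with $\int\varphi_{j}'(\omega,p)\,B_{p}(d\omega)\le 2$ for every $p$, and with $\varphi_{j}'=\varphi_{j}$ whenever $\varphi_{j}$ already had expectation at most $1$ at $p$. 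In particular, if $\varphi_{j}$ is itself a uniform test, then $\varphi_{j}'=\varphi_{j}$ everywhere.

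Having replaced the list by the trimmed functions $\varphi_{j}'$, each of which has $B_{p}$-expectation at most $2$ uniformly in $p$, I would set
\begin{equation*}
  \t(\omega,p)=\sum_{j}2^{-j-1}\,\varphi_{j}'(\omega,p).
\end{equation*}
This is lower semicomputable jointly in $\pair{\omega}{p}$, being a computable sum of lower semicomputable functions, and its $B_{p}$-expectation is at most $\sum_{j}2^{-j-1}\cdot 2=1$ for every $p$, so $\t$ is a uniform test. For universality, given any uniform test $t$, it occurs as some $\varphi_{j}$ in the enumeration; since its expectation is already bounded by $1$ at every $p$, trimming leaves it unchanged, so $\varphi_{j}'=t$ and hence $\t(\omega,p)\ge 2^{-j-1}\,t(\omega,p)$, giving multiplicative domination with constant $c=2^{j+1}$. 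The only points needing care are verifying that the enumeration is effective as a joint lower semicomputability presentation (handled by Proposition~\ref{propo:lower-semicomp-as-limit}) and that Trimming applies uniformly across the list so that the trimmed functions remain jointly lower semicomputable in $\pair{\omega}{p}$ and uniformly in $j$, which follows by inspecting its proof.
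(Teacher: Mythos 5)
Your proposal is correct and follows essentially the same route as the paper's proof: enumerate all lower semicomputable functions on $\Omega\times\clint{0}{1}$, trim each one via Proposition~\ref{propo:trim} so that all expectations stay below $2$ while genuine uniform tests pass through unchanged, and sum the results with coefficients totaling at most $1/2$. The extra detail you supply on the effective enumeration and the universality check is consistent with what the paper leaves implicit.
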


\begin{lemma}\label{lem:Bernoulli-from-unif}
For the universal uniform test $\t$ of lemma~\ref{lem:univ-unif.Bernoulli},
the function $\t'(\omega)=\inf_p \t(\omega,p)$
coincides (to within a bounded factor in both
directions) with the universal Bernoulli test of
Definition~\ref{def:univ.Bernoulli}.
\end{lemma}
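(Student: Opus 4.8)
The plan is to prove the two multiplicative inequalities separately. For the direction $\t'(\omega)\lem\t_{\cB}(\omega)$, I would first check that $\t'(\omega)=\inf_p\t(\omega,p)$ is itself a Bernoulli test in the sense of Definition~\ref{def:Bernoulli-test}, and then invoke the universality of $\t_{\cB}$. The measure-condition is immediate: for each fixed $q\in\clint{0}{1}$ we have $\t'(\omega)=\inf_p\t(\omega,p)\le\t(\omega,q)$ pointwise in $\omega$, so integrating against $B_q$ gives $\int\t'(\omega)\,B_q(d\omega)\le\int\t(\omega,q)\,B_q(d\omega)\le 1$, using the second clause of Definition~\ref{def:uniform.Bernoulli}. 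Thus $\t'$ has $B_q$-integral at most $1$ for every $q$, which is exactly the defining property of a Bernoulli test. The one subtlety is lower semicomputability: an infimum over a real parameter $p$ is not obviously lower semicomputable. The point I would emphasize is that $\clint{0}{1}$ is compact and $\t$ is lower semicomputable jointly in $\pair{\omega}{p}$, so for each rational $r$ the set $\{(\omega,p):\t(\omega,p)>r\}$ is effectively open; the complement in $\omega$ of the fiber where the infimum exceeds $r$ can be captured by a compactness argument entirely analogous to Proposition~\ref{propo:inf-lower-semicomp.seqs}. Once $\t'$ is confirmed lower semicomputable, universality of $\t_{\cB}$ yields $\t'(\omega)\lem\t_{\cB}(\omega)$.

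For the reverse direction $\t_{\cB}(\omega)\lem\t'(\omega)$, the idea is to regard the universal Bernoulli test $\t_{\cB}$, which depends only on $\omega$, as a degenerate uniform test that happens to ignore its second argument. Concretely, set $s(\omega,p)=\t_{\cB}(\omega)$. This $s$ is trivially lower semicomputable jointly in $\pair{\omega}{p}$ since it does not depend on $p$, and for every $p$ we have $\int s(\omega,p)\,B_p(d\omega)=\int\t_{\cB}(\omega)\,B_p(d\omega)\le 1$ by the very definition of a Bernoulli test. Hence $s$ is a uniform test for Bernoulli measures, and by the universality asserted in Lemma~\ref{lem:univ-unif.Bernoulli} there is a constant $c$ with $s(\omega,p)\le c\cdot\t(\omega,p)$ for all $\pair{\omega}{p}$. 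Taking the infimum over $p$ on both sides gives $\t_{\cB}(\omega)=\inf_p s(\omega,p)\le c\cdot\inf_p\t(\omega,p)=c\cdot\t'(\omega)$, which is the desired $\t_{\cB}(\omega)\lem\t'(\omega)$.

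Combining the two inequalities gives $\t'(\omega)\eqm\t_{\cB}(\omega)$, as claimed. The main obstacle is the lower semicomputability of $\t'=\inf_p\t(\cdot,p)$ in the first direction: unlike a supremum, an infimum over a parameter does not automatically preserve lower semicomputability, and it is precisely here that the finiteness of the alphabet and the compactness of the parameter space $\clint{0}{1}$ must be exploited, mirroring the compactness argument already used in Proposition~\ref{propo:inf-lower-semicomp.seqs}. The reverse direction, by contrast, is essentially free once one observes that a measure-independent Bernoulli test is a special case of a uniform test.
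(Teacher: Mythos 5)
Your proposal is correct and follows essentially the same route as the paper: the integral bound for $\t'$ is immediate from $\t'\le\t(\cdot,q)$, the lower semicomputability of $\inf_p\t(\omega,p)$ is handled by the compactness argument of Proposition~\ref{propo:inf-lower-semicomp.seqs} (the paper also cites the general Proposition~\ref{propo:lsc-min}), and the reverse inequality comes from viewing any Bernoulli test as a uniform test that ignores its second argument. You correctly identify the lower semicomputability of the infimum as the only nontrivial step.
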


This lemma implies that $\omega$ is a Bernoulli sequence iff
$\t'(\omega)$ is finite, that is $\t(\omega,p)$ is finite for some
$p\in\clint{0}{1}$.

\begin{lemma}\label{lem:Bernoulli-oracle}
For a fixed~$p$ the function $\t_p(\omega)=\t(\omega,p)$
coincides (to within a bounded factor)
with the universal randomness test with respect to
$B_p$ relativized with oracle~$p$.
\end{lemma}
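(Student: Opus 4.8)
The plan is to prove Lemma~\ref{lem:Bernoulli-oracle} by showing two inequalities, exactly as one proves the analogous universality statements earlier in the paper. Fix $p\in\clint{0}{1}$ and consider it given as an oracle, so that $B_p$ becomes a $p$-computable measure. By Theorem~\ref{thm:universality.cptable} (relativized to $p$) there is a universal $B_p$-test $\t_{B_p}(\omega)$, computed with oracle $p$; I must show $\t_p(\omega)=\t(\omega,p)\eqm\t_{B_p}(\omega)$, where the multiplicative constants are allowed to depend on $p$.

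First I would show $\t(\omega,p)\lem\t_{B_p}(\omega)$. The function $\omega\mapsto\t(\omega,p)$ is, for fixed $p$, lower semicomputable relative to the oracle $p$: indeed the enumeration of basic rectangles whose union is $\{\pair{\omega}{q}:\t(\omega,q)>r\}$ can be intersected with the slice $q=p$ using the oracle $p$ to decide, for each rectangle $x\Omega\times\opint{u}{v}$, whether $p\in\opint{u}{v}$. By condition~(b) of Definition~\ref{def:uniform.Bernoulli}, its $B_p$-integral is at most $1$, so $\omega\mapsto\t(\omega,p)$ is a $B_p$-test relative to $p$. Hence it is dominated by the universal one, $\t(\omega,p)\lem\t_{B_p}(\omega)$.

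The reverse inequality $\t_{B_p}(\omega)\lem\t(\omega,p)$ is where the real content lies, and it is the step I expect to be the main obstacle. The issue is that $\t_{B_p}$ is built using $p$ as an oracle, so it need not extend to a function that is jointly lower semicomputable in $\pair{\omega}{p}$; thus I cannot directly feed it into the maximality of $\t$. The plan is to build a single jointly lower semicomputable uniform test $s(\omega,p)$ whose slice at the fixed $p$ multiplicatively dominates $\t_{B_p}$, after which maximality of $\t$ (Lemma~\ref{lem:univ-unif.Bernoulli}) gives $\t_{B_p}(\omega)\lem s(\omega,p)\lem\t(\omega,p)$. To construct $s$, I would take the enumeration of all jointly lower semicomputable functions as increasing limits of the basic functions $\cE$ of Definition~\ref{def:basic-functions.Bernoulli}, and apply the Trimming Proposition~\ref{propo:trim} to each candidate. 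Trimming guarantees that every candidate is replaced by a function with $\int s_i(\omega,q)\,B_q(d\omega)\le 2$ for all $q$, while leaving untouched (by part~(b) of the proposition) every slice $q$ on which the original integral was already at most $1$. Summing the trimmed functions with coefficients $2^{-i-1}$ yields a jointly lower semicomputable $s$ with $\int s(\omega,q)\,B_q(d\omega)\le 1$ for all $q$, i.e.\ a uniform test. The point is that the $p$-relative construction of $\t_{B_p}$ proceeds by the same enumerate-and-check scheme (Theorem~\ref{thm:universality.cptable}), and each ingredient of $\t_{B_p}$, being a $p$-computable lower semicomputable function, appears as the slice at $p$ of some jointly lower semicomputable basic combination; the crucial feature is that the checking at the single value $p$ never fails the trimming at $p$, so the corresponding slice is preserved and contributes fully to $s(\omega,p)$. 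The careful bookkeeping --- matching the oracle computation producing $\t_{B_p}$ with a joint enumeration surviving trimming precisely at the coordinate $p$ --- is the delicate part; once it is set up, combining the two inequalities gives $\t_p(\omega)\eqm\t_{B_p}(\omega)$, which is the assertion of the lemma.
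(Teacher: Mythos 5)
Your first inequality and the overall architecture of the second one (extend to a jointly lower semicomputable function, then trim via Proposition~\ref{propo:trim} and invoke maximality of $\t$) match the paper's proof. But the step you defer as ``careful bookkeeping'' is in fact the one place where a genuine idea is needed, and your proposal does not supply it. You assert that each ingredient of $\t_{B_p}$, being $p$-computable, ``appears as the slice at $p$ of some jointly lower semicomputable basic combination.'' That is exactly the claim that a function enumerated from below by an oracle machine reading the \emph{binary expansion} of $p$ extends to a function lower semicomputable in the pair $\pair{\omega}{q}$, where $q$ is accessed only through rational interval approximations (which is what joint lower semicomputability on $\Omega\times\clint{0}{1}$ means, per Definition~\ref{def:lower-semicomp.product}). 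This is not automatic: the map from a real $q$ to its binary expansion is not continuous at dyadic rationals, so an oracle computation does not directly yield an effectively open preimage of $\opint{r}{\infty}$ in the product space.

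The paper closes this gap with a case distinction. If $p$ is computable, no oracle is needed: $t$ is outright lower semicomputable, and one adds a dummy variable $q$ before trimming. If $p$ is not computable, then $p$ is irrational, and for an irrational number the bits of the binary expansion \emph{can} be computed from any shrinking sequence of rational intervals converging to it; hence the oracle machine enumerating $t$ from below can be resimulated as a machine that enumerates from below a function $\tilde t(\omega,q)$ of the real argument $q$, jointly lower semicomputable and agreeing with $t(\omega)$ at $q=p$. Only then does trimming apply. Without this observation your construction of $s$ has no candidate $u_i$ whose slice at $p$ dominates $\t_{B_p}$ while surviving the trimming there, so the inequality $\t_{B_p}(\omega)\lem\t(\omega,p)$ is not established.
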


\begin{proof}[Proof of Lemma~\protect\ref{lem:univ-unif.Bernoulli}]
Generate all lower semicomputable functions;
using Proposition~\ref{propo:trim}, they
can be then trimmed to guarantee that all expectations do not
exceed, say, $2$, and all uniform tests should get through
unchanged.
Sum up all the trimmed functions with coefficients whose sum is less than $1/2$.
\end{proof}

\begin{proof}[Proof of Lemma~\protect\ref{lem:Bernoulli-from-unif}]
Let us show that $\t'(\omega)$ is a universal Bernoulli test.
The integral of this
function with respect to $B_p$ does not exceed $1$ since this
function does not exceed $\t(\omega,p)$ for that $p$.
The statement that this function is lower semicomputable
(as a function of $\omega$)
is analogous to Proposition~\ref{propo:inf-lower-semicomp.seqs}, and the
proof is also analogous, relying on compactness.
Both are special cases of the general theorem given in
Proposition~\ref{propo:lsc-min}.

Therefore the function $\inf_p \t(\omega,p)$ is a Bernoulli test.
The universality (maximality) follows obviously, since
any Bernoulli test can be considered a uniform Bernoulli
test of two variables that does not depend on variable $p$.
\end{proof}


\begin{proof}[Proof of Lemma~\protect\ref{lem:Bernoulli-oracle}]
Consider first the case when $p$ is a computable real number.
Then the function $\t_p\colon \omega\mapsto \t(\omega,p)$
(where $\t$ is a uniform randomness test for Bernoulli measures) is lower
semicomputable (we can enumerate all intervals that contain
$p$ and combine then with an algorithm for $\t$; in this way we
represent $\t_p$ as the least upper bound of the computable
sequence of basic functions).

A similar argument works for an arbitary $p$ and shows that $\t_{p}$ is lower
semicomputable with a $p$-oracle.
Thus, $\t_{p}$ does not exceed the universal relativized test with
respect to~$B_p$.

The reverse implication is a bit more difficult.
Assume that
$t$ is a lower semicomputable (with oracle $p$) randomness test
with respect to $B_p$.
We need to find a uniform Bernoulli test $t'$ that majorizes it
(for a given $p$).
This $t'$ must be lower
semicomputable, now (a subtle but important point)
using $p$ as an argument of the function $t'$, not as an
oracle.
In other words, one has to extend a function defined initially only for a single
$p$, to all values of $p$, while also guaranteeing the bound on the integral.

As a warmup consider the case of computable $p$.
Then no oracle is needed, and $t$ is lower semicomputable.
Adding dummy variable $p$ we get a
lower semicomputable function of two arguments.
But this
function may not be a uniform test since its expectation with
respect to $B_q$ may be arbitrary if $q\ne p$.
However, Proposition~\ref{propo:trim} helps
transform it into a $t'$ (which will now really depend on
$q$) with $\int  t'(\omega,q)B_{q}(d\omega) \le 2$ for all $q$ and
$t'(\cdot,p)=t(\cdot,p)$.
Dividing $t'$ into half provides a uniform test.

Now consider the case of noncomputable $p$.
In this case $p$ is irrational, so the bits of its
binary expansion can be obtained from any sequence of decreasing rational
intervals that converge to $p$.
Therefore an oracle machine that
enumerates approximations for $t$ from below (having $p$ as an
oracle) can be transformed into a machine that enumerates from
below some function $\tilde t(\omega,q)$, that coincides with
$t(\omega)$ if $q=p$.
The function $\tilde t$ may not be a uniform
Bernoulli test (its expectations for $q\ne p$ can be arbitrary);
but it again can be trimmed with the help of Proposition~\ref{propo:trim}.
\end{proof}


\section{Arbitrary measures over binary sequences}

In this section, we generalize the theory to arbitrary measures, not only
Bernoulli ones, but still stay in the space $\Omega$ of binary sequences.

\begin{notation}
The set of all probability measures over the space
$\Omega$ is denoted by $\cM(\Omega)$.
(Recall that the measure of the whole space $\Omega$ is equal to 1.)
\end{notation}

\subsection{Uniform randomness tests}\label{subsec:uniform-tests}

\begin{definition}[Uniform tests]\label{def:uniform-test.bin-Cantor}
 A \df{uniform} test is a lower semicomputable function $t(\omega,\P)$ of
two arguments ($\omega$ is a sequence, $\P$ is a measure on $\Omega$)
with
\begin{equation*}
\int t(\omega,\P)\, \P(d\omega) \le 1
\end{equation*}
for every measure $\P$.
\end{definition}

However, we have to define carefully the
notion of a lower semicomputability in this case.
The set $\cM(\Omega)$ of all measures
is a closed subset of the infinite (countable) product
\begin{equation}\label{eq:inf-product}
 \Xi=\clint{0}{1}\times \clint{0}{1}\times \clint{0}{1}\times\dotsm
\end{equation}
(the measure is defined by the values $\P(x)$ for all strings $x$;
these values should satisfy the equations~\eqref{eq:measure.Omega}, so we get a
closed subset).

Let us introduce basic open sets and computability notions
for the set $\Omega\times\cM(\Omega)$.

\begin{definition}
  An (open) \df{interval} (basic open set) in the space of measures is
given by a finite set of conditions of type $u<\P(y)<v$ where $y$ is some binary
string and $u,v$ are some rational numbers; the basic open set consists of the
measures $\P$ that satisfy these conditions.
A \df{basic open set} in $\Omega\times\cM(\Omega)$ has the form $x\Omega\times
\beta$, (product of intervals in $\Omega$ and $\cM(\Omega)$)
where $\beta$ is a basic open set of measures.
Now lower and upper semicomputability and computability are defined in terms of
these basic open sets just as they were defined for $\Omega\times\clint{0}{1}$
in Definition~\ref{def:lower-semicomp.product}.
\end{definition}

In much of what follows, we will exploit the fact that, due to the finiteness of
the alphabet $\{0,1\}$, the space $\Omega$ of infinite binary sequences is
compact, and also the set of measures $\cM(\Omega)$ is compact.
Recall that a set $C$ is compact if every cover of $C$ by open sets contains a
finite subcover.
We need, however, an effective version of compactness:

\begin{definition}[Effective compactness]\label{def:effectively-compact}
  A compact subset $C$ of $\cM(\Omega)$ is called
\df{effectively compact} if the set
 \begin{align*}
   \setOf{S}{S \text{ is a finite set of basic open sets and } \bigcup_{E\in S}E\supseteq C}
 \end{align*}
is enumerable.
\end{definition}

The set $\cM(\Omega)$ itself is, as it is easy to see, compact and effectively
compact.
It is compact, as said above, as a closed set in the product of compact spaces,
and the effectivity follows from the fact that we can check whether some given
basic sets cover the whole space (we are dealing with linear equations and
inequalities in a finite number of variables, where everything is
algorithmically decidable).
From here, it also follows:

\begin{proposition}\label{propo:closed-to-compact}
Every effectively closed subset of $\cM(\Omega)$ is effectively compact.
\end{proposition}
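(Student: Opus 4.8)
The plan is to reduce effective compactness of $C$ to the effective compactness of the whole space $\cM(\Omega)$, which is available from the paragraph preceding the proposition. First I would unfold the hypothesis: since $C$ is effectively closed, its complement $\cM(\Omega)\setminus C$ is effectively open, so there is an algorithm enumerating a sequence of basic open sets $V_1,V_2,\dots$ whose union is exactly $\cM(\Omega)\setminus C$. The set $C$, being closed in the compact space $\cM(\Omega)$, is itself compact, so the only thing left to establish is the enumerability condition in Definition~\ref{def:effectively-compact}: namely that the collection of finite sets $S$ of basic open sets with $\bigcup_{E\in S}E\supseteq C$ is enumerable.

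The crux is the following equivalence. A finite family $S$ covers $C$ if and only if $\big(\bigcup_{E\in S}E\big)\cup\big(\bigcup_i V_i\big)=\cM(\Omega)$, because the $V_i$ together cover precisely $\cM(\Omega)\setminus C$. Since $\cM(\Omega)$ is compact, such a covering of the whole space implies that already finitely many of the sets suffice; keeping all of the finite family $S$ and adjoining an initial segment $V_1,\dots,V_n$ can only enlarge the cover, so there must exist some $n$ with $S\cup\{V_1,\dots,V_n\}\supseteq\cM(\Omega)$. This is what lets me invoke the effective compactness of $\cM(\Omega)$: the set of finite families of basic open sets covering $\cM(\Omega)$ is enumerable by hypothesis.

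Concretely, the semidecision procedure for ``$S$ covers $C$'' runs as follows. Enumerate the $V_i$, and for increasing $n$ test whether the finite family $S\cup\{V_1,\dots,V_n\}$ belongs to the enumerable set of covers of $\cM(\Omega)$; halt and accept $S$ as soon as one such $n$ is found. Dovetailing this over all finite families $S$ produces an enumeration of exactly the finite covers of $C$. To verify correctness I would check both directions. If $S$ does cover $C$, the equivalence above guarantees some $n$ works, so the procedure halts. For the converse, I would argue that the procedure is correct precisely because the $V_i$ are disjoint from $C$: if $S$ fails to cover $C$, pick a point $p\in C\setminus\bigcup_{E\in S}E$; since $p\in C$ we have $p\notin V_i$ for every $i$, so $p$ witnesses that $S\cup\{V_1,\dots,V_n\}$ never covers $\cM(\Omega)$, and the procedure runs forever without accepting. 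The main point requiring care is exactly this converse: one must use that the enumerated $V_i$ lie entirely outside $C$ so that an uncovered point of $C$ can never be mopped up by the $V_i$, which is what makes non-covers non-halting rather than merely slow.
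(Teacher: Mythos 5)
Your proof is correct and follows essentially the same route as the paper: write the complement of $C$ as an enumerated union of basic open sets $V_i$, observe that $S$ covers $C$ iff $S$ together with finitely many $V_i$ covers all of $\cM(\Omega)$, and then semidecide the latter using the effective compactness of the whole space. The careful verification of the converse direction (an uncovered point of $C$ lies in no $V_i$) is exactly the point the paper leaves implicit.
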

\begin{proof}
Let an effectively closed subset $C$ of $\cM(\Omega)$.
be the complement of the union of a list $B_{1},B_{2},\dots$
of basic open sets.
Then a finite set $S$ of basic open sets covers $C$
if and only if together with a finite set of the $B_{i}$, it covers the whole space.
And this property is decidable.
\end{proof}

Effective compactness implies effective closedness.
This follows from the following two properties of our space and our basic open
sets:
\begin{alphenum}
  \item\label{i:separation} For every closed set $F$ and every point $x$ outside
    $F$ there are two disjoint open sets containing $F$ and $x$.
  \item\label{i:disjointness}
For every pair of basic open sets, it is uniformly decidable whether they are disjoint.
\end{alphenum}
Let $F$ be an effectively compact set.
We call a basic open set $B$ \df{manifestly disjoint} of $F$, if there is a finite
set of basic open sets $S$ disjoint of $B$ covering $F$.
Due to the effective compactness of $F$ and property~\eqref{i:disjointness},
the set of all basic open sets
manifestly disjoint of $F$ is enumerable.
Property~\eqref{i:separation} implies that it covers the complement of $F$.

In view of later generalization to cases where the space
itself may not be compact, we will refer to
some effectively closed sets of $\cM(\Omega)$ as effectively compact.

Now we introduce a dense set of computable functions called
basic functions on the set $\Omega\times\cM(\Omega)$, similarly
to Definition~\ref{def:basic-functions.Bernoulli}.
Their specific form is not too important.

\begin{definition}[Basic functions for binary sequences and arbitrary measures]
The set of \df{basic} functions over the set $\Omega\times\cM(\Omega)$ is
defined analogously to Definition~\ref{def:basic-functions.Bernoulli},
starting from the functions
 \begin{align*}
 g_{x,y,u,v,k}:\Omega\times\cM(\Omega)\to\clint{0}{1}
 \end{align*}
with $x,y\in\{0,1\}^{*}$ defined as follows.
If $x\not\prefix\omega$, then $g_{x,y,u,v,k}(\omega,\P)=0$.
Otherwise, its value does not depend on $\omega$ and
depends piecewise linearly on $\P(y)$ in a way that
it is 0 if $\P(y)\not\in\opint{u}{v}$ and
1 if $u+2^{-k}\le\P(y)\le v-2^{-k}$.
\end{definition}

The analogue of Proposition~\ref{propo:lower-semicomp-as-limit} holds again:
a lower semicomputable function is the monotonic limit of a computable sequence
of basic functions (which themselves are computable).

The analogue of Proposition~\ref{propo:integral-computable.Bernoulli} holds also: the
integral $\int f(\omega,\P)\P(d\omega)$
of a basic function is computable as a function of the measure $\P$, uniformly
in the number of the basic function.

Finally, the analogue of Proposition~\ref{propo:trim} holds again:

\begin{theorem}[Trimming]\label{thm:trim.Cantor}
Let $\varphi(\omega,\P)$ be a lower semicomputable function.
Then there exists a lower semicomputable function $\varphi'(\omega,\P)$ such that
for all $\P$:
\begin{alphenum}
 \item $\int \varphi'(\omega,\P)\P(d\omega)\le 2$,
 \item if $\int \varphi(\omega,\P)\P(d\omega)\le 1$ then
$\varphi'(\omega,\P)=t(\omega,\P)$ for all $\omega$.
\end{alphenum}
\end{theorem}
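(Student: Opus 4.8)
The plan is to follow the same strategy as the proof of Proposition~\ref{propo:trim}, since the statement is explicitly billed as ``the analogue of Proposition~\ref{propo:trim}'' and the excerpt has already asserted that the two ingredients that proof relied on carry over to the present setting. Concretely, I would first invoke the analogue of Proposition~\ref{propo:lower-semicomp-as-limit} to write the given lower semicomputable $\varphi(\omega,\P)$ as a pointwise sum of a series of nonnegative basic functions $\varphi(\omega,\P)=\sum_{n}h_{n}(\omega,\P)$, where each $h_{n}$ is a basic function on $\Omega\times\cM(\Omega)$.

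Next I would use the analogue of Proposition~\ref{propo:integral-computable.Bernoulli}: the integral $\int\bigl(\sum_{i\le n}h_{i}(\omega,\P)\bigr)\P(d\omega)$ is computable as a function of the measure $\P$, uniformly in $n$, because each $h_{i}$ is basic and the finite sum is again basic. Consequently the set $S_{n}=\setOf{\P}{\int\sum_{i\le n}h_{i}(\omega,\P)\P(d\omega)<2}$ is effectively open in $\cM(\Omega)$, uniformly in $n$. I would then define the trimmed pieces $h'_{n}(\omega,\P)$ to equal $h_{n}(\omega,\P)$ when $\P\in S_{n}$ and $0$ otherwise; each $h'_{n}$ remains lower semicomputable because $S_{n}$ is effectively open, and by construction $\int\sum_{i\le n}h'_{i}(\omega,\P)\P(d\omega)\le 2$ for every measure $\P$.

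Setting $\varphi'=\sum_{n}h'_{n}$ yields a lower semicomputable function with $\varphi'\le\varphi$, and the monotone convergence theorem gives $\int\varphi'(\omega,\P)\P(d\omega)\le 2$ for all $\P$, establishing part~(a). For part~(b), if $\int\varphi(\omega,\P)\P(d\omega)\le 1$ for a particular $\P$, then every partial integral is also at most $1<2$, so that $\P$ lies in every $S_{n}$; hence no piece is ever zeroed out at that $\P$, and $\varphi'(\omega,\P)=\varphi(\omega,\P)$ for all $\omega$. (I note the statement writes $t$ where it presumably means $\varphi$ in clause~(b); I would phrase the conclusion with $\varphi$.)

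Since the structural bookkeeping is identical to the Bernoulli case, I do not expect any single step to be a genuine obstacle; the only point demanding care—and the one place where the generalization is not purely cosmetic—is verifying that the integral of a basic function is computable \emph{as a function of the measure $\P$} (rather than of a single real parameter $p$) and that the resulting superlevel sets $S_{n}$ are effectively open in the space $\cM(\Omega)$ with its product topology. This rests on the continuity of the basic functions in the coordinates $\P(y)$ and on the analogue of Proposition~\ref{propo:integral-computable.Bernoulli}, both of which the excerpt has already granted, so I would simply cite those analogues and reproduce the argument of Proposition~\ref{propo:trim} verbatim with $B_{p}(d\omega)$ replaced by $\P(d\omega)$ and the parameter $p$ replaced by the measure $\P$.
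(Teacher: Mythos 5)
Your proposal is correct and is exactly the argument the paper intends: it states that the proof is completely analogous to that of Proposition~\ref{propo:trim}, relying on the already-asserted analogues of Propositions~\ref{propo:lower-semicomp-as-limit} and~\ref{propo:integral-computable.Bernoulli}, which is precisely what you carry out. Your parenthetical remark that the $t$ in clause~(b) should read $\varphi$ is also right.
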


The proof is completely analogous to the proof we gave for
Proposition~\ref{propo:trim}.


This allows the construction of a universal test as a function of a sequence and
an arbitrary measure over $\Omega$:


\begin{theorem}
There exists a maximal (maximal to within a bounded factor)
uniform randomness test.
\end{theorem}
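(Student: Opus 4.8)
The plan is to mimic the construction of the universal test that has already appeared twice in the excerpt — once for computable measures (Theorem~\ref{thm:universality.cptable}) and once for Bernoulli measures (Lemma~\ref{lem:univ-unif.Bernoulli}). The strategy is the standard enumerate-trim-and-sum argument. First I would enumerate all algorithms producing lower semicomputable functions $t(\omega,\P)$ of the pair $(\omega,\P)$, in the sense of lower semicomputability over $\Omega\times\cM(\Omega)$ defined above. Each such algorithm yields, by the analogue of Proposition~\ref{propo:lower-semicomp-as-limit}, a monotonic limit of basic functions. The difficulty is that an arbitrary enumerated function need not satisfy the integral bound $\int t(\omega,\P)\,\P(d\omega)\le 1$ for every $\P$, so it cannot be used as a test directly.

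\emph{Here the key tool is the Trimming Theorem} (Theorem~\ref{thm:trim.Cantor}), which is the exact analogue for $\Omega\times\cM(\Omega)$ of Proposition~\ref{propo:trim}. Given any enumerated lower semicomputable function $\varphi_i$, I would apply trimming to obtain $\varphi_i'$ with $\int\varphi_i'(\omega,\P)\,\P(d\omega)\le 2$ for \emph{all} $\P$, and with the guarantee that if $\varphi_i$ already was a genuine uniform test (integral $\le 1$ for all $\P$), then $\varphi_i'=\varphi_i$ identically. Thus every actual uniform test survives the trimming unchanged, while every enumerated function is forced down to have expectation at most $2$ against every measure.

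\emph{The final step} is to take the weighted sum
\begin{equation*}
\t(\omega,\P)=\sum_i c_i\,\varphi_i'(\omega,\P),
\end{equation*}
with positive rational coefficients $c_i$ satisfying $\sum_i c_i\le 1/2$, for instance $c_i=2^{-i-2}$. Lower semicomputability of $\t$ follows because a computable sum of lower semicomputable functions is lower semicomputable, and the joint lower semicomputability in $(\omega,\P)$ is preserved throughout. For the integral bound, monotone convergence under the integral sign gives, for every $\P$,
\begin{equation*}
\int \t(\omega,\P)\,\P(d\omega)=\sum_i c_i\int\varphi_i'(\omega,\P)\,\P(d\omega)\le \sum_i c_i\cdot 2\le 1,
\end{equation*}
so $\t$ is itself a uniform test. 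Maximality is immediate: any uniform test $t$ appears as some $\varphi_i$ in the enumeration, is left fixed by trimming, and hence $\t\ge c_i\,t$, giving $t\le c_i^{-1}\t$ with the constant $c_i^{-1}$ independent of $\omega$ and $\P$.

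\emph{The main obstacle} is entirely packaged into the Trimming Theorem, and in particular into the computability (as a function of $\P$, uniformly in the basic-function index) of the integrals $\int f(\omega,\P)\,\P(d\omega)$ of basic functions — the analogue of Proposition~\ref{propo:integral-computable.Bernoulli}. This computability is what lets the trimming procedure effectively locate, for each finite partial sum of the series representing $\varphi_i$, the effectively open set of measures $\P$ on which the running integral stays below $2$, and switch off the contribution elsewhere. Since the excerpt asserts that this integral-computability and the Trimming Theorem carry over to $\Omega\times\cM(\Omega)$ with proofs identical to the Bernoulli case, the construction of the universal test goes through verbatim, and no genuinely new idea beyond the Bernoulli argument is required.
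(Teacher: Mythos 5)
Your proposal is correct and follows essentially the same route as the paper: the paper's own (very terse) proof likewise enumerates the lower semicomputable functions on $\Omega\times\cM(\Omega)$, applies the trimming theorem (Theorem~\ref{thm:trim.Cantor}) to force each into an almost-test while leaving genuine tests unchanged, and sums the results with coefficients summing to at most $1/2$, exactly as in Lemma~\ref{lem:univ-unif.Bernoulli}. Your elaboration of the integral bound via monotone convergence and of the role of the computability of $\int f(\omega,\P)\,\P(d\omega)$ for basic functions fills in precisely the details the paper leaves implicit.
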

\begin{proof}
We use the same approach as before: we
trim a lower semicomputable function in such a way that it
becomes a test (or almost a test) and remains untouched if it
were a test in the first place.
\end{proof}

\begin{definition}
Let us fix a universal uniform randomness test $\t(\omega,\P)$.

We call a sequence $\omega$ \df{uniformly random}
with respect to a (not necessarily computable) measure $\P$
if $\t(\omega,\P)<\infty$.
\end{definition}

Let us show that for computable measures, the new definition coincides with the
old one.

\begin{proposition}\label{propo:unif-comput.Cantor}
Let $\P$ be a computable measure, let $\t_{\P}(\omega)$ be a
universal (average-bounded) randomness test for $\P$ as,
and $\t(\omega,\P)$ the universal uniform test defined above.
Then there are constants $c_{1},c_{2}>0$ such that
$c_{1}\t_{\P}(\omega)\le \t(\omega,\P)\le c_{2}\t_{\P}(\omega)$.
\end{proposition}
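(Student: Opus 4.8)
The plan is to establish the two inequalities $c_1\t_{\P}(\omega)\le\t(\omega,\P)$ and $\t(\omega,\P)\le c_2\t_{\P}(\omega)$ separately, for a fixed computable measure $\P$. The key point in both directions is that computability of $\P$ lets us pass freely between a function of $\omega$ alone and a function of the pair $\pair{\omega}{\Q}$, restricted along $\Q=\P$.

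For the inequality $c_1\t_{\P}(\omega)\le\t(\omega,\P)$, I would start from the universal uniform test $\t(\omega,\Q)$ and fix the second argument at $\P$. Since $\P$ is computable, the map $\omega\mapsto\t(\omega,\P)$ is lower semicomputable: given a rational $r$, we enumerate the basic rectangles $x\Omega\times\beta$ witnessing $\t(\omega,\Q)>r$, and keep those whose measure-interval $\beta$ contains $\P$, which we can check effectively because $\P$ is computable (we can decide $u<\P(y)<v$ for rational $u,v$ by approximating $\P(y)$). By Definition~\ref{def:uniform-test.bin-Cantor} we have $\int\t(\omega,\P)\,\P(d\omega)\le 1$, so $\t(\cdot,\P)$ is an average-bounded $\P$-test in the sense of Definition~\ref{def:test-computable-measure}. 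By the universality of $\t_{\P}$ (Theorem~\ref{thm:universality.cptable}), there is a constant with $\t(\omega,\P)\le c\cdot\t_{\P}(\omega)$; this is the upper bound $\t(\omega,\P)\le c_2\t_{\P}(\omega)$.

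For the reverse inequality $c_1\t_{\P}(\omega)\le\t(\omega,\P)$, I would take the universal $\P$-test $\t_{\P}(\omega)$, which is lower semicomputable in $\omega$ alone (here we use that $\P$ is computable), and promote it to a function of two arguments by adding a dummy measure variable: set $\varphi(\omega,\Q)=\t_{\P}(\omega)$ for all $\Q$. This $\varphi$ is lower semicomputable jointly in $\pair{\omega}{\Q}$, but it need not satisfy the integral bound $\int\varphi(\omega,\Q)\,\Q(d\omega)\le 1$ for $\Q\ne\P$, since its $\Q$-expectation is uncontrolled off the point $\P$. This is exactly the situation handled by the Trimming Theorem (Theorem~\ref{thm:trim.Cantor}): it produces a lower semicomputable $\varphi'$ with $\int\varphi'(\omega,\Q)\,\Q(d\omega)\le 2$ for every $\Q$, and with $\varphi'(\omega,\P)=\varphi(\omega,\P)=\t_{\P}(\omega)$ for all $\omega$, because the hypothesis $\int\varphi(\omega,\P)\,\P(d\omega)\le 1$ holds at $\Q=\P$. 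Then $\tfrac12\varphi'$ is a genuine uniform test, so by the maximality of $\t$ we get $\tfrac12\varphi'(\omega,\Q)\le c\cdot\t(\omega,\Q)$ for all $\pair{\omega}{\Q}$; specializing to $\Q=\P$ yields $\t_{\P}(\omega)=\varphi'(\omega,\P)\le 2c\cdot\t(\omega,\P)$, which is the lower bound with $c_1=1/(2c)$.

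The main obstacle is the care required in the reverse direction: one must be sure that promoting $\t_{\P}$ to two variables yields a jointly lower semicomputable function and that the Trimming Theorem leaves the value at $\Q=\P$ exactly unchanged, so that the specialization recovers $\t_{\P}$ rather than a weakened version. This is precisely the content built into part~(b) of Theorem~\ref{thm:trim.Cantor}, and the argument mirrors the corresponding step in the proof of Lemma~\ref{lem:Bernoulli-oracle} in the Bernoulli setting. The forward direction, by contrast, is routine once one notes that computability of $\P$ makes restriction to $\Q=\P$ preserve lower semicomputability.
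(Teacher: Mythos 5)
Your proof is correct and follows essentially the same route as the paper: the forward bound by restricting the universal uniform test to $\Q=\P$ (using computability of $\P$ to preserve lower semicomputability) and comparing with $\t_{\P}$, and the reverse bound by extending $\t_{\P}$ with a dummy measure argument and applying the trimming theorem. The only blemish is a labeling slip at the start of your second paragraph, where you announce the lower bound but then (correctly) prove the upper bound.
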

The constants $c_{1},c_{2}$ here depend on the choice of measure $\P$ and of the
choice of the test $\t_{\P}$ for this measure (this choice was done in an
arbitrary way for each computable measure).

This proposition shows, that in the case of the computable measures, uniform
randomness coincides with randomness in the sense of Martin-L\"of.
\begin{proof}
Let us show $\t(\omega,\P)\le c_{2}\t_{\P}(\omega)$ first.
The function $\omega\mapsto \t(\omega,\P)$ is lower
semicomputable since we can effectively enumerate all intervals
in the space of measures that contain $\P$; therefore it is dominated by
$\t_{\P}(\omega)$.

To prove $\t(\omega,\P)\ge c_{1}\t_{\P}(\omega)$,
consider the lower semicomputable function
 \begin{align*}
 t(\omega,\Q)=\t_{\P}(\omega).
 \end{align*}
The function $\pair{\omega}{\Q}\mapsto t(\omega)$ is
not guaranteed to be a uniform randomness
test, since its integral can be greater than~$1$ if $\Q\ne\P$.
However, it can be trimmed without changing it at $\P$, and then it still
remains (almost) a test.
\end{proof}

We are also interested in tests defined just for one, not necessarily
computable, measure $\P$:

 \begin{definition}\label{def:P-test.Cantor}
We will call a function $f:\Omega\to\clint{0}{\infty}$ \df{lower semicomputable}
relatively to measure $\P$ if it is obtained from a lower semicomputable
function on the set $\Omega\times\cM(\Omega)$ after fixing the second argument
at $\P$.

For a measure $\P\in\cM(\Omega)$, a $\P$-\df{test of randomness}
is a function $f:\Omega\to\clint{0}{\infty}$ lower semicomputable from $\P$ with
the property $\int f(\omega)\,d P \le 1$.
\end{definition}

It seems as if a $\P$-test may capture some nonrandomnesses that uniform tests
cannot---however, this is not so, since trimming (see
Theorem~\ref{thm:trim.Cantor}) generalizes:

\begin{theorem}\label{propo:uniformize.Cantor}
Let $\P_{0}$ be some measure along with
some $\P_{0}$-test $t_{\P_{0}}(\omega)$.
There is a uniform test $t'(\cdot,\cdot)$ with $t_{\P_{0}}(\omega)\le 2
t'(\omega,\P_{0})$.
On the other hand, the restriction of any uniform test to the measure $\P$ is a
$\P$-test.
\end{theorem}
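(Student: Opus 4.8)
The plan is to establish the two directions of Theorem~\ref{propo:uniformize.Cantor} separately. The second assertion is immediate: if $t'(\omega,\P)$ is a uniform test, then fixing the second argument at any particular measure $\P$ yields a function that is lower semicomputable from $\P$ (by Definition~\ref{def:P-test.Cantor}) and whose $\P$-integral is at most $1$ (since this holds for every measure in the definition of a uniform test). So the restriction is a $\P$-test essentially by unwrapping the definitions.

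The first assertion is the substantive part, and it parallels the final step in the proof of Proposition~\ref{propo:unif-comput.Cantor} above. First I would use Definition~\ref{def:P-test.Cantor} directly: since $t_{\P_{0}}$ is lower semicomputable from $\P_{0}$, by definition it arises as $\varphi(\cdot,\P_{0})$ for some lower semicomputable function $\varphi(\omega,\P)$ on $\Omega\times\cM(\Omega)$. This $\varphi$ is a genuine function of two variables, lower semicomputable jointly, but it is \emph{not} guaranteed to be a uniform test, because its integral $\int\varphi(\omega,\Q)\,\Q(d\omega)$ may exceed $1$ for measures $\Q\neq\P_{0}$; we only control it at $\P_{0}$.

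The key step is then to apply the Trimming theorem (Theorem~\ref{thm:trim.Cantor}) to $\varphi$, obtaining a lower semicomputable $\varphi'(\omega,\P)$ with $\int\varphi'(\omega,\P)\,\P(d\omega)\le 2$ for \emph{all} $\P$, and with $\varphi'(\omega,\P_{0})=\varphi(\omega,\P_{0})=t_{\P_{0}}(\omega)$ for all $\omega$ (this second conclusion applies precisely because $\int t_{\P_{0}}(\omega)\,\P_{0}(d\omega)\le 1$, which holds since $t_{\P_{0}}$ is a $\P_{0}$-test). Finally I would set $t'=\tfrac12\varphi'$. Then $\int t'(\omega,\P)\,\P(d\omega)\le 1$ for every $\P$, so $t'$ is a uniform test, and $2t'(\omega,\P_{0})=\varphi'(\omega,\P_{0})=t_{\P_{0}}(\omega)$, giving the claimed bound $t_{\P_{0}}(\omega)\le 2t'(\omega,\P_{0})$.

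The main obstacle is ensuring that the trimming procedure leaves the function untouched at $\P_{0}$; but this is exactly what clause (b) of Theorem~\ref{thm:trim.Cantor} delivers, and it hinges on the hypothesis that the integral at $\P_{0}$ does not exceed $1$. The only subtlety worth flagging is the correct invocation of Definition~\ref{def:P-test.Cantor}: one must observe that ``lower semicomputable from $\P_{0}$'' really does supply a joint lower semicomputable extension $\varphi$ to feed into the trimming theorem, rather than merely a function defined at the single measure $\P_{0}$. Everything else is routine once the extension is in hand.
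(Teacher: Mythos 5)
Your proof is correct and follows exactly the route the paper intends: the paper states this theorem as an immediate consequence of the trimming technique (Theorem~\ref{thm:trim.Cantor}) without writing out the details, and your argument—extend $t_{\P_{0}}$ to a jointly lower semicomputable $\varphi$ via Definition~\ref{def:P-test.Cantor}, trim, and halve—is precisely that intended proof, with the easy converse direction handled by unwrapping definitions.
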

The notion of extended text can be generalized to uniform tests:

\begin{definition}[Extended uniform test]\label{def:extended-test.uniform}
A lower semicomputable function
$T:\{0,1\}^{*}\times\cM(\Omega)\to\clint{0}{1}$ monotonic with respect to
the prefix relation is called an
\df{extended uniform test} if for all $n$
and all distributions $P$ we have $\sum_{x: |x|=n}T(x,\P)\P(x)\le 1$.
\end{definition}
As earlier, due to monotonicity, we could sum not only over words of a given
length, but over an arbitrary prefix-free set.

The following follows from
the analogue of Proposition~\ref{propo:lower-semicomp-as-limit}
(representing a nonnegative
lower semicomputable function as a sum of nonnegative basic
functions):

\begin{proposition}\label{propo:extended-test.uniform}
Every uniform test $t(\omega,\P)$ can be generated by an extended uniform test
in the sense of $t(\omega,\P)=\sup_{x\prefix\omega}T(x,\P)$.
Conversely, every extended uniform test $T$ generates a uniform test $t$.
\end{proposition}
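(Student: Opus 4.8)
The plan is to follow the template of Proposition~\ref{propo:extended-test.computable}, treating the measure $\P$ as a passive parameter throughout and checking at each step that lower semicomputability is preserved jointly in $\pair{\omega}{\P}$ and in $\pair{x}{\P}$. There are two directions. The easy one is that an extended uniform test $T$ generates a uniform test. First I would observe that, by the monotonicity of $T$ in the prefix order, the supremum is really an increasing limit, $t(\omega,\P)=\lim_{n}T(\omega(1:n),\P)$. Lower semicomputability of $t$ is then immediate: for rational $r$ the set $\setOf{\pair{\omega}{\P}}{t(\omega,\P)>r}$ is the union over all strings $x$ of the basic rectangles $x\Omega\times\setOf{\P}{T(x,\P)>r}$, and each factor on the right is effectively open by the lower semicomputability of $T$. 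For the integral bound, the function $\omega\mapsto T(\omega(1:n),\P)$ is constant equal to $T(x,\P)$ on each cylinder $x\Omega$ with $|x|=n$, so $\int T(\omega(1:n),\P)\,\P(d\omega)=\sum_{x:|x|=n}T(x,\P)\P(x)\le 1$; the monotone convergence theorem then gives $\int t(\omega,\P)\,\P(d\omega)\le 1$ for every $\P$.

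For the converse I would use the analogue of Proposition~\ref{propo:lower-semicomp-as-limit}, writing a given uniform test as a sum $t(\omega,\P)=\sum_{i}h_{i}(\omega,\P)$ of nonnegative basic functions (obtained as consecutive differences of an increasing sequence of basic functions, which are again basic since the class is closed under rational linear combinations). The key structural fact is that each basic function depends on $\omega$ only through a finite prefix: there is a length $N_{i}$, namely the maximal length of a string occurring in the generators building $h_{i}$, such that $h_{i}(\omega,\P)$ is determined by $\omega(1:N_{i})$ and $\P$. This lets me transfer each $h_{i}$ to finite strings and set
\begin{equation*}
T(x,\P)=\sum_{i:\,N_{i}\le|x|}h_{i}(\omega,\P)\quad\text{for any }\omega\postfix x,
\end{equation*}
the right-hand side being independent of the choice of $\omega\postfix x$ precisely because only the first $N_{i}\le|x|$ bits matter. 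This $T$ is lower semicomputable (a countable sum of basic functions whose index set $\setOf{i}{N_{i}\le|x|}$ depends computably on $x$) and monotonic in the prefix order, since passing from $x$ to a longer $x'$ only adds nonnegative terms while leaving the old ones unchanged. Moreover $\sup_{x\prefix\omega}T(x,\P)=\sum_{i}h_{i}(\omega,\P)=t(\omega,\P)$, so $T$ generates $t$.

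It remains to verify the defining inequality $\sum_{x:|x|=n}T(x,\P)\P(x)\le 1$. Here I would interchange the two nonnegative summations, over $x$ and over $i$, and for each fixed $i$ with $N_{i}\le n$ group the length-$n$ strings $x$ according to their length-$N_{i}$ prefix $y$; summing $\P(x)$ over the extensions of $y$ collapses to $\P(y)$ by the additivity in~\eqref{eq:measure.Omega}, turning the inner sum into $\sum_{y:|y|=N_{i}}h_{i}(y,\P)\P(y)=\int h_{i}(\omega,\P)\,\P(d\omega)$. Thus $\sum_{x:|x|=n}T(x,\P)\P(x)=\sum_{i:N_{i}\le n}\int h_{i}\,\P(d\omega)\le\int t(\omega,\P)\,\P(d\omega)\le 1$, the middle step again by monotone convergence.

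I expect the main obstacle to be the bookkeeping in this converse direction: one must ensure the basic-function decomposition can be chosen with each term genuinely localized to a finite prefix of $\omega$ (so that the passage to finite strings is legitimate), and that the rearrangement of the double sum together with the grouping by prefixes is justified --- both rest only on nonnegativity and the tower property of $\P$, but need to be stated with care. An alternative to the whole construction is to take $T(x,\P)=\inf_{\omega\postfix x}t(\omega,\P)$, the uniform analogue of $\bar t$; its integral bound is immediate since an infimum is at most an average, but establishing its joint lower semicomputability would invoke the compactness of $\Omega$ (the analogue of Proposition~\ref{propo:inf-lower-semicomp.seqs}, a special case of Proposition~\ref{propo:lsc-min}), which is precisely what the sum-of-basic-functions route avoids.
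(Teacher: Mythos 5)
Your proof is correct and follows essentially the route the paper intends: the paper derives this proposition in one line from the representation of a lower semicomputable function as a sum of nonnegative basic functions, each depending on $\omega$ only through a finite prefix, and your construction of $T(x,\P)$ as the partial sum over those terms localized within $\len{x}$ is exactly that argument spelled out (it also mirrors the decomposition used in the proof of Theorem~\ref{thm:uniform-test-sum}). The easy direction via monotone convergence likewise matches Proposition~\ref{propo:extended-test.computable}.
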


Among the uniform extended tests, it is also possible to select a maximal one
(using an analogous trimming method and summing the results).
We fix an extended uniform test and denote it $\t(x,\P)$ (where
$x\in\{0,1\}^{*}$, and $\P$ is a measure over $\Omega$).
It generates a maximal uniform test $\t(\omega,\P)$ (to within a bounded
factor).

\begin{remark}\label{rem:seq-of-natural}
Much of the theory worked out at the beginning of this paper for 0-1 sequences
holds also for sequences whose elements are arbitrary natural numbers.
The extended tests of Definition~\ref{def:extended-test.uniform}
generalize, and the existence of a uniform universal extended test is proven in the same
way.
But it becomes important to define extended tests directly, and not via tests
for infinite sequences, since compactness may not hold.
\end{remark}

Proposition~\ref{propo:unif-comput.Cantor}
allows us to generalize a result about Bernoulli measures:

\begin{theorem}\label{thm:oracle2uniform}
Let $\P$ be a measure computable with some oracle $A$.
Assume also that $A$ can be effectively reconstructed as the values of the
measure are provided with more and more precision.
Then a sequence $\omega$ is
uniformly random with respect to $\P$ if and only if it is random with respect
to $\P$ with oracle $A$.
\end{theorem}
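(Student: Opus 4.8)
The plan is to follow the proof of Lemma~\ref{lem:Bernoulli-oracle} almost verbatim, with the oracle $A$ playing the role that the parameter $p$ played there, and with the hypothesis that $A$ is reconstructible from increasingly precise values of $\P$ replacing the use of the irrationality of $p$. Write $\t^{A}_{\P}(\omega)$ for a universal randomness test with respect to $\P$, relativized to the oracle $A$ (this is well defined, since $\P$ is $A$-computable). It suffices to prove that $\t(\omega,\P)\eqm\t^{A}_{\P}(\omega)$, since uniform randomness means $\t(\omega,\P)<\infty$ while randomness with oracle $A$ means $\t^{A}_{\P}(\omega)<\infty$, and an $\eqm$-relation preserves finiteness. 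The two inequalities are proved separately.

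For the inequality $\t(\omega,\P)\lem\t^{A}_{\P}(\omega)$, I would fix the second argument at $\P$ and observe that $\omega\mapsto\t(\omega,\P)$ is lower semicomputable \emph{with oracle $A$}. Indeed, using $A$ we can approximate all the values $\P(y)$ to arbitrary precision, hence enumerate all basic open sets of $\cM(\Omega)$ that contain $\P$; combining this enumeration with the joint lower semicomputability of $\t$ exhibits $\t(\omega,\P)$ as the supremum of an $A$-computable sequence of basic functions. Since $\int\t(\omega,\P)\,\P(d\omega)\le 1$, this is a $\P$-test relativized to $A$, so it is dominated by the universal relativized test $\t^{A}_{\P}$.

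For the reverse inequality $\t^{A}_{\P}(\omega)\lem\t(\omega,\P)$, I would take $t=\t^{A}_{\P}$, so $t$ is lower semicomputable with oracle $A$ and $\int t(\omega)\,\P(d\omega)\le 1$, and convert its $A$-oracle computation into a jointly lower semicomputable function $\tilde t(\omega,\Q)$ of the measure argument. Concretely, simulate the machine enumerating $t$ from below, and each time it queries a bit of $A$, answer that query by running the assumed reconstruction procedure on the current interval-approximation of $\Q$; if the reconstruction has not yet committed to that bit, wait for a finer approximation of $\Q$. For $\Q=\P$ the interval-approximations converge to $\P$ and the reconstruction eventually yields the true bits of $A$, so $\tilde t(\omega,\P)=t(\omega)$; for $\Q\ne\P$ the value is some arbitrary lower semicomputable function whose integral may exceed $1$. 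I would then apply Theorem~\ref{thm:trim.Cantor} to get $\tilde t'$ with $\int\tilde t'(\omega,\Q)\,\Q(d\omega)\le 2$ for every $\Q$ and, because $\int\tilde t(\omega,\P)\,\P(d\omega)\le 1$, with $\tilde t'(\omega,\P)=\tilde t(\omega,\P)=t(\omega)$ unchanged; finally $t'=\tilde t'/2$ is a genuine uniform test, so $t(\omega)/2=t'(\omega,\P)\lem\t(\omega,\P)$.

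The main obstacle is exactly the reconstruction step of the last paragraph: guaranteeing simultaneously that $\tilde t(\omega,\Q)$ is jointly lower semicomputable and that it agrees \emph{exactly} with $t(\omega)$ at $\Q=\P$. Since lower semicomputation may only add contributions and never retract them, the reconstruction procedure must never commit to a wrong bit of $A$ on the basis of a valid (i.e.\ $\P$-containing) approximation---it may only refrain from answering until the approximation is fine enough. This is precisely what the hypothesis ``$A$ can be effectively reconstructed as the values of the measure are provided with more and more precision'' buys us, and it is the abstract counterpart of the fact, used in Lemma~\ref{lem:Bernoulli-oracle}, that an irrational $p$ is never mis-decided by a rational interval surrounding it. All remaining points (the convergence of the measure-intervals containing $\P$ to $\P$, and the halving after trimming) are routine.
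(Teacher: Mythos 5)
Your proposal is correct and follows essentially the same route as the paper: the first direction is identical, and your explicit oracle-query simulation answered via the reconstruction procedure, followed by trimming, is exactly what the paper packages as composing $t$ with the computable map $f:\P\mapsto A$ and invoking the uniformization theorem~\ref{propo:uniformize.Cantor}. The only difference is cosmetic: you prove the quantitative relation $\t(\omega,\P)\eqm\t^{A}_{\P}(\omega)$, while the paper argues directly about finiteness.
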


(Since the oracle $A$ makes $\P$ computable, the notion of Martin-L\"of randomness
is well defined.)

\begin{proof}
 Assume that $\t(\omega,\P)=\infty$ for the universal uniform test $\t$.
Note that
 $\t(\cdot,\P)$ is an $A$-lower semicomputable function and is a $\P$-test, so
 $\omega$ is nonrandom with respect to $\P$ with oracle $A$.

On the other hand, let $t(\omega,A)$ be some $A$-lower semicomputable $\P$-test with
$t(\omega,A)=\infty$.
That $A$ can be reconstructed from $\P$ means that there is a computable
mapping $f$ from measures to binary sequences (oracles) defined at least over $\P$,
with $A=f(\P)$.
But then $\pair{\omega}{\P}\mapsto t(\omega,f(\P))$ is a $\P$-test.
The uniformization theorem~\ref{propo:uniformize.Cantor} converts it
into a uniform test that is infinite on $\pair{\omega}{\P}$.
\end{proof}

Let us note that not all measures $\P$ satisfy the condition of the theorem (it
means that the mass problem of ``show approximations to the values of $\P$'' is
equivalent to the decision problem of some set; on the degrees of such mass
problems, see~\cite{MillerDegreesCont04}).
Later, in Theorem~\ref{thm:some-oracle}, we show a characterization of uniform
randomness for arbitrary measures (in terms of Martin-L\"of randomness with
oracle).

Another application of the trimming technique:
let us show that the notion of uniform randomness test is indeed a
generalization of the notion of an uniform Bernoulli test we introduced earlier
in Definition~\ref{def:Bernoulli-test}.

\begin{theorem}
Let $\t(\omega,\P)$ be the universal uniform test and let $\t(\omega,p)$ be
the universal uniform Bernouli test defined in Lemma~\ref{lem:univ-unif.Bernoulli}.
Then $\t(\omega,B_{p})\eqm \t(\omega,p)$.
\end{theorem}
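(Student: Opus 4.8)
The plan is to establish $\t(\omega,B_{p})\eqm\t(\omega,p)$ by proving the two multiplicative inequalities separately, in each case building a test of one kind out of a test of the other and then invoking universality (Lemma~\ref{lem:univ-unif.Bernoulli} for the universal uniform Bernoulli test $\t(\omega,p)$, and the maximality theorem for the universal uniform test $\t(\omega,\P)$).

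For the inequality $\t(\omega,B_{p})\lem\t(\omega,p)$, I would regard $\pair{\omega}{p}\mapsto\t(\omega,B_{p})$ as a candidate uniform Bernoulli test. First I would note that the map $p\mapsto B_{p}$ is computable from $\clint{0}{1}$ into $\cM(\Omega)$: indeed $B_{p}(x)=p^{k}(1-p)^{n-k}$ for a string $x$ of length $n$ with $k$ ones, a polynomial in $p$, so approximations to $p$ yield approximations to all coordinates $B_{p}(x)$ uniformly. Composing the lower semicomputable $\t$ on $\Omega\times\cM(\Omega)$ with this computable embedding shows that $\t(\omega,B_{p})$ is lower semicomputable jointly in $\pair{\omega}{p}$. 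Its $B_{p}$-integral is at most $1$ for every $p$ simply because $\t(\omega,\P)$ is a uniform test and $B_{p}$ is a measure. Hence $\t(\omega,B_{p})$ is a uniform Bernoulli test, and universality of $\t(\omega,p)$ gives this direction at once.

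The reverse inequality $\t(\omega,p)\lem\t(\omega,B_{p})$ is where the real work lies, since I must extend the Bernoulli test, defined only along the one-parameter curve $\set{B_{p}}$ inside the infinite-dimensional $\cM(\Omega)$, to a uniform test over \emph{all} measures without losing the integral bound off the curve. The key observation is that $p$ is recoverable from $B_{p}$ through a single coordinate, namely $B_{p}(1)=p$, and that $\P\mapsto\P(1)$ is computable on $\cM(\Omega)$. I would therefore set $t'(\omega,\P)=\t(\omega,\P(1))$, which is lower semicomputable jointly in $\pair{\omega}{\P}$, being the composition of the lower semicomputable $\t(\omega,p)$ with the computable map $\P\mapsto\P(1)$. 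The difficulty, and the main obstacle, is that $t'$ need not be a uniform test: for a measure $\Q$ with $\Q(1)=p$ but $\Q\ne B_{p}$ the integral $\int t'(\omega,\Q)\Q(d\omega)=\int\t(\omega,p)\Q(d\omega)$ can be arbitrarily large.

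This is exactly the obstacle that the Trimming Theorem~\ref{thm:trim.Cantor} is designed to remove. Applying it to $t'$ produces a lower semicomputable $t''$ with $\int t''(\omega,\P)\P(d\omega)\le 2$ for all $\P$, and with $t''(\omega,\P)=t'(\omega,\P)$ whenever the integral of $t'$ against $\P$ is already at most $1$. The point is that the Bernoulli measures survive trimming untouched: for $\P=B_{p}$ we have $\int t'(\omega,B_{p})B_{p}(d\omega)=\int\t(\omega,p)B_{p}(d\omega)\le 1$, so $t''(\omega,B_{p})=t'(\omega,B_{p})=\t(\omega,p)$. Now $t''/2$ is a genuine uniform test, so by maximality of $\t(\omega,\P)$ we get $t''\lem\t(\omega,\P)$; restricting to $\P=B_{p}$ yields $\t(\omega,p)=t''(\omega,B_{p})\lem\t(\omega,B_{p})$. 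Combining the two inequalities gives $\t(\omega,B_{p})\eqm\t(\omega,p)$.
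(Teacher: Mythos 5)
Your proposal is correct and follows essentially the same route as the paper: the easy direction via the computable map $p\mapsto B_{p}$ and universality of the uniform Bernoulli test, and the harder direction via the computable map $\P\mapsto\P(1)$ recovering $p$ from $B_{p}$, followed by trimming to restore the integral bound off the Bernoulli curve. Your writeup merely spells out in more detail why the Bernoulli measures survive trimming unchanged, which the paper leaves implicit.
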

(Here $B_p$ is the Bernoulli measure with parameter $p$.)
\begin{proof}
For the inequality $\lem$ note that
the function $\pair{\omega}{p}\mapsto \t(\omega,B_{p})$
is an uniform Bernoulli test, since the mapping $p\mapsto B_p$
is computable mapping (in a natural sense).

For the other direction, there exists a computable function on measures
that maps $B_p$ to $p$ (just take the probability of the one-bit string).
Combining
this function with $\t(\omega,p)$, we get a lower semicomputable function
$f(\omega,\P)$ on general measures $\P$ with
$f(\omega,B_p)=\t(\omega,p)$.
The function $f(\omega,p)$ is not a uniform test yet,
but again the trimming technique given by Theorem~\ref{propo:trim}
yields the desired result.
\end{proof}

\subsection{Apriori probability with an oracle, and uniform tests}
\label{subsec:uniform-exact}

For a computable measure, we had an expression for the universal test via
apriori probability in Proposition~\ref{propo:sum-characteriz}.
An analogous expression exists also for the universal uniform test:

\begin{theorem}\label{thm:uniform-test-sum}
\begin{align*}
   \t(\omega,\P) \eqm \sum_{x\prefix\omega}\frac{\m(x\mid\P)}{\P(x)}.
 \end{align*}
\end{theorem}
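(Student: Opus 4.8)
The plan is to mirror the proof of Proposition~\ref{propo:sum-characteriz}, now carrying the measure $\P$ as a second variable throughout and replacing ``lower semicomputable in $\omega$'' by ``lower semicomputable jointly in $\pair{\omega}{\P}$'' in the sense of Definition~\ref{def:lower-semicomp.product} (adapted to $\Omega\times\cM(\Omega)$). First I would pin down the meaning of $\m(x\mid\P)$: it is the \emph{uniform} (relativized) a priori probability, i.e.\ the largest, up to a multiplicative constant, function $m(x,\P)$ that is lower semicomputable jointly in $\pair{x}{\P}$ and that is a discrete semimeasure in $x$ for every fixed $\P$, meaning $\sum_{x}m(x,\P)\le 1$ for all $\P$. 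Its existence follows from the discrete analogue of the trimming Theorem~\ref{thm:trim.Cantor}: enumerate all jointly lower semicomputable candidates, trim each so that its $x$-sum never exceeds $2$ over the whole space of measures --- the partial sums $\sum_{x}\sum_{i\le n}h_{i}(x,\P)$ are finite sums of basic functions, hence computable in $\P$, so the sets of $\P$ on which they stay below $2$ are effectively open uniformly in $n$ --- and then add the trimmed candidates with summable weights. This is exactly the relativized Coding Theorem.

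For the easy inequality I would check that $F(\omega,\P)=\sum_{x\prefix\omega}\m(x\mid\P)/\P(x)$ is itself a uniform test and then invoke universality. Integrating term by term (monotone convergence),
\[
\int F(\omega,\P)\,\P(d\omega)=\sum_{x}\frac{\m(x\mid\P)}{\P(x)}\,\P(x)=\sum_{x}\m(x\mid\P)\le 1
\]
for every $\P$. Joint lower semicomputability of $F$ is the point that needs the uniform machinery: the map $\P\mapsto\P(x)$ is computable and bounded by $1$, so $1/\P(x)$ is lower semicomputable jointly in $\pair{x}{\P}$ (with the convention that it is $+\infty$ when $\P(x)=0$, matching the usual convention for $\m(x\mid\P)/\P(x)$), and a product of two nonnegative jointly lower semicomputable functions is jointly lower semicomputable. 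Hence $F$ is a uniform test and $F\lem\t(\omega,\P)$.

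For the reverse inequality I would represent the universal uniform test as a sum of prefix weights. By the $\Omega\times\cM(\Omega)$ analogue of Proposition~\ref{propo:lower-semicomp-as-limit}, $\t(\omega,\P)$ is a sum of nonnegative basic functions; each basic function depends on $\omega$ only through a prefix of some fixed length $N$, so it is constant on each cylinder $z\Omega$ with $\card{z}=N$ and equals $\sum_{\card{z}=N}c_{z}(\P)\,1_{z\Omega}(\omega)$ with coefficients $c_{z}(\P)\ge 0$ lower semicomputable in $\P$. Collecting coefficients by cylinder yields nonnegative jointly lower semicomputable weights $w(x,\P)$ with $\t(\omega,\P)=\sum_{x\prefix\omega}w(x,\P)$. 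The test condition gives $\sum_{x}w(x,\P)\P(x)=\int\t(\omega,\P)\,\P(d\omega)\le 1$, so $f(x,\P)=w(x,\P)\P(x)$ is a jointly lower semicomputable discrete semimeasure in $x$ (lower semicomputability is preserved here because $\P\mapsto\P(x)$ is computable). By the maximality of $\m(\cdot\mid\P)$ we get $f(x,\P)\lem\m(x\mid\P)$, i.e.\ $w(x,\P)\lem\m(x\mid\P)/\P(x)$, and summing over prefixes gives $\t(\omega,\P)\lem\sum_{x\prefix\omega}\m(x\mid\P)/\P(x)$.

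I expect the main obstacle to be the bookkeeping around joint lower semicomputability rather than any new idea: specifically, verifying that multiplying and dividing by $\P(x)$ preserves lower semicomputability \emph{uniformly} in $\P$ (the uniform counterpart of the remark after Proposition~\ref{propo:sum-characteriz} that both $\P$ and $1/\P$ are lower semicomputable), and setting up $\m(x\mid\P)$ correctly as a jointly --- not oracle --- lower semicomputable object, so that the trimming construction applies over the compact space $\cM(\Omega)$. Once these are in place, both inequalities are term-by-term copies of the computable-measure case.
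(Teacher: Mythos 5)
Your proposal is correct and follows essentially the same route as the paper's proof: the $\gem$ direction by checking that the sum is itself a uniform test, and the $\lem$ direction by decomposing the universal test into a sum of (continuous) basic functions, collecting the contributions cylinder by cylinder into weights $w(x,\P)$, multiplying by $\P(x)$ to obtain a uniform lower semicomputable semimeasure, and invoking the maximality of $\m(\cdot\mid\P)$. The only point you gloss over in the reverse direction is the case $\P(x)=0$ for some prefix $x$ of $\omega$, where the right-hand side is infinite and the inequality holds trivially, exactly as the paper notes.
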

To be honest, we still owe the reader the definition of the concept of apriori
probability with respect to a measure, that is the quantity $\m(x\mid\P)$.
We do this right away, before returning to the proof.

\begin{definition}\label{def:uniform-semimeasure}
A nonnegative function $t(x,\P)$ whose arguments are the binary word $x$ and the measure
$\P$ will be called a \df{uniform lower semicomputable semimeasure}, if it is
lower semicomputable and $\sum_{x} t(x,\P)\le 1$ for all measures $\P$ over
$\Omega$.
\end{definition}

\begin{proposition}\label{propo:apriori-universal-uniform}
Among the uniform lower semicomputable semimeasures, there is a largest one to
within a multiplicative constant.
\end{proposition}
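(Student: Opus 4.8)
The plan is to follow the same universal-object recipe used for Theorem~\ref{thm:universality.cptable}, Lemma~\ref{lem:univ-unif.Bernoulli} and the Coding Theorem (Proposition~\ref{propo:coding}): enumerate all candidates, trim each one so that it genuinely satisfies the defining constraint while leaving the true semimeasures untouched, and then add the trimmed functions with summable coefficients. First I would fix a computable enumeration $t_{1},t_{2},\dots$ of all lower semicomputable functions $t_{i}\colon\{0,1\}^{*}\times\cM(\Omega)\to\clint{0}{\infty}$, obtained as usual by writing each such function as the monotone limit of a computable sequence of basic functions (the analogue of Proposition~\ref{propo:lower-semicomp-as-limit} for a discrete first argument $x$, where a basic function is concentrated on finitely many words and depends piecewise-linearly and continuously on finitely many of the coordinates $\P(y)$). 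Taking successive differences, each $t_{i}$ becomes a sum $\sum_{n}h_{n}(x,\P)$ of nonnegative basic functions, each supported on finitely many words.

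The technical heart is a trimming lemma for the \emph{sum} constraint $\sum_{x}t(x,\P)\le 1$, exactly parallel to Proposition~\ref{propo:trim} and Theorem~\ref{thm:trim.Cantor}, which treated the \emph{integral} constraint. Given $\varphi(x,\P)=\sum_{n}h_{n}(x,\P)$ as above, the key observation is that every partial sum $\sum_{n'\le n}\sum_{x}h_{n'}(x,\P)$ is a \emph{finite} sum of basic functions, hence computable in $\P$ uniformly in $n$ (this is the role played by Proposition~\ref{propo:integral-computable.Bernoulli} in the earlier arguments, except that here the summation over the finite support replaces the integral). Consequently the set $S_{n}=\setOf{\P}{\sum_{n'\le n}\sum_{x}h_{n'}(x,\P)<2}$ is effectively open, uniformly in $n$. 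Defining $h'_{n}=h_{n}$ on $S_{n}$ and $0$ elsewhere and setting $\varphi'=\sum_{n}h'_{n}$ yields a lower semicomputable $\varphi'\le\varphi$ with $\sum_{x}\varphi'(x,\P)\le 2$ for every $\P$; and if $\sum_{x}\varphi(x,\P)\le 1$ then every $\P$ lies in all the $S_{n}$, so that $\varphi'=\varphi$. No compactness is needed, just as in Proposition~\ref{propo:trim}.

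With the trimming lemma in hand the rest is routine. I would apply it to each $t_{i}$ to obtain $t'_{i}$ with $\sum_{x}t'_{i}(x,\P)\le 2$ for all $\P$ and $t'_{i}=t_{i}$ whenever $t_{i}$ is already a uniform semimeasure. Then I would set $v(x,\P)=\sum_{i}2^{-i-2}\,t'_{i}(x,\P)$, which is lower semicomputable and satisfies $\sum_{x}v(x,\P)\le\sum_{i}2^{-i-2}\cdot 2\le 1$, so that $v$ is a uniform lower semicomputable semimeasure. For maximality, any uniform lower semicomputable semimeasure $t$ occurs as some $t_{j}$ in the enumeration; since $t$ satisfies the constraint, trimming leaves it unchanged, whence $v(x,\P)\ge 2^{-j-2}\,t(x,\P)$ for all $x$ and $\P$, giving domination within the multiplicative constant $2^{j+2}$.

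I would expect the only real obstacle to be the trimming lemma, and specifically the verification that the truncated sums are computable in $\P$ so that the cut-off sets $S_{n}$ are effectively open; everything else is the standard enumerate-trim-combine pattern already used repeatedly in the paper. The function $v$ constructed here is the object that the subsequent definition will name $\m(x\mid\P)$.
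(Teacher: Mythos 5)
Your proposal is correct and is essentially the argument the paper has in mind: the paper proves this proposition by simply invoking "the same method as the existence of a universal test," i.e.\ the enumerate--trim--combine recipe, noting that the trimming step is even easier here because the constraint $\sum_{x}t(x,\P)\le 1$ is a plain sum rather than a $\P$-weighted integral. Your explicit verification that the partial sums of basic functions are computable in $\P$ (so the cut-off sets $S_{n}$ are effectively open) is exactly the point the paper's parenthetical remark is alluding to.
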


This is proved by the same method as the existence of a universal test (and even
simpler, since the constraints on the values of the test do not depend on the
measure).

\begin{definition}
We will fix one such largest semimeasure, and call it the \df{apriori probability
with respect to} $\P$.
We will denote it by $\m(x\mid\P)$.
\end{definition}
(The vertical bar in place of a comma emphasizes the similarity
to the conditional apriori probability normally considered.)

\begin{proof}[Proof of Proposition~\protect\ref{thm:uniform-test-sum}]
We need to check two things.
First we need to convince ourselves that the right-hand side of the formula
defines a uniform test.
Every member of the sum can be considered to be a function of two arguments, equal
to 0 outside the cone of extensions of $x$, and equal to $\m(x\mid\P)/\P(x)$ inside
the cone.
For every $x$, the functions $\m(x\mid\P)$ and $1/\P(x)$ are lower semicomputable
(uniformly in $x$), and the sum gives a lower semicomputable
function.
The integral of this function by any measure $\P$ is equal to the sum of the
integrals of the members, that is $\sum_{x}\m(x\mid\P)$, and therefore does not
exceed 1.

There is a special case, when $\P(x)=0$ for some $x$.
In this case the corresponding member of the sum becomes infinite for any
$\omega$ extending $x$.
But since the measure of this cone is zero, the integral by this measure is by
definition zero, and therefore the additive term, if is not equal to
$\m(x\mid\P)$, is simply smaller.
This way, the right-hand side of the formula is a uniform test, and therefore
does not exceed the universal uniform test: we proved the inequality $\gem$.

The second part of the proof is not so simple: observing the increase of the
values of the uniform test, we must distribute this increase among the different
members of the sum of the right-hand side, while preserving lower
semicomputability.
The difficulty is that if, say, the lower semicomputable function was 1 on some
effectively open set $A$, and outside it was zero, and then this set was changed
to a larger set $B$, then the difference (the characteristic function of
$B\setminus A$) will not in general be lower semicomputable since in the set of
measures (as also on a segment) the difference of two intervals will not be an
open The.

This problem is solved by moving to continuous functions.
Let us be given an arbitrary uniform test $t(\omega,\P)$.
Since it is lower semicomputable, it can be represented as the limit of a
nondecreasing sequence of basic functions, or---passing to differences---in the
form of a sum of a series of nonnegative basic functions:
$t(\omega,\P)=\sum_{i}t_{i}(\omega,\P)$.

Being basic, the function $t_{i}$ of $\omega$ depends only on a finite prefix of the sequence
$\omega$; denote the length of this prefix by $n_{i}$.
For every word $x$ of length $n_{i}$ we get some lower semicomputable function
$t_{i,x}(\P)$, where $t_{i}(\omega,\P)=t_{i,x}(\P)$ if $\omega$ begins by $x$.
Now define $m_{i}(x,\P)=t_{i,x}(\P)\cdot\P(x)$, if $x$ has length $n_{i}$ (for
the other lengths, zero).
The function $m_{i}$ is lower semicomputable (as the product of two lower
semicomputable functions) uniformly in $i$, and therefore the sum
$m(x,\P)=\sum_{i}m_{i}(x,\P)$ will be lower semicomputable.

Let us show that $m$ is a semimeasure, that is $\sum_{x}m(x,\P)\le 1$ for all
$\P$.
Indeed, in $\sum_{i}m_{i}(x,\P)$ the nonzero terms correspond to words of length
$n_{i}$, and this sum is equal to $\sum_{x}t_{i,x}(\P)\cdot\P(x)$, that is exactly the
integral $\int t_{i}(\omega,\P)\,\P(d\omega)$, and the sum of these integrals
does not exceed 1 by our condition.

Moreover, if for all prefixes $x$ of the sequence $\omega$ the measure $\P(x)$
is not equal to zero, then
 \begin{align*}
   \sum_{x\prefix\omega}\frac{m_{i}(x,\P)}{\P(x)}=\frac{t_{i,x_{i}}(\P)\cdot\P(x_{i})}{\P(x_{i})}
 = t_{i}(\omega,\P)
 \end{align*}
(here $x_{i}$ is the prefix of length $n_{i}$ of $\omega$), hence after summing
over $i$
 \begin{align*}
   \sum_{x\prefix\omega}\frac{m(x,\P)}{\P(x)}=t(\omega,\P),
 \end{align*}
and it just remains to apply the maximality of the apriori probability to obtain
the $\lem$-inequality for the case that all prefixes of $\omega$ have nonzero
$\P$-measure.
On the other hand, if one of these has zero $\P$-measure, then the right-hand
side is infinite, and so here the inequality is also satisfied.
\end{proof}

\begin{question}
  For the universal randomness test with respect to a computable measure,
in this formula one could replace the sum with a maximum.
Is this possible for uniform tests?
(The reasoning applied there encounters difficulties in the uniform case.)
Can one define apriori probabilithy on the tree in a reasonable way, and prove a
uniform variant of the Levin-Schnorr theorem?
\end{question}

\subsection{Effectively compact classes of measures}

We have considered Bernoulli tests, that is lower
semicomputable functions $t(\omega)$ that are tests with respect
to all Bernoulli measures.
In this definition, in place of Bernoulli measures, an arbitrary effectively
compact class can be taken:

\begin{definition}\label{def:test-effectively-compact.Cantor}
Let $\cC$ be an effectively compact class of measures over $\Omega$.
We say that lower semicomputable
function $t$ on $\Omega$ is a \df{$\cC$-test} if
$\int t(\omega) \,d\P\le 1$ for every $\P\in\cC$.
\end{definition}

\begin{theorem}\label{thm:class-test}
Let $\cC$ be an effectively compact class of measures.
  \begin{alphenum}
 \item There exists a universal $\cC$-test $\t_{\cC}(\cdot)$.
 \item $\t_{\cC}(\omega)=\inf_{\P\in\cC}\t(\omega,\P)$.
  \end{alphenum}
\end{theorem}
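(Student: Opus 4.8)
The plan is to prove the two parts separately, both resting on the trimming technique already developed. For part (a) I would follow the familiar recipe: enumerate all lower semicomputable functions $t_1,t_2,\dots$ on $\Omega$, trim each into a genuine (doubled) $\cC$-test, and sum the results with coefficients $2^{-j-1}$. The only new ingredient is a version of trimming adapted to $\cC$ which, unlike the uniform trimming of Theorem~\ref{thm:trim.Cantor}, must output a function of $\omega$ \emph{alone} — a $\cC$-test in the sense of Definition~\ref{def:test-effectively-compact.Cantor} does not depend on $\P$ — while keeping the integral bounded simultaneously for every $\P\in\cC$.

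The crux is this $\cC$-adapted trimming. Given a lower semicomputable $t$, I would represent it as $t=\sum_i h_i$ with $h_i$ nonnegative basic (the analogue of Proposition~\ref{propo:lower-semicomp-as-limit}), and set $s_n=\sum_{i\le n}h_i$ and $I_n(\P)=\int s_n\,\P(d\omega)$. By the analogue of Proposition~\ref{propo:integral-computable.Bernoulli}, each $I_n$ is computable in $\P$, uniformly in $n$, so $\{\P:I_n(\P)<2\}$ is effectively open uniformly in $n$. I then release $s_n$ only once I have confirmed the condition $C_n$: ``$\cC\subseteq\{\P:I_n(\P)<2\}$''. By effective compactness (Definition~\ref{def:effectively-compact}) the relation $C_n$ is semidecidable: I enumerate the basic sets constituting $\{I_n<2\}$ and wait until a finite subfamily covers $\cC$. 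Since $s_n\le s_{n+1}$ forces $\{I_{n+1}<2\}\subseteq\{I_n<2\}$, the set $\{n:C_n\}$ is an initial segment, so $\tilde t=\sup\{s_n:C_n\text{ confirmed}\}$ is well defined and lower semicomputable; it equals $s_M$ for $M=\sup\{n:C_n\}$ and satisfies $\int\tilde t\,d\P\le 2$ on $\cC$ (either $I_M<2$ there, or a monotone-convergence passage to the limit when $M=\infty$). If $t$ was already a $\cC$-test then $I_n\le 1<2$ on $\cC$ for every $n$, whence $M=\infty$ and $\tilde t=t$. Summing the trimmed $\tilde t_j$ with weights $2^{-j-1}$ yields a lower semicomputable $\cC$-test dominating every $\cC$-test up to a constant, proving (a).

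For part (b), to get $\inf_{\P\in\cC}\t(\omega,\P)\lem\t_{\cC}(\omega)$ I note that $\omega\mapsto\inf_{\P\in\cC}\t(\omega,\P)$ is lower semicomputable by Proposition~\ref{propo:lsc-min} (the compact-infimum generalization of Proposition~\ref{propo:inf-lower-semicomp.seqs}), and that for each $\P_0\in\cC$ its integral against $\P_0$ is at most $\int\t(\omega,\P_0)\,\P_0(d\omega)\le 1$; hence it is a $\cC$-test and is dominated by the universal one. For the reverse inequality $\t_{\cC}\lem\inf_{\P\in\cC}\t(\omega,\P)$ I would feed the \emph{constant-in-$\P$} extension $\varphi(\omega,\P)=\t_{\cC}(\omega)$ into the uniform trimming of Theorem~\ref{thm:trim.Cantor}: since $\int\varphi(\cdot,\P)\,\P(d\omega)=\int\t_{\cC}\,d\P\le 1$ for every $\P\in\cC$, the trimmed $\varphi'$ agrees with $\t_{\cC}$ on all of $\cC$, while $\tfrac12\varphi'$ is a genuine uniform test and therefore $\tfrac12\varphi'\le c\,\t$ with a single constant $c$ by universality of $\t(\omega,\P)$ (the constant depends only on the fixed test $\varphi'$, not on $\P$). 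Evaluating at any $\P\in\cC$ gives $\t_{\cC}(\omega)\le 2c\,\t(\omega,\P)$, and taking the infimum over $\P\in\cC$ completes the estimate, so that $\t_{\cC}(\omega)\eqm\inf_{\P\in\cC}\t(\omega,\P)$.

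The one genuinely delicate point is the $\cC$-adapted trimming of part (a): ordinary trimming produces a $\P$-dependent cutoff, whereas here the threshold must be a single value valid across all of $\cC$ at once, and it must be released effectively. This is precisely where effective compactness is indispensable — it converts the universally quantified condition ``$I_n<2$ everywhere on $\cC$'' into a semidecidable one, and the monotonicity $\{I_{n+1}<2\}\subseteq\{I_n<2\}$ is what guarantees that the released prefixes cohere into a lower semicomputable limit. Everything else is routine given the machinery already in place.
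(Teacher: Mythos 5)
Your proof is correct and follows essentially the same route the paper intends (its own proof is just a pointer to Lemmas~\ref{lem:univ-unif.Bernoulli} and~\ref{lem:Bernoulli-from-unif}): enumeration plus trimming for existence, and for part (b) the combination of Proposition~\ref{propo:lsc-min} for the lower semicomputability of the infimum with the trimmed constant-in-$\P$ extension for maximality. Your $\cC$-adapted trimming, using effective compactness to semidecide the condition ``$\int s_n\,d\P<2$ for all $\P\in\cC$,'' is exactly the right generalization of the polynomial-integral check used for the Bernoulli class, and you correctly note the point the paper glosses over, namely that for a general $\cC$ the constant extension of a $\cC$-test must be trimmed before it becomes a uniform test.
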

\begin{proof}
Both of these statements are proved analogously to
Lemmas~\ref{lem:univ-unif.Bernoulli} and~\ref{lem:Bernoulli-from-unif}.
\end{proof}

\begin{remark}
Since $\cC$ is compact and the function $\t(\omega,\P)$
is lower semicomputable, the $\inf$-operation can be replaced by the $\min$-operation.
\end{remark}

\begin{question}
Can we give criteria for randomness with respect to natural closed
classes of measures (for example in terms of complexity)?
How can we describe
Bernoulli sequences in terms of complexities of their initial segments?
It is known that the main term of the randomness deficiency is
 \begin{align*}
   \log\binom{n}{k}-\KP(x\mid n,k).
 \end{align*}
The lecture notes~\cite{GacsLnotesAIT88} contains a characterization
Bernoulli sequences, but it is rather messy.

What about Markov measures?
Shift-invariant measures?
\end{question}

\subsection{Sparse sequences}

There are several situations closely related to some intuitive
understanding of randomness, but not fitting directly into the framework of
the question of a randomness of a given outcome $\omega$ to a given model
(measure $\P$).
Our example is here a natural notion of sparsity,
introduced in~\cite{BienvenuRomashShenSparse08},
but another example, online
tests, will be considered in Section~\ref{sec:too-strong}.

It is natural to call ``$p$-sparse'' a sequence $\omega$, when its 1's come from
some $p$-random sequence $\omega'$, but we allow some of its 0's to also come
from the 1's of $\omega'$.
For example, the 1's of $\omega'$ may be a sequence of miracles, and
$\omega$ is the sequence of those miracles that have been reported.
The tacit hypothesis is, of course, that all reported miracles actually happened.

\begin{definition}[Sparse sequences]
Let us introduce a coordinate-wise order between
infinite binary sequences (or binary sequences of the same length):
we say $\omega\le \omega'$ if this is true
coordinate-wise, that is $\omega(i)\le \omega'(i)$ for all $i$: in other words,
$\omega'$ is obtained from $\omega$ replacing some $0$'s with $1$s.

Let $B_p$ be a Bernoulli measure with some computable $p$.
We say that a binary
sequence $\omega$ is $p$-\df{sparse} if $\omega\le\omega'$
for some $B_p$-random sequence $\omega'$.
(In terms of sets, $p$-sparse sets are subsets of $p$-random sets).
\end{definition}

We will show that in the definition of sparsity, the existential quantifier can
be eliminated, giving a criterion in terms of monotonic tests.

\begin{definition}
A real function $f$ on $\Omega$ will be called \df{monotonic} if
$\omega'\ge\omega$ implies $f(\omega')\ge f(\omega)$.

A monotonic lower semicomputable function $t:\Omega\to\clint{0}{\infty}$ is a
$p$-\df{sparsity test} if $\int t(\omega)\, d B_{p}\le 1$.
A  $p$-sparsity test is \df{universal} if it multiplicatively dominates all
other sparsity tests for $p$.
\end{definition}

The monotonicity of tests guarantees, informally speaking, that only the
presence of some $1$s is counted as regularity, not their absence.
(Note that earlier we spoke of an entirely different kind of monotonicity, while
defining extended tests: there we compared the values of a function on a finite
word and its extension.)

\begin{proposition}
  Consider the universal test $\t(\omega,\P)$.
The expression
 \begin{align*}
   \r_{p}(\omega) =\min_{\omega'\ge\omega}\t(\omega',B_{p})
 \end{align*}
defines a universal $p$-sparsity test.
\end{proposition}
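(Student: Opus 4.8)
The plan is to verify that $\r_{p}$ satisfies the three defining properties of a $p$-sparsity test---monotonicity, lower semicomputability, and the bound $\int\r_{p}\,dB_{p}\le 1$---and then to establish universality. Throughout I use that, since $p$ is computable, $B_{p}$ is a computable measure, so by Proposition~\ref{propo:unif-comput.Cantor} the function $\omega\mapsto\t(\omega,B_{p})$ is lower semicomputable and dominates, up to a multiplicative constant, every $B_{p}$-test.

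Two of the claims are immediate. For monotonicity, $\omega_{1}\le\omega_{2}$ gives the cone inclusion $\setOf{\omega'}{\omega'\ge\omega_{2}}\subseteq\setOf{\omega'}{\omega'\ge\omega_{1}}$, and a minimum over a smaller set is at least as large, so $\r_{p}(\omega_{1})\le\r_{p}(\omega_{2})$. For the integral bound, taking $\omega'=\omega$ in the minimum gives $\r_{p}(\omega)\le\t(\omega,B_{p})$ pointwise, hence $\int\r_{p}\,dB_{p}\le\int\t(\omega,B_{p})\,B_{p}(d\omega)\le 1$ since $\t$ is a uniform test.

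For universality, let $s$ be any $p$-sparsity test. Being lower semicomputable with $\int s\,dB_{p}\le 1$ it is a $B_{p}$-test, so $s\lem\t(\cdot,B_{p})$; fix $c$ with $s(\omega')\le c\,\t(\omega',B_{p})$ for all $\omega'$. Then for every $\omega'\ge\omega$ the monotonicity of $s$ gives $s(\omega)\le s(\omega')\le c\,\t(\omega',B_{p})$, and minimizing over $\omega'\ge\omega$ yields $s(\omega)\le c\,\r_{p}(\omega)$. Thus $\r_{p}$ dominates every sparsity test.

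The one genuinely technical point, which I expect to be the main obstacle, is the lower semicomputability of $\r_{p}$; I would prove it by compactness exactly as in Proposition~\ref{propo:inf-lower-semicomp.seqs}. Fix a rational $r$ and put $G_{r}=\setOf{\omega'}{\t(\omega',B_{p})>r}$, effectively open uniformly in $r$. For a finite string $x$ let $U_{x}=\setOf{\omega'}{\omega'(i)\ge x(i)\text{ for all }i\le\len{x}}$; this equals $\bigcup_{\omega\in x\Omega}\setOf{\omega'}{\omega'\ge\omega}$, it depends only on the first $\len{x}$ coordinates, and is thus a computable finite union of cylinders, in particular clopen and compact. The crux is the identity
\begin{equation*}
\setOf{\omega}{\r_{p}(\omega)>r}=\bigcup\setOf{x\Omega}{U_{x}\subseteq G_{r}}.
\end{equation*}
Inclusion $\supseteq$ holds because $x\prefix\omega$ forces $\setOf{\omega'}{\omega'\ge\omega}\subseteq U_{x}\subseteq G_{r}$, so the minimum defining $\r_{p}(\omega)$---attained since $\t(\cdot,B_{p})$ is lower semicontinuous on the compact cone---exceeds $r$. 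For $\subseteq$, the sets $U_{\omega(1:n)}$ form a decreasing sequence of compact sets whose intersection is $\setOf{\omega'}{\omega'\ge\omega}\subseteq G_{r}$; a nested sequence of compact sets lying in an open set is eventually contained in it, so $U_{\omega(1:n)}\subseteq G_{r}$ for some $n$, and $x=\omega(1:n)$ works. Finally, because $U_{x}$ is compact and $G_{r}$ is an effectively open union of cylinders, the relation $U_{x}\subseteq G_{r}$ is witnessed at a finite stage and is semidecidable uniformly in $x$ and $r$; enumerating the qualifying strings $x$ exhibits $\setOf{\omega}{\r_{p}(\omega)>r}$ as effectively open uniformly in $r$, i.e., $\r_{p}$ is lower semicomputable.
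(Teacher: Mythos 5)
Your proof is correct and follows essentially the same route as the paper: monotonicity and the integral bound read off directly from the definition, universality via domination of $B_{p}$-tests by $\t(\cdot,B_{p})$ combined with monotonicity of the given test, and lower semicomputability by a compactness argument on the upward cone $\setOf{\omega'}{\omega'\ge\omega}$. The only difference is presentational: the paper delegates the semicomputability of the minimum to the general parametrized-minimum lemma (Proposition~\ref{propo:lsc-min}, using that $\omega\le\omega'$ defines an effectively closed subset of the compact space $\Omega\times\Omega$), whereas you unfold that special case by hand with the clopen sets $U_{x}$ — a correct and if anything more self-contained version of the same argument.
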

\begin{proof}
  Each $p$-sparsity test is by definition a test with respect to the measure
$B_{p}$.
Using its monotonicity and comparing it with the universal test we obtain that
no sparsity test exceeds $r_{p}$ (to within a bounded factor).

In the other direction it must be shown that the minimum in the expression for
$r_{p}$ is achieved, and that this function is a $p$-sparsity test.
The lower semicomputability is proved usin that the property $\omega\le\omega'$
gives an effectively closed set of the effectively compact space
$\Omega\times\Omega$.
The monotonicity and the integral inequality follow immediately from the definition.
\end{proof}

From this follows the following characterization in terms of tests:

\begin{theorem}
A sequence $\omega$ is $p$-sparse (is obtained from a $p$-random by replacing
some $1$s with zeros) if and only if the universal sparsity test
$\r_{p}(\omega)$ is finite.
\end{theorem}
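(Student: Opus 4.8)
The plan is to derive both implications directly from the proposition just proved---which both identifies $\r_p$ with the universal $p$-sparsity test and, crucially, guarantees that the minimum in $\r_p(\omega)=\min_{\omega'\ge\omega}\t(\omega',B_p)$ is actually attained---together with the fact that for a computable parameter $p$ the measure $B_p$ is computable, so by Proposition~\ref{propo:unif-comput.Cantor} the uniform test value $\t(\omega',B_p)$ is finite exactly when $\omega'$ is Martin-L\"of random with respect to $B_p$.

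First I would handle the forward direction. Suppose $\omega$ is $p$-sparse. By definition there is a $B_p$-random sequence $\omega'$ with $\omega'\ge\omega$. Since $B_p$ is computable, randomness of $\omega'$ means $\t(\omega',B_p)<\infty$ by Proposition~\ref{propo:unif-comput.Cantor}. As $\omega'$ is one of the sequences over which the minimum defining $\r_p(\omega)$ ranges, we get $\r_p(\omega)\le\t(\omega',B_p)<\infty$.

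For the converse, suppose $\r_p(\omega)<\infty$. Here I would invoke the attainment of the minimum established in the preceding proposition: the set $\{\omega':\omega'\ge\omega\}$ is a closed, hence compact, subset of $\Omega$, and $\omega'\mapsto\t(\omega',B_p)$ is lower semicomputable and therefore lower semicontinuous, so it reaches its minimum on this compact set. Thus there is an actual witness $\omega'\ge\omega$ with $\t(\omega',B_p)=\r_p(\omega)<\infty$. Finiteness of $\t(\omega',B_p)$ gives, again through Proposition~\ref{propo:unif-comput.Cantor}, that $\omega'$ is $B_p$-random, and since $\omega\le\omega'$ this is precisely the definition of $\omega$ being $p$-sparse.

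The only genuine content beyond bookkeeping is the attainment of the minimum, that is, the elimination of the existential quantifier in the definition of sparsity. This is the step that would fail if $\Omega$ were not compact, but it is already carried out in the proposition defining $\r_p$ via compactness and the lower semicontinuity of the universal test, so here it can simply be quoted. Everything else---the passage between ``random'' and ``test value finite'' for the computable measure $B_p$---is immediate from the results already established, and I expect no further obstacle.
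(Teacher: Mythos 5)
Your proof is correct and follows exactly the route the paper intends: the theorem is stated as an immediate consequence of the preceding proposition, whose proof already supplies the attainment of the minimum via compactness and lower semicontinuity, and the remaining step is just the equivalence of finiteness of $\t(\cdot,B_p)$ with $B_p$-randomness for the computable measure $B_p$. Nothing is missing.
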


Sparsity is equivalent to randomness with respect to a certain class of measures.
To define this class, we introduce the notion of coupling of measures.

\begin{definition}
For measures $\P,\Q$ we say $\P\preceq \Q$, or that
$\P$ can be \df{coupled below} $\Q$ if there exists a
probability distribution $R$ on pairs of sequences $\pair{\omega}{\omega'}$ such
that
\begin{alphenum}
  \item The first projection (marginal distribution) is $\P$ and the second one
    is $\Q$.
  \item Measure $R$ is entirely concentrated on pairs $\pair{\omega}{\omega'}$
    with $\omega\le\omega'$ (the probability of this event by the measure $R$ is $1$.
\end{alphenum}
\end{definition}

The following characterization of coupling is well known: it has many proofs,
but all seem to go back to~\cite{Strassen65} (Theorem  11, p. 436).
A proof can be found in~\cite{BienvenuRomashShenSparse08}.

\begin{proposition}\label{propo:coupling-characteriz}
The property $\P\preceq \Q$ is equivalent to the following:
for all monotonic basic functions $f$ the following inequality holds:
 \begin{align*}
\int f(\omega)\,d\P\le \int f(\omega)\,d \Q.
 \end{align*}
\end{proposition}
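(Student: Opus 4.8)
The plan is to prove both directions, with the forward direction immediate and the reverse direction handled by a Hahn--Banach separation argument in the space of measures. For the forward direction, suppose $\P\preceq\Q$ via a coupling $R$ on $\Omega\times\Omega$ with marginals $\P,\Q$ concentrated on the closed set $K=\setOf{\pair{\omega}{\omega'}}{\omega\le\omega'}$. For any monotonic basic $f$ we have $f(\omega)\le f(\omega')$ for $R$-almost every pair $\pair{\omega}{\omega'}$, so $\int f\,d\P=\int f(\omega)\,dR\le\int f(\omega')\,dR=\int f\,d\Q$, which is the asserted inequality. The content is the converse.

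For the reverse direction I would fix the \emph{second} marginal and show $\P$ is realizable as a first marginal. Let $C'$ be the set of all probability measures $R$ on $\Omega\times\Omega$ that are concentrated on $K$ and have second marginal $\Q$. This set is nonempty (push $\Q$ forward along $\omega'\mapsto\pair{\omega'}{\omega'}$), convex, and, since $K$ is closed and $\Omega\times\Omega$ is compact, it is weak-$*$ compact: concentration on the closed set $K$ is preserved under weak-$*$ limits by the portmanteau inequality $\limsup_n R_n(K)\le R(K)$. Let $D'=\setOf{\text{first marginal of }R}{R\in C'}$; as the image of $C'$ under the continuous marginal map, $D'$ is compact and convex. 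The goal becomes exactly $\P\in D'$, since a coupling in $C'$ with first marginal $\P$ is precisely a witness for $\P\preceq\Q$.

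The key computation is the support function of $D'$. For a basic function $f$ on $\Omega$ (determined by the first $n$ bits) define its downward monotone envelope $\check f(\omega')=\max_{\omega\le\omega'}f(\omega)$. Since $f$ depends only on the first $n$ coordinates, so does $\check f$; hence $\check f$ is again basic, and it is monotonic with $f\le\check f$. For any $R\in C'$ one has $f(\omega)\le\check f(\omega')$ on $K$, so $\int f(\omega)\,dR\le\int\check f\,d\Q$; conversely the bound is attained by the measure obtained from $\Q$ through a cylinder-based selection $\omega'\mapsto s(\omega')\le\omega'$ with $f(s(\omega'))=\check f(\omega')$ (choose the maximizing first $n$ bits as a function of the first $n$ bits of $\omega'$ and copy the remaining bits of $\omega'$). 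Thus $\sup_{\nu\in D'}\int f\,d\nu=\int\check f\,d\Q$.

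Now I would conclude by separation. If $\P\notin D'$, then, as $\{\P\}$ and $D'$ are disjoint compact convex sets in the locally convex space of signed measures (dual $C(\Omega)$), Hahn--Banach yields a continuous $f$ with $\int f\,d\P>\sup_{\nu\in D'}\int f\,d\nu$; since basic functions are dense in $C(\Omega)$ we may take $f$ basic. Applying the hypothesis to the monotonic basic function $\check f$ gives $\int\check f\,d\P\le\int\check f\,d\Q$, and $f\le\check f$ gives $\int f\,d\P\le\int\check f\,d\P$; combined with the support-function computation these yield $\int f\,d\P\le\int\check f\,d\Q=\sup_{\nu\in D'}\int f\,d\nu$, contradicting the strict separation. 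Hence $\P\in D'$ and the desired coupling exists. I expect the main obstacle to be the support-function step: verifying that the constrained supremum is genuinely attained and equals $\int\check f\,d\Q$, which requires the measurable (cylinder-based) selection of a maximizer $\omega\le\omega'$ and the compactness of the fibers $\setOf{\omega}{\omega\le\omega'}$; everything else is routine once basic functions are used to keep $\check f$ continuous.
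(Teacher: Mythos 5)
Your proof is correct. Note that the paper itself does not prove this proposition at all: it only cites Strassen (1965) and the sparse-sets paper of Bienvenu, Romashchenko and Shen, so there is no internal argument to compare against. What you give is a complete, self-contained version of the classical duality route: the forward direction by integrating $f(\omega)\le f(\omega')$ against the coupling, and the converse by showing that the set $D'$ of first marginals of couplings supported on $\setOf{\pair{\omega}{\omega'}}{\omega\le\omega'}$ with second marginal $\Q$ is weak-$*$ compact and convex, computing its support function as $\int\check f\,d\Q$ where $\check f(\omega')=\max_{\omega\le\omega'}f(\omega)$, and separating by Hahn--Banach. The specialization to Cantor space does real work for you: because a basic $f$ depends only on the first $n$ bits, $\check f$ is again basic and monotonic, and the attainment of the support function reduces to a finite maximization over $n$-bit prefixes together with the explicit cylinder-based selection $s(\omega')\le\omega'$, so no abstract measurable-selection theorem is needed. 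All the compactness claims check out ($K$ is closed as an intersection of clopen coordinate conditions, concentration on a closed set and the marginal constraint are preserved under weak-$*$ limits), and the density of basic functions in $C(\Omega)$ justifies replacing the separating functional by a basic one while keeping strict separation. This is essentially Strassen's own argument adapted to the combinatorial setting of the paper, and it would serve as a valid replacement for the external citation.
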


In this characterization, we could have said ``all monotonic integrable
functions'' as well.

\begin{definition}
  Let $\cS_{p}$ be the set of measures that can be coupled below $B_{p}$.
\end{definition}

\begin{proposition}
The set $\cS_{p}$ of measures is effectively closed (and thus effectively compact).
\end{proposition}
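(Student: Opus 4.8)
The plan is to characterize membership in $\cS_p$ by the dual condition of Proposition~\ref{propo:coupling-characteriz} and then show that the \emph{complement} of $\cS_p$ is effectively open in $\cM(\Omega)$; the effective compactness will then follow at once from Proposition~\ref{propo:closed-to-compact}. By Proposition~\ref{propo:coupling-characteriz}, a measure $\P$ fails to be coupled below $B_p$ precisely when there is a monotonic basic function $f$ with $\int f\,d\P > \int f\,dB_p$. Since the basic functions of Definition~\ref{def:basic-func.seqs} are finite objects (a prefix length $N$ together with a table of $2^{N}$ rational values), and monotonicity with respect to the coordinatewise order is a finite, decidable condition, the monotonic basic functions form an enumerable family. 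Hence
\[
  \cM(\Omega)\setminus\cS_p = \bigcup_{f}\setOf{\P}{\int f\,d\P > \int f\,dB_p},
\]
the union ranging over this enumerable family, and it suffices to show that each set on the right is effectively open, uniformly in $f$.

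First I would note that for a basic $f$ depending on the prefix of length $N$ one has the explicit finite formula $\int f\,d\P = \sum_{|x|=N} f(x)\P(x)$, a rational linear combination of the coordinates $\P(x)$; thus $\P\mapsto\int f\,d\P$ is a computable function of $\P$, uniformly in $f$. The right-hand constant $c_f = \int f\,dB_p = \sum_{|x|=N} f(x)B_p(x)$ is a computable real, uniformly in $f$, because $p$ is computable and $B_p(x)=p^{k}(1-p)^{N-k}$ whenever $x$ has $k$ ones.

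The point requiring a little care is that $c_f$ may be irrational, so $\setOf{\P}{\int f\,d\P > c_f}$ is not directly a union of basic open sets. I would handle this by rational interposition: $\int f\,d\P > c_f$ holds if and only if there is a rational $q$ with $\int f\,d\P > q > c_f$. The set of rationals $q > c_f$ is enumerable uniformly in $f$ since $c_f$ is computable, and for each such $q$ the open half-space $\setOf{\P}{\sum_{|x|=N} f(x)\P(x) > q}$ is a union of basic open sets (finite conjunctions of conditions $u<\P(y)<v$) that can be enumerated uniformly in $f$ and $q$ — one simply lists all rational boxes whose closure lies strictly inside the half-space, a decidable test since the coefficients and threshold are rational and the number of coordinates is finite. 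Taking the union over all $q > c_f$ exhibits $\setOf{\P}{\int f\,d\P > c_f}$ as effectively open uniformly in $f$, and the outer union over the enumerable family of monotonic basic $f$ is then effectively open as well. This makes $\cS_p$ effectively closed, and Proposition~\ref{propo:closed-to-compact} upgrades this to effective compactness.

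The only genuine obstacle is the irrationality of $c_f$ just described; everything else is the routine translation of the coupling criterion into an enumeration of linear inequalities in the finitely many relevant coordinates. I would double-check that the strict inequality is preserved under the rational interposition (using density of the rationals together with the computability of $c_f$ from above), which is exactly what places the open side on the complement rather than on $\cS_p$ itself.
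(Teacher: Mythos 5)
Your proof is correct and follows essentially the same route as the paper's: the paper likewise invokes Proposition~\ref{propo:coupling-characteriz} and observes that each condition $\int f\,d\P\le\int f\,dB_{p}$ defines an effectively closed set whose intersection is effectively closed, which is exactly the dual of your statement that the complement is a uniform union of effectively open sets. Your treatment of the possibly irrational threshold $c_f$ by rational interposition is just the detail the paper leaves implicit (and is valid since $p$ is assumed computable in the definition of sparsity).
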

\begin{proof}
For each function $f$ in Proposition~\ref{propo:coupling-characteriz},
the condition defines an effectively closed set, and their intersection will
also be effectively closed.
\end{proof}

\begin{theorem}\label{thm:mon-class-test}
  The universal $p$-test $\r_{p}(\omega)$ is a universal class test for class $\cS_{p}$.
\end{theorem}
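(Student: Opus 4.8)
The plan is to prove the equivalent statement $\r_{p}(\omega)\eqm\t_{\cS_{p}}(\omega)$, where $\t_{\cS_{p}}$ is the universal class test for $\cS_{p}$: this test exists because $\cS_{p}$ was just shown to be effectively compact, and by Theorem~\ref{thm:class-test} it satisfies $\t_{\cS_{p}}(\omega)=\inf_{\Q\in\cS_{p}}\t(\omega,\Q)$, while by definition $\r_{p}(\omega)=\min_{\omega'\ge\omega}\t(\omega',B_{p})$.

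First I would establish $\r_{p}\lem\t_{\cS_{p}}$, i.e.\ that $\r_{p}$ is itself a $\cS_{p}$-test. The function $\r_{p}$ is monotonic (if $\omega_{1}\le\omega_{2}$ then the minimum defining $\r_{p}(\omega_{1})$ ranges over a larger set of $\omega'$, so $\r_{p}(\omega_{1})\le\r_{p}(\omega_{2})$), it is lower semicomputable, and $\int\r_{p}\,dB_{p}\le 1$ since it is a $p$-sparsity test. For any $\Q\in\cS_{p}$ we have $\Q\preceq B_{p}$, so the coupling characterization Proposition~\ref{propo:coupling-characteriz}, applied to the monotonic function $\r_{p}$ (first to the truncations $\min(\r_{p},M)$ and then passing to the limit), gives $\int\r_{p}\,d\Q\le\int\r_{p}\,dB_{p}\le 1$. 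Hence $\r_{p}$ is a $\cS_{p}$-test and is dominated by the universal one.

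For the reverse inequality $\t_{\cS_{p}}\lem\r_{p}$ the obstacle is that a $\cS_{p}$-test need not be monotonic, so $\t_{\cS_{p}}$ is not literally a sparsity test. I would therefore pass to its monotone majorant $s(\omega)=\sup_{\omega''\le\omega}\t_{\cS_{p}}(\omega'')$ and show that $s$ is, up to a constant, a $p$-sparsity test; since $\t_{\cS_{p}}\le s$ and $s\lem\r_{p}$ by universality of $\r_{p}$ among sparsity tests, this yields $\t_{\cS_{p}}\lem\r_{p}$. Monotonicity of $s$ is immediate. Lower semicomputability is the first delicate point: $\{s>r\}$ is the upward closure in the coordinatewise order of the effectively open set $\{\t_{\cS_{p}}>r\}$, and the upward closure of a basic cylinder $x\Omega$ is the clopen set of those $\omega$ carrying a $1$ in every position where $x$ does; taking unions shows $\{s>r\}$ is effectively open uniformly in $r$, so $s$ is lower semicomputable.

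The last and main point is the integral bound $\int s\,dB_{p}=O(1)$. Here I would use measurable selection: for $B_{p}$-almost every $\omega$ choose $\omega''(\omega)\le\omega$ with $\t_{\cS_{p}}(\omega''(\omega))\ge\tfrac12 s(\omega)$ (a near-maximizer exists by definition of the supremum, interpreting this via truncation where $s(\omega)=\infty$, and a universally measurable selector exists since the relevant set is Borel). The joint law of $(\omega''(\omega),\omega)$ for $\omega\sim B_{p}$ is a coupling concentrated on pairs whose first coordinate lies below the second, so its first marginal $\Q$ satisfies $\Q\preceq B_{p}$, that is $\Q\in\cS_{p}$. Since $\t_{\cS_{p}}$ is a $\cS_{p}$-test, $\int\t_{\cS_{p}}\,d\Q\le 1$; but $\int\t_{\cS_{p}}\,d\Q=\int\t_{\cS_{p}}(\omega''(\omega))\,dB_{p}(\omega)\ge\tfrac12\int s\,dB_{p}$, whence $\int s\,dB_{p}\le 2$. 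Thus $s/2$ is a $p$-sparsity test and the argument closes. I expect this integral bound, specifically extracting the coupling from a selector while keeping its marginal inside $\cS_{p}$, to be the step demanding the most care; note that the naive alternative of building a coupling by pushing $B_{p}$ forward under the deletion map sending $\omega'$ to $\omega$ fails, because that map is noncomputable and randomness conservation would then contribute a constant depending on $\omega$, which is exactly why I route the proof through the monotone majorant and an integral estimate rather than through conservation.
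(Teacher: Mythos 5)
Your proof is correct, and its overall skeleton matches the paper's: one direction shows that the (monotonic) universal sparsity test is an $\cS_{p}$-test via the coupling characterization, and the harder direction passes from the universal class test to its monotone majorant and bounds the $B_{p}$-integral of that majorant by exhibiting a coupling. The difference lies in how the monotonization is carried out. The paper proves a Monotonization Lemma only for \emph{basic} functions: since a basic $t$ depends on finitely many coordinates, the maximizer $x'\le x$ is chosen by a finite combinatorial selection, the coupling $\Q\preceq B_{p}$ is written down explicitly by transferring mass from $x$ to $x'$, and the full test is then handled by applying this to an increasing sequence of basic approximants and passing to the limit. You instead monotonize the full (possibly infinite-valued) test directly, which forces you to invoke a universally measurable selection theorem to produce the near-maximizer $\omega''(\omega)$ and hence the coupling; you also then have to verify lower semicomputability of the majorant by hand (your upward-closure-of-cylinders argument is right, and is a nice observation the paper never needs, since effectivity is automatic for basic functions). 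Your route is shorter and more direct but leans on descriptive-set-theoretic machinery well beyond what the paper uses; the paper's route is more elementary and entirely constructive. Both are valid; if you wanted to avoid the selection theorem, note that applying your coupling argument to each basic approximant $t_{n}$ of $\t_{\cS_{p}}$ (where the selector is a trivial finite choice) and taking the monotone limit recovers exactly the paper's proof, with the added benefit that the constant $2$ improves to $1$.
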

Thus, a sequence is $p$-sparse if and only if it is random with respect to some
measure that can be coupled below $B_{p}$.

The following lemma will be key to the proof.

\begin{lemma}[Monotonization]\label{lem:monotonize}
Let $t:\Omega\to\bbR$ be a basic function with
$\int t(\omega)\,d \Q\le 1$ for all $\Q\in\cS_{p}$.
Define the monotonic function $\hat t(\omega)=\max_{\omega'\le\omega}t(\omega')$
(the maximum is achieved since $t(\omega)$ depends only on finitely many
positions of $\omega$).
Then $\int\hat t(\omega)\,d B_{p}\le 1$.
\end{lemma}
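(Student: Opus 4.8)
The plan is to produce a single measure $\Q\in\cS_{p}$ for which $\int \hat t\,dB_{p}=\int t\,d\Q$. Since by hypothesis every such $\Q$ satisfies $\int t\,d\Q\le 1$, the desired bound $\int\hat t\,dB_{p}\le 1$ follows at once. I would obtain $\Q$ as the first marginal of an explicit coupling below $B_{p}$, built by pushing each sample of $B_{p}$ downward (in the coordinatewise order) to a point where $t$ already attains the value of $\hat t$. Intuitively, $\hat t(\omega)$ is witnessed by some $\omega'\le\omega$ with $t(\omega')=\hat t(\omega)$, and transporting the mass of $B_{p}$ along $\omega\mapsto\omega'$ yields a measure coupled below $B_{p}$ on which $t$ reproduces $\hat t$.

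First I would reduce to finitely many coordinates. Since $t$ is basic, it depends only on the first $N$ bits for some $N$, so it is constant on each cone $y\Omega$ with $\len{y}=N$; write $t(y)$ for that constant. Turning $1$s into $0$s in positions beyond $N$ does not change the value of $t$, so $\hat t(\omega)=\max\setOf{t(y)}{y\le\omega(1:N),\ \len{y}=N}$ depends only on $\omega(1:N)$ as well. For each length-$N$ string $z$ I would fix (say, lexicographically) a string $m(z)\le z$ of length $N$ with $t(m(z))=\hat t(z)$; the maximum is over a finite set, so $m$ is a well-defined map on $\{0,1\}^{N}$.

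Next I would define the coupling. Draw $\omega$ according to $B_{p}$ and set $\omega'$ to agree with $\omega$ on every coordinate past $N$ and to equal $m(\omega(1:N))$ on the first $N$ coordinates. Let $R$ be the joint law of $\pair{\omega'}{\omega}$ and let $\Q$ be the law of $\omega'$, its first marginal. By construction $\omega'\le\omega$ holds with $R$-probability $1$ (on the first $N$ coordinates because $m(z)\le z$, and elsewhere by equality), and the second marginal of $R$ is $B_{p}$. Hence $R$ witnesses $\Q\preceq B_{p}$, so $\Q\in\cS_{p}$.

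Finally I would compute. Because $t$, like $\hat t$, depends only on $\omega(1:N)$ and $\omega'(1:N)=m(\omega(1:N))$,
\begin{equation*}
\int t\,d\Q=\int t(\omega')\,dR=\int t\bigl(m(\omega(1:N))\bigr)\,dB_{p}(\omega)=\int \hat t(\omega)\,dB_{p}(\omega).
\end{equation*}
Since $\Q\in\cS_{p}$, the left-hand side is at most $1$, which is exactly the assertion. The only point needing care is that the first marginal of the constructed $R$ genuinely lands in $\cS_{p}$; but this is immediate from the definition of $\cS_{p}$ as the measures couplable below $B_{p}$ together with the fact that $R$ is concentrated on $\setOf{\pair{\omega'}{\omega}}{\omega'\le\omega}$. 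After the reduction to $\{0,1\}^{N}$ all measurability concerns disappear, so the argument is self-contained, though one could alternatively phrase it through Proposition~\ref{propo:coupling-characteriz}.
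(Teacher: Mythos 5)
Your proof is correct and follows essentially the same route as the paper: both push the $B_{p}$-mass of each length-$N$ prefix down to a coordinatewise-smaller maximizer of $t$, observe that the resulting measure is coupled below $B_{p}$ and hence lies in $\cS_{p}$, and conclude from $\int t\,d\Q=\int\hat t\,dB_{p}$. Your version merely spells out the coupling $R$ and the measurability of the maximizer map a bit more explicitly than the paper does.
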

\begin{proof}
Let function $t$ depend only on the first $n$ coordinates.
For each $x\in\{0,1\}^{n}$ fix $x'\le x$ for which $t(x')$ reaches the maximum
(among all such $x'$).
Besides the distribution $B_{p}$ consider a distribution $\Q$ in which the
Bernoulli measure of $x$ is tranferred to $x'$ (the measures of several $x$ may
be transferred to the same $x'$ and then be added).
We described the behavior of $\Q$ on the first $n$ bits; the following bits are
chosen independently, and the probability of 1 in each position is equal to $p$.
Note also that for the expected values of the functions $t$ and $\hat t$ only
the first $n$ bits count.

By the construction, $\Q\preceq B_{p}$ (essentially, we described a measure on
pairs), therefore $\int t(\omega)\,d\Q\le 1$.
But this integral is equal to $\int\hat t(\omega)\, d B_{p}$.
\end{proof}

Let us return to the theorem.

\begin{proof}[Proof of Theorem~\protect\ref{thm:mon-class-test}]
Every $p$-sparsity test $t$ is a class test for $\cS_{p}$.
Indeed, its integral by a measure in the class $\cS_{p}$ does not exceed its
integral by the measure $B_{p}$, by the monotonicity of the test and the
possibility of coupling.

On the other hand, let us show that for every test $t$ for the class $\cS_{p}$,
there is a a $p$-sparsity test that is not smaller.
Indeed, the test $t$ is the limit of an increasing sequence $t_{n}$ of basic
functions.
Applying to them the monotonization lemma~\ref{lem:monotonize}, we obtain a
sequence of basic functions $\hat t_{n}$ that are everywhere greater or equal to
$t_{n}$ and have integrals bounded by 1 with respect to the measure $B_{p}$.
Their limit is the needed $p$-sparsity test.
\end{proof}

\subsection{Different kinds of randomness}

There are several ways to define randomness with respect to an arbitrary (not
necessarily computable) measure.
We have already defined uniform randomness.
Here are some other ways.

\paragraph{Oracles}
We can use the Martin-L\"of definition (or its average-bounded
version) with oracles.
We would call a sequence $\omega$ random with respect
to $\P$, if there exists an oracle $A$ that makes $\P$ computable such that
$\omega$ is ML-random with respect to $\P$ with oracle $A$.
(We say ``there exists an oracle that makes $\P$ computable'' but not
``for all oracles that make $\P$ computable'': indeed, some powerful oracle
can always make
$\omega$ computable and therefore non-random, unless $\omega$ is an atom of
$\P$.)
As Adam Day and Joseph Miller have shown~\cite{DayMiller10}, this
definition turns out to be equivalent to uniform randomness.
The proof of this equivalence needs some preparation.

First let us look into why is it not possible to take for oracle the measure
itself (as was done for the Bernoulli measures, where for oracle we chose a
binary expansion of the number $p$).
Well, the choice of such a representation is not unique
($0.01111\dots=0.10000\dots$).
When all we have is a single number $p$ then this is not important, as the
non-uniqueness arises only for rational $p$, and in this case both
representations are computable.
But for measures this is not so: a measure is represented by a countable number
of reals (say, the probabilities of individual words, or the conditional
probabilities), and the arbitrariness in the choice of representation is not
reduced to a finite number of variants.

\begin{definition}\label{def:r-test}
Fix some representation of measures by infinite binary sequences, that is a
computable (and therefore continuous)
mapping $\pi\mapsto\R_{\pi}$ of $\Omega$ onto the space of measures.
For example, we may split the binary sequence $\pi$ into countably many parts
and use these parts as binary representations
of the probability that the sequence continues with $1$ after a certain prefix.

Define the notion of an \df{r-test} (representation-test, test of randomness
relative to a given representation of the measure) as a lower
semicomputable function $t(\omega,\pi)$ with
$\int t(\omega,\pi)\R_{\pi}(d\omega)\le 1$ for all $\pi$.
\end{definition}

This notion of r-test depends on the
representation method chosen; there are no intuitive reasons to choose one
specific representation and declare it to be ``natural'', but any representation
is good for the argument below and we assume some representation fixed.
The following statements can be proven just as similar statements before:
  \begin{alphenum}
  \item\label{i:some-oracle.trim}
 Every lower semicomputable function $t(\omega,\pi)$ can be trimmed to make it
not greater than twice an r-test
(not changing it for those $\pi$ where it already was a r-test).
 \item\label{i:some-oracle.univ}
 There exists an universal (maximal to within a bounded factor) r-test $\t(\omega,\pi)$.
  \end{alphenum}

For a fixed $\pi$, the function $\t(\cdot,\pi)$ is universal among the
$\pi$-computable average-bounded tests with respect to the measure $\R_{\pi}$.
Indeed, it is such a test; on the other hand, any such test can be lower
semicomputed by the oracle machine.
This machine is applicable to any oracle (though may not give a test), giving
a lower semicomputable function $t'(\omega,\pi)$ that is equal to the starting
test for the given $\pi$.
It remains to apply property~\eqref{i:some-oracle.trim}.

As a consequence of this simple reasoning we obtain that the quantity
$\t(\omega,\pi)$ is finite if and only if the sequence $\omega$ is random
relative to the oracle $\pi$, with respect to measure $\R_{\pi}$.

\begin{theorem}[Day-Miller]\label{thm:some-oracle}
A sequence $\omega$ is uniformly random with respect to measure $\P$ if and only
if there is an oracle computing $\P$ that makes $\omega$ random
(in the original Martin-L\"of sense).
More precisely,
\begin{equation}
  \label{eq:some-oracle.inf}
    \t(\omega,\P)\eqm\inf_{\R_{\pi}=\P}\t(\omega,\pi).
\end{equation}
\end{theorem}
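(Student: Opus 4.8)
The plan is to prove the two multiplicative inequalities hidden in~\eqref{eq:some-oracle.inf} separately, and then read off the qualitative statement from the quantitative one together with the facts already recorded about $\t(\omega,\pi)$. The easy inequality is $\t(\omega,\P)\lem\inf_{\R_{\pi}=\P}\t(\omega,\pi)$. Since $\pi\mapsto\R_{\pi}$ is computable, the function $\pair{\omega}{\pi}\mapsto\t(\omega,\R_{\pi})$ is lower semicomputable, and for each $\pi$ its integral against $\R_{\pi}(d\omega)$ is at most $1$ because $\t(\cdot,\R_{\pi})$ is a uniform test for the measure $\R_{\pi}$; hence it is an r-test and is dominated by the universal r-test with a single constant, $\t(\omega,\R_{\pi})\lem\t(\omega,\pi)$. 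Fixing $\P$ and taking the infimum over all $\pi$ with $\R_{\pi}=\P$ gives the claim.

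For the reverse inequality I would show that $s(\omega,\P):=\inf_{\R_{\pi}=\P}\t(\omega,\pi)$ is itself a uniform test, so that maximality of $\t(\omega,\P)$ yields $s\lem\t(\omega,\P)$. The integral bound is immediate: picking any $\pi_{0}$ with $\R_{\pi_{0}}=\P$ we have $s(\omega,\P)\le\t(\omega,\pi_{0})$ pointwise, whence $\int s(\omega,\P)\,\P(d\omega)\le\int\t(\omega,\pi_{0})\,\R_{\pi_{0}}(d\omega)\le 1$. The real content, and the main obstacle, is the joint lower semicomputability of $s$ as a function of $\pair{\omega}{\P}$: the difficulty is exactly that the fiber $\setOf{\pi}{\R_{\pi}=\P}$ over which one minimizes varies with $\P$, so one cannot directly apply a minimization over a fixed compact parameter set as in Lemma~\ref{lem:Bernoulli-from-unif}.

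To get lower semicomputability I would invoke the fibered minimization principle behind Proposition~\ref{propo:lsc-min}. Consider the computable map $\Phi\colon\Omega\times\Omega\to\Omega\times\cM(\Omega)$, $\Phi(\omega,\pi)=\pair{\omega}{\R_{\pi}}$, and the lower semicomputable function $\t(\omega,\pi)$ on its domain; then $s$ is exactly the fiberwise infimum of $\t$ along $\Phi$. For every rational $r$ one has $s(\omega,\P)>r$ if and only if $\pair{\omega}{\P}\notin\Phi(K_{r})$, where $K_{r}=\setOf{\pair{\omega}{\pi}}{\t(\omega,\pi)\le r}$. Since $\t$ is lower semicomputable, $K_{r}$ is effectively closed, hence effectively compact (the product $\Omega\times\Omega$ being effectively compact, by the analogue of Proposition~\ref{propo:closed-to-compact}), uniformly in $r$; the computable image $\Phi(K_{r})$ is again effectively compact, therefore effectively closed, so its complement is effectively open uniformly in $r$. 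This is precisely the joint lower semicomputability of $s$, and the same compactness shows the infimum over the nonempty compact fiber is attained, so $\inf$ may be replaced by $\min$. Combining the two inequalities gives~\eqref{eq:some-oracle.inf}.

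Finally, the qualitative equivalence follows. From~\eqref{eq:some-oracle.inf}, $\omega$ is uniformly random (that is $\t(\omega,\P)<\infty$) if and only if the attained minimum $\t(\omega,\pi^{*})$ is finite for some $\pi^{*}$ with $\R_{\pi^{*}}=\P$; as established before the theorem this means $\omega$ is Martin-L\"of random with respect to $\R_{\pi^{*}}=\P$ relative to the oracle $\pi^{*}$, and $\pi^{*}$ computes $\P$ through $\R$, which gives the passage from uniform randomness to the existence of a suitable oracle. Conversely, if some oracle $A$ computing $\P$ makes $\omega$ ML-random, then from $A$ one computes a representation $\pi$ with $\R_{\pi}=\P$ (writing down the binary expansions of the conditional probabilities used to define $\R$); since $\pi\le_{T}A$, ML-randomness relative to $A$ descends to ML-randomness relative to $\pi$, so $\t(\omega,\pi)<\infty$ and hence $\t(\omega,\P)\lem\t(\omega,\pi)<\infty$ by the easy inequality, i.e.\ $\omega$ is uniformly random.
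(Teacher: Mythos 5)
Your proof of the quantitative formula~\eqref{eq:some-oracle.inf} is correct and is essentially the paper's argument: the easy inequality is the same, and your projection argument---$s(\omega,\P)>r$ iff $\pair{\omega}{\P}$ avoids the computable image $\Phi(K_{r})$ of an effectively compact set, which is again effectively compact and hence effectively closed, uniformly in $r$---is exactly the mechanism inside Proposition~\ref{propo:lsc-min} and Lemma~\ref{lem:push-forward}, which is what the paper invokes for this step. (One small reordering: the equivalence ``$s(\omega,\P)>r$ iff $\pair{\omega}{\P}\notin\Phi(K_{r})$'' already relies on the infimum being attained on the compact fiber $\setOf{\pi}{\R_{\pi}=\P}$, so that observation should come before the equivalence, not after it.)

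There is, however, a genuine gap in your last paragraph, in the direction ``some oracle $A$ computing $\P$ makes $\omega$ ML-random $\Rightarrow$ $\omega$ is uniformly random''. You claim that from $A$ one computes a representation $\pi$ with $\R_{\pi}=\P$ by writing down the binary expansions of the conditional probabilities. This step fails: being able to compute a real (produce arbitrarily good rational approximations) does not let one compute a binary expansion of it, because of the ambiguity $0.0111\dots=0.1000\dots$ at dyadic rationals---precisely the issue the paper raises when motivating representations. Indeed, one can diagonalize against all machines to build a \emph{computable} measure none of whose representations is computable: keep a conditional probability $q_{x_{e}}$ pinned near $1/2$ until the $e$-th machine commits to the first bit of the corresponding block of its output, then move $q_{x_{e}}$ to the other side of $1/2$. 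So an oracle computing $\P$ need not compute any $\pi$ with $\R_{\pi}=\P$, and your chain $A\Rightarrow\pi\le_{T}A\Rightarrow\t(\omega,\pi)<\infty$ breaks at its first link. The repair (and the paper's actual argument) avoids representations here entirely: if $\t(\omega,\P)=\infty$ and $A$ computes $\P$, then $\t(\cdot,\P)$ is $A$-lower semicomputable and satisfies $\int\t(\cdot,\P)\,d\P\le 1$, hence is an $A$-relativized average-bounded test for the $A$-computable measure $\P$, so $\omega$ is not ML-random relative to $A$. Your other direction of the qualitative equivalence, via the minimizing representation $\pi^{*}$, is fine.
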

\begin{proof}
Let us prove the equality shown in the theorem.
Note that if $t$ is a uniform test, then $t(\omega,\R_{\pi})$ as a function of
$\omega$ and $\pi$ is an r-test, and is therefore dominated by the universal r-test.

The other direction is somewhat more difficult.
We have to show that the function on the right-hand side is lower semicomputable
as a function of the sequence $\omega$ and the measure $\P$.
(The integral condition is obtained easily afterwards, as the measure $\P$ has
at least one representation $\pi$.)
This can be proved using the effective compactness of the set of those pairs
$\pair{\P}{\pi}$ with $\P=\R_{\pi}$.
In the general form (for constructive metric spaces) this statement forms the
content of Lemma~\ref{lem:push-forward}.

It remains to explain the connection between the given equality and randomness
relative to an oracle.
If $\t(\omega,\P)$ is finite, then by the proved equality a $\pi$ exists with
$\R_{\pi}=\P$ and finite $\t(\omega,\pi)$.
As we have seen, this in turn means that $\omega$ is random with respect to the
measure $\P$, with an oracle $\pi$ that makes $\P$ computable.
Conversely, if $\t(\omega,\P)$ is infinite, and some oracle $A$ makes $\P$
computable, then the function $\t(\cdot,\P)$ becomes $A$-lower semicomputable,
and its integral by measure $\P$ does not exceed 1, hence the sequence $\omega$
will not be random relative to oracle $A$ and with respect to measure $\P$.
\end{proof}

\paragraph{Blind (oracle-free) tests}
We can define the notion of an effectively null set as before, even if
the measure is not computable.
The maximal effectively null set may not exist.
For example, if
measure $\P$ may be concentrated on some non-computable sequence $\pi$, then all
intervals not containing $\pi$ will be effective null sets, and their union (the
complement of the singleton $\{\pi\}$) will not be, otherwise $\pi$ would be
computable.

However, we
still can define random sequence as a sequence that does not belong to
\emph{any} effectively null set.
Kjos-Hanssen suggested the name
``Hippocratic randomness'' for this definition (referring to a certain legend
about the doctor Hippocrates), but we prefer the more neutral
name ``blind randomness''.

\begin{definition}[Blind tests]\label{def:blind-test.Cantor}
A lower semicomputable function $t(\omega)$ with integral bounded by $1$
will be called a \df{blind, or oracle-free, test} for measure $\P$.
A sequence $\omega$ is \df{blindly random}
iff $t(\omega)<\infty$ for all blind tests.
\end{definition}

As seen, there may not exist a maximal blind test.

This oracle-free notion of randomness can be characterized in the terms
introduced earlier:

\begin{theorem}\label{thm:characterize-obliv}
Sequence $\omega$ is blindly random with respect to
measure $\P$ if and only if $\omega$ is random with respect to any effectively
compact class of measures that contains~$\P$.
\end{theorem}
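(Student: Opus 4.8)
The plan is to read the statement as a biconditional between blind randomness and randomness with respect to \emph{every} effectively compact class $\cC\ni\P$, and to prove the two implications separately. The easy direction is that blind randomness implies randomness with respect to each such class. Indeed, fix an effectively compact $\cC$ containing $\P$ and let $\t_{\cC}$ be its universal class test (Theorem~\ref{thm:class-test}). By Definition~\ref{def:test-effectively-compact.Cantor}, $\t_{\cC}$ is a single lower semicomputable function on $\Omega$ whose integral against every member of $\cC$ is at most $1$; in particular $\int\t_{\cC}\,d\P\le 1$ since $\P\in\cC$, so $\t_{\cC}$ is a blind test for $\P$ in the sense of Definition~\ref{def:blind-test.Cantor}. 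If $\omega$ is blindly random, then $\t_{\cC}(\omega)<\infty$, which is exactly randomness with respect to $\cC$.

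For the converse I would argue by contraposition: assuming $\omega$ is \emph{not} blindly random, I would exhibit one effectively compact class $\cC\ni\P$ witnessing non-randomness. By hypothesis there is a blind test $t$, i.e.\ a lower semicomputable $t:\Omega\to\clint{0}{\infty}$ with $\int t\,d\P\le 1$, such that $t(\omega)=\infty$. The natural candidate is the class of all measures for which $t$ happens to be a legitimate test,
\begin{equation*}
  \cC_{t} = \setOf{\Q\in\cM(\Omega)}{\int t(\omega)\,\Q(d\omega)\le 1}.
\end{equation*}
By construction $\P\in\cC_{t}$, and $t$ is itself a $\cC_{t}$-test; since $t(\omega)=\infty$, the sequence $\omega$ already fails a $\cC_{t}$-test and so is not random with respect to $\cC_{t}$. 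This establishes the contrapositive, once we know that $\cC_{t}$ is effectively compact.

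The main point to verify---and the only real obstacle---is therefore that $\cC_{t}$ is effectively compact. By Proposition~\ref{propo:closed-to-compact} it suffices to show that $\cC_{t}$ is effectively closed, i.e.\ that its complement $\setOf{\Q}{\int t\,d\Q>1}$ is effectively open in $\cM(\Omega)$. Here I would use that $\Q\mapsto\int t\,d\Q$ is lower semicomputable: writing $t=\sup_{n}t_{n}$ as the increasing limit of basic functions (Proposition~\ref{propo:lower-semi-limit.seqs}), each $t_{n}$ depends on only finitely many coordinates, so $\int t_{n}\,d\Q=\sum_{|x|=N}t_{n}(x)\Q(x)$ is a finite rational-coefficient linear combination of the values $\Q(x)$ and hence computable in $\Q$, uniformly in $n$ (the arbitrary-measure analogue of Proposition~\ref{propo:integral-computable.Bernoulli}). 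Consequently $\int t\,d\Q=\sup_{n}\int t_{n}\,d\Q$ is lower semicomputable in $\Q$, and $\setOf{\Q}{\int t\,d\Q>1}=\bigcup_{n}\setOf{\Q}{\int t_{n}\,d\Q>1}$ is effectively open. With this in place both implications are complete; I expect the uniformity bookkeeping for the integrals to be the most delicate part, but it follows the pattern already established for the trimming lemmas.
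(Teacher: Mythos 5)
Your proposal is correct and follows essentially the same route as the paper: the easy direction uses the universal class test as a blind test, and the converse constructs the class $\cC_{t}=\setOf{\Q}{\int t\,d\Q\le 1}$ and verifies effective closedness (hence effective compactness) by writing $t$ as a monotone limit of basic functions, exactly as in the paper's proof.
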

\begin{proof}
  Assume first that $\omega$ is not random with respect to some effectively
compact class of measures that contains $\P$.
Then the universal
test with respect to this class is a blind test that shows that
$\omega$ is not blindly random with respect to $\P$.

Now assume that there exists some blind test $t$ for measure $\P$
with $t(\omega)=\infty$.
Then just consider the class $\cC$ of measures
$\Q$ with $\int t(\omega)\, d\Q\le 1$.
This class is effectively closed, (and thus effectively compact).
Indeed, $t$ be the supremum of the computable sequence of basic functions
$t_{n}$.
The class of measures $\Q$ with $\int t_{n}(\omega)\,d Q>1$ is effectively open,
uniformly in $n$, and $\cC$ is the complement of the union of these sets.
\end{proof}

It is easy to see from the definition (or from the last theorem)
that uniform randomness implies blind randomness (either
directly or using the last theorem).
The reverse statement is not true:

\begin{theorem}\label{thm:nonrandom-blind}
There exists a sequence $\omega$ and measure $\P$ such that $\omega$ is
blindly random with respect to $\P$ but not uniformly random.
\end{theorem}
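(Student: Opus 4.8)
The plan is to reformulate both sides of the separation through the characterizations already established, and then to exhibit a witness by placing the ``bad'' measure at a $1$-generic parameter of a computable family.

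First I would translate the two notions. By Theorem~\ref{thm:some-oracle} (Day--Miller), $\omega$ fails to be uniformly random with respect to $\P$ exactly when $\t(\omega,\P)=\infty$. By Theorem~\ref{thm:characterize-obliv}, blind randomness of $\omega$ with respect to $\P$ is equivalent to $\omega$ being random with respect to every effectively compact class containing $\P$; since effectively closed classes are effectively compact (Proposition~\ref{propo:closed-to-compact}) and the proof of Theorem~\ref{thm:characterize-obliv} produces an effectively closed class, it suffices to quantify over effectively closed $\cC\ni\P$. Using Theorem~\ref{thm:class-test}, $\omega$ is random with respect to $\cC$ iff $\inf_{\Q\in\cC}\t(\omega,\Q)<\infty$, and this infimum is attained by compactness. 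Writing $R=\setOf{\Q}{\t(\omega,\Q)<\infty}$, blind randomness thus says that $\P$ cannot be separated from $R$ by an effectively closed set; dually, \emph{every effectively open $U\supseteq R$ satisfies $\P\in U$}. So the theorem reduces to producing $\omega$ and a measure $\P\notin R$ lying in this effective closure of $R$.

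For the witness I would parametrize measures by Cantor space through a single computable map $\beta\mapsto\P_\beta$ ($\beta\in\Omega$) and take $\P=\P_{\beta^*}$ for a suitably chosen $1$-generic $\beta^*$. The point of genericity is the neighborhood-forcing property: if $U$ is effectively open and contains every $\P_\beta$ with $\beta$ computable, then $V=\setOf{\beta}{\P_\beta\in U}$ is effectively open and contains all computable $\beta$, hence is dense, and a $1$-generic $\beta^*$ lies in every dense effectively open set, so $\beta^*\in V$ and $\P_{\beta^*}\in U$. Consequently, \emph{provided $\omega$ is uniformly random with respect to $\P_\beta$ for every computable $\beta$} (so that $\setOf{\P_\beta}{\beta\text{ computable}}\subseteq R$), the measure $\P_{\beta^*}$ automatically lies in the effective closure of $R$, i.e.\ $\omega$ is blindly random with respect to it. It then remains only to arrange that $\omega$ is \emph{not} uniformly random with respect to $\P_{\beta^*}$. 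To realize both demands I would build $\omega$ and $\beta^*$ together by finite extension. Cut $\omega$ into consecutive blocks, the $n$-th of length $\ell_n\to\infty$, and let $\P_\beta$ act independently on blocks, on block $n$ being the uniform measure tilted by a small $\beta(n)$-dependent bias favouring strings whose ``signature'' (say, parity) equals $\beta(n)$; the tilt is chosen weak enough that for a fixed computable $\beta$ the measure $\P_\beta$ is mutually absolutely continuous with the untilted product measure $\mu$ with $\mu$-computable density, so that (by conservation of randomness and Proposition~\ref{propo:unif-comput.Cantor}) a $\mu$-random $\omega$ stays $\P_\beta$-random. During the construction I keep $\omega$ $\mu$-random by diagonalizing it out of the countably many effective $\P_{\beta_e}$-null sets, while steering the actual block-signatures of $\omega$ so that $\beta^*$, defined as the sequence of these signatures, meets every dense effectively open set and is therefore $1$-generic. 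Finally, because $\beta^*$ records $\omega$'s own signatures, the $\beta^*$-computable strategy ``bet that block $n$ has signature $\beta^*(n)$'' wins on every block, and the induced $\beta^*$-lower-semicomputable test witnesses $\t(\omega,\P_{\beta^*})=\infty$.

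The hard part will be the tuning and the simultaneity of the forcing. The bias must be strong enough that matching \emph{all} signatures (as $\beta^*$ does) yields an unbounded test, which needs the per-block advantage to sum to infinity along $\omega$, yet weak enough that each individual computable $\beta$, able to predict only a chance fraction of $\omega$'s signatures, leaves $\P_\beta$ absolutely continuous with $\mu$ and hence randomness-preserving; finding one threshold that does both uniformly over the blocks is the delicate estimate. Interwoven with this is the bookkeeping of the finite-extension construction: at each stage I must be free to set the next block's signature to whatever value the genericity of $\beta^*$ demands, while retaining enough unconstrained bits in the block to keep $\omega$ off the next effective null set and to carry the randomness. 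I expect that choosing $\ell_n$ growing fast and the bias decaying slowly (so that the advantages are non-summable yet each block still offers exponentially many free completions) reconciles these, but verifying that the three requirement streams --- genericity of $\beta^*$, $\mu$-randomness of $\omega$, and divergence of the $\beta^*$-test --- never conflict is where the real work lies.
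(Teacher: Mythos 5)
Your reformulation of the two notions is correct, and your core idea---encode $\omega$ into the measure through tiny but detectable biases, so that the measure acts as an oracle defeating $\omega$'s randomness while remaining close to a base measure---is exactly the right one. But the witness construction has an irreparable internal conflict. You define $\beta^*$ to be the block-signature sequence of $\omega$, you need $\beta^*$ to be $1$-generic, and you simultaneously need $\omega$ to be Martin-L\"of random with respect to $\P_\beta$ for at least one computable $\beta$ (otherwise $\setOf{\P_\beta}{\beta \text{ computable}}\not\subseteq R$ and your forcing argument has nothing to force with). The signature map is computable and pushes any such $\P_\beta$ forward to a computable measure on $\Omega$ in which every bit has probability in, say, $\clint{1/4}{3/4}$; by conservation of randomness $\beta^*$ is then Martin-L\"of random for that push-forward, so by the Levin--Schnorr criterion $\KP(\beta^*(1:n))\ge cn-O(1)$ for some $c>0$. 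A $1$-generic, however, meets for every constant $d$ the dense c.e.\ set of strings $\sigma$ with $\KP(\sigma)<c\len{\sigma}-d$ (extend any string by enough zeros), so it has prefixes of unbounded complexity deficiency. The three ``requirement streams'' you propose to interleave are therefore not merely delicate to reconcile --- they are contradictory, and no choice of block lengths or bias decay can fix this.

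What pushes you into this dead end is a miscalculation of where the test's winnings come from. Betting the whole capital on ``block $n$ has signature $\beta^*(n)$,'' an event of $\P_{\beta^*}$-probability $1/2+\varepsilon_n$, multiplies the capital by $(1/2+\varepsilon_n)^{-1}\approx 2$ when it wins, not by $1+O(\varepsilon_n)$; so along $\omega$ the capital grows like $2^k$ even when $\sum_n\varepsilon_n<\infty$. There is thus no need for non-summable biases, and once the biases are summable the perturbed and unperturbed measures agree within an $O(1)$ factor on every set --- at which point blind randomness is immediate from the definition of a blind test (any blind test for the perturbed measure is, up to a constant, a blind test for the uniform measure), and the whole apparatus of $1$-genericity, forcing, and the characterization via effectively compact classes becomes unnecessary. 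This is the paper's proof: take $\omega$ random for $B_{1/2}$ and let $B'$ have conditional probabilities $1/2+\omega(n)\varepsilon_n$ with $\varepsilon_n>0$ summable, so that $B'\eqm B_{1/2}$ as set functions and $\omega$ stays blindly random; meanwhile the positions of the ones of $\omega$ are enumerable from $B'$ as a point of $\cM(\Omega)$, and betting on a one at each such position at the near-even odds gives a uniform test that is infinite at $\pair{\omega}{B'}$.
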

\begin{proof}
Indeed, oblivous randomness does not change if we change the
measure slightly (up to $O(1)$-factor).
On the other hand, the changed measure
may have much more oracle power that makes a sequence non-random.
For example,
we may start with uniform Bernoulli random measure $B_{1/2}$ (coin tosses with
probabilities $1/2, 1/2, 1/2,\ldots$ and fix some random sequence
$\omega=\omega(1)\omega(2)\dots$.
Then consider a (slightly) different measure $B'$ with probabilities
$1/2+\omega(1)\varepsilon_{1},
1/2+\omega(2)\varepsilon_2,\dots$ where $\varepsilon_1,\varepsilon_2,\dots$ are
so small and converge to zero so fast that they do not change the measure more
than by $O(1)$-factor while being all positive.
Then $B'$ encodes $\omega$, which makes it easy to construct a uniform test $t$
with $t(\omega, B')=\infty$.
\end{proof}

However, there are some special cases (including Bernoulli measures) where
uniform and blind randomness are equivalent.
In order to formulate the sufficient conditions for such a coincidence, let us
start with some definitions.

\begin{definition}[Effective orthogonality]\label{def:effectively-orth}
  For a probability measure $\P$, let $\Rands(\P)$ denote the set of sequences
uniformly random with respect to $\P$.
A class of measures is called \df{effectively orthogonal} if
$\Rands(\P)\cap\Rands(\Q)=\emptyset$ for any two different measures in it.
\end{definition}

\begin{theorem}\label{thm:orthogonal-blind.bin-Cantor}
  Let $\cC$ be an effectively compact, effectively orthogonal class of measures.
Then
for every measure $\P$ in $\cC$ the uniform randomness with respect to $\P$ is
equivalent to blind randomness with respect to $\P$.
\end{theorem}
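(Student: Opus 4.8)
The plan is to prove the two implications separately, noting that only the reverse one uses orthogonality. The forward direction (uniform randomness implies blind randomness) holds for any effectively compact class and was already observed before the statement; I would simply recall it. If $\omega\in\Rands(\P)$, that is $\t(\omega,\P)<\infty$, then for every effectively compact class $\cC'$ containing $\P$ we have, by Theorem~\ref{thm:class-test}, $\t_{\cC'}(\omega)=\inf_{\Q\in\cC'}\t(\omega,\Q)\le\t(\omega,\P)<\infty$, so $\omega$ is random with respect to $\cC'$; Theorem~\ref{thm:characterize-obliv} then yields blind randomness with respect to $\P$.

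For the nontrivial direction I would argue by contradiction. Assume $\omega$ is blindly random with respect to $\P\in\cC$ but \emph{not} uniformly random, i.e.\ $\t(\omega,\P)=\infty$, equivalently $\omega\notin\Rands(\P)$. Consider the set $G=\setOf{\Q\in\cC}{\omega\in\Rands(\Q)}$. Effective orthogonality forces $|G|\le 1$, and by assumption $\P\notin G$. The heart of the argument is to produce an effectively compact subclass $\cC'\subseteq\cC$ with $\P\in\cC'$ but $\cC'\cap G=\emptyset$. If $G=\emptyset$, take $\cC'=\cC$. If $G=\set{\Q_0}$ with $\Q_0\ne\P$, I would separate $\P$ from $\Q_0$ by a single coordinate: choose a string $y$ with $\P(y)\ne\Q_0(y)$ and a rational $b$ strictly between the two values, and let $\cC'$ be the intersection of $\cC$ with $\setOf{\Q}{\Q(y)\le b}$ (or with $\setOf{\Q}{\Q(y)\ge b}$, according to the sign), chosen so that $\P\in\cC'$ and $\Q_0\notin\cC'$. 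Since $\setOf{\Q}{\Q(y)>b}$ is a basic open set, $\cC'$ is an intersection of two effectively closed sets, hence effectively closed and therefore effectively compact by Proposition~\ref{propo:closed-to-compact}.

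Now $\cC'$ is nonempty, since it contains $\P$, and every $\Q\in\cC'$ satisfies $\omega\notin\Rands(\Q)$, that is $\t(\omega,\Q)=\infty$, because the only possible element of $G$ has been excluded and $\cC'\subseteq\cC$. Hence Theorem~\ref{thm:class-test} gives $\t_{\cC'}(\omega)=\inf_{\Q\in\cC'}\t(\omega,\Q)=\infty$, so $\omega$ is not random with respect to the effectively compact class $\cC'$. As $\P\in\cC'$, Theorem~\ref{thm:characterize-obliv} then asserts that $\omega$ is not blindly random with respect to $\P$, contradicting the standing assumption. Therefore $\omega\in\Rands(\P)$, which together with the forward direction establishes the claimed equivalence.

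The one genuinely content-bearing step is this excision: orthogonality is used precisely to guarantee $|G|\le 1$, so that removing the unique competitor $\Q_0$ while retaining $\P$ can be accomplished by a single coordinate inequality, which automatically preserves effective compactness. Everything else is bookkeeping on top of Theorems~\ref{thm:class-test} and~\ref{thm:characterize-obliv}. Two points I would be careful about are that only the \emph{existence} of $\cC'$ is needed (the separating $y$ and $b$ need not be found effectively from $\omega$), and that an infimum over an all-$\infty$ family is $\infty$, so no appeal to attainment of the minimum is required.
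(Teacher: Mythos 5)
Your proof is correct and follows essentially the same route as the paper's: the easy direction via the class-test characterization of blind randomness, and for the converse the separation of $\P$ from the unique measure $\Q_0\in\cC$ having $\omega$ random by a closed condition on a single coordinate $\P(y)$, yielding an effectively compact subclass that witnesses non-blind-randomness. The paper merely organizes the contradiction in mirror image (assuming blind randomness, extracting $\P'$ with $\omega\in\Rands(\P')$, and showing $\P'=\P$), but the content is identical.
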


The statement looks strange: we claim something about randomness with respect
to measure $\P$, but the
condition of the claim is that $\P$ can be included into a class of measures with some
properties.
(It would be natural to have a more direct requirement for $\P$ instead.)
The theorem implies that the measures of Theorem~\ref{thm:nonrandom-blind}
do not belong to any such class.

\begin{proof}
We have noted already that in one direction the statement is obviously true.
Let us prove the converse.
Assume that sequence $\omega$ is blindly random with respect to measure $\P$.
By Theorem~\ref{thm:characterize-obliv}, it is random
with respect to the class $\cC$.
So, $\omega$ is uniformly random with respect to some measure $\P'$ from the
class $\cC$.
It remains to show $\P=\P'$.

Imagine that this is not the case.
Then we can construct an effectively compact class
of measures $\cC'$ that contains $\P$ but not $\P'$.
Indeed, since $\P$ and $\P'$ are different, they assign
different measures to some finite string,
and this fact can be used, in form of a closed condition separating $\P$ from
$\P'$, to construct $\cC'$.
Consider now the effectively compact class $\cC\cap\cC'$.
It contains $\P$, and therefore $\omega$ will be random with respect to this class.
Hence the class contains some measure $\P''$ with respect to which $\omega$ is
uniformly random.
But $\P'\ne\P''$ (one measure is in $\cC'$, the other one is not),
so we get a contradiction with the assumption with the effective orthogonality
of the class $\cC$.
\end{proof}

\begin{remark}
The proved theorem is applicable in particular to the class of Bernoulli
measures.
It is tempting to think that there is a simpler proof, at least for this case:
if $\omega$ is random with respect to $p$ we can compute $p$ from $\omega$ as
the limit of relative frequency, and no additional oracle is needed.
This is not so: though $p$ is \emph{determined} by $\omega$, it
does not even depend continuously on $\omega$.
Indeed, no initial segment of the sequence guarantees that its limiting
frequency is in some given interval.
However, we can apply an analogous reasoning to those sequences
$\omega$ with the randomness deficiency bounded by some constant.
(See~\cite{HoyrupRojasCiE2009} which introduces the notion of \df{layerwise
  computability}.)
In particular, it can be shown that if $\omega$ is
random with respect to the measure
$B_{p}$ then $p$ is computable with oracle $\omega$.
\end{remark}

\section{Neutral measure}

The following theorem, first published in~\cite{LevinUnif76} and then again
in~\cite{GacsUnif05}, points to a curious property of uniform randomness which
distinguishes it from randomness using an oracle.

\begin{definition}
A measure is called \df{neutral} if every sequence is uniformly
random with respect to it.
\end{definition}

\begin{theorem}
There exists a neutral measure; moreover, there is a measure $N$ with
$\t(\omega,N)\le 1$ for all sequences $\omega$.
\end{theorem}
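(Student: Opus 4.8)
The plan is to reduce the statement to a sequence of finite-dimensional fixed-point problems and then pass to a limit using the compactness of $\cM(\Omega)$. Write the universal uniform test as a monotone limit $\t=\sup_n t_n$ of an increasing sequence of basic (hence jointly continuous) functions $t_n(\omega,\P)$, as in the analogue of Proposition~\ref{propo:lower-semicomp-as-limit}. Since $t_n\le\t$, every $t_n$ inherits the integral bound $\int t_n(\omega,\P)\,\P(d\omega)\le 1$ for all $\P$. I claim it suffices to produce, for each $n$, a measure $N_n$ with $\sup_\omega t_n(\omega,N_n)\le 1$: by compactness of $\cM(\Omega)$ pass to a convergent subsequence $N_{n_j}\to N$; then for fixed $m$ and $\omega$ and $n_j\ge m$ we have $t_m(\omega,N_{n_j})\le t_{n_j}(\omega,N_{n_j})\le 1$, and letting $j\to\infty$, continuity of $t_m$ in the measure argument gives $t_m(\omega,N)\le 1$, whence $\t(\omega,N)=\sup_m t_m(\omega,N)\le 1$ for every $\omega$. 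The monotone-sup representation is exactly what makes the limit go in the right direction despite $\t$ being only lower semicontinuous.

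Next I would solve the finite level. A basic function $t_n$ depends on $\omega$ only through a prefix of some length $k$ and on $\P$ through finitely many values $\P(y)$. Embed the probability simplex $\Delta$ over the $2^k$ cylinders $a\in\{0,1\}^k$ into $\cM(\Omega)$ by an affine (hence continuous) map $\iota$ that realizes the prescribed cylinder probabilities $q(a)$ and then continues by fair coin tossing. Put $\tau(a,q)=t_n(\omega_a,\iota(q))$, where $\omega_a$ is any extension of $a$; this is well defined and continuous in $q$, and since the $\iota(q)$-measure of the cylinder $a$ is $q(a)$, the integral bound becomes the averaged inequality $\sum_{a}q(a)\,\tau(a,q)\le 1$ for every $q\in\Delta$. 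The goal is now purely combinatorial: find $q^{\ast}\in\Delta$ with $\tau(a,q^{\ast})\le 1$ for all $a$, for then $N_n=\iota(q^{\ast})$ works.

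For this I would apply Brouwer's fixed-point theorem to the continuous self-map of $\Delta$ given by
\begin{equation*}
 F(q)(a)=\frac{q(a)+\max(\tau(a,q)-1,\,0)}{1+\sum_{b}\max(\tau(b,q)-1,\,0)}.
\end{equation*}
A fixed point $q^{\ast}$ satisfies $q^{\ast}(a)\,S=\max(\tau(a,q^{\ast})-1,0)$ for all $a$, where $S=\sum_{b}\max(\tau(b,q^{\ast})-1,0)$. If $S>0$ then $q^{\ast}$ is supported exactly on $\{a:\tau(a,q^{\ast})>1\}$, and there $\tau(a,q^{\ast})=1+q^{\ast}(a)S$; summing gives $\sum_a q^{\ast}(a)\tau(a,q^{\ast})=1+S\sum_a q^{\ast}(a)^2>1$, contradicting the averaged inequality. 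Hence $S=0$, i.e. $\tau(a,q^{\ast})\le 1$ for every $a$, as required. The point of building the surplus $\max(\tau-1,0)$ additively into the numerator is precisely to force the bound on the whole simplex and not merely on the support of $q^{\ast}$.

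The main obstacle is exactly this off-support control in the fixed-point step: a fixed point of a naive multiplicative replicator-type reweighting only pins down $\tau$ on the support of $q^{\ast}$, so the candidate measure could still have $\tau>1$ on null cylinders. The additive map $F$ above, combined with the averaged inequality, is what removes this gap. The remaining ingredients --- the monotone-limit representation, the affine embedding of the simplex, and the weak-$\ast$ compactness of $\cM(\Omega)$ --- are routine, and the resulting neutral measure $N$ is of course not computable.
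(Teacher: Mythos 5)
Your proof is correct, but it takes a genuinely different route from the paper's. The paper keeps the universal test $\t$ intact and instead cuts down the constraint set: the conditions $\t(\omega,N)\le 1$ are closed subsets of the compact space $\cM(\Omega)$ (by lower semicontinuity of $\t$), so it suffices to handle finitely many sequences $\omega_1,\dots,\omega_k$ at a time; the finite problem is then solved on the simplex of mixtures of the point masses $\delta_{\omega_i}$, where the integral bound $\int\t(\omega,X)\,dX(\omega)\le 1$ shows that each face of the simplex is covered by the corresponding closed sets $A_i=\{X:\t(\omega_i,X)\le 1\}$, and the Knaster--Kuratowski--Mazurkiewicz-type covering lemma (proved via Sperner) yields a common point. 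You instead approximate the test rather than the constraint set: you truncate $\t$ to a continuous basic function $t_n$, solve an exact finite problem on the simplex of length-$k$ cylinder distributions via a Nash-style Brouwer fixed point (your additive surplus map correctly handles the off-support cylinders, which is indeed the one place a naive reweighting would fail), and recover $N$ as a subsequential limit, using the monotone representation $\t=\sup_m t_m$ together with continuity of each $t_m$ to push the bound through the limit --- a necessary care, since lower semicontinuity of $\t$ alone would not transfer $t_{n_j}(\cdot,N_{n_j})\le 1$ to the limit measure. The two finite-dimensional kernels are equivalent topological facts (KKM versus Brouwer), but the decompositions differ: the paper's argument needs only the values of $\t$ at the chosen points and the compactness of $\cM(\Omega)$, and so transfers verbatim to arbitrary effectively compact metric spaces; yours exploits the cylinder structure of $\Omega$ (basic functions depending on finite prefixes) and produces $N$ as an explicit limit of measures solving finite variational problems, which is somewhat more constructive at each stage, at the cost of an extra approximation layer.
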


Note that a neutral measure cannot be computable.
Indeed, for a computable measure there exists a computable sequence that is not
an atom (adding bits sequentially, we choose the next bit in such a way that its
conditional probability is at most $2/3$).
Such a sequence cannot be random with respect to $N$.
For the same reason a neutral measure cannot be equivalent to an oracle
(for a neutral measure $N$ one cannot find an oracle $A$ that make it computable
and at the same time can be uniformly reconstructed from every approximation
of~$N$).
Indeed, in this case uniform randomness (as we have shown) is
equivalent to randomness with respect to $N$ with oracle $A$, and the same
argument works.

A neutral measure cannot be lower or upper semicomputable either, but this
statement does not seem interesting, since here a semicomputable measure
over $\Omega$ is also computable.
Some more meaningful (and less trivial) versions of this fact are proved
in~\cite{GacsUnif05}.

\begin{proof}
Consider the universal test $\t(\omega,\P)$.
We claim that
there exists a measure $N$ with $\t(\omega,N)\le 1$ for every $\omega$.
In other terms, for every $\omega$ we have a condition on $N$ saying that
$\t(\omega,N)\le 1$ and we need to prove that these conditions (there is
continuum of them) have non-empty intersection.
Each of these condition is a
closed set in a compact space (recall that $\t$ is lower semicontinuous), so it
is enough to show that finite intersections are non-empty.

So let $\omega_1,\ldots,\omega_k$ be $k$ sequences.
We want to prove that there
exists a measure $N$ such that $\t(\omega_i,N)\le 1$ for every $i$.
This measure
will be a convex combination of measures concentrated on
$\omega_1,\ldots,\omega_k$.
So we need to prove that $k$ closed subsets of a
$k$-vertex simplex (corresponding to $k$ inequalities) have a common point.
It is a direct consequence of the following classical topology result formulated
in Lemma~\ref{lem:Sperner} below (which is used in the standard proof of
Brouwer's fixpoint theorem).

To show that the lemma gives us what we want, consider any point of some face.
For example, let $X$ be a measure that is a mixture of, say, $\omega_1$,
$\omega_5$ and $\omega_7$.
We need to show that $X$ belongs to $A_1\cup A_5\cup A_7$: in our terms,
that one of the numbers $\t(\omega_1,X)$, $\t(\omega_5,X)$ and $\t(\omega_7,X)$
does not exceed $1$.
It is easy since we know $\int t(\omega,X)\,d X(\omega)\le 1$ (by the
definition of the test), and this integral is a convex combination of the above
three numbers with some coefficients (the weights of $\omega_1$, $\omega_5$ and
$\omega_7$ in $X$.
\end{proof}
\begin{lemma}\label{lem:Sperner}
Let a simplex with vertices $1,\dots,n$ be covered
by closed sets $A_1,\ldots,A_{k}$ in such a way
that vertex $i$ belongs to $A_i$ (for every $i$), edge $i$-$j$ is covered
by $A_i\cup A_j$, and so on (formally, face $(i_1,\dots,i_s)$ of the simplex is
a subset of $A_{i_1}\cup\ldots\cup A_{i_s}$; in particular, the union
$A_1\cup\dots\cup A_{k}$ is the entire simplex).
Then the intersection $A_1\cap\dots\cap A_{k}$ is not empty.
\end{lemma}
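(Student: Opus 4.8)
The plan is to deduce this statement (which is the Knaster--Kuratowski--Mazurkiewicz covering lemma) from the purely combinatorial Sperner lemma together with a compactness argument, exactly as in the classical route to Brouwer's theorem that the paper alludes to. Throughout I take $n=k$, so that the simplex $\Delta$ has vertices $v_{1},\dots,v_{k}$, vertex $v_{i}$ lies in $A_{i}$, and for every index set $I\subseteq\{1,\dots,k\}$ the face spanned by $\{v_{i}:i\in I\}$ is contained in $\bigcup_{i\in I}A_{i}$.

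First I would fix a sequence of triangulations of $\Delta$ (for instance iterated barycentric subdivisions) whose mesh tends to $0$. For a fixed triangulation I label each of its vertices $w$ as follows. Let $I(w)$ be the support of the barycentric coordinates of $w$, that is the minimal face of $\Delta$ containing $w$. By the covering hypothesis applied to this face, $w\in\bigcup_{i\in I(w)}A_{i}$, so I may choose a label $\ell(w)\in I(w)$ with $w\in A_{\ell(w)}$. Choosing the label \emph{inside} $I(w)$ is the whole point: it guarantees that a subdivision vertex lying on a proper face of $\Delta$ receives a label belonging to that face, and that each original vertex $v_{i}$ receives label $i$; in other words $\ell$ is a legitimate Sperner labeling.

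Then I would invoke Sperner's lemma: any Sperner labeling of a triangulation of $\Delta$ has at least one fully labeled (``rainbow'') cell, a small simplex whose $k$ vertices carry all $k$ distinct labels. Applying this to the $m$-th triangulation produces, after reindexing, vertices $w_{1}^{(m)},\dots,w_{k}^{(m)}$ of a single cell with $w_{i}^{(m)}\in A_{i}$ for each $i$, and since the mesh tends to $0$ the diameter of this cell tends to $0$. Finally I would extract the common point by compactness: $\Delta$ is compact, so along a subsequence $w_{1}^{(m)}$ converges to some $x\in\Delta$, and because the cell diameters tend to $0$ every $w_{i}^{(m)}$ converges to the same $x$. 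Each $A_{i}$ is closed and contains the sequence $(w_{i}^{(m)})_{m}$, hence $x\in A_{i}$ for all $i$, so $x\in\bigcap_{i=1}^{k}A_{i}$ and the intersection is nonempty.

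The main obstacle is really only the labeling step: one must verify that a valid label can always be selected inside the minimal carrying face $I(w)$, since this boundary compatibility is exactly what Sperner's lemma demands, and it is here---and only here---that the covering hypothesis is used for \emph{all} faces rather than merely for the whole simplex. Once that is in place the genuinely combinatorial difficulty is off-loaded to Sperner's lemma (which I would treat as the classical result the paper takes it to be), and the limiting argument is routine given the closedness of the $A_{i}$ and the compactness of $\Delta$.
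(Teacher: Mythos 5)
Your proof is correct and follows essentially the same route as the paper's: a Sperner labeling is obtained by assigning to each subdivision vertex a label from its minimal carrying face (using the covering hypothesis for that face), Sperner's lemma yields fully labeled cells of arbitrarily small diameter, and compactness together with closedness of the $A_i$ produces a common point. You simply spell out the labeling and limiting steps in more detail than the paper does.
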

For completeness, let us reproduce the standard proof of this lemma.
\begin{proof}
Consider a disjoint division $T$ of the simplex
into smaller $n$-dimensional simplices (in such a way that every vertex in the
division is a vertex of every simplex containing it).
Let $S$ be the set of vertices of $T$.
A \df{Sperner-labeling} is a covering of $S$ by sets $A_{1},\dots,A_{k}$
such that the points of $S$ belonging to each
lower-dimensional simplex formed by some vertices $i_1<i_2<\dots <i_r\le k$
are covered by $A_{i_1}\cup\dots\cup A_{i_{r}}$.
(A point gets label $i$ if it belongs to $A_{i}$.)
Sperner's famous combinatorial lemma (see for example the Wikipedia)
implies that in any Sperner labeling,
there is a simplex whose vertices are labeled with all $k$ colors.

To apply the Sperner's lemma, note that our closed sets $A_{i}$
satisfy the rules of Sperner coloring.
Sperner's lemma guarantees the existence of a simplex that has
all possible labels on its vertices.
In this way we can get arbitrarily small
simplices with this property; compactness then shows that all $A_i$ have a
common point.
\end{proof}

\section{Randomness in a metric space}

Most of the theory presented above for infinite binary sequences generalizes to
infinite sequences of natural numbers.
Much of it generalizes even further, to an arbitrary metric space.
In what follows below we not only generalize; some of the results are
new also for the binary sequence case.

\subsection{Constructive metric spaces}

We rely on the definition of a constructive metric space,
and the space of measures on it, as defined in~\cite{GacsUnif05}
and~\cite{HoyrupRojasRandomness09} (the lecture notes~\cite{GacsLnotesAIT88} are
also recommended).

 \begin{definition}
A \df{constructive metric space} is a tuple $\bX = (X, d, D, \alpha)$ where $(X,d)$
is a complete separable metric space, with a countable dense subset $D$
and an enumeration $\alpha$ of $D$.
It is assumed that the real function $d(\alpha(v),\alpha(w))$ is computable.
Open balls with center in $D$
and rational radius are called \df{ideal balls}, or \df{basic open sets}, or
\df{basic balls}.
The (countable) set of basic balls will also be called the \df{canonical basis}
in the topology of the metric space.

An infinite sequence $s_{1},s_{2},\dots$ with $s_{i}\in D$ is called a
\df{strong Cauchy} sequence if for all $m<n$ we have $d(s_{m},s_{n})\le 2^{-m}$.
Since the space is complete, such a sequence always has a (unique) limit, which
we will say is \df{represented} by the sequence.
 \end{definition}

We will generally use the notational convention of this definition: if there is
a constructive metric space with an underlying set $X$ then the we will use
$\bX$ (boldface) to denote the whole structure $(X,d,D,\alpha)$.
But frequently, we just use $X$ when the structure is automatically understood.

 \begin{examples}\label{example:metric}
 \begin{enumerate}[\upshape 1.]

 \item\label{i:example.metric.discrete}
A set $X=\{s_{1},s_{2},\dots\}$ can be turned into a constructive
\df{discrete} metric space by making the distance between any two different
points equal to 1.
The set $D$ consists of all points $\alpha(i)=s_{i}$.

 \item\label{i:example.metric.one-point-compactif}
The set $\overline\bbN=\bbN\cup\{\infty\}$ can be turned into a constructive
metric space by making the distance between any two different
points with the distance
function $d(x,y)=|\frac{1}{x}-\frac{1}{y}|$, where of course,  $\frac{1}{\infty}=0$.
The set $D$ consists of all points of $\bbN$.
This metric space is called the \df{one-point compactification}, in a
topological sense, of the \df{discrete metric space} $\bbN$ of
Example~\ref{i:example.metric.discrete}.

  \item The real line $\bbR$ with the distance $d(x,y) = |x - y|$ is a constructive
metric space, and so is $\bbR_{+}=\lint{0}{\infty}$.
We can add the element $\infty$ to get $\overline\bbR_{+}=\clint{0}{\infty}$.
This is not a metric space now, but is still equipped with a natural
constructive topology (see Remark~\ref{rem:constr-top} below).
It could be equipped with a new metric in a way that would not change
this constructive topology.

\item\label{i:example.metric.L1}
If $\bX,\bY$ are constructive metric spaces, then we can define
a constructive metric space $\bZ=\bX\times\bY$ with
one of its natural metrics, for example the sum of distances in both
coordinates.
In case when $\bX=\bY=\bbR$, this is called the $L_{1}$ metric.
Let $D_{\bZ}$ be the product $=D_{\bX}\times D_{\bY}$.

  \item\label{i:example.metric.Cantor}
Let $X$ be a finite or countable (enumerated) alphabet, with a
fixed numbering, and let $X^{\bbN}$ be the set of infinite sequences
$x = \tup{x(1), x(2), \dots}$
with distance function $d^{\bbN}(x,y) = 2^{-n}$ where $n$ is the first $i$ with
$x(i)\ne y(i)$.
This space generalizes the binary
Cantor space of Definition~\ref{def:binary-Cantor}, to the case mentioned in
Remark~\ref{rem:seq-of-natural}.
The balls in it are cylinder sets: for a given finite sequence $z$, we take all
continuations of $z$.
 \end{enumerate}
 \end{examples}

 \begin{remark}
Each point $x$ of a constructive metric space $\bX$ can be viewed as an
``approximation mass problem'': the set of total functions that for any given
rational $\varepsilon>0$ produce a $\varepsilon$-approximation to $x$ by a point
of the canonical dense set $D$.
This is a mass problem in the sense of~\cite{MedvedevMass55}.
One can also note that this
mass problem is Medvedev equivalent to the enumeration problem: enumerate all
basic balls that contain $x$.
 \end{remark}

 \begin{remark}\label{rem:constr-top}
A constructive metric space is special case of a more general concept,
which is often useful: a constructive topological space.

A \df{constructive topological space} $\bX = (X, \tau, \nu)$
is a topological space over a set $X$ with a basis $\tau$ effectively
enumerated (not necessarily without repetitions) as a list
$\tau =\set{\nu(1),\nu(2),\dots}$.

For every nonempty subset $Z$ of the space $X$, we can equip $Z$ with
a constructive topology: we intersect all basic sets with $Z$, without changing
their numbering.
On the other hand, not every subset of a constructive metric space
naturally has the structure of a constructive metric space (the
everywhere dense set $D$ is not inherited).

But instead of introducing constructive topological spaces formally, we prefer
not to burden the present paper with more abstractions, and will speak about
some concepts like effective open sets and continuous functions, as defined on
an arbitrary subset $Z$ of the constructive metric space $X$.
 \end{remark}

 \begin{definition}
   An open subset of a constructive metric space is \df{lower semicomputable
open} (or r.e.~open, or c.e.~open), or \df{effectively open}
if it is the union of an enumerable set of elements of the canonical basis.
It is \df{upper semicomputable closed}, or \df{effectively closed}
if its complement is effectively open.
Given any set $A\subseteq X$, a set $U$ is \df{effectively open on $A$} if
there is an effective open set $V$ such that $U\cap A=V\cap A$.
 \end{definition}

Note that in the last definition, $U$ is not necessarily part of $A$, but only
its intersection with $A$ matters.

Computable functions can be defined in terms of effectively open sets.

 \begin{definition}[Computable function]\add{\label{def:comp-func}}
   Let $X,\Y$ be constructive metric spaces and $f:X\to \Y$ a function.
Then $f$ is \df{continuous} if for each element $U$ of the canonical
basis of $\Y$ the set $f^{-1}(U)$ is an open set.
It is \df{computable} if $f^{-1}(U)$ is also an effectively open set,
uniformly in $U$.
A partial function $f:X\to \Y$ defined at least on a set $A$ is
\df{computable} if for each element $U$ of the canonical basis of $\Y$ the set
$f^{-1}(U)$ is effectively open on $A$, uniformly in $U$.

An element $x\in X$ is called \df{computable} if the
function $f:\{0\}\to X$ with $f(0)=x$ is computable.

When $f(x)$ is defined only in a single
point $x_{0}$ then we say that the element $y_{0}=f(x_{0})$ is
$x_{0}$-\df{computable}.
When $f:X\times Z\to \Y$, defined on $X\times\set{z_{0}}$, is computable,
then we say that the
function $g:X\to \Y$ defined by $g(x)=f(y, z_{0})$ is $z_{0}$-\df{computable}, or
computable \df{from} $z_{0}$.
 \end{definition}

There are several alternative characterizations of a computable element.

\begin{proposition}
The following statements are equivalent for an element $x$ of a constructive
metric space $\bX=(X,d,D,\alpha)$.
\begin{enumerate}[\upshape (i)]
\item $x$ is computable.
\item the set of basic balls containing $x$ is enumerable.
\item There is a computable sequence $z_{1},z_{2},\dots$ of elements of $D$ with
  $d(x,z_{n})\le 2^{-n}$.
\end{enumerate}
\end{proposition}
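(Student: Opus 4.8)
The plan is to prove the cycle $(i)\Rightarrow(ii)\Rightarrow(iii)\Rightarrow(i)$, where the first link is in fact an equivalence that comes straight out of unwinding Definition~\ref{def:comp-func} at a one-point domain. For $(i)\Leftrightarrow(ii)$ I would specialize the notion of a computable function to the map $f\colon\{0\}\to X$ with $f(0)=x$: by definition $f$ is computable exactly when, for every basic ball $U$ of $X$, the preimage $f^{-1}(U)$ is effectively open on $\{0\}$, uniformly in $U$. Since $\{0\}$ is a single point, $f^{-1}(U)$ equals $\{0\}$ or $\emptyset$ according as $x\in U$ or $x\notin U$, and ``effectively open, uniformly in $U$'' means precisely that there is an algorithm which, given the index of $U$, enumerates $\{0\}$ iff $x\in U$. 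This is exactly the assertion that the set of (indices of) basic balls containing $x$ is enumerable, i.e.\ $(ii)$. So $(i)\Leftrightarrow(ii)$ is stated as an immediate reading of the definition.

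For $(ii)\Rightarrow(iii)$: given $n$, I would run the enumeration of basic balls containing $x$ guaranteed by $(ii)$ and wait for the first one, say $B(c,r)$, whose rational radius satisfies $r\le 2^{-n}$, then set $z_n=c\in D$. This search halts because such balls exist: by density of $D$ there is $c_0\in D$ with $d(x,c_0)<2^{-n}$, and then $B(c_0,2^{-n})$ is a basic ball containing $x$ of radius $2^{-n}$, so a ball of radius $\le 2^{-n}$ must eventually be listed. Since $x\in B(c,r)$ means $d(x,c)<r\le 2^{-n}$, the sequence $(z_n)$ is computable and satisfies $d(x,z_n)\le 2^{-n}$, which is $(iii)$.

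For $(iii)\Rightarrow(ii)$ the key tool is the assumed computability of the real function $d(\alpha(v),\alpha(w))$ on the dense set. Given a basic ball $B(c,r)$ with $c\in D$ and $r\in\bbQ$, I would approximate $d(x,c)$ through $d(z_n,c)$: the triangle inequality together with $d(x,z_n)\le 2^{-n}$ gives $|d(z_n,c)-d(x,c)|\le 2^{-n}$, so $d(x,c)$ is a computable real, uniformly in $c$. Membership $x\in B(c,r)$, i.e.\ $d(x,c)<r$, is then semidecidable, because $d(x,c)<r$ holds iff there is some $n$ with $d(z_n,c)+2^{-n}<r$. One direction is $d(x,c)\le d(x,z_n)+d(z_n,c)\le 2^{-n}+d(z_n,c)$; for the other, if $d(x,c)<r$ then $d(z_n,c)+2^{-n}\le d(x,c)+2\cdot 2^{-n}<r$ for all large $n$. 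Each inequality $d(z_n,c)+2^{-n}<r$ is semidecidable (a computable real strictly below a rational), so the whole condition is enumerable, uniformly in $(c,r)$, giving $(ii)$.

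The only step that needs genuine care is $(iii)\Rightarrow(ii)$: one must pass from the exact relation $d(x,c)<r$ to a semidecidable existential form and verify both directions of the equivalence, being attentive to the strict inequality and to the fact that $d(z_n,c)$ is only a computable real, so that merely \emph{strict} comparisons with rationals are semidecidable. Everything else is routine: $(ii)\Rightarrow(iii)$ rests on density to make the search terminate, and $(i)\Leftrightarrow(ii)$ is a direct reading of Definition~\ref{def:comp-func} at the one-point domain.
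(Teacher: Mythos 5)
Your proof is correct. The paper itself states this proposition without proof, so there is nothing to compare against; your argument --- unwinding Definition~\ref{def:comp-func} at the one-point domain for $(i)\Leftrightarrow(ii)$, using density of $D$ to make the search in $(ii)\Rightarrow(iii)$ terminate, and reducing $d(x,c)<r$ to the semidecidable condition $\exists n\,\bigl(d(z_n,c)+2^{-n}<r\bigr)$ for $(iii)\Rightarrow(ii)$ --- is the standard one, and you correctly handle the one delicate point, namely that only strict comparisons of the computable reals $d(z_n,c)$ with rationals are semidecidable.
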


The following proposition connects computability with a more intuitive concept
based on representation by strong Cauchy sequences.

\begin{proposition}
   Let $\bX,\bY$ be constructive metric spaces and $f:X\to \Y$ a function.
Then $f$ is computable if and only if there is a computable transformation that
turns each strong Cauchy sequence $s_{1},s_{2},\dots$ with $s_{i}\in D_{\bX}$
converging to a point $x\in X$ into a strong Cauchy sequence $t_{1},t_{2},\dots$ with
$t_{i}\in D_{\bY}$ converging to $f(x)$.

If $f$ is a partial function with domain $Z$
then $f$ is computable if and only if there is a computable transformation that
turns each strong Cauchy sequence $s_{1},s_{2},\dots$ with $s_{i}\in D_{\bX}$
converging to some point $x\in Z$ into a strong Cauchy sequence $t_{1},t_{2},\dots$ with
$t_{i}\in D_{\bY}$ converging to $f(x)$.
\end{proposition}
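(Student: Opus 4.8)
The plan is to prove both implications of the equivalence, treating the total case first and then indicating the (routine) modifications for a partial function with domain $Z$. Throughout I use the standard \emph{finite-use principle} for a computable transformation of strong Cauchy sequences: its $j$-th output symbol $t_j$ is produced after reading only finitely many input symbols $s_1,\dots,s_n$, and the same $t_j$ is output for every input extending that prefix. I also use that for a strong Cauchy sequence $s_1,s_2,\dots$ with limit $x$ one has $d(s_m,x)\le 2^{-m}$, by letting $n\to\infty$ in $d(s_m,s_n)\le 2^{-m}$.

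For the direction from computability to a transformation, suppose $f^{-1}(U)$ is effectively open uniformly in the basic ball $U$ of $\bY$. Given a strong Cauchy sequence $(s_i)$ for $x$, I would compute $t_j$ as follows: dovetail over all basic balls $B=B(c,r)$ of $\bY$ with $r<2^{-j-1}$, over the basic balls of $\bX$ enumerating $f^{-1}(B)$, and over $i$; whenever a basic $\bX$-ball $B(c',r')\subseteq f^{-1}(B)$ and an index $i$ with $d(c',s_i)+2^{-i}<r'$ are found, stop and set $t_j=c$. The inequality certifies $d(c',x)<r'$, hence $x\in f^{-1}(B)$ and $f(x)\in B$, so $d(t_j,f(x))<2^{-j-1}$. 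Termination follows from continuity of $f$ and density of $D_{\bY}$: there is a basic ball of radius $<2^{-j-1}$ containing $f(x)$, its $f$-preimage is effectively open and contains $x$ in its interior, and some $s_i$ eventually witnesses this. Since $d(t_j,f(x))<2^{-j-1}$ for all $j$, the sequence $(t_j)$ is strong Cauchy and converges to $f(x)$, and the whole procedure is computable.

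The converse is the main obstacle and is where the finite-use principle does the work. Fix a basic ball $U=B(c_0,r_0)$ of $\bY$. Call a finite strong Cauchy prefix $\sigma=(q_1,\dots,q_n)$ over $D_{\bX}$ \emph{good} if running the transformation on $\sigma$ yields, using only these $n$ symbols, some output $t_j$ with $d(t_j,c_0)+2^{-j}<r_0$; this is a semidecidable condition, uniformly in $\sigma$ and in $U$. I would then set $V_U=\bigcup_{\sigma\text{ good}}\bigcap_{m\le n}B(q_m,2^{-m})$, a uniformly effectively open set, and claim $f^{-1}(U)=V_U$. For the inclusion $V_U\subseteq f^{-1}(U)$, take $x$ with $d(q_m,x)<2^{-m}$ for all $m\le n$; the strictness gives an $\varepsilon>0$ with $d(q_m,x)+\varepsilon\le 2^{-m}$ for every $m\le n$, so one can append points $q_{n+1},q_{n+2},\dots$ of $D_{\bX}$ with $d(q_l,x)\le\min(\varepsilon,2^{-l-1})$ and check that the extended sequence is strong Cauchy with limit $x$. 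Running the transformation on it reproduces the output $t_j$ of $\sigma$, and since the outputs converge to $f(x)$ we get $d(f(x),c_0)\le d(t_j,c_0)+2^{-j}<r_0$, i.e.\ $f(x)\in U$. For the reverse inclusion, given $x\in f^{-1}(U)$ take a representation with slack $d(s_i,x)\le 2^{-i-1}$; since $U$ is open there is $j_0$ with $d(t_{j_0},c_0)+2^{-j_0}<r_0$, this $t_{j_0}$ is produced from a finite prefix $\sigma=(s_1,\dots,s_n)$, which is therefore good, and $d(s_m,x)\le 2^{-m-1}<2^{-m}$ places $x$ in the corresponding ball, so $x\in V_U$.

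The delicate points, which I expect to absorb most of the work, are exactly the extendability lemma in the inclusion $V_U\subseteq f^{-1}(U)$ (turning a point of the open intersection into an \emph{actual} strong Cauchy representation extending $\sigma$, which forces the strict-inequality bookkeeping above) and the legitimacy of the finite-use principle. For the partial function with domain $Z$, I would run the same two constructions but only claim that the transformation behaves correctly on representations of points of $Z$; the set $V_U$ then satisfies $V_U\cap Z=f^{-1}(U)\cap Z$ (the appended approximating points still lie in $D_{\bX}$ and the limits stay in $Z$), which is precisely the definition of $f^{-1}(U)$ being effectively open on $Z$, uniformly in $U$. This yields the claimed equivalence in the partial case as well.
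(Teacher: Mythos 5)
The paper offers nothing to compare against here: it explicitly omits the proof (``We omit the---not difficult---proof of this statement''), so your argument has to stand on its own. In outline it does, and it is the standard one: the forward direction by dovetailing through the enumerations of $f^{-1}(B)$ for small basic balls $B$ of $\bY$ until a ball of the enumeration is certified to contain the limit of the input sequence, and the converse by the finite-use principle for oracle machines, declaring a finite input prefix ``good'' for $U=B(c_0,r_0)$ when it already forces an output $t_j$ with $d(t_j,c_0)+2^{-j}<r_0$, and taking $V_U$ to be the union of the corresponding finite intersections of balls. Your two flagged delicate points (extendability of a good prefix to a genuine strong Cauchy representation of any point of its associated open set, and the legitimacy of finite use) are exactly the right ones, and your treatment of them is sound; the relativization to a domain $Z$ is also handled correctly, since both constructions only ever invoke the machine on representations of points of $Z$.

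There is one repairable defect in the converse direction: you quantify over ``finite strong Cauchy prefixes'' and assert that goodness is semidecidable uniformly in $\sigma$. But the strong Cauchy condition $d(q_m,q_{m'})\le 2^{-m}$ is a closed ($\Pi^0_1$) condition on the computable reals $d(q_m,q_{m'})$, not a c.e.\ one, so the set of prefixes you union over is not effectively enumerable as written; and you cannot simply drop the condition, because a tuple violating it can still have $\bigcap_{m\le n}B(q_m,2^{-m})$ nonempty (one only gets $d(q_m,q_{m'})<2^{-m}+2^{-m'}$ from a common point), in which case the machine's guarantee does not apply to any extension and the output $t_j$ could be garbage. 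The fix is cheap: either include the strict, hence c.e., condition $d(q_m,q_{m'})<2^{-m}$ in the definition of goodness, or shrink the radii and set $V_U=\bigcup_\sigma\bigcap_{m\le n}B(q_m,2^{-m-1})$, which forces any tuple with nonempty intersection to be a strong Cauchy prefix automatically; in either variant the reverse inclusion survives because your chosen representation has slack $d(s_i,x)\le 2^{-i-1}$. With that adjustment the proof is complete.
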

We omit the---not difficult---proof of this statement.

\begin{remark}
Though $x_{0}$-computability means computability from a strong Cauchy sequence
$s_{1},s_{2},\dots$ converging to $x_{0}$, it should not be considered the same
as computability using a machine that treats this sequence as an ``oracle''.
In case of $x_{0}$-computability, the resulting output
must be independent of the strong Cauchy sequence $s_{1},s_{2},\dots$
representing $x_{0}$.
\end{remark}

The following definition of lower semicomputability is also a straightforward
generalization of the special case in Definition~\ref{def:lower-semicomp.seqs}.

 \begin{definition}[Lower semicomputability]
Let $\bX=(X,d,D,\alpha)$ be a constructive metric space.
A function $f:X\to\clint{-\infty}{\infty}$ is \df{lower semicontinuous}
if the sets $\setOf{x}{f(x)>r}$ are open, for every rational number $r$
(from here it follows that they are open for all $r$, not only rational).

It is \df{lower semicomputable} if these sets are effectively open, uniformly
in the rational number $r$.
It is \df{upper semicomputable} if $-f$ is lower semicomputable.

A partial function $f:X\to \Y$ defined at least on a set $A$ is
\df{lower semicomputable} on $A$ if the sets $\setOf{x}{f(x)>r}$ are effectively
open in $A$, uniformly for every rational number $r$.
  \end{definition}

It is easy to check that a real function over a constructive metric space is
computable if and only if it is lower and upper semicomputable.
As before, one can define semicomputability equivalently with the help of basic
functions.

Let us introduce an everywhere dense set of simple functions.

 \begin{definition}[Hat functions, basic functions]\label{example:hat-funcs}
We define an enumerated list of \df{basic} functions $\cE=\{e_{1},e_{2},\dots\}$
in the constructive metric space $\bX=(X,d,D,\alpha)$ as follows.
For each point $u\in D$ and positive
rational numbers $r,\varepsilon$ let us define the \df{hat function}
$g_{u,r,\varepsilon}$: its value in point $x$ is determined by  the distance of
$x$ to $u$ and is equal to $1$, if this distance is at most $r$, equal to zero,
if the distance is not less than $r+\varepsilon$, and varies linearly as the
distance runs through the segment $\clint{r}{r+\varepsilon}$: see
Figure~\ref{fig:hat-function}.
\begin{figure}
  \centering
  \includegraphics[scale=0.5]{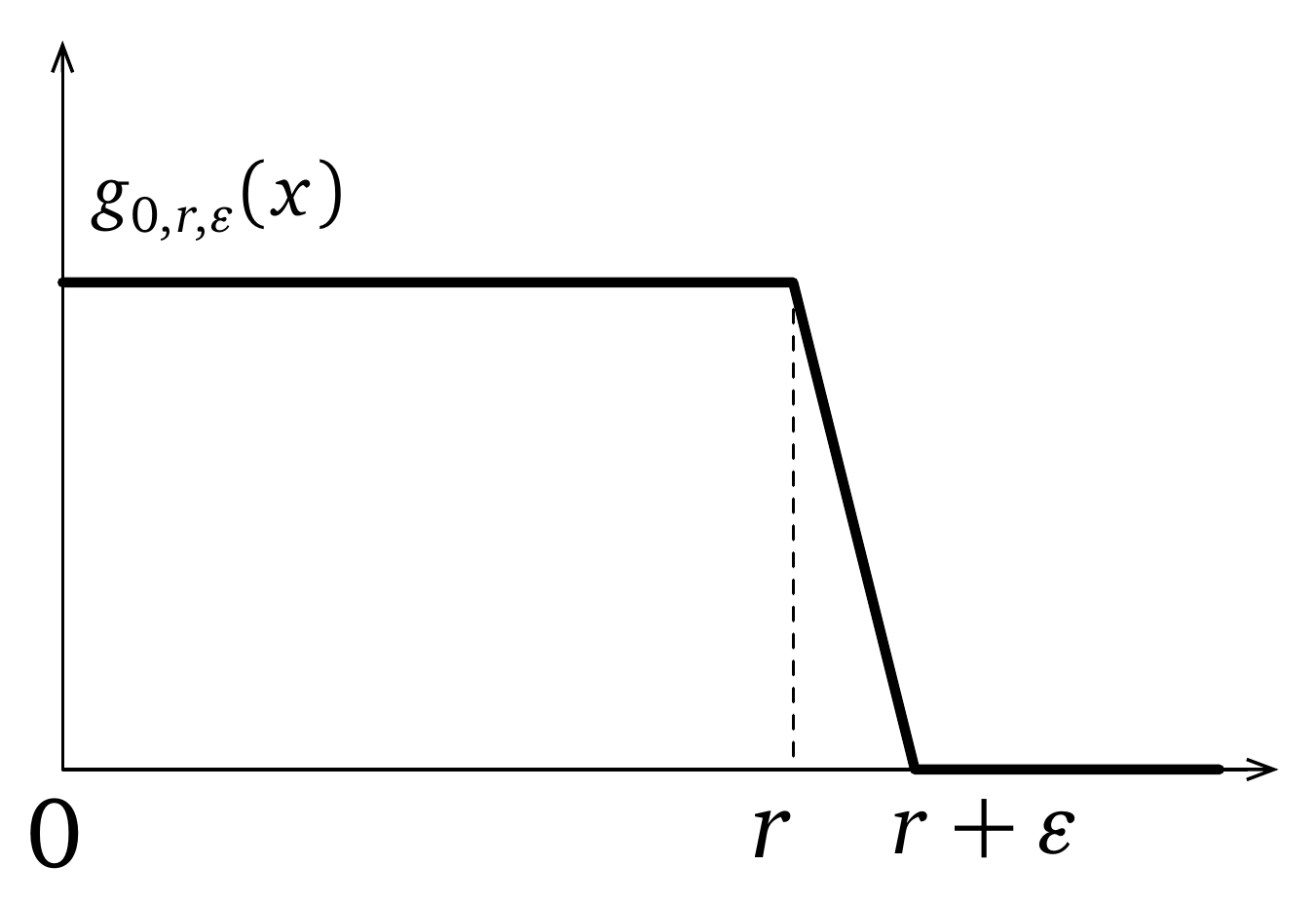}
  \caption{A hat function}
  \label{fig:hat-function}
\end{figure}
Let $\cE$ be the smallest set of functions containing all hat functions that is
closed under $\min,\max$ and rational linear combinations.
 \end{definition}

\begin{proposition}\label{propo:lower-semi}
A function $f:X\to\clint{0}{\infty}$ defined on a constructive metric space
is lower semicomputable if and only if it is the limit of a computable
increasing sequence of basic functions.
\end{proposition}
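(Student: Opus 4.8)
The plan is to prove the two implications separately, reusing the strategy of Proposition~\ref{propo:lower-semicomp-as-limit} but with the hat functions of Definition~\ref{example:hat-funcs} playing the role the functions $g_{x,u,v,k}$ played there. For the direction that is immediate, suppose $f=\lim_{n}f_{n}$ for a computable increasing sequence of basic functions $f_{n}$. Each basic function is computable, hence lower semicomputable, so each set $\setOf{x}{f_{n}(x)>r}$ is effectively open uniformly in $n$ and in the rational $r$. Since the sequence increases to $f$, we have $\setOf{x}{f(x)>r}=\bigcup_{n}\setOf{x}{f_{n}(x)>r}$, a union that is effectively open uniformly in $r$; thus $f$ is lower semicomputable. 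The parenthetical remark that basic functions are computable follows as in the earlier propositions, since every hat function is a computable function of the computable distance $d(x,u)$ and the class $\cE$ is closed under $\min$, $\max$ and rational linear combinations.

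For the substantive direction, assume $f$ is lower semicomputable, so there is an algorithm that, given a rational $q$, enumerates ideal balls whose union is $\setOf{x}{f(x)>q}$. Because that union equals $\setOf{x}{f(x)>q}$, every enumerated ball $B$ lies inside $\setOf{x}{f(x)>q}$, so $f>q$ on all of $B$. First I would record the representation $f(x)=\sup\setOf{q}{x\in B\text{ for some enumerated }B\subseteq\setOf{y}{f(y)>q}}$: the inequality $\le$ holds because $x\in B\subseteq\setOf{y}{f(y)>q}$ forces $q<f(x)$, and the inequality $\ge$ holds because for every rational $q<f(x)$ the point $x$ lies in some ball of the enumeration for $\setOf{y}{f(y)>q}$. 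Next I would replace each scaled indicator $q\cdot\mathbf{1}_{B}$ appearing here by genuine basic functions lying below $f$: if $B$ has center $u\in D$ and rational radius $\rho$, then for rationals $r,\varepsilon$ with $r+\varepsilon<\rho$ the scaled hat function $q\cdot g_{u,r,\varepsilon}$ is supported inside $B$ and bounded by $q$, hence is everywhere $\le f$ (it is $0$ off $B$, and on $B$ it is at most $q<f$).

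Collecting all functions $q\cdot g_{u,r,\varepsilon}$ over all enumerated pairs $(B,q)$ and all admissible $(r,\varepsilon)$ gives a computable sequence $h_{1},h_{2},\dots$ of basic functions, each $\le f$. I would then check that their pointwise supremum is exactly $f$: given $x$ and a rational $q<f(x)$, pick from the enumeration for $\setOf{y}{f(y)>q}$ a ball $B$ with center $u\in D$ and radius $\rho$ containing $x$, together with rationals $r,\varepsilon$ with $d(x,u)\le r$ and $r+\varepsilon<\rho$; then $g_{u,r,\varepsilon}(x)=1$, so the corresponding $h_{i}$ takes the value $q$ at $x$, whence $\sup_{i}h_{i}(x)\ge q$, and letting $q\uparrow f(x)$ gives $\sup_{i}h_{i}(x)\ge f(x)$; the reverse is clear from $h_{i}\le f$. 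Setting $f_{n}=\max_{i\le n}h_{i}$ yields a computable increasing sequence of basic functions with limit $f$. The main obstacle is exactly this passage from indicators to continuous hat functions: one must keep each hat strictly supported inside its ball (the strict inequality $r+\varepsilon<\rho$ together with the openness of ideal balls) so that no approximant ever exceeds $f$, while still letting $g_{u,r,\varepsilon}$ attain the value $1$ at points interior to $B$ so that the supremum recovers $f$ rather than some strictly smaller lower semicontinuous function. Everything else — the uniformity of the enumeration and the closure of $\cE$ under $\max$ — is routine.
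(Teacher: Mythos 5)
Your proof is correct, and it follows exactly the approach the paper intends (and uses explicitly for the analogous Proposition~\ref{propo:lower-semicomp-as-limit}): approximate the scaled indicators $q\cdot\mathbf{1}_{B}$ of the enumerated basic balls from below by hat functions supported strictly inside $B$, and take the increasing maxima. The key point you flag — keeping $r+\varepsilon$ below the radius of $B$ so that every approximant stays below $f$ while the supremum still recovers $f$ — is precisely the continuity issue the paper's earlier proof addresses, so nothing is missing.
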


Note that the above characterization holds also for lower
semicontinous functions, if we just omit the requirement that the sequence
$g_{n}$ be computable.

\begin{definition}
We can introduce the notion of lower semicomputability \df{from} $z_{0}$,
or \df{$z_0$-lower semicomputability},
similarly to the $z_{0}$-computability of Definition~\ref{def:comp-func}, as
lower semicomputability of a function defined on the set $X\times \{z_{0}\}$.
\end{definition}

Sometimes two metrics on a space are equivalent from the point of
view of computability questions.
Let us formalize this notion.


\begin{definition}[Uniform continuity, equivalence]\label{def:uniform-contin}
  Let $X,Y$ be two metric spaces, and $f:X\to Y$ a function.
We say that $f$ is \df{uniformly continuous} if for each $\varepsilon>0$
there is a $\delta>0$ such that
$d_{X}(x,y)\le\delta$ implies
$d_{Y}(f(x),f(y))\le\varepsilon$.

If $\bX,\bY$ are constructive metric spaces and function $f$ is computable,
we will call it \df{effectively uniformly continuous} if $\delta$ can be
computed from $\epsilon$ effectively.

Two metrics $d_{1},d_{2}$ over the same space are \df{(effectively) equivalent} if
the identity map is (effectively) continuous in both directions.
\end{definition}

For example, the Euclidean metric and the $L_{1}$ metric introduced in
Example~\ref{example:metric}.\ref{i:example.metric.L1} are equivalent in
the space $\bbR^{2}$.

Effective compactness was introduced in
Definition~\ref{def:effectively-compact}: this generalizes immediately to
arbitrary metric spaces.
A weaker notion, local compactness, also has an effective version.

\begin{definition}[Effective compactness and local
  compactness]\label{def:effectively-compact-metric}
  A compact subset $C$ of a constructive metric space $\bX=(X,d,D,\alpha)$ is called
\df{effectively compact} if the set
 \begin{align*}
   \setOf{S}{S \text{ is a finite set of basic open sets and } \bigcup_{E\in S}E\supseteq C}
 \end{align*}
is enumerable.

A subset $C$ of a metric space is called \df{locally compact} if it is covered by
the union of a set of balls $B$ such that $\overline B\cap C$ is compact.
Here $\overline B$ is the closure of $B$.
It is \df{effectively locally compact} if it is covered by the union of an enumerated
sequence of basic balls $B_{k}$ such that $\overline B_{k}\cap C$ is effectively
compact, uniformly in $k$.
\end{definition}

 \begin{examples}
   \begin{enumerate}[\upshape 1.]
  \item The countable discrete space of
Example~\ref{example:metric}.\ref{i:example.metric.discrete} is effectively
compact if it is finite, and effectively locally compact otherwise.

   \item The segment $\clint{0}{1}$ is effectively compact.
The line $\bbR$ is effectively locally compact.

   \item If the alphabet $X$ is finite then the space $X^{\bbN}$ of infinite
     sequences is effectively compact.
Otherwise it is not even locally compact.

  \item
Let $\alpha\in\clint{0}{1}$ be a lower semicomputable
real number that is not computable.
(It is known that there are such numbers, for example
 $\sum_{x\in \bbN} 2^{-\KP(x)}$.)
The lower semicomputability of $\alpha$ allows to enumerate the rationals
less than $\alpha$ and allows for the segment $\clint{0}{\alpha}$ to inherit
the constructive metric (and topology) from the real line.
This space is compact, but not effectively so.

   \end{enumerate}
 \end{examples}

The following is a useful characerization of effective compactness.

\begin{proposition}\label{propo:epsilon-net}
  \begin{alphenum}
  \item
A compact subset $C$ of
a constructive metric space $\bX=(X,d,D,\alpha)$ is effectively compact if and only if
from each (rational) $\varepsilon$ one can compute a finite set of $\varepsilon$-balls
covering $C$.
 \item
For an effectively compact subset $C$ of a constructive metric space, in
every enumerable set of basic open sets covering
$C$ one can effectively find a finite covering.
  \end{alphenum}
 \end{proposition}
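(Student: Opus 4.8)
The plan is to establish part~(b) first, deduce the forward direction of part~(a) from it, and then spend the real effort on the reverse direction of~(a). For part~(b), I work straight from Definition~\ref{def:effectively-compact-metric}. Suppose $C$ is effectively compact and $\mathcal U=\set{U_1,U_2,\dots}$ is an enumerable family of basic open sets with $C\subseteq\bigcup\mathcal U$. Effective compactness says precisely that the collection of finite sets $S$ of basic open sets with $\bigcup_{E\in S}E\supseteq C$ is enumerable. I would run two enumerations in parallel: one listing the finite subsets $T$ of $\mathcal U$ (dovetailed against the enumeration of $\mathcal U$), the other listing the finite covers of $C$, and search for a $T$ occurring in both, outputting the first such. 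Since $\mathcal U$ covers the compact $C$, ordinary Heine--Borel gives a finite $T_0\subseteq\mathcal U$ covering $C$; this $T_0$ appears in both lists, so the search halts, which is exactly an effective extraction of a finite subcover.

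For the forward direction of~(a), fix a rational $\varepsilon>0$ and consider the enumerable family $\mathcal U_\varepsilon=\setOf{B(\alpha(i),\varepsilon)}{i\in\bbN}$ of all $\varepsilon$-balls centred in $D$. Since $D$ is dense, $\mathcal U_\varepsilon$ covers all of $X$, in particular $C$, so applying part~(b) to $\mathcal U_\varepsilon$ yields, effectively and uniformly in $\varepsilon$, a finite set of $\varepsilon$-balls covering $C$.

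The reverse direction of~(a) is the heart of the matter: assuming we can compute a finite $\varepsilon$-ball cover of $C$ from each rational $\varepsilon$, I must show ``$\bigcup S\supseteq C$'' is semidecidable for finite $S=\set{B(c_1,r_1),\dots,B(c_n,r_n)}$. The device is the auxiliary function
\[
 h(x)=\max_{1\le i\le n}\bigl(r_i-d(x,c_i)\bigr),
\]
which is computable (a finite maximum of the computable functions $x\mapsto r_i-d(x,c_i)$) and $1$-Lipschitz, and which satisfies $x\in\bigcup S$ iff $h(x)>0$; hence $\bigcup S\supseteq C$ iff $\min_{x\in C}h(x)>0$, the minimum being attained as $C$ is compact. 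Given an $\varepsilon$-cover $\set{B(a_1,\varepsilon),\dots,B(a_m,\varepsilon)}$ of $C$, every $x\in C$ lies in some $B(a_j,\varepsilon)$, so $h(x)\ge h(a_j)-\varepsilon$ by the Lipschitz bound. Therefore the semidecidable condition ``there is a rational $\varepsilon=2^{-k}$ and a computed $\varepsilon$-cover of $C$ all of whose centres satisfy $h(a_j)>\varepsilon$'' is a sound certificate for $\bigcup S\supseteq C$; the test $h(a_j)>\varepsilon$ is effective because $h(a_j)$ is a computable real.

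The remaining task, and the main obstacle, is completeness: that whenever $\bigcup S\supseteq C$ such a certificate must eventually appear. If $\bigcup S\supseteq C$, the Lebesgue-number lemma applied to the cover $\set{B_i}$ of the compact set $C$ provides $\delta>0$ with $B(q,\delta)$ contained in a single $B_i$ for every $q\in C$; hence, once $\varepsilon$ is small, every net ball that \emph{meets} $C$ has its centre within $\varepsilon$ of $C$ and so satisfies $h(a_j)>\varepsilon$. The genuine difficulty is that a computed $\varepsilon$-cover may contain redundant balls lying far from $C$ and outside $\bigcup S$, whose centres have $h(a_j)\le 0$ and thereby spoil the naive test over \emph{all} centres. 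The plan is to restrict the test to the net balls actually meeting $C$ — these are the only ones required, since each $x\in C$ is caught by a ball meeting $C$ — and the crux is to make this restriction effective. This is the point where the compactness of $C$ is essential: one arranges that the computed $\varepsilon$-covers may be taken to consist of balls meeting $C$, after which the Lipschitz estimate above yields both soundness and completeness and closes the proof.
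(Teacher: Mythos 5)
Parts~(b) and the forward half of~(a) are fine: extracting a finite subcover from an enumerable cover by dovetailing against the enumeration of finite covers, and then applying this to the family of all $\varepsilon$-balls centred in $D$, is exactly what the paper dismisses with ``the remaining statements are proved quite easily.''

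The reverse half of~(a) is where your proof stops, and the step you leave open --- ``one arranges that the computed $\varepsilon$-covers may be taken to consist of balls meeting $C$'' --- is not a finishing touch but the entire difficulty, and it \emph{cannot} be arranged from the hypothesis as literally stated. Take $X=\bbR$ and $C=[0,\alpha]$ with $\alpha\in(0,1)$ lower semicomputable but not computable. From any rational $\varepsilon$ one can output the $\varepsilon$-balls centred at $0,\varepsilon,2\varepsilon,\dots$ up to $1$, which cover $[0,1]\supseteq C$; so the right-hand side of~(a) holds. But $C$ is not effectively compact: a single basic ball $B(q,r)=(q-r,q+r)$ covers $C$ iff $q-r<0$ and $q+r>\alpha$, and every rational $s>\alpha$ arises as $q+r$ for such a ball, so enumerating the finite covers of $C$ would enumerate $\setOf{s\in\bbQ}{s>\alpha}$ and make $\alpha$ upper semicomputable, hence computable. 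Thus the implication is false unless the hypothesis is strengthened to require that the computed $\varepsilon$-balls all intersect $C$ (equivalently, have centres within $\varepsilon$ of $C$), which is how the statement must be read. Under that reading your argument does close, and more simply than via the Lebesgue number: if $\bigcup S\supseteq C$ then your continuous function $h$ has a positive minimum $\delta$ on the compact set $C$, each centre $a_j$ is within $\varepsilon$ of some $z_j\in C$, so $h(a_j)\ge h(z_j)-\varepsilon>\delta-\varepsilon>\varepsilon$ once $\varepsilon<\delta/2$, and your (semidecidable) certificate is eventually found.

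For what it is worth, the paper's own proof takes the same route and has the same hole. It enumerates the finite families $S'$ such that every ball $B(x,\varepsilon)$ of the computed cover is ``guaranteedly contained'' in some $B(y,\sigma)\in S'$, meaning $\sigma>\varepsilon+d(x,y)$ --- which is your test $h(x)>\varepsilon$ in different notation --- and its completeness argument takes a limit point of the centres of the non-contained balls as $\varepsilon\to 0$. That limit point is only forced to lie in $C$ (and hence be covered by $S'$, giving the contradiction) if the centres stay within $\varepsilon$ of $C$; junk balls far from $C$ defeat it, exactly as in the example above. So you have correctly isolated the one genuine issue; what is missing is the realization that it is resolved by amending the hypothesis, not by a further construction.
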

 \begin{proof}
Assume that for all $\varepsilon$ we can show a finite covering
$S_{\varepsilon}$ of the set $C$ by balls of radius $\varepsilon$.
Along with such a covering, we can enumerate all coverings with
\emph{guaranteedly} large balls (this means that for all balls $B(x,\varepsilon)$
from the covering $S_{\varepsilon}$ there is a ball $B(y,\sigma)$ from the new
covering with $\sigma>\varepsilon+d(x,y)$).
The compactness of $C$ guarantees that while $S_{\varepsilon}$ runs through all
$\varepsilon$-coverings of $C$, this way all coverings of $C$ will be enumerated.
(Indeed, if there is some covering $S'$ not falling into the enumeration, then
for all $\varepsilon$ there is a ball of the covering $S_{\varepsilon}$ not
guaranteedly contained in any ball of $S'$.
Applying compactness and taking a limit point of the centers of these
non-contained balls, we obtain contradiction.)

The remaining statements are proved quite easily.
 \end{proof}

The following
statement generalizes Proposition~\ref{propo:closed-to-compact}, with the
same proof.

  \begin{proposition}
Every effectively closed subset $E$ of an effectively compact set $C$
is also effectively compact.
  \end{proposition}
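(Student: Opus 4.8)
The plan is to follow the proof of Proposition~\ref{propo:closed-to-compact} essentially verbatim, with the effectively compact set $C$ now playing the role that the whole space $\cM(\Omega)$ played there. First I would unwind the hypotheses. Since $E$ is effectively closed, its complement is effectively open, so there is an enumerable list $B_{1},B_{2},\dots$ of basic open sets whose union is the complement of $E$ (either in the ambient space $X$, or relative to $C$ in the sense of the ``effectively open on $A$'' notion of Definition; both readings lead to the same argument). The observation that makes everything go through is that each $B_{i}$ is disjoint from $E$: since $E\subseteq C$ and the $B_{i}$ exhaust the complement of $E$, no $B_{i}$ can contain a point of $E$. I would also note at the outset that $E$ is genuinely compact, being a closed subset of the compact set $C$.

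The combinatorial heart of the argument is the following claim: a finite set $S$ of basic open sets covers $E$ if and only if $S$ together with some finite subfamily $\{B_{i_{1}},\dots,B_{i_{k}}\}$ covers $C$. For the forward direction, if $S$ covers $E$ then $S$ together with the entire list $B_{1},B_{2},\dots$ is an open cover of $C$, because $E$ is covered by $S$ while the rest of $C$ lies in the complement of $E$, hence in $\bigcup_{i}B_{i}$; compactness of $C$ then extracts a finite subcover, which uses only finitely many $B_{i}$. For the reverse direction, if $S\cup\{B_{i_{1}},\dots,B_{i_{k}}\}$ covers $C\supseteq E$, then, since none of the $B_{i}$ meets $E$, the family $S$ alone already covers $E$.

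With the claim in hand, enumerability follows by dovetailing. Effective compactness of $C$ gives an enumeration of all finite sets of basic open sets covering $C$; equivalently, the predicate ``this finite family covers $C$'' is semidecidable. To enumerate the finite families $S$ covering $E$, I would run through all pairs $(S,F)$, where $S$ is a finite set of basic open sets and $F$ is a finite subfamily of the $B_{i}$, and wait for confirmation that $S\cup F$ covers $C$; each time such a confirmation arrives, I output the corresponding $S$. By the claim this enumerates exactly the finite covers of $E$, which is what effective compactness of $E$ requires. The one point needing care --- and the closest thing to an obstacle --- is precisely that effective compactness of $C$ yields only a semidecision procedure for ``covers $C$'', not the outright decision procedure available in the special case of Proposition~\ref{propo:closed-to-compact} (where covering the whole space $\cM(\Omega)$ was decidable by finite linear algebra). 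Consequently the final step must be phrased as a dovetailed enumeration over the auxiliary families $F$, rather than a direct decision, which is exactly what the semidecidability permits.
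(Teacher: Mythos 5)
Your proof is correct and follows essentially the same route as the paper, which explicitly states that this proposition is proved "with the same proof" as Proposition~\ref{propo:closed-to-compact}: reduce "$S$ covers $E$" to "$S$ plus finitely many of the $B_i$ covers $C$" and enumerate. Your one adaptation --- replacing the decidable check of covering the whole space by a dovetailed search using the merely semidecidable predicate "covers $C$" supplied by effective compactness of $C$ --- is exactly the adjustment the generalization requires.
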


As earlier, the converse also holds: every effectively compact subset of a
constructive metric space is effectively closed.
Indeed, we can consider all possible coverings of this set by basic balls, and
also outside balls that manifestly (by the relation of the distances of their centerse
and their radiuses) are disjoint from the balls of the covering.
The union of all these outside balls provide the complement of our effectively
compact set.

It is known that a continuous function maps compact sets into compact ones.
This statement also has a constructive counterpart, also provable by a standard argument:

 \begin{proposition}\label{propo:image-of-eff-compact}
Let $C$ be an effectively compact subset of a constructive metric space
$X$, and $f$ a computable function from $X$
into another constructive metric space $Y$.
Then $f(C)$ is effectively compact.
 \end{proposition}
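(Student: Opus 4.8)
The plan is to verify directly the two ingredients in the definition of effective compactness (Definition~\ref{def:effectively-compact-metric}) for the set $f(C)$: that it is compact, and that the family of finite collections of basic open sets covering it is enumerable. Compactness is classical: a continuous function maps compact sets to compact sets, and $f$, being computable, is in particular continuous (Definition~\ref{def:comp-func}), so $f(C)$ is compact. All the work lies in the effectivity of the covering relation.

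First I would reduce a covering condition in $Y$ to one in $X$. Given a finite set $S$ of basic open sets of $Y$, we have $\bigcup_{E\in S}E\supseteq f(C)$ if and only if $C\subseteq f^{-1}\bigl(\bigcup_{E\in S}E\bigr)=\bigcup_{E\in S}f^{-1}(E)$. By Definition~\ref{def:comp-func}, since $f$ is computable, each $f^{-1}(E)$ is effectively open uniformly in the basic open set $E$; as $S$ is finite, the union $V_{S}=\bigcup_{E\in S}f^{-1}(E)$ is an effectively open subset of $X$, presented as an enumerable union of basic balls $B_{1},B_{2},\dots$ uniformly in the code of $S$.

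Next I would show that the relation ``$C\subseteq V_{S}$'' is semidecidable uniformly in $S$, using the effective compactness of $C$. The idea is to enumerate the basic balls $B_{1},B_{2},\dots$ whose union is $V_{S}$ and, in parallel, run the enumerator guaranteed by Definition~\ref{def:effectively-compact-metric} that lists all finite collections of basic open sets covering $C$; we accept as soon as some finite prefix $\{B_{1},\dots,B_{n}\}$ appears in that list. If $C\subseteq V_{S}$, then since $C$ is compact, finitely many of the $B_{i}$ already cover $C$, so some prefix is eventually recognized and the procedure halts; if $C\not\subseteq V_{S}$, then no finite subcollection of the $B_{i}$ covers $C$, so the procedure never halts. (This is essentially the content of Proposition~\ref{propo:epsilon-net}, part~(b).) Hence the predicate $\bigcup_{E\in S}E\supseteq f(C)$ is semidecidable uniformly in $S$, and dovetailing over all finite $S$ produces an enumeration of exactly the finite coverings of $f(C)$, which is precisely the definition of effective compactness of $f(C)$.

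The main obstacle, and the only genuinely non-formal point, is the step that turns the effective compactness of $C$ into semidecidability of the covering predicate for $f(C)$: it crucially combines the classical finite-subcover property (from compactness of $C$) with the enumerability of finite covers (from effective compactness of $C$), so that a true covering is always detected at a finite stage. Everything else is routine bookkeeping about preimages and the uniformity built into the computability of $f$.
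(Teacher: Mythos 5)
Your proof is correct, and since the paper itself omits the argument (calling it ``standard''), your write-up supplies exactly the intended proof: pull back a candidate cover of $f(C)$ to the effectively open set $\bigcup_{E\in S}f^{-1}(E)$, semidecide the covering of $C$ via its effective compactness (Proposition~\ref{propo:epsilon-net}(b)), and dovetail over all finite $S$. No gaps.
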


The statement that a lower semicontinuous function on a compact set reaches its
minimum has also a computable analog (we provide a parametrized variant):

 \begin{proposition}[Parametrized minimum]\label{propo:lsc-min}
Let $\bY,\bZ$ be constructive metric spaces, let $f:Y\times Z\to\clint{0}{\infty}$ be a
lower semicomputable function, and $C$ an effectively closed subset of $Y\times Z$.
If it is also effectively compact, then the function
 \begin{align*}
  g(y)=\inf_{z:(y,z)\in C} f(y,z)
 \end{align*}
is lower semicomputable from below (and the $\inf$ can be
replaced with $\min$ due to compactness).

Instead of effective compactness of $C$, it is sufficient to require that
its projection $C_{Y}=\setOf{y}{\exists z\,(y,z)\in C}$ is effectively closed and
covered by an enumerated sequence of basic balls $B_{k}$ such that
$\overline B_{k}\times Z\cap C$ is effectively compact, uniformly in $k$.
 \end{proposition}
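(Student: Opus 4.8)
The plan is to verify lower semicomputability of $g$ directly, by exhibiting, uniformly in the rational $r$, the set $\setOf{y}{g(y)>r}$ as an effectively open set, and to derive the attainment of the infimum from the compactness of the fibers.

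First I would treat the case in which $C$ itself is effectively compact. Fix a rational $r$ and consider the effectively open set $U_{r}=\setOf{(y,z)}{f(y,z)>r}$ (effectively open uniformly in $r$ because $f$ is lower semicomputable). Then $C\setminus U_{r}$ is the intersection of the effectively closed set $C$ with the effectively closed complement of $U_{r}$, hence effectively closed uniformly in $r$; being an effectively closed subset of the effectively compact set $C$, it is itself effectively compact uniformly in $r$, by the fact recorded just above. Its projection $\pi_{Y}(C\setminus U_{r})$ to $Y$ is then effectively compact, and in particular effectively closed, uniformly in $r$, by Proposition~\ref{propo:image-of-eff-compact} applied to the (computable) coordinate projection. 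The key identity is
\[
  \setOf{y}{g(y)>r}=Y\setminus\pi_{Y}(C\setminus U_{r}),
\]
whose right-hand side is effectively open uniformly in $r$; this gives the lower semicomputability of $g$.

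The identity is where compactness of the fibers does the real work, and I expect it to be the main point to get right. Unwinding the definitions, $y\in\pi_{Y}(C\setminus U_{r})$ means exactly that some $z$ with $(y,z)\in C$ satisfies $f(y,z)\le r$; so $y$ lies outside the projection precisely when $f(y,z)>r$ for \emph{all} $z$ in the fiber $\setOf{z}{(y,z)\in C}$. This last condition is equivalent to $g(y)>r$ only because the fiber is a closed subset of the compact set $C$, hence compact, and $f(y,\cdot)$ is lower semicontinuous, so the infimum defining $g(y)$ is attained (when the fiber is empty both sides read $g(y)=+\infty>r$). The same attainment justifies replacing $\inf$ by $\min$.

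For the localized version I would glue the compact pieces. For each $k$ set $C_{k}=(\overline{B_{k}}\times Z)\cap C$, which is effectively compact uniformly in $k$ by hypothesis, and let $g_{k}(y)=\inf_{z:(y,z)\in C_{k}}f(y,z)$; by the compact case each $g_{k}$ is lower semicomputable uniformly in $k$. For $y\in B_{k}$ the conditions $(y,z)\in C_{k}$ and $(y,z)\in C$ coincide, so $g_{k}(y)=g(y)$ there. Since the balls $B_{k}$ cover $C_{Y}$ and every $y\notin C_{Y}$ has empty fiber (so $g(y)=+\infty$), I would conclude
\[
  \setOf{y}{g(y)>r}=(Y\setminus C_{Y})\cup\bigcup_{k}\bigl(B_{k}\cap\setOf{y}{g_{k}(y)>r}\bigr).
\]
Here $Y\setminus C_{Y}$ is effectively open because $C_{Y}$ is effectively closed, each $B_{k}$ is a basic ball, and each $\setOf{y}{g_{k}(y)>r}$ is effectively open uniformly in $k$ and $r$; the whole union is therefore effectively open uniformly in $r$, proving $g$ lower semicomputable in this more general setting as well.
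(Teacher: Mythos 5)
Your proof is correct and follows essentially the same route as the paper's: both arguments project the effectively compact set $C\cap\setOf{(y,z)}{f(y,z)\le r}$ (respectively its localized pieces $(\overline{B_{k}}\times Z)\cap C$) onto $Y$, use that a computable image of an effectively compact set is effectively compact hence effectively closed, take complements, and glue over the covering balls together with $Y\setminus C_{Y}$. The only cosmetic difference is that the paper inserts an auxiliary rational $r'>r$ and tests the condition $\forall z\,[(y,z)\in C\Rightarrow f(y,z)>r']$, whereas you eliminate that extra quantifier by invoking attainment of the infimum on the compact fibers; both variants are valid.
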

The weaker condition formulated at the end holds for example if $Y$ is
effectively locally compact and $Z$ is effectively compact.
 \begin{proof}
For start, we reproduce the classical proof of lower semicontinuity.
One needs to check that the set $\setOf{y}{r<g(y)}$ is open for all $r$.
This set can be represented in the form of a union, noting that the condition
$r<g(y)$ is equivalent to the condition
 \begin{align*}
   (\exists r'>r)\forall z\,[(y,z)\in C\Rightarrow f(y,z)>r'],
 \end{align*}
and it is sufficient to check the openness of the set
 \begin{align*}
   U=\setOf{y}{\forall z\,[(y,z)\in C\Rightarrow f(y,z)>r']}.
 \end{align*}
Now, $U=(Y\setminus C_{Y})\cup\bigcup_{k}(B_{k}\cap U)$.
Since $Y\setminus C_{Y}$ is assumed to be open,
it is sufficient to show that each $B_{k}\cap U$ is open.
Let $F_{k}=\overline B_{k}\times Z$, then by the assumptions, $F_{k}\cap C$ is compact.
The condition $f(y,z)>r'$ by the assumption defines a certain open set $V$ of
pairs, hence $F_{k}\cap C\setminus V$ is closed, and as a subset of a compact set, compact.
It follows that its projection
$\setOf{y\in \overline B_{k}}{\exists z\,(y,z)\in F_{k}\cap C\setminus V}$, as a
continuous image of a compact set, is also compact, and so closed.
Its complement in $B_{k}$, which is $B_{k}\cap U$, is then open.

Now this argument must be translated to an effective language.
First of all note that it is sufficient to consider rational $r$ and $r'$.
Then the set $V$ is effectively open, the set $F_{k}\cap C\setminus V$ is effectively
closed, and as a subset of an effectively compact set, also effectively compact.
Its projection, as a computable image of an effectively compact set, is also
effectively compact, and as such, effectively closed.
The complement of the projection is then effectively open.
 \end{proof}

The following lemma is an application:

\begin{lemma}\label{lem:push-forward}
Let $X,Z,Z'$ be metric spaces, where $X$ is locally compact and $Z$ is compact.
Let $f\colon Z\to Z'$ be continuous and surjective,
and $t\colon X\times Z\to \clint{0}{\infty}$
a lower semicontinuous function.
Then the function $t_{f}\colon X\times Z'\to\clint{0}{\infty}$ defined by the formula
\begin{equation*}
  t_{f}(x,z')=\inf_{z:f(z)=z'} t(x,z)
\end{equation*}
is lower semicontinuous.

If $X,Z,Z'$ are constructive metric spaces, $X$ is effectively locally compact, $Z$ is
effectively compact
and $f$ is computable, further $t$ is lower semicomputable, then $t_{f}$ is lower
semicomputable.
\end{lemma}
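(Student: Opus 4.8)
The plan is to recognize $t_{f}$ as a parametrized minimum and apply Proposition~\ref{propo:lsc-min}. I take the parameter space of that proposition to be $X\times Z'$ and its minimization space to be $Z$. Define $F\colon (X\times Z')\times Z\to\clint{0}{\infty}$ by $F((x,z'),z)=t(x,z)$, and set
\begin{equation*}
  C=\setOf{((x,z'),z)}{f(z)=z'}\subseteq (X\times Z')\times Z .
\end{equation*}
By construction $t_{f}(x,z')=\inf_{z:((x,z'),z)\in C}F((x,z'),z)$, which is precisely the function produced by Proposition~\ref{propo:lsc-min} for this $F$ and $C$. So it suffices to check that $F$ and $C$ satisfy the hypotheses of that proposition.

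First, $F$ is lower semicomputable (lower semicontinuous, in the purely topological setting): it is the composition of $t$ with the computable projection $(x,z',z)\mapsto (x,z)$, and lower semicomputability is preserved under composition with a computable map. Next, $C$ is effectively closed: the map $(x,z',z)\mapsto d(f(z),z')$ is computable, since $f$ and the metric of $Z'$ are, so the set where it is positive is effectively open and $C$ is its complement (in the topological setting, continuity of $f$ makes this map continuous, hence $C$ closed). Finally, since $f$ is surjective, the projection $C_{Y}=\setOf{(x,z')}{\exists z\,f(z)=z'}$ equals all of $X\times Z'$, which is trivially effectively closed; in particular the infimum defining $t_{f}$ is taken over a nonempty set for every $(x,z')$.

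It remains to verify the compactness hypothesis, the one point requiring care. By surjectivity $Z'=f(Z)$; since $f$ is computable and $Z$ is effectively compact, Proposition~\ref{propo:image-of-eff-compact} shows that $Z'$ is effectively compact. Hence the parameter space $X\times Z'$, being the product of the effectively locally compact $X$ with the effectively compact $Z'$, is itself effectively locally compact: covering $X$ by basic balls $B_{k}$ with $\overline{B_{k}}\cap X$ effectively compact uniformly in $k$, the corresponding product pieces $(\overline{B_{k}}\cap X)\times Z'$ are effectively compact uniformly in $k$. With $X\times Z'$ effectively locally compact and the minimization space $Z$ effectively compact, we are exactly in the situation of the weaker sufficient condition stated at the end of Proposition~\ref{propo:lsc-min}. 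That proposition then yields that $t_{f}$ is lower semicomputable, with the infimum attained, so that $\inf$ may be replaced by $\min$.

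The purely topological claim is obtained by the same decomposition together with the classical (non-effective) half of the proof of Proposition~\ref{propo:lsc-min}, which needs only that $X$ is locally compact, $Z$ is compact, $f$ is continuous and surjective, and $t$ is lower semicontinuous. The only genuine obstacle is this compactness bookkeeping---deriving effective compactness of $Z'$ from surjectivity of $f$ and then fitting the product $X\times Z'$ to the hypotheses of Proposition~\ref{propo:lsc-min}; everything else is a direct unwinding of the definitions.
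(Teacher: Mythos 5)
Your proposal is correct and follows essentially the same route as the paper's proof: both reduce $t_f$ to the parametrized minimum of Proposition~\ref{propo:lsc-min} over the set $C=\setOf{(x,z',z)}{z'=f(z)}$ with parameter space $Y=X\times Z'$, use surjectivity plus Proposition~\ref{propo:image-of-eff-compact} to get effective compactness of $Z'$ and hence effective local compactness of $Y$, and then invoke the weaker sufficient condition at the end of that proposition. You merely spell out some details the paper leaves implicit (effective closedness of $C$ via the computable map $(x,z',z)\mapsto d(f(z),z')$, and the lower semicomputability of $t$ composed with the projection).
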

\begin{proof}
We will prove just the effective version.
We will apply Proposition~\ref{propo:lsc-min} with $Y=X\times Z'$, and
$C=X\times\setOf{(f(z),z)}{z\in Z}$.
Then $t_{f}(x,z')=\inf_{(x,z',z)\in C} t(x,z)$.
The set $Y$ is effectively locally compact, as the product of an effectively
locally compact set and an effectively compact set.
The projection of the set $C$ onto $Y$ is the whole set $Y$, and hence it is
closed.
Hence the proposition is applicable, according to the remark following it.
\end{proof}

\subsection{Measures over a constructive metric space}

On a metric space, the \df{Borel sets} are the smallest $\sigma$-algebra containing
the open sets.
We can define measures on Borel sets.
These measures have the following \df{regularity} property:

\begin{proposition}[Regularity]\label{propo:regular}
Let $\P$ be a measure over a complete separable metric space.
Then every measureable set $A$ can be approximated by large open sets:
$\P(A)=\inf_{G\supseteq A}\P(G)$, where $G$ is open.
\end{proposition}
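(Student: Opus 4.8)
The plan is to prove the following symmetric strengthening, from which outer regularity is immediate: every Borel set $A$ admits, for each $\varepsilon>0$, a closed set $F$ and an open set $G$ with $F\subseteq A\subseteq G$ and $\P(G\setminus F)<\varepsilon$. I would argue by the usual ``good sets'' principle: let $\cA$ be the collection of all Borel sets with this sandwiching property, and show that $\cA$ is a $\sigma$-algebra containing every closed set. Since the Borel sets form the smallest $\sigma$-algebra containing the closed (equivalently, the open) sets, this forces $\cA$ to contain all Borel sets, and then $\P(A)\le\P(G)\le\P(A)+\varepsilon$ for a suitable open $G\supseteq A$ yields $\P(A)=\inf_{G\supseteq A}\P(G)$. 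Throughout I use that $\P$ is a finite (probability) measure, so that continuity from above is available.

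First I would check that every closed set $F$ lies in $\cA$. For the inner approximation take $F$ itself; for the outer one use the open neighborhoods $G_{n}=\setOf{x}{d(x,F)<1/n}$. Because $F$ is closed in a metric space, $\bigcap_{n}G_{n}=F$, and the $G_{n}$ decrease to $F$; finiteness of $\P$ then gives $\P(G_{n})\to\P(F)$, so that $\P(G_{n}\setminus F)<\varepsilon$ for $n$ large enough.

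Next I would verify that $\cA$ is a $\sigma$-algebra. Closure under complementation is free from the symmetry of the definition: if $F\subseteq A\subseteq G$, then $G^{c}\subseteq A^{c}\subseteq F^{c}$ with $G^{c}$ closed, $F^{c}$ open, and $F^{c}\setminus G^{c}=G\setminus F$. For a countable union $A=\bigcup_{n}A_{n}$, pick $F_{n}\subseteq A_{n}\subseteq G_{n}$ with $\P(G_{n}\setminus F_{n})<\varepsilon/2^{n+1}$; then $G=\bigcup_{n}G_{n}$ is open, contains $A$, and subadditivity controls the outer side, $\P\Paren{G\setminus\bigcup_{n}F_{n}}\le\sum_{n}\P(G_{n}\setminus F_{n})<\varepsilon/2$. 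The main obstacle is the inner side: $\bigcup_{n}F_{n}$ need not be closed. Here I would again exploit finiteness of $\P$ to truncate, choosing $N$ with $\P\Paren{\bigcup_{n}F_{n}\setminus\bigcup_{n\le N}F_{n}}<\varepsilon/2$ and setting $F=\bigcup_{n\le N}F_{n}$, a finite union of closed sets and hence closed. Combining the two estimates gives $\P(G\setminus F)<\varepsilon$, so that $A\in\cA$.

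This would complete the verification that $\cA$ is a $\sigma$-algebra containing the closed sets, hence all Borel sets, proving the proposition (in fact in the stronger two-sided form). The only places where finiteness of the measure is genuinely needed are the two continuity-from-above arguments (for closed sets, and for the tail of $\bigcup_{n}F_{n}$); everything else is set-theoretic and uses only monotonicity and countable subadditivity of $\P$.
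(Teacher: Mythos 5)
Your proof is correct; the paper states this proposition without any proof (treating it as a classical fact about Borel measures on metric spaces), and your ``good sets'' argument is the standard one. As a minor observation, your argument never uses completeness or separability --- only that the space is metric and $\P$ is finite --- which is consistent with the fact that those hypotheses are really needed for inner regularity by compact sets, not for the open-set approximation asserted here.
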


It is possible to introduce a metric over measures:

 \begin{definition}[Prokhorov distance]\label{def:prokh}
For a set $A$ and point $x$ let us define the distance of $x$ from $A$ as
$d(x, A) = \inf_{y \in A} d(x,y)$.
The $\varepsilon$-neighborhood of a set $A$ is defined as
$A^{\varepsilon} = \setOf{x}{d(x, A) < \varepsilon}$.

The \df{Prokhorov distance} $\rho(\P, \Q)$ of two measures is
the greatest lower bound of all those $\varepsilon$ for which, for all Borel
sets $A$ we have $\P(A) \le \Q(A^{\varepsilon}) + \varepsilon$
and $\Q(A) \le \P(A^{\varepsilon}) + \varepsilon$.
 \end{definition}
It is known that $\rho(\P,\Q)$ is indeed a metric, and it turns
the set of probability measures over metric space $X$
into a metric space.
There is a number of other metrics for measures that are equivalent, in the
sense of Definition~\ref{def:uniform-contin}.

 \begin{definition}[Space of measures]
For a constructive metric space, $\bX$, let
$\bM=\cM(\bX)$ define the metric space of the set of probability measures over $\bX$,
with the metric $\rho(\P, \Q)$.
The dense set $D_{\bM}$ is the set of
those probability measures that are concentrated on finitely many points of
$D_{\bX}$ and assign rational values to them.
Let $\alpha_{\bM}$ be a natural enumeration of $D_{\bM}$, this turns $\bM$ into
a constructive metric space, too.

A probability measure is called \df{computable} when it is a computable element
of the space $\bM$.
 \end{definition}

Computability of measures is a particularly simple property
for the Cantor space of binary sequences
in Definition~\ref{def:computable-measure-Omega} (which
is easily shown to be equivalent to the definition given here);
it is just as simple for the Baire space of sequences over a countable alphabet.

The analogue of Proposition~\ref{propo:integral-computable.Bernoulli} holds again: the
integral $\int f(\omega,\P)\P(d\omega)$
of a basic function is computable as a function of the measure $\P$, uniformly
in the code of the basic function.
Here is a closely related result:

\begin{proposition}\label{propo:integral-computable}
If $f$ is a bounded, effectively uniformly
continuous function then its integral by the measure $\P$ is an effectively
uniformly continuous function of $\P$.
\end{proposition}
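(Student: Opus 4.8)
The plan is to establish a single quantitative estimate---an explicit, computable modulus of continuity for the map $\P\mapsto\int f\,d\P$---and then read off from it both the computability of this map and the computability of its modulus, which together are exactly what ``effectively uniformly continuous'' demands in Definition~\ref{def:uniform-contin}.

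First I would fix a rational bound $M$ with $|f|\le M$ everywhere (available since $f$ is bounded) and use the computable modulus of $f$: given a target $s>0$, compute $\eta>0$ such that $d(x,y)\le\eta$ implies $|f(x)-f(y)|\le s$. The core estimate compares $\int f\,d\P$ and $\int f\,d\Q$ when $\rho(\P,\Q)\le\delta$. I would reduce to the case $0\le f\le 2M$ by adding the constant $M$ (which changes neither difference, since $\int M\,d\P=M=\int M\,d\Q$) and apply the layer-cake identity $\int f\,d\P=\int_0^{2M}\P(\{f>t\})\,dt$. The sets $A_t=\{f>t\}$ are open because $f$ is continuous, so the Prokhorov inequality gives $\P(A_t)\le \Q(A_t^{\delta})+\delta$. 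Here uniform continuity enters through the inclusion $A_t^{\delta}\subseteq\{f>t-s\}$, valid once $\delta\le\eta$: if $x\in A_t^{\delta}$ then some $y$ with $f(y)>t$ lies within $\delta\le\eta$ of $x$, whence $f(x)>t-s$. Integrating the resulting pointwise bound over $t$ and telescoping the level-shift (using $\Q(\{f>u\})\le 1$) yields $\int f\,d\P-\int f\,d\Q\le s+2M\delta$, and by symmetry $|\int f\,d\P-\int f\,d\Q|\le s+2M\delta$.

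With this estimate in hand, given $\varepsilon$ I would set $s=\varepsilon/2$, compute the corresponding $\eta$, and put $\delta=\min(\eta,\varepsilon/(4M))$; then $\rho(\P,\Q)\le\delta$ forces $|\int f\,d\P-\int f\,d\Q|\le\varepsilon$. Since $\eta$ and $M$ are computable, $\delta$ is computable from $\varepsilon$, giving the computable modulus. For computability of the map itself, I would approximate any given $\P$ by a finitely supported rational measure $\Q=\sum_i q_i\delta_{d_i}\in D_{\bM}$ with $\rho(\P,\Q)\le\delta$; the integral $\int f\,d\Q=\sum_i q_i f(d_i)$ is computable because $f$ is computable at the computable points $d_i\in D$, and the modulus estimate bounds $|\int f\,d\P-\int f\,d\Q|$, so feeding in a strong Cauchy representation of $\P$ produces $\int f\,d\P$ with a computable rate.

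I expect the main obstacle to be the layer-cake step on a possibly non-compact space: one must check that the telescoping of $\int_0^{2M}[\Q(\{f>t-s\})-\Q(\{f>t\})]\,dt$ is genuinely bounded by $s$ (using $0\le\Q(\{f>u\})\le 1$ and $f\ge 0$), and that the neighborhood inclusion $A_t^{\delta}\subseteq\{f>t-s\}$ is invoked only in the direction that the one-sided Prokhorov inequality supplies. A secondary point to handle cleanly is that global uniform approximation of $f$ by basic (compactly supported) functions is \emph{not} available when $X$ is non-compact, which is precisely why I route computability through finitely supported measures rather than through approximating $f$.
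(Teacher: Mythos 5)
Your proof is correct and follows essentially the same route as the paper's: the layer-cake (Fubini) representation of the integral, the one-sided Prokhorov inequality $\P(A)\le\Q(A^{\delta})+\delta$, and effective uniform continuity to absorb the $\delta$-neighborhood of a level set into a shifted level set (the paper phrases this via the function $f_{\varepsilon}(x)=\sup_{d(x,y)\le\varepsilon}f(y)$, which is the same estimate). Your added paragraph on computing the integral itself via finitely supported rational measures from $D_{\bM}$ just makes explicit a step the paper leaves to the reader.
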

\begin{proof}
It can be assumed without loss of generality that $f$ is nonnegative (add a
constant).
Let measures $\P$ and $\P'$ be close.
Then $\P'(A)\le\P(A_{\varepsilon})+\varepsilon$, where $A_{\varepsilon}$ denotes the
$\varepsilon$-neighborhood of $A$.
Then
 \begin{align*}
   \int f\, d\P'\le \int f_{\varepsilon}\, d\P+\varepsilon,
 \end{align*}
where $f_{\varepsilon}(x)$ is the least upper bound of $f$ on the
$\varepsilon$-neighborhood of $x$.
(The integral of a nonnegative function $g$ is defined by the measures of the
sets $G_{t}=\setOf{x}{g(x)\ge t}$; by Fubini's theorem on the change of the
order of integration, this measure must be
integrated by $t$ as a function of $t$.
Now, if $f(x)\ge t$ then $f_{\varepsilon}(x)\ge t$ in the
$\varepsilon$-neighborhood of point $x$.)
It remains to apply the effective uniform continuity of $f$ to find out the
precision by which the measure must be given in order to obtain a given
precision in the integral.
  \end{proof}

On the other hand, the measure of $\P(B)$ of a basic ball $B$ is not necessarily
computable, only lower semicomputable.
It is shown in~\cite{HoyrupRojasRandomness09} that this property also
characterizes the computability of measures:
$\P$ is computable if and only if $\P(B)$ is lower semicomputable, uniformly in
the basic ball $B$.

It is known that if a complete separable
metric space is compact then so is the set of measures with the described
metric.
The following constructive version is proved by standard means:

\begin{proposition}
If a constructive metric space
$\bX$ is effectively compact then its space of probability measures
$\cM(\bX)$ is also effectively compact.
\end{proposition}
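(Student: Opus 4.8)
The plan is to reduce everything to the $\varepsilon$-net characterization of effective compactness in Proposition~\ref{propo:epsilon-net}. By part~(a) of that proposition it suffices, given a rational $\varepsilon>0$, to \emph{compute} a finite family of $\varepsilon$-balls (in the Prokhorov metric of Definition~\ref{def:prokh}) whose union covers $\cM(\bX)$. Compactness of $\cM(\bX)$ itself is the classical fact quoted just above, so only the effectivity of such a net is at issue. The underlying idea is the standard one: approximate an arbitrary measure first by a measure supported on a finite net of $\bX$, and then by rounding the weights to rationals.

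First I would fix $r=\varepsilon/2$ and use the effective compactness of $\bX$, again through Proposition~\ref{propo:epsilon-net}(a), to compute finitely many ideal balls $B(x_{1},r),\dots,B(x_{n},r)$ with centers $x_{i}\in D_{\bX}$ whose union is all of $X$. Let $T\colon X\to\{x_{1},\dots,x_{n}\}$ send $x$ to $x_{i}$ for the least index $i$ with $d(x,x_{i})<r$; this is a Borel map with $d(x,T(x))<r$ everywhere. For a modulus $m$ to be fixed below, the candidate centers are all measures of the form $\Q=\sum_{i=1}^{n}q_{i}\,\delta_{x_{i}}$, where $\delta_{x_{i}}$ is the point mass at $x_{i}$ and $(q_{1},\dots,q_{n})$ ranges over probability vectors whose entries are multiples of $1/m$. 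Each such $\Q$ lies in the dense set $D_{\bM}$, and the whole list is finite and computable from $\varepsilon$.

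It remains to check that the $\varepsilon$-balls around these candidates cover $\cM(\bX)$. Given any $\P$, put $\P'=T_{*}\P$. Since $T(x)\in A$ forces $x\in A^{r}$, we have $T^{-1}(A)\subseteq A^{r}$, hence $\P'(A)=\P(T^{-1}(A))\le\P(A^{r})$; and since $x\in A$ forces $T(x)\in A^{r}$, we have $A\subseteq T^{-1}(A^{r})$, hence $\P(A)\le\P'(A^{r})$. Both inequalities hold for every Borel $A$, so $\rho(\P,\P')\le r=\varepsilon/2$. Now round the weight vector $(\P'(\{x_{1}\}),\dots,\P'(\{x_{n}\}))$ to a probability vector on the $1/m$-grid; one can always do this with $\ell_{1}$-error at most $n/m$, and for measures on the common point set $\{x_{1},\dots,x_{n}\}$ the Prokhorov distance is dominated by the total variation, i.e.\ by half the $\ell_{1}$-distance. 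Choosing $m\ge 2n/\varepsilon$ therefore yields a candidate $\Q$ with $\rho(\P',\Q)\le n/(2m)\le\varepsilon/4$, whence $\rho(\P,\Q)<\varepsilon$ by the triangle inequality.

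The only genuine content is the Prokhorov estimate $\rho(\P,T_{*}\P)\le r$ for a map that moves points by less than $r$, together with the elementary rounding bound; both are routine once stated. The main point to get right is that \emph{no information about $\P$ is needed} to produce the net: the finite list of centers depends only on $\varepsilon$ (through $r$, the computed cover of $X$, and the grid modulus $m$), so the construction is genuinely uniform in $\varepsilon$, which is exactly what Proposition~\ref{propo:epsilon-net}(a) demands. A minor point to verify is the Borel measurability of $T$ (clear, since each cell $T^{-1}(x_{i})$ is a Boolean combination of balls), so that $T_{*}\P$ is a well-defined probability measure.
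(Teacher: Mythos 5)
Your argument is correct, and it supplies a proof where the paper gives none: the text only remarks that the statement ``is proved by standard means'' and refers back, for the special case of the Cantor space, to the identification of $\cM(\Omega)$ with an effectively closed subset of $\clint{0}{1}\times\clint{0}{1}\times\dotsm$. Your route --- classical compactness of $\cM(\bX)$ plus a computable Prokhorov $\varepsilon$-net built from a computable $r$-net of $\bX$ and a rational grid of weights --- is exactly the kind of standard argument the authors are alluding to, and all the essential points are in place: the candidate measures lie in $D_{\bM}$ so the $\varepsilon$-balls around them are basic balls; the estimate $\rho(\P,T_{*}\P)\le r$ follows from the two inclusions you state (in fact without even the additive term in the Prokhorov definition); and, crucially, the net is computed from $\varepsilon$ alone, so the construction is uniform as Proposition~\ref{propo:epsilon-net}(a) requires. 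Two cosmetic corrections: rounding down to the $1/m$-grid and then repairing the total mass gives $\ell_{1}$-error bounded by $2n/m$ rather than $n/m$, and with $r=\varepsilon/2$ the triangle inequality then yields only $\rho(\P,\Q)\le\varepsilon$ rather than a strict inequality, which matters if the covering balls are open. Both are fixed by taking, say, $r=\varepsilon/3$ and $m\ge 6n/\varepsilon$; nothing in the structure of the proof changes.
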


For the binary Cantor space, this was proved in
Proposition~\ref{propo:closed-to-compact}.
There, the topology of the space of measures was simply derived from
the topology of the space $\clint{0}{1}\times\clint{0}{1}\times\dotsm$.
It can be seen that the Prokhorov metric leads to the same topology.

\begin{example}\label{example:discrete-measures}
Another interesting simple metric space is the infinite discrete space, say on
the set of natural numbers $\bbN$.
This is not a compact space, and the set of measures, namely the set of all
functions $\P(x)\ge 0$ with $\sum_{x\in\bbN}\P(x)=1$, is not compact either.

On the other hand, the set of semimeasures (see Definition~\ref{def:semimeasure})
is compact.
Indeed, recall that the space
$\clint{0}{1}\times\clint{0}{1}\times\dotsm$, of functions
$\P:\bbN\to\clint{0}{1}$ is compact.
Hence also for each $n$ the subset $F_{n}$ of this set of consisting of
functions $\P$ obeying the restriction
$\P(0)+\P(1)+\dots+\P(n)\le 1$ is compact, as the product of a
compact finite-dimensional set
$\setOf{(\P(0),\P(1),\dots,\P(n))\in\clint{0}{1}^{n}}{\P(0)+\dots+\P(n)\le1}$
and the compact infinite product set $\setOf{(\P(n+1),\P(n+1),\dots)}{
  0\le\P(x)\le 1 \text{ for } x>n}$.
The intersection of all sets $F_{n}$ is then also compact, and is equal to
the set of semimeasures.

Equivalently, we can consider the one-point
compactification $\overline\bbN$ of $\bbN$ given in
Example~\ref{example:metric}.\ref{i:example.metric.one-point-compactif}.
Measures $\P$ on this space can be identified with semimeasures over $\bbN$:
we simply set $\P(\infty)=1-\sum_{n<\infty}P(n)$.
\end{example}

\subsection{Randomness in a metric space}\label{subsec:randomness-metric}

In the Cantor space $\Omega$ of infinite binary sequences we defined
\begin{bullets}
  \item randomness with respect to computable measures (in the sense of
    Martin-L\"of); see Definition~\ref{def:test-computable-measure};
  \item uniform randomness with respect to arbitrary measures (when the test is
    a function of the sequence and the measure),
    Definition~\ref{def:uniform-test.bin-Cantor};
 \item Randomness with respect to an effectively compact class of measures,
   Definition~\ref{def:test-effectively-compact.Cantor};
 \item Blind (oracle-free) randomness in Definition~\ref{def:blind-test.Cantor};
\end{bullets}
All these notions carry over with minor changes to an arbitrary constructive
metric space.
In the present section we discuss these generalizations and their properties,
and then consider in more detail randomness with respect to an orthogonal class
of measures.

For computable measures, a test is defined as a lower semicomputable function on
a constructive metric space, whose integral is bounded by $1$.
Among such tests, there is a maximal one to within a multiplicative constant.
As earlier, this is proved with the help of trimming: we list all lower
semicomputable functions, forcing them into tests or almost tests, and then add
them up with coefficients from a converging series.

This is done as before, by considering lower semicomputable functions as
monotonic limits of basic ones.
It is used that the integral of a basic function by a measure is
computable as a function of the measure: see
Propositions~\ref{propo:integral-computable} and the discussion preceding it.

The uniform tests introduced in Definition~\ref{def:uniform-test.bin-Cantor}
generalize immediately to the case of constructive metric spaces.
Such a test is a lower semicomputable function of two arguments $t(x,\P)$, where
$x$ is a point of our metric space, and $\P$ is a measure over this space.
The integral condition has the same form as earlier:
$\int t(x,\P)\,\P(d x)\le 1$.

As earlier, there exists a universal test, and this can be proved by the
technique of trimming:

\begin{theorem}[Trimming in metric spaces]\label{thm:trim}
Let $u(x,\P)$ be a lower semicomputable function whose first argument is a point
of a constructive metric space, and the second one is  measure over this space.
Then there exists a uniform tests $t(x,\P)$ satisfying $u(x,\Q)\le 2 t(x,\Q)$ for all
$\Q$ such that the function $u_{\Q}:x\mapsto u(x,\Q)$ is a test by the measure
$\Q$, that is $\int u(x,\Q)\,\Q(d x)\le 1$.
\end{theorem}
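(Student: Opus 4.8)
The plan is to follow the proof of Proposition~\ref{propo:trim} almost verbatim, since that argument never used any special feature of the Bernoulli family beyond two facts that are equally available here: every nonnegative lower semicomputable function on $X\times\cM(X)$ is a sum of nonnegative basic functions (Proposition~\ref{propo:lower-semi}), and the integral of a basic function against $\P$ is computable in $\P$ uniformly in the code of the function (the metric-space analogue of Proposition~\ref{propo:integral-computable.Bernoulli} stated in the previous subsection). First I would construct an auxiliary $\varphi'(x,\P)\le u(x,\P)$ with $\int\varphi'(x,\P)\,\P(dx)\le 2$ for all $\P$, and $\varphi'(\cdot,\Q)=u(\cdot,\Q)$ for every $\Q$ with $\int u(x,\Q)\,\Q(dx)\le 1$; the desired uniform test is then simply $t=\varphi'/2$, which has integral at most $1$ and satisfies $u(\cdot,\Q)=\varphi'(\cdot,\Q)=2t(\cdot,\Q)$, hence $u\le 2t$, on every $\Q$ that is a test.

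For the construction, write $u(x,\P)=\sum_{n}h_{n}(x,\P)$ with $h_{n}$ nonnegative basic functions. The partial integrals $I_{n}(\P)=\int\sum_{i\le n}h_{i}(x,\P)\,\P(dx)$ are computable in $\P$ uniformly in $n$, so each $S_{n}=\setOf{\P}{I_{n}(\P)<2}$ is effectively open uniformly in $n$; moreover, since $h_{i}\ge 0$ the sequence $I_{n}$ is nondecreasing in $n$, so the $S_{n}$ are nested, $S_{1}\supseteq S_{2}\supseteq\dotsm$. Set $h'_{n}(x,\P)=h_{n}(x,\P)$ when $\P\in S_{n}$ and $0$ otherwise; this is lower semicomputable uniformly in $n$, because for rational $r\ge 0$ the set where $h'_{n}>r$ is the intersection of the effectively open set where $h_{n}>r$ with the effectively open set $X\times S_{n}$. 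Put $\varphi'=\sum_{n}h'_{n}$, which is lower semicomputable. To bound its integral, fix $\P$: by nesting, either $\P\in S_{n}$ for all $n$, in which case $h'_{i}=h_{i}$ throughout and $\int\varphi'(x,\P)\,\P(dx)=\lim_{n}I_{n}(\P)\le 2$; or there is a largest $N$ with $\P\in S_{N}$, and then $h'_{i}(\cdot,\P)=h_{i}(\cdot,\P)$ for $i\le N$ while $h'_{i}(\cdot,\P)=0$ for $i>N$, so the integral equals $I_{N}(\P)<2$. Either way the bound holds.

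Finally, for the untouched property: if $\int u(x,\Q)\,\Q(dx)\le 1$ then $I_{n}(\Q)\le 1<2$ for all $n$, so $\Q$ lies in every $S_{n}$ and $h'_{n}(\cdot,\Q)=h_{n}(\cdot,\Q)$ for all $n$, giving $\varphi'(\cdot,\Q)=u(\cdot,\Q)$; passing to $t=\varphi'/2$ completes the proof. The only place where the metric-space setting could cause trouble is the uniform computability of $\P\mapsto I_{n}(\P)$ and the resulting effective openness of $S_{n}$; but these rest exactly on the two imported facts above, and---this is worth stressing---nowhere does the argument use compactness of $X$ or of $\cM(X)$, so it goes through even when the space is not (locally) compact. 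The main thing to get right is therefore the bookkeeping that keeps all the $S_{n}$ and $h'_{n}$ effective \emph{uniformly} in $n$, which is routine.
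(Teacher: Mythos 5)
Your proof is correct and is essentially the paper's own argument: the paper proves this theorem by explicitly repeating the construction of Proposition~\ref{propo:trim} (decompose $u$ into a sum of nonnegative basic functions, cut each summand off on the effectively open set where the partial integral stays below $2$, and resum), relying on exactly the two facts you isolate. Your added remarks that the sets $S_{n}$ are nested and that no compactness is needed are accurate and only make explicit what the paper leaves implicit.
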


The proof repeats the reasoning of the proof of Theorem~\ref{thm:trim.Cantor},
while using the fact that for a basic function $b(x,\P)$ on the product space
the integral $\int b(x,\P)\,\P(d x)$ is a computable (continuous) function of
$\P$ (which is proved analogously to our above argument on the computability of
the integral).

We will denote the universal uniform test again by $\t(x,\P)$.
Strictly speaking, it depends also on the constructive metric space on which it
is defined, but in general it is evident, which space is being considered,
therefore it is not shown in the notation.

Definition~\ref{def:P-test.Cantor} and Proposition~\ref{propo:uniformize.Cantor}
extend without difficulty.

 \begin{definition}[Tests for arbitrary measures]\label{def:P-test}
Let $\bX = (X, d, D, \alpha)$ be a constructive metric space.
For a measure $\P\in\cM(\bX)$, a $\P$-\df{test of randomness}
is a function $f:X\to\clint{0}{\infty}$ lower semicomputable from $\P$ with
the property $\int f(x)\,d P \le 1$.
\end{definition}

It seems as if a $\P$-test may capture some nonrandomnesses that uniform tests
cannot---however, this is not so, since trimming (see
Theorem~\ref{thm:trim.Cantor}) generalizes:

\begin{theorem}[Uniformization]\label{propo:uniformize}
Let $\P$ be some measure over a constructive metric space $X$, along with
some $\P$-test $t_{\P}(x)$.
There is a uniform test $t'(\cdot,\cdot)$ with $t_{\P}(x)\le 2 t'(x,\P)$.
\end{theorem}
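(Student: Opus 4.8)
The plan is to follow exactly the strategy used for the Cantor-space version, Theorem~\ref{propo:uniformize.Cantor}, replacing the special trimming lemma used there by its metric-space analogue, Theorem~\ref{thm:trim}. The one genuinely new ingredient, compared with the Cantor case, is that here a $\P$-test is only assumed \emph{$\P$-lower semicomputable} (Definition~\ref{def:P-test}), i.e.\ lower semicomputable on the slice $X\times\set{\P}$, whereas the trimming theorem needs a function that is jointly lower semicomputable on all of $X\times\cM(\bX)$. So the first and main step is to manufacture such a joint extension of $t_{\P}$ that does not disturb the values on the slice $\Q=\P$.

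First I would unfold the definition of $\P$-lower semicomputability: it provides, uniformly in each rational $r$, an effectively open set $V_{r}\subseteq X\times\cM(\bX)$ whose intersection with the slice $X\times\set{\P}$ equals $\setOf{(x,\P)}{t_{\P}(x)>r}$. Setting
\[
  u(x,\Q)=\sup\setOf{r\in\bbQ}{(x,\Q)\in V_{r}},
\]
I obtain a function on the full product $X\times\cM(\bX)$ whose superlevel sets $\setOf{(x,\Q)}{u(x,\Q)>r}=\bigcup_{s>r}V_{s}$ are effectively open uniformly in $r$; hence $u$ is jointly lower semicomputable. On the slice $\Q=\P$ one has $(x,\P)\in V_{r}\Leftrightarrow t_{\P}(x)>r$, so $u(x,\P)$ is the supremum of the rationals below $t_{\P}(x)$, i.e.\ $u(x,\P)=t_{\P}(x)$.

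The remaining steps are immediate. Since $t_{\P}$ is a $\P$-test, $\int u(x,\P)\,\P(dx)=\int t_{\P}(x)\,\P(dx)\le 1$, so the slice $u_{\P}$ is a test by the measure $\P$. Applying the metric trimming Theorem~\ref{thm:trim} to $u$ yields a uniform test $t'(x,\Q)$ with $u(x,\Q)\le 2\,t'(x,\Q)$ for every measure $\Q$ for which $u_{\Q}$ is a $\Q$-test; taking $\Q=\P$ gives $t_{\P}(x)=u(x,\P)\le 2\,t'(x,\P)$, which is the asserted inequality.

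The only point that requires care is the extension step of the second paragraph. In the Cantor-space treatment (Definition~\ref{def:P-test.Cantor}) a $\P$-test was \emph{defined} as the restriction of a jointly lower semicomputable function, so this step was free; in the metric setting it must be produced from $\P$-lower semicomputability. The substance is that the sup-definition of $u$ is lower semicomputable jointly in the pair $(x,\Q)$ and not merely on the slice, and that passing from the slice data to the global sets $V_{r}$ leaves the values on $\Q=\P$ unchanged, so that the integral bound $\int u(x,\P)\,\P(dx)\le 1$ is inherited from $t_{\P}$ and the trimming theorem can be invoked at $\P$ without loss.
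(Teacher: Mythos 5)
Your proof is correct and follows essentially the same route as the paper, which reduces the statement to the trimming theorem (Theorem~\ref{thm:trim}) after noting that a $\P$-test is by definition the restriction of a jointly lower semicomputable function to the slice $X\times\set{\P}$. The only detail you add beyond what the paper leaves implicit is the explicit construction $u(x,\Q)=\sup\setOf{r}{(x,\Q)\in V_{r}}$ extracting that joint extension from the sets $V_{r}$ witnessing $\P$-lower semicomputability (one should clip it at $0$ off the slice, a triviality), and this is exactly the intended argument.
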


Theorem~\ref{thm:some-oracle} generalizes to the case of constructive metric spaces.
Let us mention one of the facts that generalize to uniform tests.

\begin{proposition}[Kurtz tests, uniformly]\label{propo:Kurtz-uniform}
Let $S$ be an effectively open subset of the space $X\times\cM(X)$.
If the set $S_{\P}=\setOf{x}{\pair{x}{\P}\in S}$, has $\P$-measure $1$ for some
measure $\P$,
then the set $S(\P)$ contains all uniformly $\P$-random points.
\end{proposition}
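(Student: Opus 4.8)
The plan is to reduce the statement to the construction of a single uniform test that is infinite off the fibre. Since a point $x\in X$ is uniformly $\P$-random exactly when $\t(x,\P)<\infty$, and the universal uniform test $\t$ multiplicatively dominates every uniform test, it suffices to exhibit one uniform test $t(x,\Q)$ for which $t(x,\P)=\infty$ at every $x\in X\setminus S_{\P}$. For such an $x$ we then get $\t(x,\P)=\infty$, so $x$ is not uniformly $\P$-random; hence no uniformly $\P$-random point lies outside $S_{\P}$, which is the assertion (here $S(\P)$ is the fibre $S_{\P}$). In this way everything reduces to covering the closed set $X\setminus S_{\P}$ by a test, exactly as in the earlier passage from effective null sets to tests.

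First I would record the semicomputability facts. Write $S=\bigcup_{j}U_{j}\times V_{j}$ for an effective enumeration of basic rectangles (balls $U_{j}\subseteq X$ and basic open $V_{j}\subseteq\cM(X)$), so that the fibre is $S_{\Q}=\bigcup\setOf{U_{j}}{\Q\in V_{j}}$ and its finite approximations $G_{\Q}^{(n)}=\bigcup\setOf{U_{j}}{j\le n,\ \Q\in V_{j}}$ are finite unions of balls, open and effectively presented in $(x,\Q)$. The measure of an effectively open set is lower semicomputable in the measure argument, uniformly (this follows from the computability of integrals of basic functions, Proposition~\ref{propo:integral-computable}), and the conditions $\Q\in V_{j}$ are effectively open; hence $m_{n}(\Q)=\Q(G_{\Q}^{(n)})$ is lower semicomputable and increases to $w(\Q)=\Q(S_{\Q})$, which is therefore lower semicomputable too. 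By hypothesis $w(\P)=1$, so $m_{n}(\P)\nearrow 1$, and consequently $1-m_{n}(\P)\to 0$.

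Next I would assemble the covering test. The complement $X\setminus S_{\P}$ sits inside every closed set $X\setminus G_{\P}^{(n)}$, whose $\P$-measure is $1-m_{n}(\P)$. Using regularity (Proposition~\ref{propo:regular}) I would replace each such closed set by an effectively open superset $W_{n}(\Q)\supseteq X\setminus G_{\Q}^{(n)}$ whose measure exceeds $1-m_{n}(\Q)$ by only a small controlled amount, presented lower semicomputably in $(x,\Q)$. Because $1-m_{n}(\Q)$ is upper semicomputable, the event ``$m_{n}(\Q)>1-2^{-2k}$'' is enumerable, so I can activate a level-$k$ contribution $2^{k}\cdot 1_{W_{n}(\Q)}(x)$ precisely when that threshold is first crossed; the resulting $u(x,\Q)=\sum_{k}2^{k}1_{W_{n(k,\Q)}(\Q)}(x)$ is then lower semicomputable jointly in $(x,\Q)$. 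At $\P$ one has $u(x,\P)=\infty$ on $\bigcap_{k}W_{n(k,\P)}(\P)\supseteq X\setminus S_{\P}$, while the integral $\sum_{k}2^{k}\P(W_{n(k,\P)}(\P))$ is bounded once each $\P(W_{n(k,\P)}(\P))\le 2^{-2k+1}$. I would then feed $u$ to the trimming theorem (Theorem~\ref{thm:trim}), obtaining a genuine uniform test $t$ with $t(x,\P)\ge u(x,\P)/2$ wherever the integral was already bounded; note that trimming enforces the bound automatically for all other $\Q$, so no a priori certification of the measure estimates is required. Combined with the reduction of the first paragraph this closes the argument.

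The main obstacle is the middle step: producing the effectively open supersets $W_{n}(\Q)$ of the closed complements with measures not inflated beyond $1-m_{n}(\Q)$, presented lower semicomputably in $\Q$ — that is, an effective, parametrized form of outer regularity. In the Cantor and Baire spaces the difficulty disappears, since there every $G_{\Q}^{(n)}$ is a finite union of cylinders and hence clopen, so $X\setminus G_{\Q}^{(n)}$ is already open and one may take $W_{n}(\Q)=X\setminus G_{\Q}^{(n)}$ directly, with $\Q(W_{n}(\Q))=1-m_{n}(\Q)$; the threshold selection then uses only the lower approximations to $m_{n}$. In a general constructive metric space the balls are not clopen and one must fatten the closed set while avoiding any mass a measure may place on ball boundaries; here I would lean on Proposition~\ref{propo:regular} together with the effective compactness available for $\cM(X)$ (and, where $X$ itself fails to be compact, on effective local compactness, as in Proposition~\ref{propo:lsc-min}) to certify that suitably small effectively open supersets exist and can be enumerated uniformly in $\Q$, after which the assembly and the appeal to trimming proceed verbatim.
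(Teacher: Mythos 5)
Your overall strategy --- exhibit one uniform test that is infinite everywhere on $X\setminus S_{\P}$ and invoke domination by the universal test --- is the same as the paper's, and your semicomputability bookkeeping for $m_{n}(\Q)=\Q(G_{\Q}^{(n)})$ and for the threshold events is fine. The gap is exactly the step you flag yourself: the effectively open supersets $W_{n}(\Q)\supseteq X\setminus G_{\Q}^{(n)}$ with certified small measure are not produced by any argument you give, and the tools you cite do not supply them. Proposition~\ref{propo:regular} is purely classical: it asserts that open supersets of close measure exist, but gives no way to \emph{find} one, let alone uniformly in $\Q$. The obstruction is structural: the $\Q$-measure of an effectively open set is only lower semicomputable, so the inequality $\Q(W_{n}(\Q))\le 1-m_{n}(\Q)+\varepsilon$ that your construction must certify is an upper-bound assertion about a quantity for which only lower bounds are enumerable; effective compactness of $\cM(X)$ does not help, since basic balls may carry boundary mass and one cannot effectively locate radii avoiding it without already knowing the measure. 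Your fallback that ``trimming enforces the bound automatically'' does not rescue this: Theorem~\ref{thm:trim} preserves the function only at those $\Q$ where the integral bound \emph{already holds}, so if the sets $W_{n(k,\P)}(\P)$ are too fat at $\P$ itself, trimming cuts the test down at $\P$ and the conclusion $\t(x,\P)=\infty$ off $S_{\P}$ evaporates.

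The paper's proof sidesteps open supersets entirely, and this is the device your argument is missing: write $1_{S}(x,\P)$ as the increasing limit of \emph{continuous} basic functions $g_{n}(x,\P)\in\clint{0}{1}$ (Proposition~\ref{propo:lower-semi}). Then $1-g_{n}$ is a computable function that dominates the indicator of $X\setminus S_{\P}$ pointwise (because $g_{n}\le 1_{S}$), and its integral $1-G_{n}(\P)$ is \emph{computable} in $\P$, not merely semicomputable --- precisely the upper-bound information your $W_{n}$ lack. One then sets $n_{k}(\P)=\min\setOf{n}{G_{n}(\P)>1-2^{-k}}$, which is upper semicomputable in $\P$, and takes $t(x,\P)=\sum_{k}\bigl(1-g_{n_{k}(\P)}(x,\P)\bigr)$: this is a uniform test, and when $\P(S_{\P})=1$ every addend equals $1$ outside $S_{\P}$. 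In the Cantor and Baire spaces your version does go through, as you observe, since there the $G_{\Q}^{(n)}$ are clopen and one may take $W_{n}(\Q)$ to be the literal complement; but the proposition is stated for arbitrary constructive metric spaces, and there the continuous-basic-function device is not optional.
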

\begin{proof}
The indicator function $1_{S}(x,\P)$ of the set $S$, that is equal to unity on $S$
and to zero outside, is lower semicomputable.
According to Proposition~\ref{propo:lower-semi},
it can be written as the limit of a computable increasing sequence
of basic functions $0\le g_{n}(x,\P)\le 1$.
The sequence $G_{n}:\P\mapsto\int g_{n}(x,\P)\,d\P$ is an
increasing sequence of functions computable uniformly in $n$.
The motonone convergence theorem implies
$G_{n}(\P)\to 1$ for all $\P\in\cC$.
Let us define for each measure $\P$ the numbers $n_{K}(\P)$ as the minimal
values of $n$ for which $G_{n}(\P)>1-2^{k}$.
These numbers are upper semicomputable as functions of $\P$ (in a natural sense;
for measures $\P$ with $\P(S_{\P})<1$, some of these $n_{k}(\P)$ are infinite).
Correspondingly, the functions $1-g_{n_{k}(\P)}(x,\P)$, as functions of $x$ and
$\P$ (define such a function to be zero for infinite $n_{k}(\P)$,
independently of $x$) are lower semicomputable, uniformly in $k$.
Then $t(x,\P) = \sum_{k>0}(1-g_{n_{k}(\P)}(x,\P))$ is a uniform test, since
at a given $\P$, if its $k$th addend is zero if $n_{k}(\P)$ is infinite, and
is not greater than $2^{-k}$ for finite $n_{k}(\P)$.

The conditions of the theorem talk about a measures $\P$ with $\P(S_{\P})=1$.
Then all numbers $n_{k}(\P)$ are finite.
Consider an $x$ outside $S_{\P}$: then $g_{n_{k}(\P)}(x,\P)=0$ by definition.
Therefore all addends of the test sum are equal to unity, thus $x$ is
is not $\P$-random point.
Consequently, $S_{\P}$ includes all uniformly $\P$-random points.
\end{proof}

\subsection{Apriori probability, with an oracle}

In Section~\ref{subsec:uniform-exact} we defined apriori probability with a
condition whose role was played by a measure over the Cantor space $\Omega$.
Now, having introduced the notion of a constructive metric space, we can note
that this definition extends naturally to an arbitrary such space $\bX$: we
consider nonnegative lower semicomputable functions
$m:\bbN\times X\to\clint{0}{\infty}$ for which $\sum_{i}m(i,x)\le 1$, for all
$x\in X$.

Among such functions, there is a maximal one to within a multiplicative
constant.
This is proved by the method of trimming: the lower semicomputable function
$m(i,x)$ can be obtained as a sum of a series of basic functions each of which
differs form zero only for one $i$; these basic functions must be multiplied by
correcting coefficients that depend on the sum over all $i$.
(In each stage, this sum has only finitely many members.)

We will call the maximal function of this kind \df{apriori probability with
condition $x$}, and denote it $\m(i\mid x)$.
We consider the first argument a natural number, but this is not essential: it
is possible to consider words (or any other discrete constructive objects).
As a special case we obtain the definition of apriori probability conditioned on
a measure (Section~\ref{subsec:uniform-exact}), and also the standard notions of
apriori probability with an oracle (which corresponds to $\bX=\Omega$, the Cantor
space of infinite sequences), and the conditional apriori probability
(corresponding to $\bX=\bbN$).

In analogy with Martin-L\"of's theorem, the apriori probability with a condition
is expressible, in an arbitrary \emph{effectively compact} constructive
metric space $\bX$ by apriori probability with an oracle.

\begin{proposition}\label{propo:day-miller-apriori}
Let $F:\Omega\to X$ be a computable map whose image is the whole space $X$.
Then
 \begin{align*}
   \m(i\mid x) \eqm \min_{\pi: F(\pi)=x}\m(i\mid\pi).
 \end{align*}
\end{proposition}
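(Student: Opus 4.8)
The plan is to prove the two $\lem$-inequalities that together make up $\eqm$ separately, in the same spirit as the Day--Miller Theorem~\ref{thm:some-oracle} and its Bernoulli precursor Lemma~\ref{lem:Bernoulli-oracle}: in each direction one exhibits the relevant expression as a uniform lower semicomputable semimeasure conditioned on the appropriate parameter, and then invokes the maximality of the corresponding apriori probability. Write $g(i,x)=\min_{\pi:F(\pi)=x}\m(i\mid\pi)$ for the right-hand side. The minimum runs over the preimage $F^{-1}(x)$, which is nonempty by surjectivity of $F$ and compact as a closed subset of the effectively compact space $\Omega$, so a lower semicontinuous function of $\pi$ indeed attains its infimum there, justifying the use of $\min$.

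For the inequality $\m(i\mid x)\lem g(i,x)$, I would consider the function $(i,\pi)\mapsto\m(i\mid F(\pi))$. Since $\m(i\mid x)$ is lower semicomputable in $(i,x)$ and $F$ is computable, this composition is lower semicomputable in $(i,\pi)$; moreover $\sum_{i}\m(i\mid F(\pi))\le 1$ for every $\pi$, because $\sum_{i}\m(i\mid x)\le 1$ for every $x$. Thus it is a uniform lower semicomputable semimeasure with condition $\pi\in\Omega$, so by the maximality of $\m(i\mid\pi)$ there is a constant $c$ with $\m(i\mid F(\pi))\le c\,\m(i\mid\pi)$ for all $i$ and $\pi$. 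Letting $\pi$ range over $F^{-1}(x)$ and taking the minimum gives $\m(i\mid x)\le c\,g(i,x)$.

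For the reverse inequality $g(i,x)\lem\m(i\mid x)$, the main point is that $g$ is lower semicomputable, and this is exactly an instance of the push-forward Lemma~\ref{lem:push-forward} (which rests on Proposition~\ref{propo:lsc-min}): take the effectively locally compact factor to be the discrete space $\bbN$ carrying the variable $i$, the effectively compact factor to be $\Omega$, the surjective computable map to be $F\colon\Omega\to X$, and the lower semicomputable function to be $t(i,\pi)=\m(i\mid\pi)$; the lemma yields that $g(i,x)=\inf_{\pi:F(\pi)=x}\m(i\mid\pi)$ is lower semicomputable. It then remains to check that $g$ is a semimeasure in $i$: fixing any $\pi_{0}\in F^{-1}(x)$ we have $g(i,x)\le\m(i\mid\pi_{0})$ for each $i$, whence $\sum_{i}g(i,x)\le\sum_{i}\m(i\mid\pi_{0})\le 1$. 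The point is that although the minimizing $\pi$ may depend on $i$, a single $\pi_{0}$ dominates all the terms at once. Hence $g$ is a uniform lower semicomputable semimeasure with condition $x\in X$, and the maximality of $\m(i\mid x)$ gives $g(i,x)\lem\m(i\mid x)$.

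The two inequalities together give $\m(i\mid x)\eqm g(i,x)$, as claimed. I expect the only genuinely nontrivial step to be the lower semicomputability of the minimum in the third paragraph, which is where effective compactness of $\Omega$ is essential and is supplied by Lemma~\ref{lem:push-forward}; everything else reduces to an application of maximality together with the elementary bound on the sum over $i$ obtained by comparing with a single representative $\pi_{0}$.
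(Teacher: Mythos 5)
Your proof is correct and follows essentially the same route as the paper's: the $\lem$-direction via the lower semicomputable semimeasure $(i,\pi)\mapsto\m(i\mid F(\pi))$ and maximality of $\m(i\mid\pi)$, and the reverse direction via Lemma~\ref{lem:push-forward} to get lower semicomputability of the minimum, plus the observation that a single $\pi_{0}\in F^{-1}(x)$ bounds the sum over $i$. You have merely spelled out details that the paper's terse proof leaves implicit.
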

\begin{proof}
  We reason as in the proof of Theorem~\ref{thm:some-oracle}.
The function $\pair{i}{\pi}\mapsto \m(i\mid F(\pi))$ is lower semicomputable on
$\bbN\times\Omega$, hence the $\lem$-inequality.

In order to obtain the reverse inequality, we use Lemma~\ref{lem:push-forward}
and note that the function on the right-hand side is correctly defined (the
minimum is achieved) and is lower semicomputable.
\end{proof}

Note that $\m(i\mid\pi)$, the apriori probability with an oracle on the right-hand side
of Proposition~\ref{propo:day-miller-apriori},
is expressible by prefix complexity with an oracle.
For the case of prefix complexity with condition in metric spaces it is not
clear, how to define prefix complexity with such a condition (one can speak of
functions whose graph is enumerable with respect to $x$, but it is not clear how
to build a universal one).
But one can define formally $\KP(i\mid x)$ as $\max_{\pi:F(\pi)=x}\KP(i\mid
\pi)$, and then $\KP(i\mid x)\eqa -\log\m(i\mid x)$,
but it is questionable whether this can be considered a satisfactory definition
of prefix complexity (say, the usual arguments using the self-delimiting property of
programs are not applicable at such a definition).
It is more honest to simply speak of the logarithm of apriori probability.
Many results still stay true: for example the formula
$\KP(i,j\mid x)\lea \KP(i\mid x) + \KP(j\mid x)$ can be proved,
without introducing self-delimiting programs, just reasoning about
probabilities.

\begin{remark}
Analogously, it is possible to supply points in constructive metric spaces as
conditions in some of our other definitions.
For example, we can consider uniform tests over the Cantor space $\Omega$ of
infinite binary sequences, with condition in an arbitrary constructive
metric space $X$: these will be lower semicomputable functions $t(\omega,\P,x)$
with $\int t(\omega,\P,x)\,\P(d\omega)\le 1$ for all $\P,x$.
It is also possible to fix a computable measure $\P$, say the uniform one, and
define tests with respect to this measure with conditions in $X$.
\end{remark}

\section{Classes of orthogonal measures}

The definition of a class test for an effectively compact class
of measures, as well as Theorem~\ref{thm:class-test} about the expression of a
class test, generalizes, with the same proof.

The set of Bernoulli measures has an important property shared by many
classes considered in practice: namely that a random sequence determines the
measure to which it belongs.
A consequence of this was spelled out in
Theorem~\ref{thm:orthogonal-blind.bin-Cantor}.
This section explores the topic in a more general setting.

There are some examples naturally generalizing the Bernoulli case:
finite or infinite ergodic Markov chains, and ergodic stationary processes.
Below, we will dwell a little more on the latter, since it brings up a rich complex of new
questions.

We will consider orthogonal classes in the general setting of metric spaces:
from now on, our measureable space is the one obtained from
a constructive metric space $\bX = (X,d,D,\alpha)$.
The following classical concept is analogous to effective orthogonality, introduced in
Definition~\ref{def:effectively-orth}.

\begin{definition}[Orthogonal measures]\label{def:orthogonal}
  Let $\P,\Q$ be two measures over a measureable space $(X,\cA)$, that is a
space $X$ with a $\sigma$-algebra $\cA$ of measureable sets on it.
We say that they are \df{orthogonal} if the space can be partitioned into
measureable sets $U,V$ with the property $\P(V)=\Q(U)=0$.

Let $\cC$ be a class of measures.
We say that $\cC$ is \df{orthogonal} if there is a measureable function
$\varphi:X\to\cC$ with the property $\P(\varphi^{-1}(\P))=1$.
\end{definition}

Note that the space $\cM(X)$, as a metric space, also allows the definition of
Borel sets, and it is in this sense that we can talk about $f$ being
measureable.

\begin{examples}\label{example:orthogonal}
  \begin{enumerate}[\upshape 1.]
  \item
In an orthogonal class, any two (different) measures $\P$ and $\Q$ are
orthogonal.
Indeed, the sets $\{\P\}$ and $\{\Q\}$ are Borel (since closed), hence their
preimages are measureable (and obviously disjoint).
The converse statement is false:
A class $\cC$ of mutually orthogonal probability
measures is not necessarily orthogonal,
even if the class is effectively compact.
For example, let $\lambda$ be the uniform distribution over the interval
$\clint{0}{1}$, and let for each $x\in\clint{0}{1}$
the probability measure $\delta_{x}$ be concentrated on $x$.
Then the class $\{\lambda\}\cup\setOf{\delta_{x}}{x\in\clint{0}{1}}$ is effectively
compact, and its elements are mutually orthogonal.
But the whole class is not orthogonal: the orthogonality condition requires
$\phi(x)=\delta_{x}$, but then $\phi^{-1}(\lambda)$ will be empty.

\item\label{i:example.orthogonal.rand}
Let $\P,\Q$ be two probability measures.
Of course, if $\Rands(\P)$ and $\Rands(\Q)$ are disjoint, then $\P$ and
$\Q$ are orthogonal.
The converse is not always true: for example it fails if
$\lambda,\delta_{x}$ are as above, where $x$ is random with
respect to $\lambda$.
  \end{enumerate}
\end{examples}

The following definition introduces the important
example of stationary ergodic processes.

\begin{definition}
The Cantor space $\Omega$ of infinite binary sequences is equipped with an
operation $T: \omega(1)\omega(2)\omega(3)\dots\mapsto
\omega(2)\omega(3)\omega(4)\dots$ called the \df{shift}.
A probability distribution $\P$ over $\Omega$ is \df{stationary} if for every
Borel subset $A$ of $\Omega$ we have $\P(A)=\P(T^{-1}(A))$.
It is easy to see that this property is equivalent to requiring
 \begin{align*}
 \P(x)=\P(0x)+\P(1x)
 \end{align*}
for every binary string $x$.

A Borel set $A\subseteq\Omega$ is called \df{invariant} with respect to the shift
operation if $T(A)\subseteq A$.
For example the set of all sequences in which the relative frequency converges
to $1/2$ is an invariant set.
A stationary distribution is called \df{ergodic} if every invariant Borel set
has measure 0 or 1.
\end{definition}

Here is a new example of a stationary process (all Bernoulli measures and
stationary Markov chains are also examples).

\begin{example}
Let $Z_{1},Z_{2},\dots$ be a sequence of independent, identically distributed
random variables taking values $0,1$ with probabilities $0.9$ an $0.1$
respectively.
Let $X_{0},X_{1},X_{2},\dots$ be defined as follows:
$X_{0}$ takes values $0,1,2$ with equal probabilities, and independently of all
$Z_{i}$, further $X_{n}=X_{0}+\sum_{i=1}^{n}Z_{i}\bmod 3$.
Finally, let $Y_{n}=0$ if $X_{n}=0$ and $1$ otherwise.
The process $Y_{0},Y_{1},\dots$ is clearly stationary, and can also be proved to
be ergodic.
As a function of the Markov chain $X_{0},X_{1},\dots$, it is also called a
\df{hidden Markov chain}.
\end{example}

The following theorem is a consequence of Birkhoff's pointwise ergodic theorem.
For each binary string $x$ let
 \begin{align*}
  g_{x}(\omega)=1_{x\Omega}(\omega)
 \end{align*}
be the indicator function of the set $x\Omega$: it is 1 if and only if $x$ is a prefix
of $\omega$.

\begin{proposition}\label{propo:Birkhoff}
 Let $\P$ be a stationary process over the Cantor space
$\Omega$.
\begin{alphenum}
\item\label{i:Birkoff.converge} With probability $1$, the average
 \begin{align}\label{eq:Axn}
   A_{x,n}(\omega)=\frac{1}{n}(g_{x}(\omega)+g_{x}(T\omega)+\dots+g_{x}(T^{n-1}\omega))
 \end{align}
converges.
 \item\label{i:Birkhoff.ergodic}
If the process is ergodic then the sequence converges to $\P(x)$.
\end{alphenum}
\end{proposition}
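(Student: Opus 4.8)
The plan is to derive both parts directly from Birkhoff's pointwise ergodic theorem, applied to the bounded (hence integrable) function $g_{x}$. First I would observe that the stationarity hypothesis is precisely the statement that the shift $T$ is a measure-preserving transformation of the probability space $(\Omega,\P)$: the condition $\P(A)=\P(T^{-1}(A))$ for all Borel $A$ is the definition of measure preservation. Since $g_{x}=1_{x\Omega}$ takes values in $\{0,1\}$, it lies in $L^{1}(\P)$, so the hypotheses of Birkhoff's theorem are met with no further work.

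For part~(\ref{i:Birkoff.converge}), Birkhoff's theorem asserts that for every $f\in L^{1}(\P)$ the Ces\`aro averages $\frac{1}{n}\sum_{i=0}^{n-1} f(T^{i}\omega)$ converge for $\P$-almost every $\omega$ to a $T$-invariant limit $\bar f$ with $\int\bar f\,d\P=\int f\,d\P$. Specializing $f=g_{x}$ gives exactly the averages $A_{x,n}(\omega)$ of~\eqref{eq:Axn}, so these converge with probability $1$, which settles~(\ref{i:Birkoff.converge}).

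For part~(\ref{i:Birkhoff.ergodic}), I would use the identification of the limit as the conditional expectation $\bar f=\mathbb{E}[f\mid\mathcal I]$, where $\mathcal I$ is the $\sigma$-algebra of shift-invariant sets. Under the ergodicity assumption every invariant set has measure $0$ or $1$, so $\mathcal I$ is trivial modulo null sets and $\bar f$ equals, $\P$-almost surely, the constant $\int g_{x}\,d\P$. It remains only to evaluate this integral: $\int g_{x}\,d\P=\P(x\Omega)=\P(x)$, which gives the claimed limit.

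The only genuine point requiring care is matching the paper's notion of ergodicity to the hypothesis of the ergodic form of Birkhoff's theorem: the text defines an invariant set by the forward condition $T(A)\subseteq A$, whereas the ergodic theorem is usually phrased using the strictly invariant $\sigma$-algebra $\setOf{A}{T^{-1}(A)=A}$ (or invariance modulo null sets). I expect this to be the main---though still routine---obstacle: one must check that triviality of the forward-invariant sets yields triviality of the invariant $\sigma$-algebra up to null sets, after which the constancy of $\bar f$ follows as above. Everything else is a direct citation of the classical theorem.
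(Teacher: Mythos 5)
Your proposal is correct and matches the paper's approach exactly: the paper offers no proof at all, merely noting that the proposition is a consequence of Birkhoff's pointwise ergodic theorem, and your argument is the standard derivation of that consequence. The one point you flag resolves in a single line, since any strictly invariant set ($T^{-1}(A)=A$) satisfies $T(A)\subseteq A$ by surjectivity of the shift, so the paper's ergodicity hypothesis is at least as strong as the one needed for the ergodic form of Birkhoff's theorem.
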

(For non-ergodic processes, the limit may depend on $\omega$.)
Birkhoff's theorem is more general, talking about more general spaces and
measure-preserving transformations $T$, arbitrary integrable functions
in place of $g_{x}$, and convergence to the expected value in the ergodic case.
But the proposition captures its essence (and can also be used in the derivation
of the more general versions).

Part~\eqref{i:Birkhoff.ergodic} of Proposition~\ref{propo:Birkhoff} implies that
the class $\cC$ of ergodic measures is an orthogonal class.
Indeed, let us call a sequence $\omega$ ``stable'' if for all strings $x$, the
averages $A_{x,n}(\omega)$ of~\eqref{eq:Axn} converge.
It is easy to see that in this case, the numbers $\P(x)$ determine
some probability measure $\Q_{\omega}$.
Now, let $\varphi:\Omega\to\cC$ be a function that assigns to each stable sequence
$\omega$ the measure $\Q_{\omega}$ provided $\Q_{\omega}$ is ergodic.
If the sequence is not stable or $\Q_{\omega}$ is not ergodic, then let $\varphi(\omega)$
be some arbitrary fixed ergodic measure.
It can be shown that $\varphi$ is a measureable function:
here, we use the fact that the set of stable sequences is a Borel set.
By part~\eqref{i:Birkhoff.ergodic} of Proposition~\ref{propo:Birkhoff}, the
relation $\P(\varphi^{-1}(\P))=1$ holds for all ergodic measures.

Note that the class of all ergodic measures is not closed, but we did not rely on the
closedness of this class in the definition.

Example~\ref{example:orthogonal}.\ref{i:example.orthogonal.rand} shows that two
measures can be orthogonal and still have common random sequences.
But, for computable measures, as we will show right away, this is not possible.

We called a class of
measures $\P$ effectively orthogonal in Definition~\ref{def:effectively-orth},
if all sets of random sequences
$\Rands(\P)$ for measures $\P$ in the class are disjoint from each other.

\begin{theorem}
  Two computable probability
measures on a constructive metric space
are orthogonal if and only they are effectively orthogonal.
\end{theorem}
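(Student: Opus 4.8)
The plan is to prove the two implications separately; the forward implication (effective orthogonality $\Rightarrow$ orthogonality) is immediate, while the converse carries the real content.

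\emph{Effectively orthogonal implies orthogonal.} For a computable measure $\P$ the universal uniform test $\t(\cdot,\P)$ coincides with a universal $\P$-test $\t_{\P}$ (Proposition~\ref{propo:unif-comput.Cantor}), so $\Rands(\P)=\setOf{\omega}{\t_{\P}(\omega)<\infty}=\bigcup_{N}\setOf{\omega}{\t_{\P}(\omega)\le N}$ is an $F_{\sigma}$, hence Borel, set, and since $\int\t_{\P}\,d\P\le 1$ it has $\P$-measure $1$. Assuming $\Rands(\P)\cap\Rands(\Q)=\emptyset$, I would take $U=\Rands(\P)$ and $V=X\setminus U$: then $\P(V)=0$, while $U\subseteq X\setminus\Rands(\Q)$ gives $\Q(U)=0$, which is exactly the partition required by Definition~\ref{def:orthogonal}.

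\emph{Orthogonal implies effectively orthogonal.} I first describe the argument on the Cantor space $\Omega$, where the cylinder filtration is available, and put $m_{n}=\sum_{\len{x}=n}\min(\P(x),\Q(x))$. The sequence $m_{n}$ is computable (a finite sum of minima of computable reals), non-increasing in $n$ (apply $\min(a,b)+\min(c,d)\le\min(a+c,b+d)$ to the two one-bit extensions of each $x$), and its limit is the total overlap mass of $\P$ and $\Q$; the classical characterization of mutual singularity gives $\lim_{n}m_{n}=0$ \emph{if and only if} $\P\perp\Q$. This is the only place orthogonality enters, and it enters merely as a promise: since $m_{n}$ is computable, monotone and known to converge to the computable value $0$, for every $k$ I can effectively search for and find an index $n_{k}$ with $m_{n_{k}}\le 4^{-k}$ (the search terminates precisely because of orthogonality, although no rate is known in advance).

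Now set $r_{n}(\omega)=\Q(\omega(1:n))/(\P(\omega(1:n))+\Q(\omega(1:n)))$ and $s_{n}=1-r_{n}$. From $ab/(a+b)\le\min(a,b)$ one gets $\int r_{n}\,d\P\le m_{n}$ and, symmetrically, $\int s_{n}\,d\Q\le m_{n}$. Define
\[ t=\sum_{k\ge 1}2^{k}r_{n_{k}},\qquad t^{*}=\sum_{k\ge 1}2^{k}s_{n_{k}}. \]
Both are lower semicomputable, and $\int t\,d\P\le\sum_{k\ge 1}2^{k}m_{n_{k}}\le\sum_{k\ge 1}2^{-k}=1$, so $t$ is an average-bounded $\P$-test; likewise $t^{*}$ is a $\Q$-test. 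If $\omega\in\Rands(\P)\cap\Rands(\Q)$, then $t(\omega)<\infty$ forces $r_{n_{k}}(\omega)\to 0$, whereas $t^{*}(\omega)<\infty$ forces $s_{n_{k}}(\omega)\to 0$, i.e. $r_{n_{k}}(\omega)\to 1$ — a contradiction. Hence $\Rands(\P)\cap\Rands(\Q)=\emptyset$.

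\emph{Passing to a constructive metric space.} The \textbf{main obstacle} is that the cylinder filtration must be replaced by an intrinsic device. The plan is to build a refining sequence of finite Borel partitions $\mathcal{P}_{1},\mathcal{P}_{2},\dots$ generating the Borel $\sigma$-algebra and whose cells have \emph{computable} $\P$- and $\Q$-measures; such partitions can be assembled from basic balls by choosing, computably, radii that avoid the countably many values on which $\P$ or $\Q$ charges a sphere (here regularity, Proposition~\ref{propo:regular}, keeps the cell measures under control). With $m_{n}=\sum_{C\in\mathcal{P}_{n}}\min(\P(C),\Q(C))$ and with $r_{n},s_{n}$ the density ratios on the cell of $\mathcal{P}_{n}$ containing $\omega$, the three paragraphs above transcribe verbatim: $m_{n}$ stays computable and non-increasing, $m_{n}\to 0\iff\P\perp\Q$ along any generating filtration, and the two tests are constructed exactly as before. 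Verifying that this partition construction can be carried out effectively and uniformly is where essentially all the remaining technical work lies.
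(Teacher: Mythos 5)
Your Cantor-space argument is correct and complete, and it takes a genuinely different route from the paper. The paper never introduces a filtration or an overlap sequence: it takes the orthogonalizing set $A$, uses regularity (Proposition~\ref{propo:regular}) to produce a computable sequence of finite unions $H_{n}$ of basic balls with $\P(H_{n})>1-2^{-n}$ and $\Q(H_{n})<2^{-n}$, and then sets $U_{m}=\bigcup_{n>m}H_{n}$; the sets $U_{m}$ have full $\P$-measure, so by Proposition~\ref{propo:Kurtz-uniform} their intersection contains every $\P$-random point, while they form a Martin-L\"of test for $\Q$, so the intersection contains no $\Q$-random point. This works directly in any constructive metric space and needs no filtration at all. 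Your likelihood-ratio construction $t=\sum_{k}2^{k}r_{n_{k}}$, $t^{*}=\sum_{k}2^{k}s_{n_{k}}$ is more quantitative (it produces explicit average-bounded tests on both sides and ties orthogonality to the computable overlap $m_{n}=1-\tfrac12\sum_{\len{x}=n}|\P(x)-\Q(x)|$), and the unbounded search for $n_{k}$ with $m_{n_{k}}\le 4^{-k}$ is a legitimate use of orthogonality as a termination promise. Both proofs ultimately rest on a similar effective search whose halting is guaranteed non-effectively.

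The gap is in the passage to general constructive metric spaces, and it is larger than your closing sentence suggests. First, ``choosing radii that avoid the countably many values on which $\P$ or $\Q$ charges a sphere'' is easy classically but is not by itself effective; what you actually need is radii at which $\P(B)$ and $\Q(B)$ are \emph{computable} (equivalently, at which the boundary sphere is effectively null for both measures), which is the Hoyrup--Rojas construction and is real work. Second, and not flagged in your proposal: the cells of such a partition are Boolean combinations of balls, hence neither open nor closed, so a function that is constant on each cell is not lower semicontinuous and your $t$, as written, is not lower semicomputable. You would have to replace each cell by its effectively open interior (changing $r_{n}$ only on the boundary spheres), and then separately argue that random points avoid those boundaries --- which requires showing the boundaries are \emph{effectively} null, not merely null. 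None of this is insurmountable, but it is exactly the machinery the paper's proof avoids by working with finite unions of basic balls and Proposition~\ref{propo:Kurtz-uniform} instead of a generating filtration.
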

Speaking of the effective orthogonality of two measures, we mean that they have
no common (uniform) random sequences.
In the effective case, pairwise orthogonality within the class and the
orthogonality of the whole class are equivalent by definition.
\begin{proof}
  We only need to prove one direction.
Assume that $\P,\Q$ are orthogonal, that is there is a
measureable set $A$ with $\P(A)=1$, $\Q(A)=0$.
By Proposition~\ref{propo:regular}, these measures are regular, so
there is a sequence $G_{n}\supseteq A$ of open sets with $\Q(G_{n})< 2^{-n}$.
Then for every $n$ there is also a finite union $H_{n}$ of basic balls with
$\P(H_{n})>1-2^{-n}$ and $\Q(H_{n})<2^{-n}$; moreover, there is a
computable sequence $H_{n}$ with this property.
Let $U_{m}=\bigcup_{n>m}H_{n}$.
By Proposition~\ref{propo:Kurtz-uniform}, $\bigcap_{m} U_{m}$
contains all random points of $\P$.
On this other hand, the sets $U_{m}$ form a
Martin-L\"of test for measure $\Q$, so the intersection contains no random
points of $\Q$.
\end{proof}

We have shown above that ergodic measures form an orthogonal class.
Careful analysis shows that this is also true effectively.

\begin{theorem}\label{thm:effective-Birkhoff}
The set of ergodic measures over the Cantor set $\Omega$
forms an effectively orthogonal class.
\end{theorem}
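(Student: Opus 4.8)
The plan is to unfold \emph{effective orthogonality} (Definition~\ref{def:effectively-orth}) into its meaning and reduce it to an effective ergodic statement. Concretely, I must show that two distinct ergodic measures $\P\ne\Q$ satisfy $\Rands(\P)\cap\Rands(\Q)=\emptyset$, and I claim this follows from an \emph{effective (uniform) Birkhoff theorem}: for every ergodic $\P$ and every uniformly $\P$-random $\omega$, the averages $A_{x,n}(\omega)$ of~\eqref{eq:Axn} converge to $\P(x)$ for all strings $x$. Granting this, if some $\omega$ were uniformly random with respect to both $\P$ and $\Q$, then $\P(x)=\lim_n A_{x,n}(\omega)=\Q(x)$ for every $x$, forcing $\P=\Q$; hence the random sets are disjoint. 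Since the tests I build must be functions of $\omega$ \emph{and} the measure, I carry everything out uniformly in the second argument, using that $\P(x)$ is a computable function of $\P$, that each $A_{x,n}$ is a clopen function of $\omega$, and that the space $\cM(\Omega)$ of measures is effectively compact.

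First I would dispose of the cheap ``cluster value'' half. For a fixed $x$, rational $\varepsilon>0$ and index $m$, the set $\{(\omega,\R):\exists N\ge m,\ A_{x,N}(\omega)<\R(x)+\varepsilon\}$ is effectively open in $\Omega\times\cM(\Omega)$, and by part~\eqref{i:Birkhoff.ergodic} of Proposition~\ref{propo:Birkhoff} its $\P$-fibre has measure $1$ for every ergodic $\P$. Proposition~\ref{propo:Kurtz-uniform} then places every uniformly $\P$-random $\omega$ inside it, yielding $\liminf_n A_{x,n}(\omega)\le\P(x)$; the symmetric open set built from $A_{x,N}(\omega)>\R(x)-\varepsilon$ gives $\limsup_n A_{x,n}(\omega)\ge\P(x)$. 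Thus $\P(x)$ is always a cluster value of the averages at a random point.

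The real content is the converse one-sided bounds $\limsup_n A_{x,n}(\omega)\le\P(x)$ and $\liminf_n A_{x,n}(\omega)\ge\P(x)$, and here is where I expect the main obstacle. The bad set $\{\omega:\limsup_n A_{x,n}(\omega)>\P(x)+\varepsilon\}$ is shift-invariant and $\P$-null, but Birkhoff's theorem has \emph{no computable convergence rate}, so this set cannot be covered by effectively open sets of effectively small measure; in particular Proposition~\ref{propo:Kurtz-uniform} is provably too weak here, since the ergodic theorem genuinely fails at merely Kurtz-random points. The remedy is to construct an honest integral test rather than a Kurtz test, which is possible exactly because uniform randomness is strictly stronger. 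For each rational $\varepsilon>0$ I would apply the maximal ergodic (Hopf) inequality to $g=g_{x}-(\P(x)+\varepsilon)$, whose $\P$-integral is $-\varepsilon<0$, in order to bound the measure of the sets on which the running Ces\`aro sums of $g$ remain positive; assembling these estimates over successive record times produces a nonnegative lower semicomputable $t_{\varepsilon}(\omega,\P)$ with $\int t_{\varepsilon}(\omega,\P)\,\P(d\omega)=O(1)$ that diverges precisely on the invariant bad set. All ingredients are uniform in $\P$, so $t_{\varepsilon}$ is a uniform object in the sense of Definition~\ref{def:uniform-test.bin-Cantor}.

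Finally I would sum $\sum_{j}2^{-j}t_{\varepsilon_j}$ over a rational sequence $\varepsilon_j\downarrow0$ to obtain one uniform test that is infinite whenever $\limsup_n A_{x,n}(\omega)>\P(x)$, and run the same construction with the complementary cylinder indicator $1-g_{x}$ to control the $\liminf$ from below. Together with the cheap half above, this gives $A_{x,n}(\omega)\to\P(x)$ at every uniformly $\P$-random $\omega$, establishing the effective Birkhoff statement and hence the theorem. The one delicate point to watch is keeping the maximal-inequality/record-time construction simultaneously lower semicomputable and integrable uniformly across the effectively compact family $\cM(\Omega)$; this is exactly the step that consumes the extra strength of uniform randomness over Kurtz randomness, and it is where the bulk of the ``careful analysis'' resides.
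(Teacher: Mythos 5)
Your overall architecture coincides with the paper's: reduce effective orthogonality to an effective Birkhoff statement (a uniformly $\P$-random point of an ergodic $\P$ satisfies $A_{x,n}(\omega)\to\P(x)$ and hence determines $\P$), and your ``cheap half'' --- applying Proposition~\ref{propo:Kurtz-uniform} to the effectively open sets $\setOf{\pair{\omega}{\R}}{\exists N\ge m\; A_{x,N}(\omega)<\R(x)+\varepsilon}$ to squeeze $\P(x)$ between $\liminf$ and $\limsup$ --- is exactly the paper's second step. The gap is in what you call the real content. You propose, for each $\varepsilon$, a function $t_{\varepsilon}(\omega,\P)$ with $\int t_{\varepsilon}(\omega,\P)\,\P(d\omega)=O(1)$ that diverges precisely on $\setOf{\omega}{\limsup_n A_{x,n}(\omega)>\P(x)+\varepsilon}$, and you claim it is a uniform test in the sense of Definition~\ref{def:uniform-test.bin-Cantor}. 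No such object exists: for a non-ergodic stationary mixture $\P=\tfrac12\P_1+\tfrac12\P_2$ with $\P_2(x)>\P(x)+\varepsilon$, that bad set has $\P$-measure at least $1/2$, so the integral is infinite. Trimming could repair the integral at such $\P$, but only after you have produced a lower semicomputable function that is already a test at every \emph{ergodic} $\P$, and that is precisely the construction you leave unexecuted. Moreover, the tool you name does not deliver it: one application of the Hopf inequality to $g=g_x-(\P(x)+\varepsilon)$ bounds the measure of the ``some partial sum positive'' set only by $\P(x)/(\P(x)+\varepsilon)$, which is not summable over your levels; to iterate and gain a geometric factor one must wait for the average to return below a second threshold $\alpha<\beta$, and what one then controls is the number of \emph{upcrossings} from below $\alpha$ to above $\beta$.

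That upcrossing count is exactly what the paper (following V'yugin) uses, and it proves a weaker, achievable statement: $\sigma(\omega,\alpha,\beta)$ is lower semicomputable with $(1+\alpha^{-1})(\beta-\alpha)\int\sigma\,d\P\le 1$ for \emph{every} stationary $\P$ (no trimming subtlety), so its finiteness at random points gives only that the averages converge to \emph{some} limit. A point where $A_{x,n}$ converges to a value above $\P(x)+\varepsilon$ makes finitely many upcrossings and is invisible to this test; it is eliminated by your cheap half instead. The correct division of labour is therefore: upcrossing test $\Rightarrow$ convergence; Kurtz-type test $\Rightarrow$ the limit equals $\P(x)$. Your plan assigns to the hard half a one-sided bound it cannot establish (and which, if it could, would make the cheap half redundant), and the construction carrying the entire weight of the theorem is both mis-targeted and missing. (The paper also notes an alternative: prove the convergence for computable ergodic measures relativizably and invoke Theorem~\ref{thm:oracle2uniform}.)
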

\begin{proof}
The paper~\cite{VyuginErgodic98} (more precisely, an analysis of it that will
create uniform tests) shows that
\begin{alphenum}
\item\label{i:effective-Birkhoff.stable}
Sequences uniformly random with respect to some stationary measure
are stable (in the sense that the above indicated limit of averages exists for them).
 \item\label{i:effective-Birkhoff.ergodic}
Uniformly random sequences with respect to an ergodic measure
are ``typical'' in the sense that these averages converge to
$\P(x)$.
\end{alphenum}
To show~\eqref{i:effective-Birkhoff.stable}, the paper introduces the function
 \begin{align*}
 \sigma(\omega,\alpha,\beta)
 \end{align*}
for rationals $0<\alpha<\beta$, which is the maximum number of times that
$A_{x,n}(\omega)$ crosses from below $\alpha$ to above $\beta$.
This function is lower semicomputable, uniformly in the rationals $\alpha,\beta$.
Then it shows
 \begin{align*}
(1+\alpha^{-1})(\beta-\alpha)\int\sigma(\omega,\alpha,\beta)\,d\P\le 1,
 \end{align*}
that is that $(1+\alpha^{-1})(\beta-\alpha)\sigma(\omega,\alpha,\beta)$ is an
average-bounded test,
implying that for Martin-L\"of-random sequences, the average $A_{x,n}(\omega)$
crosses from below $\alpha$ to above $\beta$ only a finite number of times.
Now one can combine all these tests, for all strings $x$ and all rational
$0<\alpha<\beta$, into a single test.
This test is uniform in $\P$: we did not rely on the computability of $\P$.

To express~\eqref{i:effective-Birkhoff.ergodic}, in view of
part~\eqref{i:effective-Birkhoff.stable},
it is sufficient, for each $x$, to prove
 \begin{align}\label{eq:liminf-limsup}
\lim\inf_{n} A_{x,n}(\omega)\le\P(x)\le\lim\sup_{n} A_{x,n}(\omega)
 \end{align}
for random $\omega$.
Take for example the statement for the lim inf.
It is sufficient to show for each $k,m$ that
$\inf_{n\ge m} A_{x,n}(\omega)\le\P(x)+2^{-k}$ for a random $\omega$.
The set
 \begin{align*}
S_{x,k,m} &=\setOf{\pair{\omega}{\P}}{\exists{n\ge m}\;A_{x,i}(\omega)<\P(x)+2^{-k}}
 \end{align*}
is effectively open, and the Birkhoff theorem implies $\P(S_{x,k,m}(\P))=1$ for
for all ergodic measures $\P$, for the set
$S_{x,k,m}(\P)=\setOf{x}{\pair{x}{\P}\in S_{x,k,m}}$.
Proposition~\ref{propo:Kurtz-uniform} implies that then for each $\P$, the set
$S_{x,k,m}(\P)$ contains all $\P$-random points.

Another approach is a proof that just shows~\eqref{i:effective-Birkhoff.ergodic}
for computable ergodic measures (in a relativizable way), without an explicit test,
as done in~\cite{BienvDayHoyrMezhShen10}.
Then a reference to Theorem~\ref{thm:oracle2uniform} allows us to conclude the
same about uniformly random sequences.
\end{proof}

It is convenient to treat orthogonality of a class in terms of separator
functions.
For this, note that by a measureable real
function we mean a Borel-measureable real function, that is a function with the
property that the inverse images of Borel sets are Borel sets.

\begin{definition}[Separator function]
Let $\cC$ be a class of measures over the metric space $X$.
A measureable function $\s:X\times\cM(X)\to\clint{0}{\infty}$,
is called a \df{separator function} for the class $\cC$
if for all measures $\P$ we have $\int \s(x,\P)\,d\P\le 1$,
further for $\P,\Q\in\cC$, $\P\ne\Q$ implies that only one of the values
$\s(x,\P)$, $\s(x,\Q)$ is finite.

In case we have a constructive metric space $\bX$,
a separator function $\s(x,\P)$ is called a \df{separator test} if it is lower
semicomputable in $\pair{x}{\P}$.
\end{definition}

We could have required the integral to be bounded only for
measures on the class, since trimming allows the extension
of the boundedness property to all measures, just as in the remark after
Definition~\ref{def:P-test}.

The following observation connects orthogonality with separator functions and
also shows that in case of effective orthogonality, each measure can be
effectively reconstructed from any of its random elements.

\begin{theorem}\label{thm:eff-orthog}
Let $\cC$ be a class of measures.
  \begin{alphenum}
 \item\label{i:thm.orthog.sep-fun}
If class $\cC$ is Borel and orthogonal then there is a separator function for
it.
  \item\label{i:thm.orthog.sep-test}
Class $\cC$ is effectively orthogonal if and only if there is a separator test for it.
 \end{alphenum}
\end{theorem}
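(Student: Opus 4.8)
The plan is to handle part~\eqref{i:thm.orthog.sep-test} using the universal uniform test and part~\eqref{i:thm.orthog.sep-fun} by an explicit indicator construction. The key observation driving the whole proof is that the universal uniform test $\t(x,\P)$ from Section~\ref{subsec:randomness-metric} is itself the natural separator-test candidate: by definition $\Rands(\P)=\setOf{x}{\t(x,\P)<\infty}$, so the effective-orthogonality hypothesis of Definition~\ref{def:effectively-orth} is \emph{literally} the assertion that for $\P\ne\Q$ in $\cC$ the values $\t(x,\P)$ and $\t(x,\Q)$ are never both finite. Throughout I read the separation clause ``only one of $\s(x,\P),\s(x,\Q)$ is finite'' as ``not both finite'' (the strict ``exactly one'' reading is untenable, since a point random for no measure of the class makes both infinite).

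For part~\eqref{i:thm.orthog.sep-test} I would prove both directions through $\t$. For the forward direction, assuming $\cC$ effectively orthogonal, I claim $\s:=\t$ is a separator test: it is lower semicomputable and a uniform test, so $\int\t(x,\P)\,d\P\le 1$ for every $\P$; and if $\t(x,\P)$ and $\t(x,\Q)$ were both finite for some $x$ and some $\P\ne\Q$ in $\cC$, then $x\in\Rands(\P)\cap\Rands(\Q)=\emptyset$, a contradiction. For the converse, given a separator test $\s$, I would note that (after trimming via Theorem~\ref{thm:trim} to guarantee $\int\s(\cdot,\P)\,d\P\le 1$ for \emph{all} $\P$, should only the on-class bound be assumed) $\s$ is a uniform test, hence $\s\lem\t$ by maximality. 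Then $x\in\Rands(\P)$ gives $\t(x,\P)<\infty$, whence $\s(x,\P)<\infty$; so a common random point of distinct $\P,\Q\in\cC$ would make both $\s(x,\P)$ and $\s(x,\Q)$ finite, contradicting the separator property. Thus $\Rands(\P)\cap\Rands(\Q)=\emptyset$.

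For part~\eqref{i:thm.orthog.sep-fun} I would build the separator function directly from the witness $\varphi:X\to\cC$ of orthogonality (Definition~\ref{def:orthogonal}). Set $\s(x,\P)=1$ when $\varphi(x)=\P$ or $\P\notin\cC$, and $\s(x,\P)=\infty$ when $\P\in\cC$ but $\varphi(x)\ne\P$. The separation clause is immediate, since $\varphi(x)$ is a single measure and so cannot equal two distinct $\P,\Q$. For the integral bound, when $\P\in\cC$ the slice $\setOf{x}{\s(x,\P)<\infty}$ is exactly $\varphi^{-1}(\P)$, which has $\P$-measure $1$; using the convention $\infty\cdot 0=0$ the integral equals $1$, while for $\P\notin\cC$ the integrand is constantly $1$.

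The one step needing care---and the part I expect to be the main obstacle---is measurability of $\s$ in part~\eqref{i:thm.orthog.sep-fun}, i.e.\ measurability of the graph $\setOf{\pair{x}{\P}}{\varphi(x)=\P}$. I would obtain this by writing the graph as the preimage of the diagonal $\Delta=\setOf{\pair{\mu}{\mu}}{\mu\in\cM(X)}$ under the measurable map $\pair{x}{\P}\mapsto\pair{\varphi(x)}{\P}$; since $\cM(X)$ is a metric (hence Hausdorff) space, $\Delta$ is closed and therefore Borel, and since $\cC$ is Borel by hypothesis the region where $\s=\infty$ is Borel as well. In part~\eqref{i:thm.orthog.sep-test} the only subtlety is to confirm that the maximality and trimming machinery ($\t$ universal, Theorem~\ref{thm:trim}) is available in the general constructive-metric-space setting, which it is; the conceptual heart---that effective orthogonality simply \emph{is} the statement that $\t$ separates---makes that direction essentially immediate.
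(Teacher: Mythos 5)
Your proof is correct and follows essentially the same route as the paper's: part (b) uses the universal uniform test $\t$ as the separator test in one direction and the maximality $\s\lem\t$ in the other, and part (a) uses exactly the same indicator-of-the-graph construction from the orthogonality witness $\varphi$ (the paper cites Kuratowski for measurability of the graph where you argue directly via the closed diagonal in the separable metric space $\cM(X)$, which is fine). Your reading of the separation clause as ``at most one finite'' also matches how the paper itself uses it.
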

The converse of part~\eqref{i:thm.orthog.sep-fun} might not hold: this needs
further investigation.
\begin{proof}
Let us prove~\eqref{i:thm.orthog.sep-fun}.
If $\varphi(x)$ is a measureable function assigning measure
$\P\in\cC$ to each element $x\in X$ as required in the definition of
orthogonality, then by a general theorem of topological measure theory
(see~\cite{KuratowskiTop}), its graph is measureable.
This allows the following definition: for $\P\not\in\cC$
set $\s(x,\P)=1$, further for $\P\in\cC$, set $\s(x,\P)=1$ if $\varphi(x)=\P$,
and $\s(x,\P)=\infty$ otherwise.

Let us prove now~\eqref{i:thm.orthog.sep-test}.
If $\cC$ is effectively orthogonal
then the uniform test $\t(x,\P)$ is a separator test for the class $\cC$.
Suppose now that there is a separator test $s$
for the class $\cC$, and let $\P,\Q\in\cC$, $\P\neq\Q$, $x\in\Rands(\P)$.
Since $\s$ is a randomness test, $\s(x,\P)<\infty$, which
implies $\s(x,\Q)=\infty$, hence $x\not\in\Rands(\Q)$.
\end{proof}

The following result is less expected: it shows that if the class of
measures is effectively compact then the existence of a lower semicontinuous
separator
function implies the existence of a lower semicomputable one (that is a
separator test).

\begin{theorem}
  If for an effectively compact class of measures there is a lower
semicontinuous separator function $\s(x,\P)$, then
this class is effectively orthogonal.
\end{theorem}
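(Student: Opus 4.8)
The plan is to reduce the statement to the construction of a \emph{separator test}, that is a lower semicomputable separator function: by part~\eqref{i:thm.orthog.sep-test} of Theorem~\ref{thm:eff-orthog} the existence of such a test is \emph{equivalent} to effective orthogonality, so it suffices to upgrade the given lower semicontinuous separator $\s(x,\P)$ to a lower semicomputable one. As the remark following that theorem stresses, this passage ``from a lower semicontinuous separator to a lower semicomputable one'' is the real content, and effective compactness of $\cC$ is what must carry it; indeed for a merely measurable separator (equivalently, plain orthogonality) the conclusion fails, as the class $\{\lambda\}\cup\setOf{\delta_{x}}{x\in\clint{0}{1}}$ shows.

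First I would assemble the effectivization machinery that effective compactness supplies. By the non-effective form of Proposition~\ref{propo:lower-semi} write $\s=\sup_{n}g_{n}$ with $g_{n}$ basic and increasing (the $g_{n}$ need not be computable). For a \emph{computable} basic function $b(x,\P)$ the integral $I_{b}(\P)=\int b(x,\P)\,\P(dx)$ is computable and continuous in $\P$, uniformly in the code of $b$ (the analogue of Proposition~\ref{propo:integral-computable} for the product space). By Proposition~\ref{propo:image-of-eff-compact} the image $I_{b}(\cC)$ is effectively compact, uniformly in $b$, so $\max_{\P\in\cC}I_{b}(\P)$ is a computable real uniformly in $b$. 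Consequently I can enumerate those computable basic $b$ whose integral stays below $1$ (using Theorem~\ref{thm:trim} to absorb the boundary between $<1$ and $\le 1$ into a harmless factor $2$) over \emph{all} $\P\in\cC$ at once, and sum them with a summable sequence of coefficients. The result is a lower semicomputable function $\sigma(x,\P)$ with $\int\sigma(x,\P)\,\P(dx)\lem 1$ for every $\P\in\cC$; extending it off $\cC$ by trimming yields a genuine uniform test, so $\sigma\lem\t$.

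The crux, which I expect to be the main obstacle, is to force this enumerated $\sigma$ to inherit the \emph{separator} property, namely that the sets $\setOf{x}{\sigma(x,\P)<\infty}$ are pairwise disjoint for distinct $\P\in\cC$. The difficulty is precisely that the separation witnessed by $\s$ lives on $\P$-null sets where $\s=\infty$, and the witnessing basic functions $g_{n}\le\s$ may have non-computable parameters, so $\s$ itself is in general \emph{not} dominated by $\t$; effective compactness must convert this topological separation into a uniform lower semicomputable one. The route I would take is to treat, for each rational $\varepsilon=2^{-k}$, the effectively compact slab $K_{\varepsilon}=\setOf{\pair{\P}{\Q}\in\cC\times\cC}{\rho(\P,\Q)\ge\varepsilon}$: for each $\pair{\P}{\Q}\in K_{\varepsilon}$ the separator property and lower semicontinuity of $\s$ give, at every common point, some $g_{n}$ large at $\pair{x}{\P}$ or at $\pair{x}{\Q}$ with $\P$- (resp.\ $\Q$-)integral at most $1$; approximating $g_{n}$ from below by a \emph{computable} basic function that is still large on a neighbourhood and extracting a finite subcover of $K_{\varepsilon}$ (here effective compactness is indispensable) places enough such functions into the enumerated family to drive $\sigma(\cdot,\P)$ or $\sigma(\cdot,\Q)$ to infinity on every shared point. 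Summing the contributions over $k$ yields a single lower semicomputable $\sigma$ whose finiteness-sets are pairwise disjoint, i.e.\ a separator test; by part~\eqref{i:thm.orthog.sep-test} of Theorem~\ref{thm:eff-orthog} the class is then effectively orthogonal. The delicate point to get right is that the computable approximations must remain valid at the (possibly non-computable) shared points $x$ uniformly over the compact slab, and that the integral bounds survive the patching — this is where I would spend the real effort.
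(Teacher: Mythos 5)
Your reduction to constructing a separator \emph{test} (via part~(b) of Theorem~\ref{thm:eff-orthog}) is legitimate, and your first stage---enumerating the basic functions whose integrals stay bounded over the effectively compact class $\cC$ and summing them---is sound. But the step you yourself flag as the crux is where the argument genuinely breaks, for two concrete reasons. First, the disjointness you must enforce is quantified over all $x\in X$, and $X$ is not assumed compact (the theorem only assumes the \emph{class of measures} is effectively compact); so there is no finite subcover over the $x$-coordinate, and your cover of $K_{\varepsilon}$ alone does not control ``every shared point.'' Second, even granting compactness of $X$, the witness at each $x$ is disjunctive---some $g_{n}$ is large at $\pair{x}{\P}$ \emph{or} at $\pair{x}{\Q}$, and which side depends on $x$---and the size $m$ of any finite subcover at threshold $N$ grows with $N$. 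After renormalizing to keep the integral $\le 1$ you only guarantee a value $\gtrsim N/m$ on one of the two sides, with no control of $N/m$; so the assembled family need not diverge at the shared points. (A small side remark: the $g_{n}$ are elements of the fixed enumerated family $\cE$ and are individually computable; what is non-effective is only the \emph{selection} of the subsequence, so your ``approximate $g_n$ by a computable basic function'' step is not where the difficulty lies.)

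The paper's proof avoids both obstacles by working \emph{pairwise} and by integrating rather than arguing pointwise. Given distinct $\P_{1},\P_{2}\in\cC$, it splits $\cC$ into two disjoint effectively compact subclasses $\cC_{1},\cC_{2}$ by closed balls, forms $t_{i}(x)=\inf_{\P\in\cC_{i}}\s(x,\P)$ (lower semi\emph{continuous} by Proposition~\ref{propo:lsc-min}), and observes that the open set $S_{k}=\setOf{x}{t_{1}(x)>2^{k}}$ satisfies $\P(S_{k})<2^{-k}$ for all $\P\in\cC_{1}$ while $\P(S_{k})=1$ for all $\P\in\cC_{2}$ (since at every $x$ one of $t_{1},t_{2}$ is infinite). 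These are statements about a single real number per measure, uniform over an effectively compact class, so an exhaustive search through $\cE$ effectively produces a computable basic $h_{k}$ with $\int h_{k}\,d\P<2^{-k}$ on $\cC_{1}$ and $>1-2^{-k}$ on $\cC_{2}$; then $t'_{1}=\sum_{k}h_{k}$ and $t'_{2}=\sum_{k}(1-h_{k})$ are lower semicomputable tests for $\cC_{1},\cC_{2}$ whose sum is identically $+\infty$, forcing $\Rands(\P_{1})\cap\Rands(\P_{2})=\emptyset$. If you want to salvage your route, you need an analogue of this integration step (in the spirit of Proposition~\ref{propo:Kurtz-uniform}) to replace the pointwise cover over $x$; as written, the proposal has a real gap there.
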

\begin{proof}
Let $\cC$ be an effectively compact class of measures on a constructive metric
space.
We need to show that under the conditions of the theorem, for any two distinct
measures $\P_{1},\P_{2}$ in $\cC$, the sets of random sequences are disjoint:
 \[
\Rands(\P_{1})\cap\Rands(\P_{2})=\emptyset.
 \]
Take two disjoint closed basic balls $B_{1}$ and $B_{2}$
in the constructive metric space $\bM$ of measures, containing the measures $\P_{1},\P_{2}$.
The classes $\cC_{i}=\cC\cap B_{i}$, $i=1,2$ of measures are disjoint effectively
compact classes of measures, containing $\P_{1}$ and $\P_{2}$.
Consider the functions
 \begin{align*}
 t_{i}(x)=\inf_{\P\in\cC_{i}}\s(x,\P).
 \end{align*}
For all $x$ at least one of the values $t_{1}(x)$, $t_{2}(x)$ is infinite.
By (a version of) Proposition~\ref{propo:lsc-min}, the functions $t_{i}(x)$ are lower
semicontinuous, and hence $\cC_{1}$- and $\cC_{2}$-tests respectively.

Now we follow some of the reasoning of the proof of
Proposition~\ref{propo:Kurtz-uniform}.
For integer $k>1$, consider the open set $S_{k}=\setOf{x}{t_{1}(x)>2^{k}}$.
Since $t_{1}$ is a $\cC_{1}$-test, then $\P(S_{k})<2^{-k}$ for all $\P\in\cC_{1}$.
On the other hand, since for all $x$ one of the two values $t_{1}(x)$,
$t_{2}(x)$ is infinite, $\P(S_{k})=1$ for all $\P\in\cC_{2}$.
The indicator function $1_{S_{k}}(x)$ of the set $S_{k}$ is lower semicontinuous,
therefore it can be written as the limit of an
increasing sequence (now not necessarily computable!)
of basic functions $g_{k,n}(x)$.
We conclude as in the proof of Proposition~\ref{propo:Kurtz-uniform}, that for each
$\P$ there is an
$n=n_{k}(\P)$ with $\int g_{k,n}(x)\,d\P>1-2^{-k}$ for all $P\in\cC_{2}$.
The effective compactness
of $\cC$ implies then that there is an $n$ independent of $\P$ with
the same property.
In summary, for each $k>0$ a basic function $h_{k}$ is found with
 \begin{align*}
     \int h_{k}\,d\P  &< 2^{-k} \text{ for all }\P\in\cC_{1},
\\  \int h_{k}\,d\P &> 1-2^{-k} \text{ for all }\P\in\cC_{2}.
 \end{align*}
Such a basic function $h_{k}$ can be found effectively from $k$, by complete enumeration.
Now we can construct a lower semicomputble function
 \begin{align*}
t'_{1}(x)=\sum_{k} h_{k}(x).
 \end{align*}
It is a test for the class $\cC_{1}$, while
$t'_{2}(x)=\sum_{k}(1-h_{k}(x))$ is a test for all $\P\in\cC_{2}$ for the same
reasons.
These tests must be finite for elements random for $\P_{1}$ and $\P_{2}$, and this
cannot happen simultaneously for both tests.
\end{proof}

The meaning of separator tests introduced above introduced notion of
can be clarified as follows.
Due to effective orthogonality of $\cC$,
the universal uniform test $\t(\omega,\P)$ allows to separate the sequences into
random ones according to different measures of the class $\cC$: looking at a
sequence $\omega$, random with respect to some measure of this class (=random
with respect to the class), we are looking for a $\P\in\cC$ for which
$\t(\omega,\P)$ is finite.
This measure is unique in the class $\cC$ (by the definition of effective
orthogonality).

This separation property, however, can be satisfied also by a non-universal
test, and we called such tests separator tests.
The non-universal test is less demanding about the idea of randomness,
giving it, so to say, a ``first approximation'': it might accept a
sequence as random that will be rejected by a more serious test.
(The converse is impossible, since the universal test is maximal.)
What matters is only that this preliminary crude triage separates
the measures of the class $\cC$, that is that no sequence should appear
``random'' even ``in first approximation'', with respect to two measures at the
same time.

For brevity, just for the purposes of the present paper, we will call
``typicality'' this ``randomness in first approximation'':

\begin{definition}
Given a separator test $\s(x,\P)$ we call an element $x$
\df{typical} for $\P\in\cC$ (with respect to the test $s$) if $\s(x,\P)<\infty$.
 \end{definition}
A typical element determines uniquely the measure $\P$ for which
it is typical.

For an example, consider the class of $\cB$ of Bernoulli measures.
For a test in ``first approximation'', we may recall von Mises, who
called the first property of a random sequence (``Kollektiv'' in his words) the
stability of its relative frequencies.
The stability of relative frequencies (strong law of large numbers in today's
terminology) means $S_{n}(\omega)/n\to p$.
Here $S_{n}(\omega)$ is the number of ones in the initial segment of length $n$ of
the sequence $\omega$, and $p$ is the parameter of the Bernoulli measure $B_{p}$.

There are several requirements close to this in this spirit:
\begin{enumerate}[(1)]
\item\label{i:typical.converge-fast} $S_{n}(\omega)/n\to p$ with a certain convergence speed.
\item\label{i:typical.converge} $S_{n}(\omega)/n\to p$.
\item\label{i:typical.Vyugin}
For the case when $\cC$ is the class of all ergodic stationary measures over the
Cantor space $\Omega$,
convert the proof of Theorem~\ref{thm:effective-Birkhoff} into a test,
implying $A_{x,n}(\omega)\to \P(x)$ for all $x$.
\end{enumerate}
Among these requirements, the one that seems most natural to a
mathematician, namely~\eqref{i:typical.converge}, is not expressible
in a semicomputable way.
Requirement~\eqref{i:typical.converge-fast} has many possible formulations,
depending on the convergence speed: we will show an example below.

Requirement~\eqref{i:typical.Vyugin} is significantly more complicated to
understand, but is still much simpler than a universal test.
It \emph{does not} imply a computable convergence speed directly; indeed, as
Vyugin showed in~\cite{VyuginErgodic98}, a computable convergence speed does not
exist for the case of computable non-ergodic measures.
But later works, starting with~\cite{AvigadGerhardyTowsner2010}, have shown that
the the convergence for ergodic measures has a speed computable from
$\P$.

Here is an example of a test expressing requirement~\eqref{i:typical.converge-fast}.
(For simplicity, we obtain the
convergence of relative frequencies not on all segments, only on lengths that
are powers of two.
With more care, one could obtain similar bounds on all initial segments.)
By Chebyshev's inequality
 \begin{align*}
   B_{p}(\setOf{x\in\{0,1\}^{n}}{|\sum_{i}x(i)-n p|\ge\lambda n^{1/2}(p(1-p))^{1/2}}) \le \lambda^{-2}.
 \end{align*}
Since $p(1-p)\le 1/4$, this implies
 \begin{align*}
   B_{p}(\setOf{x\in\{0,1\}^{n}}{|\sum_{i}x(i)-n p>\lambda n^{1/2}/2}) < \lambda^{-2}.
 \end{align*}
Setting $\lambda=n^{0.1}$ and ignoring the factor $1/2$ gives
 \begin{align*}
   B_{p}(\setOf{x\in\{0,1\}^{n}}{|\sum_{i}x(i)-n p|> n^{0.6}})< n^{-0.2}.
 \end{align*}
Setting $n=2^{k}$:
 \begin{align}\label{eq:2^k-Cheb}
   B_{p}(\setOf{x\in\{0,1\}^{2^{k}}}{|\sum_{i}x(i)-2^{k}p|> 2^{0.6 k}})< 2^{-0.2k}.
 \end{align}
Now, for a sequence $\omega$ in $\bB^{\bbN}$, and for $p\in\clint{0}{1}$ let
 \begin{align*}
   g(\omega, B_{p}) = \sup\setOf{k}{|\sum_{i=1}^{2^{k}}\omega(k)-2^{k}p|> 2^{0.6k}}.
 \end{align*}
Then
 \begin{align*}
   \int g(\omega,B_{p})\,B_{p}(d\omega) \le \sum_{k} k\cdot 2^{-0.2 k} = c<\infty.
 \end{align*}
Dividing by $c$, we obtain a test.
This is a separator test, since $g(\omega,B_{p})<\infty$ implies that $2^{-k}S_{2^{k}}(\omega)$
converges to $p$, and this cannot happen for two different $p$.


Theorem~\ref{thm:orthogonal-blind.bin-Cantor} generalizes, with essentially
the same proof (using basic balls instead of initial sequences): it says that in
an effectively compact, effectively orthogonal class of measures, blind
randomness is the same as uniform Martin-L\"of randomness.
This raises the question whether every ergodic measure
belongs to some effectively compact class.
The answer is negative:

\begin{theorem}\label{thm:ergodic-non-eff}
Consider stationary measures over $\Omega$ (with the shift transformation).
Among these, there are some ergodic measures that do not belong to any
effectively compact class of ergodic measures.
\end{theorem}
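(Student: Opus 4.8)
The plan is to argue by Baire category in the compact metric space $\cM_{s}=\cM_{s}(\Omega)$ of \emph{stationary} measures on $\Omega$. This is the set of $\P$ satisfying the computable linear constraints $\P(x)=\P(0x)+\P(1x)$, so it is an effectively closed (hence, by Proposition~\ref{propo:closed-to-compact}, effectively compact) convex subset of $\cM(\Omega)$, and in particular a nonempty complete metric space under the Prokhorov metric. Write $\cE\subseteq\cM_{s}$ for the set of ergodic measures. Any effectively compact class of ergodic measures is contained in $\cM_{s}$ and, by the discussion following Proposition~\ref{propo:closed-to-compact}, is effectively closed; since each effectively closed subset of $\cM(\Omega)$ is specified by an algorithm enumerating its complementary basic balls, there are only countably many of them. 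Let $\cC_{0},\cC_{1},\dots$ enumerate those effectively closed subsets of $\cM(\Omega)$ that happen to consist only of ergodic measures, and put $\mathcal{U}=\bigcup_{e}\cC_{e}$. It suffices to produce an ergodic measure outside $\mathcal{U}$, and the plan is to show that $\mathcal{U}$ is meager while $\cE$ is comeager in $\cM_{s}$, so that $\cE\setminus\mathcal{U}\ne\emptyset$; any measure in this set witnesses the theorem.

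First I would show that each $\cC_{e}$ is nowhere dense in $\cM_{s}$. Being effectively closed it is closed, so it is enough to check that it has empty interior, and for this I would use that the \emph{non}-ergodic measures are dense in $\cM_{s}$. For the full shift the invariant measures carried by periodic orbits are dense in $\cM_{s}$ (a standard fact: the word-frequencies of a single long periodic sequence can approximate those of any stationary measure). A single periodic-orbit measure is ergodic, but a nontrivial convex combination $(1-\delta)\mu+\delta\nu$ of two \emph{distinct} periodic-orbit measures is not ergodic and lies within $O(\delta)+\rho(\mu,\P)$ of any prescribed $\P$. Hence every nonempty open subset of $\cM_{s}$ meets the non-ergodic measures, so the purely-ergodic closed set $\cC_{e}$ contains no open set and is nowhere dense. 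As $\mathcal{U}$ is a countable union of nowhere dense sets, it is meager.

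The main obstacle is to show that $\cE$ is comeager, i.e.\ a dense $G_{\delta}$. Density follows from the density of the (ergodic) periodic-orbit measures used above, so the real content is the $G_{\delta}$ property. Here I would fix a countable family $\{f_{j}\}$ of continuous functions on $\Omega$ (for instance rational combinations of the indicators $g_{x}$) dense in $C(\Omega)$, and invoke the mean ergodic theorem: for every stationary $\P$ the averages $C_{j,n}(\P)=\frac1n\sum_{k<n}\int f_{j}\,(f_{j}\circ T^{k})\,d\P$ converge to a limit that always satisfies $\ge D_{j}(\P)$, where $D_{j}(\P)=(\int f_{j}\,d\P)^{2}$, with equality for all $j$ precisely when $\P$ is ergodic. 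Since $C_{j,n}$ and $D_{j}$ are continuous in $\P$ (integrals of fixed continuous functions), the set $\bigcup_{n\ge m}\{\P:C_{j,n}(\P)<D_{j}(\P)+\varepsilon\}$ is open for each rational $\varepsilon>0$ and each cut-off $m$; intersecting over $m$, $\varepsilon$, and $j$, and using that the limit exists and dominates $D_{j}$, one expresses $\cE$ as a countable intersection of open sets. This is the step requiring the most care, as it is where the analytic input (the mean ergodic theorem and the correlation characterization of ergodicity) enters; the rest of the argument is soft.

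Finally, $\cM_{s}$ is a nonempty complete metric space, hence a Baire space, so the comeager set $\cE$ cannot be contained in the meager set $\mathcal{U}$. Any $\P\in\cE\setminus\mathcal{U}$ is then an ergodic measure belonging to no effectively compact class of ergodic measures, which proves the theorem. I note that, with additional effort, one could instead diagonalize directly against the enumeration $\cC_{e}$ to obtain an explicit (even effectively described) witness, but the category argument already yields existence, which is all that is claimed.
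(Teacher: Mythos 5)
Your proposal is correct and follows essentially the same route as the paper: a Baire category argument in the compact space of stationary measures, combining the countability (hence $F_\sigma$/meagerness) of the union of effectively compact classes of ergodic measures, the density of both the ergodic and the nonergodic measures, and the $G_\delta$ property of the ergodic measures. The only differences are in how the auxiliary facts are established (you prove density via periodic-orbit measures and their mixtures and the $G_\delta$ property via the correlation characterization from the mean ergodic theorem, whereas the paper uses Markov approximations and $L^1$ convergence of Birkhoff averages), but these are interchangeable standard arguments and do not change the structure of the proof.
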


Before proving the theorem, let us prove some preparatory statements.

\begin{proposition}\label{propo:ergodic-nonergodic-dense}
  Both the ergodic measures and the nonergodic measures are dense in the set of
stationary  measures $\cM(\Omega)$ over $\Omega$.
\end{proposition}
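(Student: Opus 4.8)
The plan is to treat the two density claims separately; the nonergodic case is immediate, while the ergodic case is the substance. Throughout I use that, by the discussion following Example~\ref{example:discrete-measures}, the topology on $\cM(\Omega)$ is that of coordinatewise convergence of the cylinder values $\P(x)$, so a stationary measure $\Q$ is as close as we wish to $\P$ once $|\Q(x)-\P(x)|<\varepsilon$ for all strings $x$ with $\len{x}\le N$, for suitable $N,\varepsilon$. I also use the classical fact that the ergodic measures are exactly the extreme points of the convex set of stationary measures.

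For the nonergodic measures, fix any stationary $\P$ and any stationary $\Q\neq\P$ (the point masses on $0^{\infty}$ and $1^{\infty}$ are both stationary and distinct, so such a $\Q$ exists). For $0<t<1$ the measure $(1-t)\P+t\Q$ is again stationary, and is a proper convex combination of two distinct stationary measures, hence not an extreme point, hence nonergodic. Letting $t\to 0$ produces stationary nonergodic measures converging to $\P$ on every cylinder, which gives density of the nonergodic measures.

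For the ergodic measures I would approximate $\P$ by measures carried by a single periodic orbit, which are automatically ergodic (the shift permutes the finitely many points of the orbit cyclically, so the only invariant subsets have measure $0$ or $1$). First I would reduce to a rational target: the values $\{\P(x):\len{x}\le N\}$ satisfy the finite rational linear system $\P(\Lambda)=1$, $\P(x)=\P(x0)+\P(x1)$, $\P(x)=\P(0x)+\P(1x)$, $\P\ge 0$, which cuts out a rational polytope, and rational points are dense in it; so I may pick a stationary $\P'$ with rational cylinder values agreeing with $\P$ up to length $N$ within $\varepsilon$ (extend the chosen rational truncation to a full measure by its order-$(N-1)$ Markov extension). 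Writing the length-$N$ values of $\P'$ over a common denominator $M$, the integers $M\P'(x)$ for $\len{x}=N$ form a nonnegative integer \emph{circulation} on the de Bruijn graph $G$ with vertex set $\bbB^{N-1}$, where a block $x=x_1\cdots x_N$ is the edge from $x_1\cdots x_{N-1}$ to $x_2\cdots x_N$: conservation at each vertex is exactly the stationarity relation $\sum_a\P'(av)=\P'(v)=\sum_a\P'(va)$.

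An integer circulation with connected support admits an Eulerian circuit, and reading the edge labels along it yields a cyclic word $u$ of length $M$ whose cyclic length-$N$ block frequencies are precisely $M\P'(x)/M=\P'(x)$; then $u^{\infty}$ gives an ergodic orbit measure $\P_u$ with $\P_u(x)=\P'(x)$ for $\len{x}\le N$. The one genuine obstacle is that the support of $\{M\P'(x)\}$ need not be connected (already for $\P=\tfrac12\delta_{0^{\infty}}+\tfrac12\delta_{1^{\infty}}$ it is two disjoint loops). I would repair this by adding one full Eulerian traversal of $G$ (which exists, since every vertex of $G$ has in- and out-degree $2$): this keeps the flow conservative, makes the support all of $G$ and hence connected, and raises each edge multiplicity by $1$ and the total length by $2^{N}$. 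The resulting word has length $M+2^{N}$ and $\P_u(x)=(M\P'(x)+1)/(M+2^{N})$ for $\len{x}=N$; replacing $M$ by a large multiple $kM$ (scaling the same rational circulation) sends this to $\P'(x)$, hence within $2\varepsilon$ of $\P(x)$ on all cylinders of length $\le N$ once $k$ is large, which proves density of the ergodic measures. The main work is thus the flow and Eulerian bookkeeping in the ergodic case; the connectivity repair is the only delicate point, and the reduction to a rational circulation is what lets me enforce exact conservation with no rounding error.
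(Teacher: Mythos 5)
Your proof is correct, but both halves take routes genuinely different from the paper's. For the nonergodic half, the paper approximates an ergodic $\P$ by the uniform measure on the orbit of a long periodic approximant of a typical sequence; you instead perturb an arbitrary stationary $\P$ to $(1-t)\P+t\Q$ for a distinct stationary $\Q$ and invoke the classical identification of the ergodic measures with the extreme points of the stationary simplex. Your version is shorter and more robust: the uniform measure on the orbit of a periodic point is in fact itself ergodic (the shift acts transitively on the finite orbit), so the extreme-point argument is the cleaner way to manufacture nearby nonergodic measures. For the ergodic half, the paper mixes in a little of the uniform measure to make all cylinder probabilities positive and then takes the order-$(N-1)$ Markov measure with the same length-$N$ marginals, which is ergodic because the resulting chain is irreducible; you instead realize a rational approximation of the length-$N$ marginal as an integer circulation on the de Bruijn graph, repair connectivity by superposing one Eulerian traversal of the whole graph, and read a periodic orbit measure off an Eulerian circuit. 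Both are valid: the Markov route is shorter, while your route yields the stronger conclusion that already the periodic-orbit (hence ergodic) measures are dense, at the price of the circulation, overlap, and connectivity bookkeeping, all of which you handle correctly.
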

\begin{proof}
First we will show how to approximate an arbitrary stationary measure $\P$ by
ergodic measures.
Without loss of generality assume that all probabilities $\P(x)$ for finite
strings $x$ are positive.
(If not, then we can mix in a little of the uniform measure.)
For a fixed $n$, consider the values $\P(x)$ on strings $x$ of length at most
$n$.
There is a process that reproduces these probabilities and that
is isomorphic to an ergodic Markov process on $\{0,1\}^{n-1}$.
In this process, for an arbitrary $x\in\{0,1\}^{n-2}$, $b,b'\in\{0,1\}$
the transition probability from $bx$ to $xb'$ is $\P(bxb')/\P(bx)$.
Since both transition probabilities are positive, this Markov process is
ergodic.

Now we show how to approximate an arbitrary ergodic measure by nonergodic
measures.
Let $\P$ be ergodic.
Let us fix some $n>0$ and $\varepsilon>0$.
By the pointwise ergodic theorem, there is a sequence in which the limiting
frequencies of all words converge to the measure (almost all sequences---with respect
to this measure---are such).
Taking a long piece of this sequence and repeating it leads to a periodic
sequence in which the frequencies of words of length not exceeding $n$ differ
from the measure $\P$ by at most $\varepsilon$ (for any given $n$ and
$\varepsilon>0$).
(The repetition forms new words on the boundaries, but at a large length, this
effect is negligible.)
Consider now the measure concentrated on the shifts of this sequence, assigning
the same weight to each of them (their number is equal to the minimum period).
This measure is not ergodic, but is close to $\P$.
\end{proof}

\begin{proposition}\label{propo:ergodic-G-delta}
  The set of ergodic measures is a $G_{\delta}$ set in the metric space of all
stationary measures over $\Omega$.
\end{proposition}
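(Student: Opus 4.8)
The plan is to identify the ergodic measures with the extreme points of the convex set of stationary measures, and then to invoke the classical fact that the extreme points of a compact metrizable convex set form a $G_{\delta}$. First I would set up the geometry. Let $\cM_{s}\subseteq\cM(\Omega)$ denote the set of stationary measures. It is compact, being a closed subset of the compact space $\cM(\Omega)$: stationarity is cut out by the conditions $\P(x)=\P(0x)+\P(1x)$ for all strings $x$, which are closed (indeed continuous, since cylinders are clopen and hence have $\P$-measure depending continuously on $\P$). As these same conditions are linear, $\cM_{s}$ is convex. Equipped with the restriction of the Prokhorov metric $\rho$, the space $\cM_{s}$ is thus a compact \emph{metrizable} convex set.

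Second, I would prove the identification: a stationary $\P$ is ergodic if and only if it is an extreme point of $\cM_{s}$. If $\P$ is ergodic and $\P=t\Q+(1-t)\R$ with $0<t<1$ and $\Q,\R\in\cM_{s}$, then $\Q\ll\P$ (since $\Q\le t^{-1}\P$), and the Radon--Nikodym density $d\Q/d\P$ is shift-invariant because both $\P$ and $\Q$ are stationary; ergodicity then forces it to be $\P$-a.e.\ constant, whence $\Q=\P$ and likewise $\R=\P$, so the decomposition is trivial and $\P$ is extreme. Conversely, if $\P$ is not ergodic, pick an invariant Borel set $A$ with $0<\P(A)<1$; the normalized restrictions $\P(\cdot\mid A)$ and $\P(\cdot\mid A^{c})$ are distinct stationary measures (invariance of $A$ gives their stationarity) exhibiting $\P=\P(A)\,\P(\cdot\mid A)+\P(A^{c})\,\P(\cdot\mid A^{c})$ as a nontrivial convex combination, so $\P$ is not extreme. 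This is the standard Choquet-type description of ergodic measures, and it carries the real ergodic-theoretic content.

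Third, I would invoke the topological lemma. The complement of the extreme points inside $\cM_{s}$ is the set of midpoints of nondegenerate segments, which I write as $\bigcup_{m\ge 1}F_{m}$ with
\[
 F_{m}=\setOf{\tfrac12(\Q+\R)}{\Q,\R\in\cM_{s},\ \rho(\Q,\R)\ge 1/m}.
\]
Each $F_{m}$ is the image of the compact set $\setOf{\pair{\Q}{\R}}{\Q,\R\in\cM_{s},\ \rho(\Q,\R)\ge 1/m}$ under the continuous midpoint map, hence compact and so closed in $\cM_{s}$. Therefore the non-extreme points form an $F_{\sigma}$ set, and the extreme points---that is, by the previous step, the ergodic measures---form a $G_{\delta}$ set, as claimed.

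The main obstacle is the second step: turning ``ergodic'' into ``extreme'' cleanly, in particular the invariance of the Radon--Nikodym density and its consequence under ergodicity. It is worth noting why the more hands-on route is avoided. Characterizing ergodicity by convergence of the averages $A_{x,n}$ (equivalently by $\tfrac1n\sum_{k<n}\P(x\Omega\cap T^{-k}x\Omega)\to\P(x)^{2}$ for all $x$) only yields a condition of the shape $\forall\varepsilon\,\exists N\,\forall n\ge N$, which places the set in $F_{\sigma\delta}$ rather than $G_{\delta}$, since the relevant Cesàro quantities are continuous in $\P$ but not monotone in $n$. It is precisely the extreme-point description that delivers the sharper $G_{\delta}$ conclusion.
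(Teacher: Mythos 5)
Your proof is correct, but it takes a genuinely different route from the paper's. You identify the ergodic measures with the extreme points of the compact, metrizable, convex set of stationary measures and then apply the classical Choquet-type lemma that the non-extreme points of such a set form an $F_{\sigma}$ (a countable union of images of compact sets under the continuous midpoint map); the ergodic-theoretic content sits entirely in the extreme-point characterization via invariance of the Radon--Nikodym density. The paper instead takes precisely the ``hands-on'' route you dismiss in your final paragraph, and your objection to it is mistaken: although the condition $\int|A_{x,n}(\omega)-\P(x)|\,\P(d\omega)\to 0$ is a priori of the shape $\forall\varepsilon\,\exists N\,\forall n\ge N$, the $L^{1}$ mean ergodic theorem guarantees that the sequence $n\mapsto\int|A_{x,n}(\omega)-\P(x)|\,\P(d\omega)$ converges for \emph{every} stationary $\P$; hence its limit is $0$ if and only if $0$ is a limit point, i.e.\ for every $\varepsilon$ and every $N$ there exists $n\ge N$ with the integral below $\varepsilon$. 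That is a $\forall\forall\exists$ condition, a countable intersection of open sets, so the direct route does land in $G_{\delta}$ after all --- this ``the limit always exists, so only $\liminf$ matters'' observation is exactly the trick the paper exploits. Comparing the two: your argument avoids the ergodic theorem altogether (Radon--Nikodym suffices for the extreme-point step) and generalizes verbatim to arbitrary compact metrizable measure-preserving systems, while the paper's argument stays close to the averages $A_{x,n}$ that the surrounding section is already manipulating and needs nothing beyond the mean ergodic theorem plus the quantifier observation above.
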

\begin{proof}
We can restrict attention to the (closed) set of stationary measures.
Let $\P$ be a stationary probability measure over $\Omega$.
Consider the function $A_{x,n}$ over $\Omega$, defining $A_{x,n}(\omega)$ to be
equal to the average number of occurrences of the word $x$ in the $n$ first possible
positions of $\omega$.
By the ergodic theorem, the sequence of functions $A_{x,1},A_{x,2},\dots$ converges in the
$L_{1}$ sense.
Moreover, the stationary measure
$\P$ is ergodic if and only if the limit of this convergence is the constant function
with value $\P(x)$.

Since the limit exists for all stationary measures, it is sufficient to check
that the constant $\P(x)$ is a limit point.
For each $x,N$ and each rational $\varepsilon$ the
set $S_{x,N,\varepsilon}$ of those $\P$ for which there is an $n\ge N$ with
 \begin{align*}
 \int|A_{x,n}(\omega)-\P(x)|\P(d\omega)<\varepsilon
 \end{align*}
is open, and set of ergodic stationary measures is the
intersection of these sets for all $x,N,\varepsilon$.
\end{proof}

\begin{proof}[Proof of Theorem~\protect\ref{thm:ergodic-non-eff}]
The union of all effectively compact classes of ergodic measures is
$F_{\sigma}$.
Suppose that it is equal to the set of all ergodic measures.
Then the set of nonergodic measures is a $G_{\delta}$ set which is also dense by
Proposition~\ref{propo:ergodic-nonergodic-dense}.

As shown in Propositions~\ref{propo:ergodic-nonergodic-dense}
and~\ref{propo:ergodic-G-delta}, the set of ergodic measures
is a dense $G_{\delta}$ set.
But by the Baire category theorem, two dense $G_{\delta}$ sets cannot have an
empty intersection.
This contradiction proves the theorem.
\end{proof}

The following question still remains open:

\begin{question}
  Is there an ergodic measure over $\Omega$ for which uniform and blind
randomness are different?
\end{question}

Returning to arbitrary effectively compact, effectively orthogonal classes, we
can connect the universal tests with class tests of Theorem~\ref{thm:class-test}
and separator tests.

\begin{theorem}\label{thm:test-sep}
Let $\cC$ be an effectively compact, effectively orthogonal class of measures,
let $\t(x,\P)$ be the universal uniform test and
let $\t_{\cC}(x)$ be a universal class test for $\cC$.
Assume that $\s(x,\P)$ is a separator test for $\cC$.
Then we have the representation
 \begin{align*}
  \t(x,\P) \eqm \max(\t_{\cC}(x), \s(x,\P))
 \end{align*}
for all $\P\in\cC$, $x\in X$.
\end{theorem}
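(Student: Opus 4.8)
The plan is to prove the two multiplicative inequalities separately, with essentially all the content residing in the lower bound $\t(x,\P)\lem\max(\t_{\cC}(x),\s(x,\P))$, which I would establish pointwise; the reverse bound is a routine consequence of universality.

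For the easy direction $\max(\t_{\cC}(x),\s(x,\P))\lem\t(x,\P)$, I would note that each of the two functions being maximized is already dominated by the universal uniform test. On one hand, Theorem~\ref{thm:class-test} gives $\t_{\cC}(x)\eqm\inf_{\Q\in\cC}\t(x,\Q)\le\t(x,\P)$ for every $\P\in\cC$, since $\P$ itself occurs in the infimum. On the other hand, a separator test is by definition a lower semicomputable function with $\int\s(x,\P)\,d\P\le 1$ for all $\P$, hence a uniform test, so $\s(x,\P)\lem\t(x,\P)$ by the universality of $\t$. Taking the larger of the two constants bounds the maximum by $\t(x,\P)$.

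For the main direction I would argue pointwise, distinguishing whether $x$ is uniformly random for $\P$. If $\t(x,\P)<\infty$, that is $x\in\Rands(\P)$, then effective orthogonality gives $x\notin\Rands(\Q)$, hence $\t(x,\Q)=\infty$, for every $\Q\in\cC$ with $\Q\ne\P$; thus every term of $\inf_{\Q\in\cC}\t(x,\Q)$ except the one at $\P$ is infinite, so $\t_{\cC}(x)\eqm\t(x,\P)$ and the right-hand side dominates $\t(x,\P)$. If instead $\t(x,\P)=\infty$, I would show the right-hand side is infinite as well: when $\s(x,\P)=\infty$ this is immediate, and when $\s(x,\P)<\infty$, i.e. $x$ is $\P$-typical, the separator property forces $\s(x,\Q)=\infty$, and since $\s$ is a uniform test we have $\Rands(\Q)\subseteq\{x:\s(x,\Q)<\infty\}$, so $x\notin\Rands(\Q)$ and $\t(x,\Q)=\infty$ for every $\Q\in\cC$ with $\Q\ne\P$; combined with $\t(x,\P)=\infty$ this yields $\t_{\cC}(x)=\inf_{\Q\in\cC}\t(x,\Q)=\infty$. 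In either subcase the right-hand side is infinite, so the inequality holds trivially.

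The one step that deserves care, and which I expect to be the crux, is exactly this last case: a point that is $\P$-typical but not $\P$-random. Here $\s(x,\P)$ is finite, and it is not a priori clear that $\t_{\cC}(x)$ is infinite; the conclusion rests entirely on ruling out that $x$ could be random with respect to some competing measure in $\cC$. This is precisely the role of the separator test—$\P$-typicality excludes $\Q$-randomness for all $\Q\ne\P$ in the class and thereby pins the infimum defining $\t_{\cC}(x)$ to the value $\infty$. Collecting the two directions gives $\t(x,\P)\eqm\max(\t_{\cC}(x),\s(x,\P))$ for all $\P\in\cC$ and $x\in X$.
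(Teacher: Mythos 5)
Your proof is correct and rests on the same two pillars as the paper's: the easy direction via universality of $\t$, and the hard direction via the representation $\t_{\cC}(x)\eqm\inf_{\Q\in\cC}\t(x,\Q)$ from Theorem~\ref{thm:class-test} together with the fact that separation (orthogonality) forces every term of that infimum other than the one at $\P$ to be infinite. The only difference is organizational — you split on whether $\t(x,\P)$ is finite rather than on whether the right-hand side is, which lets you bypass the paper's appeal to the minimum being attained — but the underlying argument is the same.
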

 \begin{proof}
Let is note first that $\t_{\cC}(x)$ and $\s(x,\P)$ do not exceed the universal
uniform test $\t(x,\P)$.
Indeed $\s(x,\P)$ is a uniform test by definition.
Also by definition, the universal class test $\t_{\cC}(x)$ is a uniform test.

On the other hand, let us show that if
$\t_{\cC}(x)$ and $\s(x,\P)$ are finite, then $\t(x,\P)$ does not exceed the
greater one of them (to within a multiplicative constant).
The finiteness of the first test guarantees that $\min_{\Q\in\cC}\t(x,\Q)$ is
finite: this minimum is equal to $\t_{\cC}(x)$ to within a constant factor.
If this minimum was achieved on some measure $\Q\not=\P$, then both values
$\s(x,\Q)$ and $\s(x,\P)$ would be finite, contradicting to the definition of a
separator.
(Note that we proved a statement slightly stronger than promised: in place of
``greater of the two'', one can write ``the first of the two, if the second one
is finite''.)
 \end{proof}

The above theorem separates the randomness test into two parts (points at two
possible causes of non-randomness).
First, we must convince ourselves that $x$ is random with respect to the class
$\cC$.
For example in the case of a measure $B_{p}$, in the class $\cB$ of Bernoulli
measures, we must first be convinced that $\t_{\cB}(\omega)$ is finite.
This encompasses all the irregularity criteria.
If the independence of the sequence is taken for granted, we may assume that
the class test is satisfied.

After this, we know that our sequence is Bernoulli, and some kind of simple
test of the type of the law of large numbers is sufficient to find out, by just
which Bernoulli measure is it random: $B_{p}$ or some other one.
This second part, typicality testing, is analogous to parameter testing in statistics.

Separation is the only requirement of the separator test: its numerical value is
irrelevant.
For example in the Bernoulli test case,
no matter how crude the convergence criterion expressed by the separator test
$\s(x,\P)$, the maximum is always (essentially) the same universal test.

 \section{Are uniform tests too strong?}\label{sec:too-strong}

\subsection{Monotonicity and/or quasi-convexity}\label{subsec:monotonicity}


Uniform tests may seem too strong,
in case $\P$ is a non-computable measure.
In particular, randomness with respect to computable measures
(in the sense of Martin-L\"of or in the uniform sense, they are the same for
computable measures)
has certain intuitively desireable properties that uniform randomness
lacks.
One of these is monotonicity:
roughly, if $\Q$ is greater than $\P$ then if
$x$ is random with respect to $\P$, it should also be random with respect to
$\Q$.

  \begin{proposition}\label{propo:test-computable-mon}
 For computable measures $\P,\Q$, for all rational $\lambda>0$,
if $\lambda\P(A)\le \Q(A)$ for all $A$, then
  \begin{align}\label{eq:test-computable-mon}
 \m(\lambda)\cdot\lambda\t(x,\Q)\lem \t(x,\P).
  \end{align}
Here $\m(\lambda)$ is the discrete apriori probability of the rational $\lambda$.
To make the constant in $\lem$ independent of $\P,\Q$, one needs also to multiply
the left-hand side by $\eqm\m(\P,\Q)$.
  \end{proposition}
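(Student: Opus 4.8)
The plan is to read the monotonicity hypothesis as a statement of measure nonnegativity, turning it into a single integral inequality, and then to package the whole family of candidate functions into one genuine uniform test, so that maximality of $\t(x,\P)$ delivers the bound with a controlled constant.

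First I would isolate the one place where the hypothesis is used. The condition $\lambda\P(A)\le\Q(A)$ for all $A$ says exactly that $\Q-\lambda\P$ is a nonnegative measure, so for the nonnegative lower semicontinuous function $x\mapsto\t(x,\Q)$ we get $\lambda\int\t(x,\Q)\,\P(dx)\le\int\t(x,\Q)\,\Q(dx)\le 1$, the last step because the restriction $\t(\cdot,\Q)$ of a uniform test is a $\Q$-test (Theorem~\ref{propo:uniformize.Cantor}). Thus for each such ``good'' $\lambda$ the function $\lambda\t(\cdot,\Q)$ has $\P$-integral at most $1$; being lower semicomputable from $\P$, it is dominated by $\t(\cdot,\P)$, but with a constant depending on $\lambda$ (and on $\P,\Q$). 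Controlling that constant is the whole content of the statement.

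Next, to make the constant independent of $\lambda$, I would sum over $\lambda$ against the discrete a priori probability. The obstruction is that the set of good $\lambda$ (those with $\lambda\P(x)\le\Q(x)$ for \emph{all} $x$) is only co-enumerable, so I cannot restrict a lower semicomputable sum to it. Instead I would keep all rationals $\lambda>0$ and trim each term: applying Theorem~\ref{thm:trim.Cantor} to $\Phi_{\lambda}(x,\P)=\lambda\t(x,\Q)$ (lower semicomputable in $(x,\P)$, uniformly in $\lambda$, for the fixed computable $\Q$) produces $\Phi'_{\lambda}$ with $\int\Phi'_{\lambda}(x,\P)\,\P(dx)\le 2$ for every $\P$ and $\Phi'_{\lambda}=\Phi_{\lambda}$ wherever the $\P$-integral was already at most $1$; in particular $\Phi'_{\lambda}(\cdot,\P)=\lambda\t(\cdot,\Q)$ for every good $\lambda$, by the inequality above. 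Then $U_{\Q}(x,\P)=\tfrac12\sum_{\lambda}\m(\lambda)\,\Phi'_{\lambda}(x,\P)$ is lower semicomputable with $\int U_{\Q}(x,\P)\,\P(dx)\le\tfrac12\sum_{\lambda}\m(\lambda)\cdot 2\le 1$, i.e.\ a uniform test, while $U_{\Q}(x,\P)\ge\tfrac12\m(\lambda)\lambda\t(x,\Q)$ for every good $\lambda$. Maximality of $\t$ gives $U_{\Q}\lem\t$, hence $\m(\lambda)\lambda\t(x,\Q)\lem\t(x,\P)$ with a constant uniform in $\lambda$, depending only on $\Q$.

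Finally, to absorb the dependence on $\P,\Q$ I would apply the same device one level higher, summing also over programs for the computable measures $\Q$ with weight $\m(\Q)$: the function $U(x,\P)=\tfrac12\sum_{\Q,\lambda}\m(\Q)\m(\lambda)\,\Phi'_{\Q,\lambda}(x,\P)$ is again a uniform test, so $\m(\Q)\m(\lambda)\lambda\t(x,\Q)\lem\t(x,\P)$ with an absolute constant; since $\m(\P,\Q)\lem\m(\Q)$ (a program for the pair yields one for $\Q$), the displayed inequality with the factor $\m(\P,\Q)\m(\lambda)$ follows a fortiori. I expect the main obstacle to be exactly this packaging — checking that the trimmed, a priori weighted sums are lower semicomputable uniformly in all parameters, in particular routing the co-enumerability of ``$\lambda\P\le\Q$'' through trimming rather than through restricting the index set, and making sense of the sum over computable measures $\Q$ uniformly in their programs. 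The core inequality itself is immediate once the hypothesis is read as nonnegativity of $\Q-\lambda\P$.
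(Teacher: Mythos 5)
Your proof is correct and follows essentially the same route as the paper's: observe that the hypothesis makes $\lambda\t(\cdot,\Q)$ a $\P$-test, then use trimming to form the $\m(\lambda)$-weighted sum over all rationals $\lambda$ (rather than restricting to the co-enumerable set of good $\lambda$), and conclude by maximality of the universal test; the final $\m(\P,\Q)$ factor is handled the same way, by weighting over programs. Your write-up just makes explicit the trimming and uniformity checks that the paper's proof leaves as "one can show".
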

 \begin{proof}
 We have $1\ge \int\t(x,\Q)\,d\Q\ge \int\lambda\t(x, \Q)\,d\P$,
 hence $\lambda\t(x,\Q)$ is a $\P$-test.
 Using the trimming method of Theorem~\ref{thm:trim} in finding universal tests,
 one can show that the sum
  \begin{align*}
    \sum_{\lambda: \lambda\int\t(x,\Q)\,d\P<2} \m(\lambda)\cdot\lambda\t(x,\Q)
  \end{align*}
 is a $\P$-test, and hence $\lem \t(x,\P)$.
 Therefore this is true of each member of the sum, which is just what the theorem
 claims.
It is easy to see that the multiplicative constants depend here on $\P,Q$ only
via inserting a factor $\m(\P,\Q)$.
 \end{proof}

The intuitive motivation for monotonicity is this: if there are two devices with
internal randomness generators, outputting numbers with distributions $\P$ and
$\Q$, and if $\lambda\P\le\Q$, then it can be imagined that the second
device simulates the first one with probability $\lambda$, and does
its own thing otherwise.
Then every outcome intuitively plausible as the outcome of the first device,
must also be deemed a plausible outcome of the second one, since this could have
simulated the first one by chance.
(The numerical value of the randomness deficiency may be, of course, somewhat
larger, since we must believe in addition that the $\lambda$-probability
event occurred.)

Uniform randomness violates, alas, this property: if measure $\Q$ is larger, but
computationally more complex, then the randomness tests with respect to $\Q$ can
exploit this additional information, to make nonrandom some outcomes that were
random with respect to $\P$ (see the proof of
Theorem~\ref{thm:nonrandom-blind}).
This is just the reason of the difference between uniform and blind
(oracle-free) randomness, for which the analogous monotonicity property is obviously
satisfied.

Another situation for which we have some intuition on randomness is the mixture
(convex combination) of measures.
Imagine two devices with output measures $\P$ and $\Q$, and an outer box which
triggers one of them with some probabilities $\lambda$, $1-\lambda$.
As a whole, we obtain a system whose outcome is distributed by the measure
$\lambda\P + (1-\lambda)\Q$.
About which outcomes can we assert that are obtained randomly as a result of
this experiment?
Clearly both the outcomes random with respect to $\P$ and those random with
respect to $\Q$ must be accepted (with the understanding that if the coefficient
is small then some additional, but finite suspicion is added).
And there should not be any other outcomes.
A quantitative elaboration of this result (which in one direction follows from
monotonicity) is given below.

\begin{proposition}\label{propo:convexity}
Let $\P$ and $\Q$ be two computable measures.
\begin{alphenum}
\item\label{i:convexity.qconvex}
For a rational $0<\lambda<1$,
 \begin{align*}
 \m(\lambda)\cdot\t(x,\lambda\P+(1-\lambda)\Q)\lem\max(\t(x,\P),\t(x,\Q)).
 \end{align*}
\item\label{i:convexity.qconcave}
For arbitrary $0<\lambda<1$,
 \begin{align*}
\t(x,\lambda\P+(1-\lambda)\Q)\gem\min(\t(x,\P),\t(x,\Q)).
 \end{align*}
\end{alphenum}
The constants in $\lem$ depend on the length of the shortest programs
defining $\P$ and $\Q$ (their complexities), but not on $\lambda$ (or other aspects of
$\P,\Q$).
\end{proposition}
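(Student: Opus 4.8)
The plan is to prove the two inequalities by separate arguments. Write $R=\lambda\P+(1-\lambda)\Q$. Part~(\ref{i:convexity.qconcave}) is the ``easy'' direction and I would prove it first, for arbitrary (even non-computable) $\lambda$; part~(\ref{i:convexity.qconvex}) is the harder one, and I would derive it from the explicit sum-characterization of the universal uniform test.

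For part~(\ref{i:convexity.qconcave}), consider the function $m(x)=\min(\t(x,\P),\t(x,\Q))$. It is lower semicomputable, since the minimum of two lower semicomputable functions is lower semicomputable (the set $\{m>r\}$ is the intersection of $\{\t(\cdot,\P)>r\}$ and $\{\t(\cdot,\Q)>r\}$). Moreover $m\le\t(\cdot,\P)$ and $m\le\t(\cdot,\Q)$, so $\int m\,d\P\le 1$ and $\int m\,d\Q\le 1$, whence $\int m\,dR=\lambda\int m\,d\P+(1-\lambda)\int m\,d\Q\le 1$. Thus $m$ is an $R$-test in the sense of Definition~\ref{def:P-test.Cantor} (it is lower semicomputable outright, hence lower semicomputable from $R$, and does not mention $\lambda$ at all). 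By the uniformization Theorem~\ref{propo:uniformize.Cantor} there is a uniform test $t'$ with $m(x)\le 2\,t'(x,R)$, and since $\t$ is universal, $t'\lem\t$; therefore $m(x)\lem\t(x,R)$, which is exactly $\t(x,R)\gem\min(\t(x,\P),\t(x,\Q))$. The constant comes only from the (fixed) $\P,\Q$ and the universality constant, not from $\lambda$, as required.

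For part~(\ref{i:convexity.qconvex}), I would use the uniform sum-formula of Theorem~\ref{thm:uniform-test-sum}, $\t(\omega,R)\eqm\sum_{y\prefix\omega}\m(y\mid R)/R(y)$, with its single universal constant, together with the same formulas for $\P$ and $\Q$. Since $R(y)\ge\lambda\P(y)$ and $R(y)\ge(1-\lambda)\Q(y)$, each summand satisfies $\m(y\mid R)/R(y)\le\min\!\big(\m(y\mid R)/(\lambda\P(y)),\ \m(y\mid R)/((1-\lambda)\Q(y))\big)$. The crucial step is the sub-lemma $\m(\lambda)\,\m(y\mid R)\lem\m(y\mid\P)$ (and symmetrically with $\Q$). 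To prove it I would exhibit, for fixed computable $\Q$, the function $s(y,M)=\sum_{\mu\in\bbQ\cap\opint{0}{1}}\m(\mu)\,\m\big(y\mid\mu M+(1-\mu)\Q\big)$ and check that it is a uniform lower semicomputable semimeasure in the argument $M$: lower semicomputability follows because $\m(\mu)$ is lower semicomputable in the rational $\mu$, the map $(M,\mu)\mapsto\mu M+(1-\mu)\Q$ is computable, and $\m(\cdot\mid\cdot)$ is jointly lower semicomputable; and the semimeasure bound follows from $\sum_y s(y,M)\le\sum_\mu\m(\mu)\le 1$. By the maximality Proposition~\ref{propo:apriori-universal-uniform}, $s(y,M)\lem\m(y\mid M)$ with a constant depending only on $\Q$; instantiating $M=\P$ and keeping the single term $\mu=\lambda$ gives $\m(\lambda)\m(y\mid R)\lem\m(y\mid\P)$. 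The symmetric construction (baking in $\P$ and summing over the second coefficient) gives $\m(\lambda)\m(y\mid R)\eqm\m(1-\lambda)\m(y\mid R)\lem\m(y\mid\Q)$.

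Finally I would combine these by the observation that $\max(\lambda,1-\lambda)\ge 1/2$. If $\lambda\ge 1/2$ then $\m(\lambda)\,\m(y\mid R)/R(y)\le\m(\lambda)\m(y\mid R)/(\lambda\P(y))\le 2\m(\lambda)\m(y\mid R)/\P(y)\lem\m(y\mid\P)/\P(y)$; summing over $y\prefix\omega$ and using the sum-formulas yields $\m(\lambda)\t(\omega,R)\lem\t(\omega,\P)\le\max(\t(\omega,\P),\t(\omega,\Q))$. If $\lambda\le 1/2$ the symmetric estimate gives the bound with $\Q$ in place of $\P$. The main obstacle is precisely this constant bookkeeping: the naive route through the monotonicity Proposition~\ref{propo:test-computable-mon} (applying $R\ge\lambda\P$) produces a stray factor of $\lambda$, and removing it via the refinement that replaces the $\P,\Q$-dependent constant by $\m(\P,R)$ costs a \emph{second} factor $\m(\lambda)$ (because $R$ is built from $\P,\Q,\lambda$, so $\m(\P,R)\gem\m(\P,\Q)\m(\lambda)$), overshooting the claimed single $\m(\lambda)$. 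The sum-formula argument above is what keeps exactly one factor of $\m(\lambda)$ while making the remaining constant depend only on the complexities of $\P$ and $\Q$.
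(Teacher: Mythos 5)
Your part~(\ref{i:convexity.qconcave}) is exactly the paper's argument: $\min(\t(\cdot,\P),\t(\cdot,\Q))$ is lower semicomputable, has integral at most $1$ against any mixture of $\P$ and $\Q$, and uniformization (trimming) turns it into a uniform test. Your part~(\ref{i:convexity.qconvex}) is correct but takes a genuinely different route. The paper simply invokes Proposition~\ref{propo:test-computable-mon} with the pair $(\P,R)$, $R=\lambda\P+(1-\lambda)\Q\ge\lambda\P$, obtaining $\m(\lambda)\cdot\lambda\cdot\t(x,R)\lem\t(x,\P)$, and then disposes of the stray factor $\lambda$ by the same case split you use at the end ($\lambda\ge 1/2$ lets it be absorbed; otherwise one uses $(1-\lambda)\Q\le R$ and $\m(\lambda)\eqm\m(1-\lambda)$). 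Your worry that this route overshoots by a second factor $\m(\lambda)$ applies only to a black-box use of the absolute-constant form of that proposition; with $\P,\Q$ fixed one builds the single $\P$-test $\sum_{\mu}\m(\mu)\,\mu\,\t(x,\mu\P+(1-\mu)\Q)$ directly---which is precisely the ``weighted sum over $\mu$, compare with the universal object'' device you deploy at the level of $\m(\cdot\mid\cdot)$---and the constant then depends only on $\P,\Q$. So the two arguments share the same engine; yours just runs it through the conditional a priori probability and the sum-formula of Theorem~\ref{thm:uniform-test-sum}. What the paper's route buys is generality: it works in an arbitrary constructive metric space, whereas Theorem~\ref{thm:uniform-test-sum} is established only for the binary Cantor space, so your part~(\ref{i:convexity.qconvex}) as written is confined to $\Omega$. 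What your route buys is the clean intermediate inequality $\m(\lambda)\,\m(y\mid\lambda\P+(1-\lambda)\Q)\lem\min(\m(y\mid\P),\m(y\mid\Q))$, a quasi-convexity statement for conditional a priori probability that does not appear in the paper and has some independent interest.
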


Statement~(\ref{i:convexity.qconvex}) could be called the \df{quasi-convexity} of
randomness tests (to within a multiplicative constant).
For a test with an exact quasi-convexity property (without any multiplicative constants)
there is a lower semicomputable semimeasure that
is neutral (after extending tests to semimeasures see~\cite{LevinUnif76,GacsExact80}).

Statement~(\ref{i:convexity.qconcave}) implies that no other random outcomes exist
for the mixture of $\P$ and $\Q$.
This could be called the \df{quasi-concavity} of
randomness tests (to within a multiplicative constant).
\begin{proof}

Part~(\ref{i:convexity.qconvex}) follows from Proposition~\ref{propo:test-computable-mon}.
Indeed, if $\lambda\ge 1/2$ then
Proposition~\ref{propo:test-computable-mon} implies
$ \m(\lambda)\cdot\t(x,\lambda\P+(1-\lambda)\Q)\lem\t(x,\P)$
(absorbing $1/2$ into the $\lem$).
If $\lambda<1/2$ then it implies
$ \m(1-\lambda)\cdot\t(x,\lambda\P+(1-\lambda)\Q)\lem\t(x,\Q)$
similarly, and we just recall $\m(\lambda)\eqm\m(1-\lambda)$.

Part~(\ref{i:convexity.qconcave}) follows from the fact that the right-hand
side is a test with respect to an arbitrary mixture of the measures $\P$ and
$\Q$, and trimming can convert it into uniform test.
\end{proof}

It is easy to see that all these statements exploit the computability of the
measures and the mixing coefficients in an essential way.
The corresponding counterexamples are easy to build once it is recognized that
the mixture of measures can be stronger from an oracle-computational point of
view than any of them, as well as in the other way.
For example, let us divide the segment $\clint{0}{1}$ into two halves and
consider the measures $\P$ and $\Q$ that are uniformly distributed over these
halves.
Their mixture with coefficients $\lambda$ and $1-\lambda$ will make the number
$\lambda$ obviously non-random (since it can be computed from this measure),
though with respect to one of the measures in can very well be random.
Taking instead of $\P$ and $\Q$ their mixtures, say, with coefficients
$1/3$ and $2/3$ and then reversed, one can make $\lambda$ random with respect to
both measures.

In this example the mixture contains more information than each of the original
measures.
It can also be the other way: bend the interval $\clint{0}{1}$ with the uniform
measure into a circle, and cut it into two half-circles by the points $p$ and
$p+1/2$.
Then the uniform measures on these half-circles make $p$ computable with respect
to them and thus non-random, while the average of these measures is the uniform
measure on the circle, with respect to which $p$ can very well be random.

Let us note that for blind (oracle-free) randomness, we can guarantee without
any restrictions that the set of points random with respect to the mixture of
$\P$ and $\Q$ is the union of points random with respect to $\P$ and $\Q$.
(In one direction this follows from monotonicity, which we already mentioned.
In the other one: if an outcome is not random with respect to $\P$ and not
random with respect to $\Q$, then there are two tests proving this, and their
minimum will a lower semicomputable test proving its non-randomness with respect
to the mixture.)

These are strong motives to modify the concept of randomness test in order to
reproduce these properties, while conserving other desireable properties (say
the existence of a universal test and with it the notion of a deficiency of
randomness).
Some such modifications can be seen
in~\cite{LevinUnif76,GacsExact80,LevinRandCons84}.

\subsection{Locality}

Imagine that some sequence $\omega$ is uniformly random with respect to
measure $\P$ and starts with $0$.
Change the values of the measure on sequences that start with $1$.
It is not guaranteed
that $\omega$ remains uniformly random since now the measure may become
stronger as an oracle (allowing to compute more).
But this looks
strange since the changes in measure are in the part of the universe that does
not touch $\omega$.

For blind (oracle-free) randomness, specifically this example
is impossible (one can force the test to zero on sequences beginning
with unity), but in principle the concept of test depends not only on the
measure along the sequence (not only on the probabilities of occurrences of nulls
and ones after its start).

For randomness with respect to computable measures, the situation is again better.

\begin{proposition}[Prequentiality]
Let $\P,\Q$ be two computable measures on the space $\Omega$ of binary
sequences, coinciding on all initial segments of some sequence $\omega$.
Then this sequence is simultaneously random or non-random with respect to
$\P$ and $\Q$.
\end{proposition}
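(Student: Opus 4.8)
The plan is to reduce the statement to the prefix-complexity criterion of randomness established in Theorem~\ref{thm:rand-prefix}, which characterizes randomness entirely through the behavior of the measure along the prefixes of the sequence in question. The decisive observation is that this criterion is \emph{local}: it reads off the measure $\P$ only at the cylinders $\omega(1:n)$, and nowhere else. Once this is noticed, the hypothesis that $\P$ and $\Q$ agree on the initial segments of $\omega$ makes the two randomness criteria literally identical.

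Concretely, first I would recall that by Theorem~\ref{thm:rand-prefix} the sequence $\omega$ is random with respect to a computable measure $\P$ if and only if the difference $-\log\P(x)-\KP(x)$ is bounded above as $x$ ranges over the prefixes of $\omega$. Both $\P$ and $\Q$ are computable, so the theorem applies to each of them. Now the prefixes of $\omega$ are exactly the strings $\omega(1:n)$ for $n=0,1,2,\dots$, so $\P$-randomness of $\omega$ depends only on the numbers $\P(\omega(1:n))$ together with the measure-independent complexities $\KP(\omega(1:n))$.

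Next I would invoke the hypothesis. By assumption $\P(\omega(1:n))=\Q(\omega(1:n))$ for all $n$, hence for each $n$
\[
-\log\P(\omega(1:n))-\KP(\omega(1:n)) = -\log\Q(\omega(1:n))-\KP(\omega(1:n)),
\]
so the two families of values over the prefixes of $\omega$ coincide term by term. One family is bounded above exactly when the other is, and by the criterion this means $\omega$ is random with respect to $\P$ if and only if it is random with respect to $\Q$, which is the assertion.

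There is essentially no obstacle here beyond invoking the right characterization: the entire content of the proposition is carried by the locality of the prefix criterion, and once Theorem~\ref{thm:rand-prefix} is in hand the argument is immediate. If one prefers to avoid the $\sup$-form, the same reasoning goes through verbatim using the ample-excess series $\sum_{x\prefix\omega}2^{-\log\P(x)-\KP(x)}$ of Corollary~\ref{coroll:ample-excess}, which is again a sum over prefixes of $\omega$ only and therefore takes the same value for $\P$ and $\Q$.
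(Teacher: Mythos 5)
Your proof is correct and follows exactly the route the paper takes: the paper's own proof is a one-line appeal to the Levin--Schnorr-type criterion (Theorem~\ref{thm:rand-prefix}, or equivalently Proposition~\ref{propo:rand-monot} or Corollary~\ref{coroll:ample-excess}), whose locality in the prefix values $\P(\omega(1:n))$ is precisely the point you spell out. Nothing is missing.
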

\begin{proof}
This follows immediately from the randomness criterion in terms of the
complexity of the inital segments (Levin-Schnorr Theorem) in any of its variants
(Theorem~\ref{thm:rand-prefix}, Proposition~\ref{propo:rand-monot},
Corollary~\ref{coroll:ample-excess}).
\end{proof}

On the other hand, it is easy to modify one of the counterexamples
in~\ref{subsec:monotonicity} to violate prequentiality as well.

In case of an arbitrary constructive metric space an analogous statement holds,
though with a stronger requirement: we assume that two computable
measures are equal on all
sets contained in some neighborhood of the outcome $\omega$.
(In this case it is possible to multiply the test by a basic function without
changing it in $\omega$, and making it zero outside the neighborhood of
coincidence).

Here is yet another way to obtain a clearly prequential definition of
randomness, in which the randomness deficiency is a function of the sequence
itself and the measures of its initial segments.
For a
given sequence $\omega$ and a given sequence $\{q(i)\}$ of real numbers with
$1=q(0)\ge q(1)\ge q(2)\ge\dotsm\ge 0$, let
 \begin{align*}
   \t'(\omega,q)=\inf\t(\omega,\P),
 \end{align*}
where the minimum is taken over all measures $\P$ with $\P(\omega(1:n))=q(n)$.
The corresponding sets are effectively compact, so that this minimum will be a
lower semicomputable function of $\omega$ and the sequence $q$.
If for the sequence $\omega$ and the measures $q(i)$ of its initial segments,
the value $\t'(\omega,q)$ is finite, then the sequence $\omega$ can be called
\df{prequentially random}.

In other words, sequence $\omega$ is prequentially random with respect to
measure $\P$ if there is a (in general different) measure $\Q$ with respect to
which $\omega$ is random and which coincides with $\P$ on all initial segments
of $\omega$.

The requirement of prequentiality has been invoked in connection with a
theory that extends probability theory and statistics to models of forecasting:
see for example~\cite{ChernovShenVereshVovk08}
and~\cite{ShenVovkPrequential10}.
An example situation is the following.
Let $\omega(n)=1$ mean that there is rain on day $n$ and $0$ otherwise.
Suppose that a forecasting office makes daily forecasts $p(1),p(2),\dots$
of the probability of rain.
It is not necessarily proposing a coherent probability model of global
weather (a global probability distribution).
It just provides forecasts for the conditional probabilities along the
path corresponding to the weather that actually takes place.

Is it possible to estimate the quality of the forecast?
It seems that in some situations, yes: if say, all forecasts are close to zero
(say, less than $10\%$), and the majority of days (say more than $90\%$) is
rainy.
(It is said that the forecast is poorly \df{calibrated}.)
Naturally, there are other possible inconsistencies, not related to the
frequencies: the general question is whether the given sequence can be accepted
as randomly obtained with the predicted probabilities.
(Such a question arises also in the situation of estimating the quality
of a random number generator each of whose output
values is claimed to occur with whatever distribution the customer requires
at that time of the process, for that particular bit.)

An additional circumstance to consider at the estimation of the quality
of forecasts is that the forecaster can use a variety of information accessible
to her at the moment of prediction (say, the evening of the preceding day), and
not only the members of the sequence $\omega$.
The presence of such information must also be taken into account at the
estimation of the quality of the forecast.

The paper~\cite{ShenVovkPrequential10} proposes several different approaches
to this question, which turn out to be equivalent.
One involves a generalization of the notion of martingale (see
Definition~\ref{def:martingale}).
It would be interesting to establish a connection with uniform randomness tests
in the spirit of the above defined prequential deficiency.
(Admittedly, in place
of probabilities of initial segments, one must deal here with conditional
probabilities,
which is not quite the same, if these are not separated from zero.)

\section{Questions for future discussion}

We have already noted some questions that (in our view) would be interesting to
study.
In this section we collected a few more such questions.

\begin{enumerate}[1.]
\item
Consider the following method for generating a sequence $\omega\in\Omega$
using an arbitrary distribution $\P$ on $\Omega$ in which the probabilities of
all words are positive.
Take a random sequence $\rho$ of independent reals $\rho(1),\rho(2),\dots$
uniformly distributed over $\clint{0}{1}$.
At stage $n$, after outputting $\xi(1:n-1)$,  set $\xi(n)=1$ if
 \begin{align*}
   \rho(n)<\P(\xi(1:n-1)1)/\P(\xi(1:n-1)).
 \end{align*}
Considering this as a random process, the output distribution will be exactly
$\P$.
What sequences can be obtained on the ouput, from a Martin-L\"of-random
sequence of real numbers on the input?
(It can be verified that for computable measures $\P$ one gets exactly the
sequences that are Martin-L\"of-random with respect to $\P$.)

\item Recall the formula for the deficiency for computable measures:
  \begin{equation}\label{eq:sum-quotient-def}
\t(\omega,\P) \eqm \sum_{x\prefix\omega} \frac{\m(x)}{\P(x)}.
  \end{equation}
Both sides make sense for non-computable $\P$, but this formula is no more true.
Indeed, the right-hand side does not change significantly if a measure $\P$ is
replaced by some other one that is close to $\P$ but is much more powerful as an
oracle; and the left-hand side can become infinite while it was finite for $\P$.

Denote the righ-hand side by $t'(\omega,\P)$.
Does it make sense to take the finiteness of $t'(\omega,\P)$ as a definition
of randomness by a non-computable measure?
It will be at least monotonic (an increase of the measure will only increase
randomness).
With respect to mixtures of measures, we can say that it is
quasi-convex; moreover, it is proved in~\cite{GacsDiss78} that $1/t'(\omega,\P)$
is a concave function of $\P$.

Another possibility is to define the randomness deficiency for an infinite
sequence $\omega$ as $\log\sup_{x\prefix\omega}\M(x)/\P(x)$ (and
consider the corresponding definition of randomness).
For computable measures we obtain a definition equivalent to Martin-L\"of's
standard one.
Paper~\cite{GacsUnif05} shows that the uniform tests defined by this expression
(whether to use $\m(x)$ or $\M(x)$)
do not obey randomness conservation, while the universal uniform test does.
The work~\cite{GacsDiss78} shows that, on the other hand, an expression related
to the right-hand side of~\eqref{eq:sum-quotient-def}, whith the summation running
over all positive basic functions instead of
only the functions $1_{x\Omega}(\omega)$, obeys randomness conservation.

\item
Can we define a reasonable class of tests with the property in
Proposition~\ref{propo:test-computable-mon} holding for all measures $\P$
(or some stronger version of it) so that there exists an universal class?
For example, one may require
\begin{equation*}
\P\le c\cdot \Q \Rightarrow t(\omega,\P)\ge t(\omega,\Q)/c
\end{equation*}
(motivation: this is true for the right-hand side of formula~\eqref{eq:sum-quotient-def}.
Could one also require the quasi-convexity, as in
Proposition~\ref{propo:convexity}?
Papers~\cite{LevinUnif76} and~\cite{GacsExact80} provide some such examples, as
well as~\cite{LevinRandCons84}.

How about the quasi-concavity of~Proposition~\ref{propo:convexity}?
A uniform test with this property seems less likely, since our counterexample
seems more robust.

\item
Relativization in recursion theory means that we take some set $A$ and
artificially declare it ``decidable'' by adding some oracle that tells us
whether $x\in A$ for any given $x$.
Almost all the theorems of classical recursion theory can be relativized.
It is more delicate to declare some set $E$ ``enumerable''.
This means that we have some
enumeration-oracle that enumerates the set $E$.
The problem is, of course, that there are many enumerations.
Still we can give the definition of an $E$-enumerable set.
Let $W$ be a set of pairs of the form $\pair{x}{S}$
where $x$ is an integer and $S$ is a finite set of integers;
assume $W$ to be enumerable in the classical sense.
Then consider the set $S(E,W)$ of all $x$ such that $\pair{x}{S}\in W$
for some $S\subset E$.
The sets $S(E,W)$ (for fixed $E$ and all enumerable $W$) are
called \df{enumerable with respect to the enumeration-oracle $E$}.
(The relation
$\pair{x}{S}\in E$ means that we add $x$ to the $E$-enumeration as soon as
we see all elements of $S$ in $E$.)
A standard (decision) oracle for a set $A$ can
be considered a special case of an enumeration oracle (say, for the set
$\{2n: n\in A\}\cup \{2n+1: n\notin A\}$.

For some purposes, an enumeration oracle is as meaningful as a decision oracle:
for
example, we can speak about a lower semicomputable function with respect to
enumeration oracle $E$, since it can be defined in terms of enumerable sets.
But what can be proved for this kind of relativized notions?

For example, is there (for an arbitrary $E$) a maximal lower $E$-semicomputable
semimeasure?
Can one define prefix complexity with oracle $E$, and will it coincide with the
logarithm of the maximal semimeasure lower semicomputable relative to $E$ (if
the latter exists)?
What if we assume, in addition, that $E$ is the set of all basic balls in a
constructive metric space, containing a given point?

(For comparison: we could define an $E$-computable function as a function
whose graph is $E$-enumerable.
Then some familiar properties will hold; say, the
composition of $E$-computable functions is again $E$-computable.
On the other hand, we cannot guarantee that every non-empty
$E$-enumerable set is the range of a total $E$-computable function:
for some $E$ this is not so.)




\item
We may try do extend the definition of randomness in a different direction: to
lower semicomputable semimeasures (that is output distributions of probabilistic
machines that generate output sequence bit by bit).
Levin's motivation for his definition was his goal to define the independence of
the pair $\pair{x}{\eta}$ of infinite sequences as
randomness with respect to the semimeasure $\M\times \M$.
Correspondingly, the
the deficiency of randomness of
the pair $\pair{\xi}{\eta}$ with respect to $\M\times \M$
could be called the quantity of mutual information between the sequences $\xi$
and $\eta$.
This is motivated by the fact that the algorithmic mutual information
 \begin{align*}
\KP(x)+\KP(y)-\KP(x,y)=-\log(\m(x)\times\m(y))-\KP(x,y)
 \end{align*}
between finite objects $x,y$ indeed looks like
deficiency of randomness with respect to $\m\times\m$.

One possibility is to
require that $\M(z)/\Q(z)$ is bounded, where $\M$ is a priori probability on the
tree and $\Q$ is the semimeasure in question.
The other possibility is to use
random sequences for unbiased coin tossing and consider the output sequences in
all these cases.
It is not clear whether these two definitions coincide or if the
second notion is well-defined (that is for two different machines with the same
output distribution the image of the set of random sequences is the
same).
For \emph{computable measures} it is indeed the case.

\item (Steven Simpson) Can we use uniform tests (modified in a proper
way) for defining, say, $2$-randomness?
(The standard definition uses
non-semicontinuous tests, but maybe it can be reformulated.)
\end{enumerate}

\section*{Acknowledgements}

The authors are thankful to their colleagues with them they have discussed the
questions considered in the paper: first to L.~Levin with whom many of the
concepts of the paper originate, to A.~Bufetov and A.~Klimenko,
and also to  V.~V'yugin and other participants
of the Kolmogorov seminar (at the Mechanico-Mathematical school of Moscow State
University).
The paper was written with the financial support of the grants
NAFIT ANR-08- EMER-008-01, RFBR 0901-00709-a.

\bibliographystyle{plain}
\bibliography{ait,publ}

\end{document}